\newlength{\margins}
\numberwithin{equation}{section}
\newtheorem{Thm}{Theorem}[section]
\newtheorem{Lem}[Thm]{Lemma}
\theoremstyle{definition}
\newtheorem{Def}[Thm]{Definition}
\newtheorem{Rmk}[Thm]{Remark}
\begin{document}

\title[Group schemes and local densities of ramified hermitian lattices, Part II]
{Group schemes and local densities of ramified hermitian lattices in residue characteristic 2 Part II, Expanded version}

\author[Sungmun Cho]{Sungmun Cho}
\thanks{The  author is partially supported by JSPS KAKENHI Grant No. 16F16316
and NRF 2018R1A4A 1023590.}

\address{Sungmun Cho \\ Graduate school of mathematics, Kyoto University, Kitashirakawa,
Kyoto, 606-8502, JAPAN \\
\newline
Current: Department of Mathematics, POSTECH, 77, Cheongam-ro, Nam-gu, Pohang-si, Gyeongsangbuk-do, 37673, KOREA}
\email{sungmuncho12@gmail.com}

\maketitle

\begin{abstract}
This paper is the complementary  work of \cite{C2}.
Ramified quadratic extensions $E/F$, where $F$ is a finite unramified field extension of $\mathbb{Q}_2$,
 fall into two cases that we call \textit{Case 1} and \textit{Case 2}.
In the previous work \cite{C2}, we obtained the local density formula  for a ramified hermitian lattice in \textit{Case 1}.
In this paper, we  obtain the local density formula for the remaining  \textit{Case 2},
by constructing a smooth integral group scheme model for an appropriate unitary group.
Consequently, this paper, combined with the paper \cite{GY} of W. T. Gan and J.-K. Yu and \cite{C2}, allows the computation of the mass formula for any  hermitian lattice $(L, H)$, when a base field is unramified over $\mathbb{Q}$ at a prime $(2)$.

\end{abstract}
\let\thefootnote\relax\footnote{Primary MSC 11E41, MSC 11E95, MSC 14L15, MSC 20G25; Secondary MSC 11E39, MSC 11E57}

\tableofcontents

\section{Introduction}\label{in}

\subsection{Introduction}\label{in'}
Local densities are local factors of the mass formula,
which is an essential tool in the classification of hermitian lattices over number fields.
We refer to the introduction of \cite{C2} for  history along this context.
This paper is the complementary work of \cite{C2}.

Let $F$ be a finite unramified field extension of $\mathbb{Q}_2$.
Ramified quadratic extensions $E/F$ fall into two cases that we call \textit{Case 1} and \textit{Case 2} (cf. Section \ref{Notations}),
 depending on lower ramification groups $G_i$'s of the Galois group $\mathrm{Gal}(E/F)$  as follows:
\[
\left\{
  \begin{array}{l }
 \textit{Case 1}: G_{-1}=G_{0}=G_{1}, G_{2}=0;\\
 \textit{Case 2}: G_{-1}=G_{0}=G_{1}=G_{2}, G_{3}=0.
    \end{array} \right.
\]
The paper \cite{C2} gives the local density formula of hermitian lattices in  \textit{Case 1}.

The main contribution of this paper is to get an explicit formula for the local density of a hermitian $B$-lattice $(L, h)$ in \textit{Case 2},
by explicitly constructing a certain smooth group scheme (called smooth integral model) associated to it that serves as an integral model
for the unitary group associated to $(L\otimes_AF, h\otimes_AF)$ and
  by investigating its special fiber,
where $B$ is a  ramified quadratic extension of $A$ and $A$ is an unramified finite extension of $\mathbb{Z}_2$ with $F$ as the quotient field of $A$.

In  conclusion, this paper, combined with \cite{GY}, \cite{C1}, and \cite{C2},
finally allows the computation of the mass formula for any hermitian lattice $(L, H)$
 when a base field is unramified over $\mathbb{Q}$ at a prime $(2)$.
 As the simplest case, we can compute the mass formula for an arbitrary hermitian lattice explicitly
  when  a base field is $\mathbb{Q}$.

For a brief idea and comment on  the proof, we refer to the introduction of \cite{C2}.
 The methodology and the structure of this paper are   basically the same as those of \cite{C2} and thus
we repeat a number of sentences and paragraphs from \cite{C2}  for synchronization without comment.
 But \textit{Case 2} is  more difficult and technical than \textit{Case 1}.

This paper is organized as follows.
 We first state a structure theorem for integral hermitian forms in Section \ref{sthln}.
  We then give an explicit construction of  a smooth integral model $\underline{G}$
 (in Section \ref{csm}) and study its special fiber (in Section \ref{sf}) in \textit{Case 2}.
 Finally,
 we obtain an explicit formula for the local density in Section \ref{cv} in \textit{Case 2}.
In Appendix \ref{App:AppendixB}, we provide an example to describe the smooth integral model and its special fiber
and to compute the local density for a unimodular lattice of rank 1.

The reader might want to skip to Appendix \ref{App:AppendixB} and at least go to Appendix \ref{nc} to get a first glimpse into why the case of $p=2$ is really different.
Some of the ideas behind our construction can be seen in the simple example illustrated in Appendix \ref{cfot}.

\subsection{Acknowledgements}
This paper was initially the second half of \cite{C2}. 
Due to a huge number of pages and technical difficulty, we decide to divide it into two papers.
The author  greatly thanks the referee of Algebra \& Number Theory, who read  \cite{C2}, and Professor Brian Conrad
 for  incredibly precise and helpful comments and discussions on this project.
 The author also appreciates   Professor Chia-Fu Yu's notice to point out one error in this paper and \cite{C2}. It is explained in Remark \ref{correction}.

\section{Structure theorem for hermitian lattices and notations}\label{sthln}
In this section, we explain a structure theorem for hermitian lattices.
This theorem is proved in \cite{C2}.
Thus we take necessary definitions and theorems from \cite{C2}, without providing proofs.
 \subsection{Notations}\label{Notations}
Notations and definitions in this section are taken from \cite{C1}, \cite{GY}, \cite{J}, and \cite{C2}.
\begin{itemize}
\item Let $F$ be an unramified finite extension of $\mathbb{Q}_2$ with $A$  its ring of integers and $\kappa$  its residue field.
\item  Let $E$ be a ramified quadratic field extension of $F$ with $B$ its ring of integers. 
\item Let $\sigma$ be the non-trivial element of the Galois group $\mathrm{Gal}(E/F)$.
\item The lower ramification groups $G_i$'s of the Galois group $\mathrm{Gal}(E/F)$ satisfy one of the following:
\[
\left\{
  \begin{array}{l }
 \textit{Case 1}: G_{-1}=G_{0}=G_{1}, G_{2}=0;\\
 \textit{Case 2}: G_{-1}=G_{0}=G_{1}=G_{2}, G_{3}=0.
    \end{array} \right.
\]
In \textit{Case 2}, based on Section 6 and Section 9 of \cite{J}, there is a suitable choice of a uniformizer $\pi$ of $B$ such that
\[\pi=\sqrt{2\delta}, \textit{where $\delta\in A $ and $\delta\equiv 1 \mathrm{~mod~}2$}.\]
Thus $E=F(\pi)$ and $\sigma(\pi)=-\pi$.
 From now on, we assume that $E/F$ satisfies \textit{Case 2} and
 a uniformizing element $\pi$ of $B$  and $\delta$ are fixed as explained above
 throughout this paper.
\item Set
\[\xi:=\pi\cdot\sigma(\pi).\]

\item We consider a $B$-lattice $L$ with a hermitian form $$h : L \times L \rightarrow B,$$
 where $h(a\cdot v, b \cdot w)=\sigma(a)b\cdot h(v,w)$ and $h(w,v)=\sigma(h(v,w))$. Here, $a, b \in B$ and $v, w \in L$.
 We denote by a pair $(L, h)$ a hermitian lattice.
We assume that $V=L\otimes_AF$ is nondegenerate with respect to $h$.
\item We denote by $(\epsilon)$ the $B$-lattice of rank 1 equipped with the hermitian form having Gram matrix $(\epsilon)$.
We use the symbol $A(a, b, c)$ to denote the $B$-lattice $B\cdot e_1+B\cdot e_2$
with the hermitian form having Gram matrix $\begin{pmatrix} a & c \\ \sigma (c) & b \end{pmatrix}$.
For each integer $i$,
the lattice of rank 2 having  Gram matrix $\begin{pmatrix} 0 & \pi^i \\ \sigma(\pi^i)  & 0 \end{pmatrix}$ is called the hyperbolic plane and denoted by $H(i)$.
\item A  hermitian lattice $L$ is the orthogonal sum of sublattices $L_1$ and $L_2$, written $L=L_1\oplus L_2$, if $L_1\cap L_2=0$, $L_1$ is orthogonal to $L_2$ with respect to the hermitian form $h$, and $L_1$ and $L_2$ together span $L$.
\item The ideal in $B$ generated by $h(x,x)$ as $x$ runs through $L$ will be called the norm of $L$ and written $n(L)$.
\item By the scale $s(L)$ of $L$, we mean the ideal generated by the subset $h(L,L)$ of $B$.
\item We define the dual lattice of $L$, denoted by $L^{\perp}$, as
 $$L^{\perp}=\{x \in L\otimes_A F : h(x, L) \subset B \}.$$

\end{itemize}

\begin{Def}[Definition 2.1 in \cite{C2}]\label{d1}
Let $L$ be a hermitian lattice. Then:
\begin{enumerate}
\item[(a)]
For any non-zero scalar $a$, define $aL=\{ ax|x\in L \}$.
It is also a lattice in the space $L\otimes_AF$. Call a vector $x$ of $L$ maximal in $L$
if $x$ does not lie in $\pi L$.
\item[(b)]
The lattice $L$ will be called $\pi^i$-modular if the ideal generated by the subset $h(x, L)$ of $E$ is $\pi^iB$ for every maximal vector $x$ in $L$.
Note that $L$ is $\pi^i$-modular if and only if $L^{\perp}=\pi^{-i}L$.
We can also see that $H(i)$ is $\pi^i$-modular.
\item[(c)] Assume that $i$ is even. A $\pi^i$-modular lattice $L$ is \textit{of parity type I} if $n(L)=s(L)$,
 and  \textit{of parity type II} otherwise.
The zero lattice is considered to be  \textit{of parity type II}.
We caution that we do not assign a \textit{parity type} to a $\pi^i$-modular lattice $L$ with $i$ odd.
\end{enumerate}
\end{Def}

 \begin{Rmk}[Remark 2.3 in \cite{C2}]\label{r23}
 \begin{enumerate}
 \item[(a)]
  If $L$ is $\pi^i$-modular, then $\pi^j L$ is $\pi^{i+2j}$-modular for any integer $j$.
\item[(b)] (Section 4 in \cite{J}) For a general lattice $L$, we have a Jordan splitting, namely $L=\bigoplus_i L_i$
such that $L_i$ is $\pi^{n(i)}$-modular  and such that the sequence $\{n(i)\}_i$ increases.
Two Jordan splittings $L=\bigoplus_{1\leqq i \leqq t} L_i$ and $K=\bigoplus_{1\leqq i \leqq T} K_i$ will be said to be of the same type
if $t=T$ and, for $1\leqq i \leqq T$, the following conditions are satisfied:
 $s(L_i)=s(K_i)$, rank $L_i$ = rank $K_i$, and $n(L_i)=s(L_i)$ if and only if $n(K_i)=s(K_i)$. 
Jordan splitting is not unique but partially canonical in the sense that two Jordan splittings of isometric lattices are always of the same type.
\item[(c)]
If we allow some of the $L_i$'s to be zero, then we may assume that $n(i) = i$ for all $i$.
In other words, for all $i\in \mathbb{N}\cup \{0\}$  we have $s(L_i)=(\pi^i)$, and, more precisely, $L_i$ is $\pi^i$-modular.
Then we can rephrase part (b) above  as follows.
 Let $L=\bigoplus_i L_i$ be a Jordan splitting with $s(L_i)=(\pi^i)$ for all $i\geq 0$.
Then the scale, rank and parity type of $L_i$ depend only on $L$.
We will deal exclusively with a Jordan splitting satisfying $s(L_i)=(\pi^i)$ from now on.
\end{enumerate}
 \end{Rmk}

 \subsection{Lattices}\label{lattices}[Section 2C in \cite{C2}]\label{lattices}

In this subsection, we will define several lattices and associated notation.
Fix a hermitian lattice $(L, h)$. 
We denote by $(\pi^l)$ the scale $s(L)$ of $L$.

\begin{itemize}
\item[(1)]  Define $A_i=\{x\in L \mid h(x,L) \in \pi^iB\}.$
\item[(2)] Define $X(L)$ to be the sublattice of $L$ such that
 $X(L)/\pi L$ is the radical of the symmetric bilinear form $\frac{1}{\pi^l}h$ mod $\pi$ on $L/\pi L$.
\end{itemize}

Let $l=2m$ or $l=2m-1$.
We consider the function defined over $L$ by
$$\frac{1}{2^m}q : L\longrightarrow A, x\mapsto \frac{1}{2^m}h(x,x).$$
Then $\frac{1}{2^m}q$  mod 2 defines a quadratic form $L/\pi L \longrightarrow \kappa$.
It can be easily checked that $\frac{1}{2^m}q$ mod 2 on $L/\pi L$ is an additive polynomial.
We define a lattice $B(L)$ as follows.
\begin{itemize}
\item[(3)]
$B(L)$ is defined to be the sublattice of $L$ such that
 $B(L)/\pi L$ is the kernel of the additive polynomial  $\frac{1}{2^m}q$ mod 2 on $L/\pi L$.
\end{itemize}

To define a few more lattices,  we need some preparation as follows.
Recall that $\pi\cdot\sigma(\pi)$ is denoted by $\xi$.

Assume $B(L)\varsubsetneq L$ and $l$ is even. Then the bilinear form $\xi^{-l/2}h$ mod $\pi$
on the $\kappa$-vector space $L/X(L)$ is nonsingular symmetric and nonalternating.
It is well known that there is a unique vector $e \in L/X(L)$ such that
$$(\xi^{-l/2}h(v,e))^2=\xi^{-l/2}h(v,v) \textit{ mod } \pi$$
for every vector $v \in L/X(L)$.
Let $\langle e\rangle$ denote  the 1-dimensional vector space spanned by the vector $e$ and denote by $e^{\perp}$ the 1-codimensional subspace of $L/X(L)$ which is orthogonal to the vector $e$ with respect to  $\xi^{-l/2}h$ mod $\pi$.
Then $$B(L)/X(L)=e^{\perp}.$$
If $B(L)= L$, then the bilinear form $\xi^{-l/2}h$ mod $\pi$ on the $\kappa$-vector space $L/X(L)$
is nonsingular symmetric and alternating. In this case, we put $e=0\in L/X(L)$
and note that it is characterized by the same identity.\\

The remaining lattices we need for our definition are:
\begin{itemize}
\item[(4)] Define $W(L)$ to be the sublattice of $L$ such that \[
\left\{
  \begin{array}{l l}
 \textit{$W(L)/X(L)=\langle e\rangle$} & \quad \textit{if $l$ is even};\\
 \textit{$W(L)=X(L)$} & \quad \textit{if $l$ is odd}.
    \end{array} \right.
\]
\item[(5)] Define $Y(L)$ to be the sublattice of $L$ such that $Y(L)/\pi L$ is the radical of
\[
\left\{
  \begin{array}{l l}
 \textit{the form $\frac{1}{2^{m}}h$
  mod $\pi$ on $B(L)/\pi L$} & \quad \textit{if $l=2m$};\\
 \textit{the  form $\frac{1}{\pi}\cdot\frac{1}{2^{m-1}}h$ mod $\pi$ on $B(L)/\pi L$} & \quad \textit{if $l=2m-1$}.
    \end{array} \right.
\]
\end{itemize}
Both forms are alternating and bilinear.
\begin{itemize}

 \item[(6)] Define $Z(L)$ to be the sublattice of $L$ such that  $Z(L)/\pi B(L)$   is the radical of
the quadratic form $\frac{1}{2^{m+1}}q$ mod $2$ on $B(L)/\pi B(L)$  if $l=2m$.
\end{itemize}
(see, e.g.,  page 813 of \cite{Sa} for the notion of the radical of a quadratic form on a vector space over a field of characteristic 2.)

\begin{Rmk}\label{r26} As in Remark 2.6 of \cite{C2}, 
 \begin{enumerate}
 \item[(a)]  We can associate
the 5 lattices  $(B(L), W(L), X(L), Y(L), Z(L))$ above
with $(A_i, h)$ in place of $L$.
 Let $B_i,W_i,X_i,Y_i,Z_i$ denote the resulting lattices.
 \item[(b)] As $\kappa$-vector spaces, the dimensions of $A_i/B_i$ and  $W_i/X_i$ are at most 1.
  \end{enumerate}
\end{Rmk}

Let $L=\bigoplus_i L_i$ be a Jordan splitting.
We assign a type to each $L_i$ as follows:

\begin{center}
    \begin{tabular}{| l | l | l  |}
    \hline
    parity of $i$ & type of $L_i$ & condition \\ \hline
 even &   $I$ & $L_i$ is of parity type $I$ \\
  even & $I^o$ & $L_i$ is of parity type $I$ and the rank of $L_i$ is odd \\
even &  $I^e$ & $L_i$ is of parity type $I$ and the rank of $L_i$ is even \\
even &   $II$ & $L_i$ is of parity type $II$ \\
odd &   $II$ & $A_i=B_i$ \\
odd &   $I$ & $A_i \varsupsetneq B_i$ \\ \hline
    \end{tabular}
\end{center}

In addition, we assign a subtype to $L_i$ in the following manner:

\begin{center}
    \begin{tabular}{| l | l | l  |}
    \hline
    parity of $i$ & subtype of $L_i$ & condition \\ \hline
 even &  bound of type  $I$ & $L_i$ is of type $I$ and either $L_{i-2}$ or $L_{i+2}$ is of type $I$ \\
  even & bound of type  $II$ & $L_i$ is of type $II$ and either $L_{i-1}$ or $L_{i+1}$ is of type $I$ \\
odd &   bound & either $L_{i-1}$ or $L_{i+1}$ is of type $I$ \\ \hline
    \end{tabular}
\end{center}
In all other cases, $L_i$ is called $free$.
If $L_i$ with $i$ odd is \textit{of type II}, then $L_i$ should be \textit{free}.
In other words, a lattice $L_i$ with $i$ odd cannot be \textit{bound of type II}.

Notice that the type of each $L_i$ is determined canonically regardless of the choice of a Jordan splitting.

\subsection{Sharpened Structure Theorem for Integral hermitian Forms}\label{sstm}

\begin{Thm}[Theorem 2.10 in   \cite{C2}]\label{210}
There exists a suitable choice of a  Jordan splitting of the given lattice $L=\bigoplus_i L_i$ such that $L_i=\bigoplus_{\lambda}H_{\lambda}\oplus K$,
where each $H_{\lambda}= H(i)$ and $K$ is  $\pi^i$-modular of rank 1 or 2 with the following descriptions.
Let $i=0$ or $i=1$. Then
\[
K=\left\{
  \begin{array}{l l}
  \textit{$(a)$ where $a \equiv 1$ mod 2}    & \quad  \textit{if $i=0$ and $L_0$ is \textit{of type $I^o$}};\\
  \textit{$A(1, 2b, 1)$}    & \quad  \textit{if $i=0$ and $L_0$ is \textit{of type $I^e$}};\\
  \textit{$A(2\delta, 2b, 1)$}    & \quad  \textit{if $i=0$ and $L_0$ is \textit{of type II}};\\
  \textit{$A(4a, 2\delta, \pi)$}    & \quad  \textit{if $i=1$ and $L_1$ is \textit{free of type I}};\\
  \textit{$H(1)$}    & \quad  \textit{if $i=1$, and $L_1$ is \textit{of type II} or \textit{bound of type I}}.
      \end{array} \right.
\]
Here, $a, b\in A$ and $\delta, \pi$ are explained in Section 2.1.
\end{Thm}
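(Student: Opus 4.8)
The plan is to begin with an arbitrary Jordan splitting $L=\bigoplus_i L_i$ normalized so that $s(L_i)=(\pi^i)$ for all $i\geq 0$ (Remark~\ref{r23}(b)--(c)), to refine each $\pi^i$-modular block into an orthogonal sum of hyperbolic planes plus a core of rank $\leq 2$, and finally to coordinate these refinements across adjacent blocks to get the subtype statements. First I would reduce to $i\in\{0,1\}$: by Remark~\ref{r23}(a) a $\pi^i$-modular lattice with $i=2j+\varepsilon$, $\varepsilon\in\{0,1\}$, is $\pi^j$ times a $\pi^\varepsilon$-modular lattice, and multiplication by $\pi^j$ carries $H(\varepsilon)$, $(a)$, $A(a,b,c)$ to $H(i)$, $(\xi^j a)$, $A(\xi^j a,\xi^j b,\pi^j c)$ while preserving scale, rank, parity type, and the (sub)type assignments. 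The mechanical input is the standard fact that a $\pi^i$-modular $B$-sublattice $M$ of a $\pi^i$-modular lattice $N$ which is a $B$-module direct summand is automatically an orthogonal direct summand, $N=M\oplus M^{\perp}$ with $M^{\perp}$ again $\pi^i$-modular: modularity of $M$ splits $N\to\mathrm{Hom}_B(M,\pi^iB)$, and comparing duals with $N^{\perp}=\pi^{-i}N$ gives modularity of $M^{\perp}$. One checks directly, using $\pi^2=2\delta$ and $\bar\delta=1$, that $H(i)$ and each listed lattice $(a)$, $A(1,2b,1)$, $A(2\delta,2b,1)$, $A(4a,2\delta,\pi)$ is $\pi^i$-modular of precisely the asserted type, so the list is at least internally consistent.

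Next I would peel off hyperbolic planes by induction on $\mathrm{rank}\,L_i$, using the residual data on $L_i/\pi L_i$ set up in Section~\ref{lattices}: the nonsingular symmetric form $\xi^{-l/2}\bar h$ with its distinguished non-alternating vector $e$ and splitting $L_i/X(L_i)=\langle e\rangle\perp e^{\perp}$, the alternating form on $B(L_i)/X(L_i)$, and, for $i$ even, the quadratic refinement $\tfrac1{2^{m+1}}q\bmod 2$ on $B(L_i)/\pi B(L_i)$. Since $\kappa$ is a finite field of characteristic $2$, a nonsingular alternating form is an orthogonal sum of hyperbolic planes and a nondegenerate quadratic form of dimension $\geq 3$ is isotropic; I would lift a residual hyperbolic plane of the alternating (resp.\ quadratic-refinement) part to a sublattice of $L_i$ isometric to $H(i)$ — a $B$-module direct summand of full scale, hence an orthogonal summand by the splitting lemma — and induct. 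This terminates at a $\pi^i$-modular orthogonal summand $K$ whose residual invariants are anisotropic, forcing $\mathrm{rank}\,K\leq 2$: rank $1$ occurs only when $i$ is even, $L_i/\pi L_i$ is odd-dimensional, and $\xi^{-l/2}\bar h$ is non-alternating (the type $I^o$ case), and rank $2$ in every other case.

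It then remains to normalize $K$ and treat the subtypes. In the rank-$1$ case $K\cong(c)$ with $c\in A^{\times}$, rescaling by $u\in B^{\times}$ replaces $c$ by $N(u)c$, and using $N(B^{\times})\supseteq(A^{\times})^2\cdot\{1-2\delta t^2: t\in A\}$ together with the structure of $A^{\times}$ modulo powers of $2$ one finds a representative $a\equiv 1\bmod 2$. In the rank-$2$ cases I would normalize the Gram matrix by elementary transformations over $B$ and unit rescalings and match the invariants $s(K)$, $n(K)$, the parity type (for $i$ even) or the additive polynomial $\tfrac12 q\bmod 2$ detecting type $I$ versus type $II$ (for $i$ odd), and the residual quadratic form $\tfrac1{2^{m+1}}q\bmod 2$ (for $i$ even), which pins $K$ down to $A(1,2b,1)$, $A(2\delta,2b,1)$, $A(4a,2\delta,\pi)$, or $H(1)$; since each listed lattice realizes exactly the corresponding type, the list is exhaustive. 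Finally, in the ``bound'' cases I would spend the remaining freedom in the choice of Jordan splitting: when $L_{i-1}$ or $L_{i+1}$ is of type $I$, a vector of that neighbouring block pairs nontrivially with $K$ after clearing a factor of $\pi$, and a combined change of basis across the two blocks converts an anisotropic core $A(4a,2\delta,\pi)$ into a hyperbolic plane $H(1)$ (and similarly adjusts the even-$i$ cores) while altering the neighbour only within its isometry class; carrying this bookkeeping along the whole splitting, and using the canonicity of the (sub)type data recorded at the end of Section~\ref{lattices}, produces one Jordan splitting with all blocks simultaneously in the claimed form.

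The hard part will be the characteristic-$2$ bookkeeping in the last two steps rather than the overall strategy: over $\kappa$ the residual bilinear form, the residual quadratic form, and the additive polynomial are genuinely independent pieces of data, and maintaining an orthogonal splitting compatible with all of them — in particular, performing the cross-block modifications that absorb anisotropic cores in the ``bound'' cases while still producing a single \emph{global} Jordan splitting in normal form — is exactly where the extra difficulty of \textit{Case 2} over \textit{Case 1} concentrates.
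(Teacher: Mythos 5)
First, a point of reference: this paper does not prove Theorem \ref{210}. It is imported verbatim from \cite{C2} (Theorem 2.10 there), and Section \ref{sthln} states explicitly that proofs are omitted; so the only comparison available is with the route of \cite{C2}, which is visible indirectly through the auxiliary results the present paper keeps invoking, namely \cite{C2}, Theorem 2.2 (a $\pi^1$-modular binary lattice of norm $(4)$ is isometric to $H(1)$, and its $\pi^0$-modular analogues) and \cite{C2}, Lemma 2.9 (the cross-block base change that raises $n(M_1)$ from $(2)$ to $(4)$ when an adjacent block is of type I). Your outline --- rescale to $i\in\{0,1\}$, split modular direct summands off orthogonally, peel hyperbolic planes against the residual symplectic/quadratic data, normalise a rank-$\leq 2$ core via the norm group $N(B^{\times})$, and absorb the anisotropic odd core into a type-I neighbour in the bound case --- is exactly that route, and each of your steps corresponds to one of those ingredients.

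The place where the outline conceals the real content is the inductive step ``lift a residual hyperbolic plane to a sublattice isometric to $H(i)$.'' A lift of an isotropic residual pair only produces a binary $\pi^i$-modular lattice $A(c,d,u\pi^i)$ with $c,d$ of controlled valuation; identifying it with $H(i)$ is precisely the classification of binary modular lattices in residue characteristic $2$ (for $i=1$: norm $(4)$ forces $\cong H(1)$; for $i=0$: norm $(2)$ together with trivial Arf class of $\tfrac12 q \bmod 2$ forces $\cong H(0)$). That classification --- not the Witt decomposition of the residual forms, which is soft --- is the engine of the whole induction, and your proposal neither isolates nor proves it; the same applies to the norm-group computation behind ``altering the neighbour only within its isometry class'' in the bound case, and to the step recombining two norm-$(2)$ binary $\pi$-modular pieces into $H(1)\oplus A(4a,2\delta,\pi)$. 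These are all fillable by explicit Jacobowitz-style computations with $\pi^2=2\delta$ and $\sigma(\pi)=-\pi$, but as written the proposal is a correct outline of the proof rather than a proof.
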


\begin{Rmk}\label{r211}
 \begin{enumerate}
 \item[(a)] As mentioned in Remark 2.3.(a) in \cite{C2}, If $L$ is $\pi^i$-modular, then $\pi^j L$ is $\pi^{i+2j}$-modular for any integer $j$.
   Thus, the above theorem implies its obvious generalization to the case where $i$ is allowed to be any element of $\mathbb{Z}$.

   \item[(b)]  Working with a basis furnished by the above theorem, we can describe our lattices $A_i$ through $Z_i$ more explicitly.
   We refer to Remark 2.11 in \cite{C2} for a precise description of these lattices.
       \end{enumerate}
\end{Rmk}

From now on, the pair $(L,h)$ is fixed throughout this paper.


\section{The construction of the smooth model}\label{csm}

We start with introducing the symbol $\delta_j$ according to the type of $L_j$, for the fixed hermitian lattice $(L, h)$.
We keep this symbol from now  until the end of this paper.

$$
\delta_{j} = \left\{
  \begin{array}{l l}
  1    & \quad  \textit{if $L_j$ is \textit{of type I}};\\
  0    &   \quad  \textit{if $L_j$ is \textit{of type II}}.
    \end{array} \right.
$$
Recall from the beginning of Section 2 that
we can choose a uniformizer $\pi$ in $B$ such that $\sigma(\pi)=-\pi$ and $\pi^2=2\delta$ with $\delta (\in A) \equiv 1$ mod 2.

We reproduce the beginning of Section 3 of \cite{C2} to explain our goal.
Let $\underline{G}^{\prime}$ be the naive integral model of the unitary group $\mathrm{U}(V, h)$, where $V=L\otimes_AF$, such that
for any commutative $A$-algebra $R$,
$$\underline{G}^{\prime}(R)=\mathrm{Aut}_{B\otimes_AR}(L\otimes_AR, h\otimes_AR).$$
The scheme $\underline{G}^{\prime}$ is then an (possibly non-smooth) affine group scheme over $A$ with the smooth generic fiber $\mathrm{U}(V, h)$.
Then by Proposition 3.7 in \cite{GY}, there exists a unique smooth integral model, denoted by  $\underline{G}$, with the generic fiber $\mathrm{U}(V, h)$,
characterized by
$$\underline{G}(R)=\underline{G}^{\prime}(R)$$
for any \'etale $A$-algebra $R$.
Note that every \'etale $A$-algebra is a finite product of finite unramified extensions of $A$.
This section, Section 4 and Appendix A  are devoted to gaining an explicit knowledge of the smooth integral model
$\underline{G}$ in \textit{Case 2},
 which will be used in Section 5 to compute the local density of $(L, h)$ (again, in \textit{Case 2}).
For a detailed exposition of the relation between the local density of $(L, h)$ and $\underline{G}$, see Section 3 of \cite{GY}.

In this section, we give an explicit construction of the smooth integral model $\underline{G}$
when $E/F$ satisfies \textit{Case 2}.
The construction of $\underline{G}$ is based on that of Section 5 in \cite{GY} and Section 3 in \cite{C2}.
Since the functor $R \mapsto \underline{G}(R)$ restricted to \'etale $A$-algebras $R$ determines $\underline{G}$,
we first list out some properties that are satisfied by each element of $\underline{G}(R)=\underline{G}^{\prime}(R)$.

We choose an element $g\in \underline{G}(R)$ for an  \'etale $A$-algebra $R$.
Then $g$ is an element of $\mathrm{Aut}_{B\otimes_AR}(L\otimes_AR, h\otimes_AR)$.
Here we consider $\mathrm{Aut}_{B\otimes_AR}(L\otimes_AR, h\otimes_AR)$ as a subgroup of $ \mathrm{Res}_{E/F}\mathrm{GL}_E(V)(F\otimes_AR)$.
To ease the notation, we say $g\in \mathrm{Aut}_{B\otimes_AR}(L\otimes_AR, h\otimes_AR)$ stabilizes
a lattice $M\subseteq V$ if $g(M\otimes_AR)=M\otimes_AR$.

\subsection{Main construction}\label{mc}

Let $R$ be an \'etale $A$-algebra.
    In this subsection, as mentioned above, we observe properties of elements of $\mathrm{Aut}_{B\otimes_AR}(L\otimes_AR, h\otimes_AR)$
     and their matrix interpretations.
    We choose a Jordan splitting $L=\bigoplus_iL_i$ and a basis of $L$ as explained in Theorem 2.4 and Remark 2.5.(a).
     Let $n_i=\mathrm{rank}_{B}L_i$, and $n=\mathrm{rank}_{B}L=\sum n_i$.
 Assume that $n_i=0$ unless $0\leq i < N$.
    Let $g$ be an element of $\mathrm{Aut}_{B\otimes_AR}(L\otimes_AR, h\otimes_AR)$.
 We always divide a matrix $g$ of size $n \times n$ into $N^2$ blocks such that the block in position $(i, j)$ is of size $n_i\times n_j$.
 For simplicity, the row and column numbering starts at $0$ rather than $1$.

 \begin{enumerate}
    \item[(1)] First of all, $g$ stabilizes $A_i$ for every integer $i$.
In terms of matrices, this fact means that the $(i,j)$-block
 has entries in $\pi^{max\{0,j-i\}}B\otimes_AR$.
 From now on, we write
\[g= \begin{pmatrix} \pi^{max\{0,j-i\}}g_{i,j} \end{pmatrix}.\]

\item[(2)]
   Let $i$ be even. 
   Then $g$ stabilizes $A_i, B_i, W_i, X_i$ and induces the identity on $A_i/B_i$ and $W_i/X_i$.
     We also interpret these facts in terms of matrices as described below:
\begin{itemize}
\item[(a)]      If  $L_i$ is \textit{of type II},
then $A_i=B_i$ and  $W_i=X_i$ and so there is no contribution.
\item[(b)]     If $L_i$ is \textit{of type} $\textit{I}^o$, the diagonal $(i,i)$-block $g_{i,i}$ is of the form
     \[\begin{pmatrix} s_i&\pi y_i\\ \pi v_i&1+\pi z_i \end{pmatrix}\in \mathrm{GL}_{n_i}(B\otimes_AR),\]
     where $s_i$ is an $(n_i-1) \times (n_i-1)-$matrix, etc.
\item[(c)]
     If $L_i$ is \textit{of type} $\textit{I}^e$, the diagonal $(i,i)$-block $g_{i,i}$ is of the form
     \[\begin{pmatrix} s_i&r_i&\pi t_i\\ \pi y_i&1+\pi x_i&\pi z_i\\ v_i&u_i&1+\pi w_i \end{pmatrix}\in \mathrm{GL}_{n_i}(B\otimes_AR),\]
     where $s_i$ is an $(n_i-2) \times (n_i-2)-$matrix, etc.\\
\end{itemize}

\item[(3)]
 Let $i=2m$ be even. 
   Then  $g$  stabilizes $Z_i$ and induces the identity on $W_i/(X_i\cap Z_i)$.
For the proof,   it is easy to show that $g$  stabilizes $Z_i$.
     To prove the latter, we choose an element $w$ in $W_i$.  Since  $gw-w\in X_i$ by the above step (2),
  it suffices to show that $gw-w\in Z_i$.
  Recall that $Z_i$ is the sublattice of $A_i$ such that $Z_i/\pi B_i$ is the radical of the quadratic form $\frac{1}{2^{m+1}}q$ mod $2$ on $B_i/\pi B_i$,
  where $\frac{1}{2^{m+1}}q(a)=\frac{1}{2^{m+1}}h(a,a)$ for $a\in B_i$.
  The lattice $Z_i$ can also be described as  the sublattice of $B_i$
  such that $Z_i/\pi B_i$ is the kernel of the additive polynomial $\frac{1}{2^{m+1}}q$ mod $2$ on $Y_i/\pi B_i$.
 Now it is also easy to show that $gw-w\in Y_i$. Thus our claim that  $gw-w\in Z_i$ follows from the following computation:

     $$\frac{1}{2}\cdot \frac{1}{2^m} q(gw-w)=\frac{1}{2}(2\cdot \frac{1}{2^m}q(w)- \frac{1}{2^m}(h(gw,w)+h(w,gw)))=$$
     $$\frac{1}{2^m}(q(w)-\frac{1}{2}(h(w+x,w)+h(w,w+x))) =-\frac{1}{2^m}\cdot\frac{1}{2}(h(x,w)+h(w,x))=0 \textit{ mod } 2,$$
 where $gw=w+x$ for some $x\in X_i$, since $\frac{1}{2^m}h(x,w)\in \pi B$ and $\pi + \sigma(\pi) \in 4A$.
 Recall that $\frac{1}{2^m}q(w)=\frac{1}{2^m}h(w,w)$.

We interpret this in terms of matrices.
\begin{itemize}
\item If $L_i$ is \textit{free of type II}, then $W_i=X_i=Z_i$ and so there is no contribution.
\item If $L_i$ is \textit{bound of type II}, then $W_i=X_i$ and $Z_i$ is as explained in Remark 2.11 of \cite{C2}.
Matrix interpretation in this case is covered by matrix interpretation of Steps (2) and (4) below.
Thus there is no new contribution.

\item If $L_i$ is \textit{of type I}, then
    $$z_i+\delta_{i-2}k_{i-2, i}+\delta_{i+2}k_{i+2, i} \in (\pi).$$
    Here,
    \begin{itemize}
  \item[(a)] $z_i$ is an entry of $g_{i,i}$ as described in  \textit{Step (2)}.
\item[(b)] $k_{i-2, i}$ (resp. $k_{i+2, i}$) is the $(n_{i-2}, n_i)^{th}$-entry (resp. $(n_{i+2}, n_i)^{th}$-entry) of the matrix $g_{i-2, i}$
(resp. $g_{i+2, i}$)
if $L_{i-2}$ (resp. $L_{i+2}$) is \textit{of type} $\textit{I}^o$.
\item[(c)] $k_{i-2, i}$ (resp. $k_{i+2, i}$) is the $(n_{i-2}-1, n_i)^{th}$-entry (resp. $(n_{i+2}-1, n_i)^{th}$-entry) of the matrix $g_{i-2, i}$
(resp. $g_{i+2, i}$)
if $L_{i-2}$ (resp. $L_{i+2}$) is \textit{of type} $\textit{I}^e$.\\
\end{itemize}
\end{itemize}

\item[(4)]
 Assume that $i$ is odd.
Then  $g$ induces the identity on $A_i/B_i$.
To interpret this as a matrix, we  consider the following $(1\times n_i)$-matrix:
$$
\left\{
  \begin{array}{l l}
v_i\cdot (g_{i, i}-\mathrm{Id}_{n_i})   &   \quad  \textit{if $L_i$ is \textit{free of type I}};\\
  \delta_{i-1}v_{i-1}\cdot g_{i-1, i}+\delta_{i+1}v_{i+1}\cdot g_{i+1, i}   & \quad  \textit{if $L_i$ is \textit{bound of type I}}.
    \end{array} \right.
$$

Here,
    \begin{itemize}
 \item[(a)]  $v_{i}=(0,\cdots, 0, 1)$ of size $1\times n_{i}$ and $\mathrm{Id}_{n_i}$ is the identity matrix of size $n_i \times n_i$.
   \item[(b)] $v_{i-1}=(0,\cdots, 0, 1)$ (resp. $v_{i-1}=(0,\cdots, 0, 1, 0)$) of size $1\times n_{i-1}$
if $L_{i-1}$ is \textit{of type} $\textit{I}^o$ (resp. \textit{of type} $\textit{I}^e$).
 \item[(c)] $v_{i+1}=(0,\cdots, 0, 1)$ (resp. $v_{i+1}=(0,\cdots, 0, 1, 0)$) of size $1\times n_{i+1}$
if $L_{i+1}$ is \textit{of type} $\textit{I}^o$ (resp. \textit{of type} $\textit{I}^e$).\\
 \end{itemize}

Then each entry of the above matrix  lies  in the ideal $(\pi)$.
If $L_i$ is \textit{of type II}, then $A_i=B_i$ so that there is no contribution.
\\

\item[(5)] 
Assume that $i$ is odd.
The fact that $g$ induces the identity on $A_i/B_i$  is equivalent to
  the fact that $g$ induces the identity on $B_i^{\perp}/A_i^{\perp}$.

  We give another description of this condition.
  Since the space V has a non-degenerate bilinear form $h$, V can be identified with its own dual.
 We define the adjoint $g^{\ast}$ characterized by $h(gv,w)=h(v, g^{\ast}w)$.
 Then the fact that $g$ induces the identity on $B_i^{\perp}/A_i^{\perp}$ is the same as the fact that $g^{\ast}$ induces the identity on $A_i/B_i$.

In terms of matrices, we  consider the following $(1\times n_i)$-matrix:
$$
\left\{
  \begin{array}{l l}
v_i\cdot ({}^tg_{i, i}-\mathrm{Id}_{n_i})   &   \quad  \textit{if $L_i$ is \textit{free of type I}};\\
  \delta_{i-1}v_{i-1}\cdot {}^tg_{i, i-1}+\delta_{i+1}v_{i+1}\cdot {}^tg_{i, i+1}   & \quad  \textit{if $L_i$ is \textit{bound of type I}}.
    \end{array} \right.
$$

Here,
    \begin{itemize}
 \item[(a)]  $v_{i}=(0,\cdots, 0, 1, 0)$ of size $1\times n_{i}$ and $\mathrm{Id}_{n_i}$ is the identity matrix of size $n_i \times n_i$.
  \item[(b)] $v_{i-1}$ (resp. $v_{i+1}$)$=(0,\cdots, 0, 1)$  of size $1\times n_{i-1}$ (resp. $1\times n_{i+1}$).\\
 \end{itemize}
Then each entry of the above matrix lies in the ideal $(\pi)$.
If $L_i$ is \textit{of type II}, then $A_i=B_i$ and $B_i^{\perp}=A_i^{\perp}$ so that there is no contribution.\\
\end{enumerate}

 \subsection{Construction of \textit{\underline{M}}}\label{m}

   We define a functor from the category of commutative flat $A$-algebras to the category of monoids as follows. For any commutative flat $A$-algebra $R$, let
    $$\underline{M}(R) \subset \{m \in \mathrm{End}_{B\otimes_AR}(L \otimes_A R)\}$$
    to be the set of $m \in \mathrm{End}_{B\otimes_AR}(L \otimes_A R)$ satisfying the following conditions:
    \begin{itemize}
     \item[(1)]  $m$ stabilizes $A_i\otimes_A R,B_i\otimes_A R,W_i\otimes_A R,X_i\otimes_A R$ for all $i$
     and $Z_i\otimes_A R$ for all even integer $i$.
     \item[(2)]  $m$ induces the identity on $A_i\otimes_A R/ B_i\otimes_A R$ for all $i$.
  \item[(3)] $m$ induces the identity on $W_i\otimes_A R/(X_i\cap Z_i)\otimes_A R$ for all even integer $i$.
   \item[(4)] $m$ induces the identity on $B_i^{\perp}\otimes_A R/ A_i^{\perp}\otimes_A R$ for all odd integer $i$.
\end{itemize}

\begin{Rmk}\label{r31}
We give another description for the functor $\underline{M}$.
Let us define a functor from the category of commutative flat $A$-algebras to the category of rings as follows:

For any commutative flat $A$-algebra $R$, define
$$\underline{M}^{\prime}(R) \subset \{m \in \mathrm{End}_{B\otimes_AR}(L \otimes_A R) \}$$
to be the set of $m\in  \mathrm{End}_{B\otimes_AR}(L \otimes_A R)$ satisfying the following conditions:
\begin{itemize}
     \item[(1)]  $m$ stabilizes $A_i\otimes_A R,B_i\otimes_A R,W_i\otimes_A R,X_i\otimes_A R$ for all $i$
     and $Z_i\otimes_A R$ for all even integer $i$.
 \item[(2)] $m $ maps $A_i\otimes_A R$ into $B_i\otimes_A R$ for all $i$.
 \item[(3)] $m $ maps $W_i\otimes_A R$ into $(X_i\cap Z_i)\otimes_A R$  for all even integer $i$.
 \item[(4)] $m $ maps $B_i^{\perp}\otimes_A R$ into $A_i^{\perp}\otimes_A R$  for all odd integer $i$.
\end{itemize}

Then, by  Lemma 3.1 of \cite{C1}, $\underline{M}^{\prime}$ is  represented by a unique flat $A$-algebra $A(\underline{M'})$ which is a polynomial ring over $A$ of $2n^2$ variables.
    Moreover, it is easy to see that $\underline{M}'$ has the structure of a scheme of rings since
    $\underline{M}'(R)$ is  closed under addition and  multiplication.

    Then the functor $\underline{M}$ is equivalent to the functor $1+\underline{M}^{\prime}$
    as subfunctors of $\mathrm{Res}_{B/A}\mathrm{End}_B(L)$,
    where $(1+\underline{M}^{\prime})(R)=\{1+m : m \in \underline{M}^{\prime}(R) \}$
    for a  commutative flat $A$-algebra $R$.
    For a detailed explanation about this, we refer to Remark 3.1 of \cite{C2}.
   This fact induces that
   the functor $\underline{M}$ is also represented by a unique flat $A$-algebra $A[\underline{M}]$ which is a polynomial ring over $A$ of $2n^2$ variables.
    Moreover, it is easy to see that $\underline{M}$ has the structure of a scheme of monoids since $\underline{M}(R)$ is  closed under multiplication.
\end{Rmk}

    We can therefore now talk of $\underline{M}(R)$ for any (not necessarily flat) $A$-algebra $R$.
Before describing an element of $\underline{M}(R)$ for a non-flat $A$-algebra $R$,
we explicitly, in terms of matrices, describe an element of $\underline{M}(R)$ for a flat $A$-algebra $R$.
By using our chosen basis of $L$, each element of  $\underline{M}(R)$ is written as
$$m= \begin{pmatrix} \pi^{max\{0,j-i\}}m_{i,j} \end{pmatrix} \mathrm{~together~with~}z_i^{\ast}, m_{i,i}^{\ast}, m_{i,i}^{\ast\ast}.$$
    Here,
    \begin{enumerate}
    \item[(a)] When $i\neq j$, $m_{i,j}$  is an  $(n_i \times n_j)$-matrix with entries in $B\otimes_AR$.
\item[(b)]  When $i=j$, $m_{i,i}$  is an  $(n_i \times n_i)$-matrix with entries in $B\otimes_AR$ such that
\[
m_{i,i}=\left\{
  \begin{array}{l l}
  \begin{pmatrix} s_i&\pi y_i\\ \pi v_i&1+\pi z_i \end{pmatrix}    & \quad  \textit{if $i$ is even and $L_i$ is \textit{of type $I^o$}};\\
  \begin{pmatrix} s_i&r_i&\pi t_i\\ \pi y_i&1+\pi x_i&\pi z_i\\ v_i&u_i&1+\pi w_i \end{pmatrix}    & \quad  \textit{if $i$ is even and $L_i$ is \textit{of type $I^e$}};\\
\begin{pmatrix} s_i&\pi r_i&t_i\\  y_i&1+\pi x_i& u_i\\\pi  v_i&\pi z_i&1+\pi w_i \end{pmatrix}
& \quad  \textit{if $i$ is odd and $L_i$ is \textit{free of type $I$}};\\
  m_{i,i}   & \quad  \textit{otherwise}.
      \end{array} \right.
\]
Here, $s_i$  is an  $(n_i-1 \times n_i-1)$-matrix (resp. $(n_i-2 \times n_i-2)$-matrix)
if $i$ is even and $L_i$ is \textit{of type $I^o$} (resp. if $i$ is even and $L_i$ is \textit{of type $I^e$},
or if $i$ is odd and $L_i$ is \textit{free of type $I$})
and $y_i, v_i, z_i, r_i, t_i, y_i, x_i, u_i, w_i$ are matrices of suitable sizes.

\item[(c)]
Assume that $i$ is even and that $L_i$ is \textit{of type I}.  Then
$$z_i+\delta_{i-2}k_{i-2, i}+\delta_{i+2}k_{i+2, i}=\pi z_i^{\ast}$$
such that  $z_i^{\ast}\in B\otimes_AR$.   Here,
    \begin{itemize}
  \item[(i)] $z_i$ is an entry of $m_{i,i}$ as described in the above step (b).
\item[(ii)] $k_{i-2, i}$ (resp. $k_{i+2, i}$) is the $(n_{i-2}, n_i)^{th}$-entry (resp. $(n_{i+2}, n_i)^{th}$-entry) of the matrix $m_{i-2, i}$
(resp. $m_{i+2, i}$)
if $L_{i-2}$ (resp. $L_{i+2}$) is \textit{of type} $\textit{I}^o$.
\item[(iii)] $k_{i-2, i}$ (resp. $k_{i+2, i}$) is the $(n_{i-2}-1, n_i)^{th}$-entry (resp. $(n_{i+2}-1, n_i)^{th}$-entry) of the matrix $m_{i-2, i}$
(resp. $m_{i+2, i}$)
if $L_{i-2}$ (resp. $L_{i+2}$) is \textit{of type} $\textit{I}^e$.
\end{itemize}

\item[(d)] Assume that $i$ is odd and that $L_i$ is \textit{bound of type I}. Then
$$\delta_{i-1}v_{i-1}\cdot m_{i-1, i}+\delta_{i+1}v_{i+1}\cdot m_{i+1, i}=\pi m_{i,i}^{\ast}$$
such that  $m_{i,i}^{\ast} \in M_{1\times n_i}(B\otimes_AR)$.
Here,
    \begin{itemize}
   \item[(i)] $v_{i-1}=(0,\cdots, 0, 1)$ (resp. $v_{i-1}=(0,\cdots, 0, 1, 0)$) of size $1\times n_{i-1}$
if $L_{i-1}$ is \textit{of type} $\textit{I}^o$ (resp. \textit{of type} $\textit{I}^e$).
   \item[(ii)] $v_{i+1}=(0,\cdots, 0, 1)$ (resp. $v_{i+1}=(0,\cdots, 0, 1, 0)$) of size $1\times n_{i+1}$
if $L_{i+1}$ is \textit{of type} $\textit{I}^o$ (resp. \textit{of type} $\textit{I}^e$).
 \end{itemize}

 \item[(e)] Assume that $i$ is odd and that $L_i$ is \textit{bound of type I}. Then
 $$ \delta_{i-1}v_{i-1}\cdot {}^tm_{i, i-1}+\delta_{i+1}v_{i+1}\cdot {}^tm_{i, i+1} = \pi m_{i,i}^{\ast\ast}$$
 such that $ m_{i,i}^{\ast\ast} \in M_{1\times n_i}(B\otimes_AR)$.
Here,
 $v_{i-1}$ (resp. $v_{i+1}$)$=(0,\cdots, 0, 1)$  of size $1\times n_{i-1}$ (resp. $1\times n_{i+1}$).\\

\end{enumerate}

In conclusion, each element of $\underline{M}(R)$ for a flat $A$-algebra $R$ is written as the following matrix
$$m= \begin{pmatrix} \pi^{max\{0,j-i\}}m_{i,j} \end{pmatrix} \mathrm{~together~with~}z_i^{\ast}, m_{i,i}^{\ast}, m_{i,i}^{\ast\ast}.$$

\textit{  }\\

   However, for a general $R$, the above description for  $\underline{M}(R)$ will  no longer be true.
For such $R$, we use our chosen basis of $L$ to write each element of $\underline{M}(R)$ formally.
 We describe each element of $\underline{M}(R)$  as  formal matrices
$$m= \begin{pmatrix} \pi^{max\{0,j-i\}}m_{i,j} \end{pmatrix} \mathrm{~with~}z_i^{\ast}, m_{i,i}^{\ast}, m_{i,i}^{\ast\ast}$$
such that
    \begin{enumerate}
    \item[(a)] When $i\neq j$, $m_{i,j}$  is an  $(n_i \times n_j)$-matrix with entries in $B\otimes_AR$.
\item[(b)]  When $i=j$, $m_{i,i}$  is an  $(n_i \times n_i)$-formal matrix such that
\[
m_{i,i}=\left\{
  \begin{array}{l l}
  \begin{pmatrix} s_i&\pi y_i\\ \pi v_i&1+\pi z_i \end{pmatrix}    & \quad  \textit{if $i$ is even and $L_i$ is \textit{of type $I^o$}};\\
  \begin{pmatrix} s_i&r_i&\pi t_i\\ \pi y_i&1+\pi x_i&\pi z_i\\ v_i&u_i&1+\pi w_i \end{pmatrix}    & \quad  \textit{if $i$ is even and $L_i$ is \textit{of type $I^e$}};\\
\begin{pmatrix} s_i&\pi r_i&t_i\\  y_i&1+\pi x_i& u_i\\\pi  v_i&\pi z_i&1+\pi w_i \end{pmatrix}
& \quad  \textit{if $i$ is odd and $L_i$ is \textit{free of type $I$}};\\
  m_{i,i}   & \quad  \textit{otherwise}.
      \end{array} \right.
\]
Here, $s_i$  is an  $(n_i-1 \times n_i-1)$-matrix (resp. $(n_i-2 \times n_i-2)$-matrix) with entries in $B\otimes_AR$
if $i$ is even and $L_i$ is \textit{of type $I^o$} (resp. if $i$ is even and $L_i$ is \textit{of type $I^e$},
or if $i$ is odd and $L_i$ is \textit{free of type $I$})
and $y_i, v_i, z_i, r_i, t_i, y_i, x_i, u_i, w_i$ are matrices of suitable sizes with entries in $B\otimes_AR$.
Similarly, in other cases, i.e.
if $i$ is even and $L_i$ is of type $II$, or if $i$ is odd and $L_i$ is of type $II$ or \textit{bound of type $I$},
then $m_{i,i}$ is an  $(n_i \times n_i)$-matrix with entries in $B\otimes_AR$.

\item[(c)]
Assume that $i$ is even and that $L_i$ is \textit{of type I}.  Then
$$z_i+\delta_{i-2}k_{i-2, i}+\delta_{i+2}k_{i+2, i}=\pi z_i^{\ast}$$
such that  $z_i^{\ast}\in B\otimes_AR$.
This equation is considered in $B\otimes_AR$ and $\pi$ stands for $\pi\otimes 1\in B\otimes_AR$.
 Here,
    \begin{itemize}
  \item[(i)] $z_i$ is an entry of $m_{i,i}$ as described in the above step (b).
\item[(ii)] $k_{i-2, i}$ (resp. $k_{i+2, i}$) is the $(n_{i-2}, n_i)^{th}$-entry (resp. $(n_{i+2}, n_i)^{th}$-entry) of the matrix $m_{i-2, i}$
(resp. $m_{i+2, i}$)
if $L_{i-2}$ (resp. $L_{i+2}$) is \textit{of type} $\textit{I}^o$.
\item[(iii)] $k_{i-2, i}$ (resp. $k_{i+2, i}$) is the $(n_{i-2}-1, n_i)^{th}$-entry (resp. $(n_{i+2}-1, n_i)^{th}$-entry) of the matrix $m_{i-2, i}$
(resp. $m_{i+2, i}$)
if $L_{i-2}$ (resp. $L_{i+2}$) is \textit{of type} $\textit{I}^e$.
\end{itemize}

\item[(d)] Assume that $i$ is odd and that $L_i$ is \textit{bound of type I}. Then
$$\delta_{i-1}v_{i-1}\cdot m_{i-1, i}+\delta_{i+1}v_{i+1}\cdot m_{i+1, i}=\pi m_{i,i}^{\ast}$$
such that  $m_{i,i}^{\ast} \in M_{1\times n_i}(B\otimes_AR)$.
This equation is considered in $B\otimes_AR$ and $\pi$ stands for $\pi\otimes 1\in B\otimes_AR$.
Here,
    \begin{itemize}
   \item[(i)] $v_{i-1}=(0,\cdots, 0, 1)$ (resp. $v_{i-1}=(0,\cdots, 0, 1, 0)$) of size $1\times n_{i-1}$
if $L_{i-1}$ is \textit{of type} $\textit{I}^o$ (resp. \textit{of type} $\textit{I}^e$).
   \item[(ii)] $v_{i+1}=(0,\cdots, 0, 1)$ (resp. $v_{i+1}=(0,\cdots, 0, 1, 0)$) of size $1\times n_{i+1}$
if $L_{i+1}$ is \textit{of type} $\textit{I}^o$ (resp. \textit{of type} $\textit{I}^e$).
 \end{itemize}

 \item[(e)] Assume that $i$ is odd and that $L_i$ is \textit{bound of type I}.
  Then
 $$ \delta_{i-1}v_{i-1}\cdot {}^tm_{i, i-1}+\delta_{i+1}v_{i+1}\cdot {}^tm_{i, i+1} = \pi m_{i,i}^{\ast\ast}$$
 such that $ m_{i,i}^{\ast\ast} \in M_{1\times n_i}(B\otimes_AR)$.
This equation is considered in $B\otimes_AR$ and $\pi$ stands for $\pi\otimes 1\in B\otimes_AR$.
Here,
 $v_{i-1}$ (resp. $v_{i+1}$)$=(0,\cdots, 0, 1)$  of size $1\times n_{i-1}$ (resp. $1\times n_{i+1}$).\\

\end{enumerate}

To simplify notation, each element
$$((m_{i,j})_{i\neq j}, (m_{i,i})_{\textit{$L_i$ of type II, or bound of type I with i odd}},
(s_i, y_i, v_i, z_i)_{\textit{$L_i$ of type $I^o$ with i even}},$$
$$(s_i, v_i, z_i, r_i, t_i, y_i, x_i, u_i, w_i)_{\textit{$L_i$ of type $I^e$ with i even, or free of type I with i odd}},$$
$$(z_i^{\ast})_{\textit{$L_i$ of type I with i even}}, (m_{i,i}^{\ast}, m_{i,i}^{\ast\ast})_{\textit{$L_i$ bound of type I with i odd}} )$$ of  $\underline{M}(R)$, for a $\kappa$-algebra $R$, is denoted by $(m_{i,j}, s_i \cdots w_i)$.\\

In the next section, we need a description of an element of $\underline{M}(R)$ and its multiplication for a $\kappa$-algebra $R$.
In order to prepare for this, we describe the multiplication explicitly only for a $\kappa$-algebra $R$.
   To multiply $(m_{i,j}, s_i\cdots w_i)$ and $(m_{i,j}', s_i'\cdots w_i')$,
    we form the matrices $m=\begin{pmatrix} \pi^{max\{0,j-i\}}m_{i,j} \end{pmatrix}$ and $m'=\begin{pmatrix} \pi^{max\{0,j-i\}}m_{i,j}' \end{pmatrix}$
    with $s_i\cdots w_i$ and $s_i'\cdots w_i'$
     and write the formal matrix product  $\begin{pmatrix} \pi^{max\{0,j-i\}}m_{i,j} \end{pmatrix}\cdot \begin{pmatrix} \pi^{max\{0,j-i\}}m_{i,j}' \end{pmatrix}=\begin{pmatrix} \pi^{max\{0,j-i\}}\tilde{m}_{i,j}'' \end{pmatrix}$
     with
    \[
\tilde{m}_{i,i}''=\left\{
  \begin{array}{l l}
  \begin{pmatrix} \tilde{s}_i''&\pi \tilde{y}_i''\\ \pi \tilde{v}_i''&1+\pi \tilde{z}_i'' \end{pmatrix}    & \quad  \textit{if $i$ is even and $L_i$ is \textit{of type $I^o$}};\\
  \begin{pmatrix} \tilde{s}_i''&\tilde{r}_i''&\pi \tilde{t}_i''\\ \pi \tilde{y}_i''&1+\pi \tilde{x}_i''&\pi \tilde{z}_i''\\ \tilde{v}_i''&\tilde{u}_i''&1+\pi \tilde{w}_i'' \end{pmatrix}    & \quad  \textit{if $i$ is even and $L_i$ is \textit{of type $I^e$}};\\
  \begin{pmatrix} \tilde{s}_i''&\pi \tilde{r}_i''& \tilde{t}_i''\\ \tilde{y}_i''&1+\pi \tilde{x}_i''& \tilde{u}_i''\\
  \pi \tilde{v}_i''&\pi \tilde{z}_i''&1+\pi \tilde{w}_i'' \end{pmatrix}    & \quad  \textit{if $i$ is odd and $L_i$ is \textit{free of type $I$}}.
      \end{array} \right.
\]

Let
    \[
\left\{
  \begin{array}{l l}
  (\tilde{z}_i^{\ast})''=1/\pi\left(\tilde{z}_i''+\delta_{i-2}\tilde{k}_{i-2, i}''+\delta_{i+2}\tilde{k}_{i+2, i}'' \right)    & \quad  \textit{if $i$ is even and $L_i$ is \textit{of type I}};\\
  (\tilde{m}_{i,i}^{\ast})''=1/\pi\left(\delta_{i-1}v_{i-1}\cdot \tilde{m}_{i-1, i}''+\delta_{i+1}v_{i+1}\cdot \tilde{m}_{i+1, i}''\right)
  & \quad  \textit{if $i$ is odd and $L_i$ is \textit{bound of type I}};\\
  (\tilde{m}_{i,i}^{\ast\ast})''=1/\pi\left(\delta_{i-1}v_{i-1}\cdot {}^t\tilde{m}_{i, i-1}''+\delta_{i+1}v_{i+1}\cdot {}^t\tilde{m}_{i, i+1}''\right)
  & \quad  \textit{if $i$ is odd and $L_i$ is \textit{bound of type I}}.
      \end{array} \right.
\]
Here, notations ($\tilde{z}_i''$, $\tilde{k}_{i-2, i}''$, and so on) in the right hand sides are
as explained above in the description of an element of $\underline{M}(R)$  in terms of $\tilde{m}_{i,j}''$.
These three equations should be interpreted as follows.
We formally compute the right hand sides and then they are of the form $1/\pi(\pi X)$.
Then the left hand sides are the same as $X$ in the right hand sides.
Such $X$'s are formal polynomials about $(m_{i,j}, s_i\cdots w_i)$ and $(m_{i,j}', s_i'\cdots w_i')$.
The precise description will be given below.


Let $(m_{i,j}'', s_i''\cdots w_i'')$ be formed by letting $\pi^2$ be zero
in each entry of $(\tilde{m}_{i,j}'', \tilde{s}_i''\cdots \tilde{w}_i'')$.
Then each matrix of $(m_{i,j}'', s_i''\cdots w_i'')$ has entries in $B\otimes_AR$ and so
$(m_{i,j}'', s_i''\cdots w_i'')$ is an element of $\underline{M}(R)$
and is the product  of $(m_{i,j}, s_i\cdots w_i)$ and $(m_{i,j}', s_i'\cdots w_i')$.
More precisely,
\begin{enumerate}
\item If $i\neq j$,
    if $i= j$ and $L_i$ is \textit{of type $II$}, or if $L_i$ is \textit{bound of type $I$} with odd $i$,
    $$m_{i,j}''=\sum_{k=1}^{N}\pi^{(max\{0, k-i\}+max\{0, j-k\}-max\{0, j-i\})}m_{i, k}m_{k, j}';$$

\item For $L_i$ \textit{of type $I^o$} with $i$ even, we write $m_{i, i-1}m_{i-1, i}'+m_{i, i+1}m_{i+1, i}'=\begin{pmatrix} a_i''&b_i''\\ c_i''&d_i'' \end{pmatrix}$
and $m_{i, i-2}m_{i-2, i}'+m_{i, i+2}m_{i+2, i}'=\begin{pmatrix} \tilde{a}_i''&\tilde{b}_i''\\ \tilde{c}_i''&\tilde{d}_i'' \end{pmatrix}$
where
$a_i''$ and $\tilde{a}_i''$ are  $(n_i-1) \times (n_i-1)$-matrices, etc.
Then
\[\left\{
  \begin{array}{l}
  s_i''=s_is_i'+\pi a_i'';\\
  y_i''=s_iy_i'+y_i+b_i''+\pi (y_iz_i'+ \tilde{b}_i'');\\
  v_i''=v_is_i'+v_i'+c_i''+\pi (z_iv_i'+ \tilde{c}_i'');\\
  z_i''=z_i+z_i'+d_i''+\pi (z_iz_i'+ v_iy_i'+ \tilde{d}_i'').
      \end{array} \right.
\]
\item When $L_i$ is \textit{of type $I^e$}  with $i$ even, we write $m_{i, i-1}m_{i-1, i}'+m_{i, i+1}m_{i+1, i}'=
\begin{pmatrix} a_i''&b_i''&c_i''\\ d_i''&e_i''&f_i''\\ g_i''&h_i''&k_i'' \end{pmatrix}$
and $m_{i, i-2}m_{i-2, i}'+m_{i, i+2}m_{i+2, i}'=
\begin{pmatrix} \tilde{a}_i''&\tilde{b}_i''&\tilde{c}_i''\\ \tilde{d}_i''&\tilde{e}_i''&\tilde{f}_i''\\ \tilde{g}_i''&\tilde{h}_i''&\tilde{k}_i'' \end{pmatrix}$
where
$a_i''$ and $\tilde{a}_i''$ are  $(n_i-2) \times (n_i-2)$-matrices, etc.
Then
\[\left\{
  \begin{array}{l}
  s_i''=s_is_i'+\pi (r_iy_i'+t_iv_i'+a_i'');\\
  r_i''=s_ir_i'+r_i+\pi (r_ix_i'+t_iu_i'+b_i'')   ;\\
  t_i''=s_it_i'+r_iz_i'+t_i+c_i''+\pi (t_iw_i'+\tilde{c}_i'');\\
y_i''=y_is_i'+y_i'+z_iv_i'+d_i''+\pi (x_iy_i'+\tilde{d}_i'');\\
x_i''=x_i+x_i'+z_iu_i'+y_ir_i'+e_i''+\pi (x_ix_i'+\tilde{e}_i'');\\
z_i''=z_i+z_i'+f_i''+\pi (y_it_i'+x_iz_i'+z_iw_i'+\tilde{f}_i'');\\
v_i''=v_is_i'+v_i'+\pi (u_iy_i'+w_iv_i'+g_i'');\\
u_i''=u_i+u_i'+v_ir_i'+\pi(u_ix_i'+w_iu_i'+h_i'');\\
w_i''=w_i+w_i'+v_it_i'+u_iz_i'+k_i''+\pi (w_iw_i'+\tilde{k}_i'').
      \end{array} \right.
\]
\item When $L_i$ is \textit{free of type $I$}  with $i$ odd, we write $m_{i, i-1}m_{i-1, i}'+m_{i, i+1}m_{i+1, i}'=
\begin{pmatrix} a_i''&b_i''&c_i''\\ d_i''&e_i''&f_i''\\ g_i''&h_i''&k_i'' \end{pmatrix}$
and $m_{i, i-2}m_{i-2, i}'+m_{i, i+2}m_{i+2, i}'=
\begin{pmatrix} \tilde{a}_i''&\tilde{b}_i''&\tilde{c}_i''\\ \tilde{d}_i''&\tilde{e}_i''&\tilde{f}_i''\\ \tilde{g}_i''&\tilde{h}_i''&\tilde{k}_i'' \end{pmatrix}$
where
$a_i''$ and $\tilde{a}_i''$ are  $(n_i-2) \times (n_i-2)$-matrices, etc.
Then
\[\left\{
  \begin{array}{l}
  s_i''=s_is_i'+\pi (r_iy_i'+t_iv_i'+a_i'');\\
  r_i''=s_ir_i'+r_i+t_iv_i'+b_i''+     \pi (r_ix_i'+\tilde{b}_i'')   ;\\
  t_i''=s_it_i'   +t_i+\pi (r_iu_i'+t_iw_i'+c_i'');\\
 y_i''=y_is_i'+y_i' +\pi (x_iy_i'+u_iv_i'+d_i'');\\
 x_i''=x_i+x_i'+y_ir_i'+u_iz_i'+e_i''+\pi (x_ix_i'+\tilde{e}_i'');\\
 u_i''=u_i+u_i'+y_it_i'+\pi(u_iw_i'+x_iu_i'+f_i'');\\
 v_i''=v_is_i'+z_iy_i'+v_i'+g_i''+\pi (w_iv_i'+\tilde{g}_i'');\\
 z_i''=z_i+z_i'+h_i''+\pi (v_ir_i'+   z_ix_i'+w_iz_i'+\tilde{h}_i'');\\
 w_i''=w_i+w_i'+v_it_i'+z_iu_i'+k_i''+\pi (w_iw_i'+\tilde{k}_i'').
      \end{array} \right.
\]

\item Assume that $i$ is even and $L_i$ is \textit{of type I}.
Let  $\tilde{k}_{i-2, i}''$ (resp. $\tilde{k}_{i+2, i}''$) be the $(n_{i-2}, n_i)^{th}$-entry (resp. $(n_{i+2}, n_i)^{th}$-entry) of the formal matrix $\tilde{m}_{i-2, i}''$
(resp. $\tilde{m}_{i+2, i}''$)
if $L_{i-2}$ (resp. $L_{i+2}$) is \textit{of type} $\textit{I}^o$ and
let $\tilde{k}_{i-2, i}''$ (resp. $\tilde{k}_{i+2, i}''$) be the $(n_{i-2}-1, n_i)^{th}$-entry (resp. $(n_{i+2}-1, n_i)^{th}$-entry) of the formal matrix $\tilde{m}_{i-2, i}''$
(resp. $\tilde{m}_{i+2, i}''$)
if $L_{i-2}$ (resp. $L_{i+2}$) is \textit{of type} $\textit{I}^e$.
Then the formal sum
$$\tilde{z}_i''+\delta_{i-2}\tilde{k}_{i-2, i}''+\delta_{i+2}\tilde{k}_{i+2, i}''$$
 equals
$$z_i+z_i'+(m_{i, i-1}m_{i-1, i}')^{\dag}+(m_{i, i+1}m_{i+1, i}'))^{\dag}+\delta_{i-2}\cdot(k_{i-2, i}+k_{i-2, i}'+(m_{i-2, i-1}m_{i-1, i}')^{\dag})+$$
$$\delta_{i+2}\cdot(k_{i+2, i}+k_{i+2, i}'+(m_{i+2, i+1}m_{i+1, i}')^{\dag})+\pi \tilde{z}_i''^{\ddag}$$
for some formal expansion $\tilde{z}_i''^{\ddag}$.
Here, if $m$ is a formal matrix of size $a\times b$, then $m^{\dag}$ is the $(a, b)^{th}$-entry of $m$ (resp. $(a-1, b)^{th}$-entry of $m$) if
$L_i$ is \textit{of type} $\textit{I}^o$ (resp. $\textit{I}^e$).

Note  that
\[\left \{  \begin{array}{l}
  z_i+\delta_{i-2}\cdot k_{i-2, i}+ \delta_{i+2}\cdot k_{i+2, i}=\pi z_i^{\ast};\\
  z_i'+\delta_{i-2}\cdot k_{i-2, i}'+ \delta_{i+2}\cdot k_{i+2, i}'=\pi (z_i^{\ast})';\\
  \textit{$(m_{i, i-1}m_{i-1, i}')^{\dag}+\delta_{i-2}\cdot((m_{i-2, i-1}m_{i-1, i}')^{\dag})=\pi (m_{i-1, i-1}^{\ast}\cdot m_{i-1, i}')^{\dag}$};\\
  \textit{$(m_{i, i+1}m_{i+1, i}')^{\dag}+\delta_{i+2}\cdot((m_{i+2, i+1}m_{i+1, i}')^{\dag})=\pi (m_{i+1, i+1}^{\ast}\cdot m_{i+1, i}')^{\dag}$}.
    \end{array} \right.\]
Therefore,
$$\tilde{z}_i''+\delta_{i-2}\tilde{k}_{i-2, i}''+\delta_{i+2}\tilde{k}_{i+2, i}''=
\pi\left(z_i^{\ast}+(z_i^{\ast})'+(m_{i-1, i-1}^{\ast}\cdot m_{i-1, i}')^{\dag}+m_{i+1, i+1}^{\ast}\cdot m_{i+1, i}')^{\dag}+\tilde{z}_i''^{\ddag}\right)$$
for some formal expansion $\tilde{z}_i''^{\ddag}$.
Then
$$(z_i^{\ast})''=z_i^{\ast}+(z_i^{\ast})'+(m_{i-1, i-1}^{\ast}\cdot m_{i-1, i}')^{\dag}+m_{i+1, i+1}^{\ast}\cdot m_{i+1, i}')^{\dag}+\tilde{z}_i''^{\ddag}$$
as an equation in $B\otimes_AR$.\\

\item   Assume that $i$ is odd and $L_i$ is \textit{bound of type I}.
Then the following formal sum
$$\delta_{i-1}v_{i-1}\cdot \tilde{m}_{i-1, i}''+\delta_{i+1}v_{i+1}\cdot \tilde{m}_{i+1, i}''$$
equals
$$\delta_{i-1}v_{i-1}\cdot (m_{i-1, i}m_{i,i}'+m_{i-1, i-1}m_{i-1,i}')+\delta_{i+1}v_{i+1}\cdot (m_{i+1, i}m_{i,i}'+m_{i+1, i+1}m_{i+1,i}')+\pi \tilde{z}_i''^{\dag}=$$
$$\left(\delta_{i-1}v_{i-1}\cdot (m_{i-1, i}m_{i,i}')+\delta_{i+1}v_{i+1}\cdot (m_{i+1, i}m_{i,i}')\right)+$$
$$\left(\delta_{i-1}v_{i-1}\cdot (m_{i-1, i-1}m_{i-1,i}')+\delta_{i+1}v_{i+1}\cdot (m_{i+1, i+1}m_{i+1,i}')\right)
+\pi \tilde{z}_i''^{\dag}   $$
for some formal expansion $\tilde{z}_i''^{\dag}$.
Here, $\delta_{j}v_{j}$ is as explained in Step (d) of the above description  of an element of $\underline{M}(R)$.
Note that
\[\left \{  \begin{array}{l}
\delta_{i-1}v_{i-1}\cdot (m_{i-1, i}m_{i,i}')+\delta_{i+1}v_{i+1}\cdot (m_{i+1, i}m_{i,i}')=\pi m_{i,i}^{\ast}\cdot m_{i,i}';\\
\delta_{i-1}v_{i-1}\cdot (m_{i-1, i-1}m_{i-1,i}')+\delta_{i+1}v_{i+1}\cdot (m_{i+1, i+1}m_{i+1,i}')
=\pi (m_{i,i}^{\ast})'+\pi \tilde{z}_i''^{\dag\dag}
    \end{array} \right.\]
for some formal expansion $\tilde{z}_i''^{\dag\dag}$.
Therefore,
$$\delta_{i-1}v_{i-1}\cdot \tilde{m}_{i-1, i}''+\delta_{i+1}v_{i+1}\cdot \tilde{m}_{i+1, i}''=
\pi\left(m_{i,i}^{\ast}\cdot m_{i,i}'+(m_{i,i}^{\ast})'+ \tilde{z}_i''^{\dag\dag}+\tilde{z}_i''^{\dag}  \right).$$
Then
$$(m_{i,i}^{\ast})''=m_{i,i}^{\ast}\cdot m_{i,i}'+(m_{i,i}^{\ast})'+ \tilde{z}_i''^{\dag\dag}+\tilde{z}_i''^{\dag}$$
as an equation in $B\otimes_AR$.\\

\item   Assume that $i$ is odd and $L_i$ is \textit{bound of type I}.
Then the following formal sum
$$\delta_{i-1}v_{i-1}\cdot {}^t\tilde{m}_{i, i-1}''+\delta_{i+1}v_{i+1}\cdot {}^t\tilde{m}_{i, i+1}''$$
equals
$$\delta_{i-1}v_{i-1}\cdot ({}^tm_{i,i-1}'\cdot {}^tm_{i, i} +{}^tm_{i-1,i-1}'\cdot {}^tm_{i, i-1})+
\delta_{i+1}v_{i+1}\cdot ({}^tm_{i,i+1}'\cdot {}^tm_{i, i}+ {}^tm_{i+1,i+1}'\cdot {}^tm_{i, i+1})+\pi \tilde{z}_i''^{\dag}=$$
$$\left(\delta_{i-1}v_{i-1}\cdot ({}^tm_{i,i-1}'\cdot {}^tm_{i, i})+\delta_{i+1}v_{i+1}\cdot ({}^tm_{i,i+1}'\cdot {}^tm_{i, i})\right)+$$
$$\left(\delta_{i-1}v_{i-1}\cdot ({}^tm_{i-1,i-1}'\cdot {}^tm_{i, i-1})+\delta_{i+1}v_{i+1}\cdot ({}^tm_{i+1,i+1}'\cdot {}^tm_{i, i+1})\right)
+\pi \tilde{z}_i''^{\dag}   $$
for some formal expansion $\tilde{z}_i''^{\dag}$.
Here, $\delta_{j}v_{j}$ is as explained in Step (e) of the above description  of an element of $\underline{M}(R)$.
Note that
\[\left \{  \begin{array}{l}
\delta_{i-1}v_{i-1}\cdot ({}^tm_{i,i-1}'\cdot {}^tm_{i, i})+\delta_{i+1}v_{i+1}\cdot ({}^tm_{i,i+1}'\cdot {}^tm_{i, i})
=\pi (m_{i,i}^{\ast\ast})'\cdot {}^tm_{i, i};\\
\delta_{i-1}v_{i-1}\cdot ({}^tm_{i-1,i-1}'\cdot {}^tm_{i, i-1})+\delta_{i+1}v_{i+1}\cdot ({}^tm_{i+1,i+1}'\cdot {}^tm_{i, i+1})
=\pi m_{i,i}^{\ast\ast}+\pi \tilde{z}_i''^{\dag\dag}
    \end{array} \right.\]
for some formal expansion $\tilde{z}_i''^{\dag\dag}$.
Therefore,
$$\delta_{i-1}v_{i-1}\cdot {}^t\tilde{m}_{i, i-1}''+\delta_{i+1}v_{i+1}\cdot {}^t\tilde{m}_{i, i+1}''=
\pi\left((m_{i,i}^{\ast\ast})'\cdot {}^tm_{i, i}+m_{i,i}^{\ast\ast}+ \tilde{z}_i''^{\dag\dag}+ \tilde{z}_i''^{\dag} \right).$$
Then
$$(m_{i,i}^{\ast\ast})''=(m_{i,i}^{\ast\ast})'\cdot {}^tm_{i, i}+m_{i,i}^{\ast\ast}+ \tilde{z}_i''^{\dag\dag}+ \tilde{z}_i''^{\dag} $$
as an equation in $B\otimes_AR$.
\end{enumerate}

\textit{  }\\

\begin{Rmk}\label{r32}
 We define a functor $\underline{M}^{\ast}$ from the category of commutative $A$-algebras to the category of groups as follows.
For a commutative $A$-algebra $R$,
 set
 $$\underline{M}^{\ast}(R)=\{ m \in \underline{M}(R) :  \textit{there exists $m'\in \underline{M}(R)$ such that $m\cdot m'=m'\cdot m=1$}\}.$$
Then $\underline{M}^{\ast}$ is an open subscheme of $\underline{M}$
with generic fiber $M^{\ast}=\mathrm{Res}_{E/F}\mathrm{GL}_E(V)$,
and that $\underline{M}^{\ast}$ is smooth over $A$. Moreover, $\underline{M}^{\ast}$ is a group scheme since $\underline{M}$ is a scheme in monoids.
The proof of this is similar to that of Remark 3.2 in \cite{C2} and so we skip it.
\end{Rmk}

 \subsection{Construction of \textit{\underline{H}}}\label{h}

 Recall that the pair $(L, h)$ is fixed throughout this paper
  and the lattices $A_i$, $B_i$, $W_i$, $X_i$, $Z_i$ only depend on the hermitian pair $(L, h)$.
  For any flat $A$-algebra $R$, let $\underline{H}(R)$ be the set of hermitian forms $f$ on $L\otimes_{A}R$ (with values in $B\otimes_AR$)
  such that  $f$ satisfies the following conditions:
  \begin{enumerate}
 \item[(a)]  $f(L\otimes_{A}R,A_i\otimes_{A}R) \subset \pi^iB\otimes_AR$ for all $i$.
 \item[(b)]
 $\xi^{-m}f(a_i,a_i)$ mod 2 = $\xi^{-m}h(a_i, a_i)$ mod 2, where $a_i \in A_i \otimes_{A}R$, and $i=2m$ or $i=2m-1$.
 \item[(c)]
 $\pi^{-i}f(a_i,w_i) = \pi^{-i}h(a_i, w_i)$ mod $\pi$, where $a_i \in A_i\otimes_{A}R$ and $w_i \in W_i\otimes_{A}R$, and $i=2m$.
 \item[(d)]
  $\xi^{-m} f(w_i,w_i)-\xi^{-m}h(w_i,w_i) \in (4)$,   where $w_i \in W_i\otimes_{A}R$, and $i=2m$.
\item[(e)] $f(B_i, B_i^{\perp})\in B\otimes_{A}R$ and $f(a_i, b_i^{\prime})-h(a_i, b_i^{\prime})\in B\otimes_{A}R$,
  where $a_i\in A_i\otimes_{A}R$ and $b_i^{\prime}\in B_i^{\perp}\otimes_{A}R$, and $i$ is odd.
\end{enumerate}
\textit{ }

We interpret the above conditions in terms of matrices.
The matrix forms are taken with respect to the basis of $L$ fixed in Theorem \ref{210} and Remark \ref{r211}.(a).
A matrix form of the given hermitian form $h$ is described in Remark \ref{r33}.(1) below.
We use $\sigma$ to mean the automorphism of $B\otimes_AR$ given by $b\otimes r \mapsto \sigma(b)\otimes r$.
For a flat $A$-algebra $R$, $\underline{H}(R)$ is
the set of hermitian matrices $$\begin{pmatrix}\pi^{max\{i,j\}}f_{i,j}\end{pmatrix}$$ of size $n\times n$ satisfying the following:
 \begin{enumerate}
\item[(1)]  $f_{i,j}$ is an $(n_i\times n_j)$-matrix with entries in $B\otimes_AR$.
\item[(2)] If $i$ is even and $L_i$ is \textit{of type} $\textit{I}^o$,
then $\pi^if_{i,i}$ is of the form $$\xi^{i/2}\begin{pmatrix} a_i&\pi b_i\\ \sigma(\pi \cdot {}^tb_i) &1+2\gamma_i +4c_i \end{pmatrix}.$$
 Here, the diagonal entries of $a_i$ are divisible by $2$, where $a_i$ is an $(n_i-1) \times (n_i-1)$-matrix with entries
 in $B\otimes_AR$, etc. and $\gamma_i (\in A)$ is
 as chosen in Remark \ref{r33}.(1) below.
\item[(3)]  If $i$ is even and $L_i$ is \textit{of type} $\textit{I}^e$, then $\pi^i f_{i,i}$ is of the form
  $$\xi^{i/2}\begin{pmatrix} a_i&b_i&\pi e_i\\ \sigma({}^tb_i) &1+2f_i&1+\pi d_i \\ \sigma(\pi \cdot {}^te_i) &\sigma(1+\pi d_i) &2\gamma_i+4c_i \end{pmatrix}.$$
 Here,  the diagonal entries of $a_i$  are divisible by $2$, where $a_i$ is an $(n_i-2) \times (n_i-2)$-matrix with entries
 in $B\otimes_AR$, etc. and $\gamma_i (\in A)$ is
 as chosen in Remark \ref{r33}.(1) below.
\item[(4)]  Assume that $i$ is even and that $L_i$ is \textit{of type} $\textit{II}$.
The diagonal entries of $f_{i,i}$   are divisible by $2$.
\item[(5)] Assume that $i$ is odd.
The diagonal entries of $\pi f_{i,i}-\pi h_i$  are  are divisible by $4$.
\item[(6)] Assume that $i$ is odd and that $L_i$ is \textit{of type I}.
If $L_i$ is \textit{free of type I}, then
$\pi^i f_{i,i}$ is of the form
$$\xi^{(i-1)/2}\cdot \pi\begin{pmatrix} a_i&\pi b_i& e_i\\ -\sigma(\pi \cdot {}^tb_i) &\pi^3f_i&1+\pi d_i \\
-\sigma({}^te_i) &-\sigma(1+\pi d_i) &\pi+\pi^3c_i \end{pmatrix}.$$
  Here,  the diagonal entries of $a_i$ are  are divisible by $\pi^3$, where $a_i$ is an $(n_i-2) \times (n_i-2)$-matrix with entries
 in $B\otimes_AR$, etc.

 If $L_i$ is \textit{bound of type I}, then each entry of
 $$ \delta_{i-1}(0,\cdots, 0, 1)\cdot f_{i-1,i}+\delta_{i+1}(0,\cdots, 0, 1)\cdot f_{i+1,i} $$
lies in the ideal $(\pi)$.
\item[(7)] Since $\begin{pmatrix}\pi^{max\{i,j\}}f_{i,j}\end{pmatrix}$ is a hermitian matrix, its diagonal entries are fixed by the nontrivial Galois action over $E/F$ and hence belong to $R$. \\
\end{enumerate}

The functor $\underline{H}$ is represented by a flat $A$-scheme
  which is isomorphic to an affine space of dimension $n^2$.
  The proof of this is similar to that in Section 3C of \cite{C2} and so we skip it.

    Now suppose that $R$ is any (not necessarily flat) $A$-algebra.
We again use $\sigma$ to mean the automorphism of $B\otimes_AR$ given by $b\otimes r \mapsto \sigma(b)\otimes r$.
Note that  $\sigma(\pi)=-\pi$.
By choosing a $B$-basis of  $L$ as explained in Theorem \ref{210} and Remark \ref{r211}.(a),
we describe each element of $\underline{H}(R)$ formally as a  matrix
    $\begin{pmatrix}\pi^{max\{i,j\}}f_{i,j}\end{pmatrix}$ with  matrices $f_{i,i}^{\ast}$ satisfying the following:
    \begin{enumerate}
\item When $i\neq j$, $f_{i,j}$  is an  $(n_i \times n_j)$-matrix with entries in $B\otimes_AR$ and  $(-1)^{max\{i,j\}}\sigma({}^tf_{i,j})=f_{j,i}$.
\item   Assume that $i=j$ is even. Then
 \[
\pi^if_{i,i}=\left\{
  \begin{array}{l l}
  \xi^{i/2}\begin{pmatrix} a_i&\pi b_i\\ \sigma(\pi\cdot {}^t b_i) &1+2\gamma_i +4c_i \end{pmatrix}    & \quad  \textit{if  $L_i$ is \textit{of type $I^o$}};\\
  \xi^{i/2}\begin{pmatrix} a_i&b_i&\pi e_i\\ \sigma({}^tb_i) &1+2f_i&1+\pi d_i \\ \sigma(\pi \cdot {}^te_i) &\sigma(1+\pi d_i) &2\gamma_i+4c_i \end{pmatrix}    & \quad  \textit{if  $L_i$ is \textit{of type $I^e$}};\\
  \xi^{i/2}a_i  & \quad  \textit{if  $L_i$ is \textit{of type $II$}}.
      \end{array} \right.
\]
Here, $a_i$  is a formal  $(n_i-1 \times n_i-1)$-matrix (resp. $(n_i-2 \times n_i-2)$-matrix or $(n_i \times n_i)$-matrix)
when $L_i$ is \textit{of type $I^o$} (resp. \textit{of type $I^e$} or \textit{of type $II$}).
Non-diagonal entries of $a_i$ are  in $B\otimes_AR$ and
the $j$-th diagonal entry of $a_i$ is of the form $2x_i^j$ with $x_i^j \in R$. In addition, for non-digonal entries of $a_i$, we have the relation  $\sigma({}^ta_i)=a_i$.
And $b_i, d_i, e_i$ are matrices of suitable sizes with entries in $B\otimes_AR$ and $c_i, f_i$ are elements in $R$.

\item   Assume that $i=j$ is odd. Then
 \[
\pi^i f_{i,i}=\left\{
  \begin{array}{l l}
  \xi^{(i-1)/2}\cdot \pi\begin{pmatrix} a_i&\pi b_i& e_i\\ -\sigma(\pi \cdot {}^tb_i) &\pi^3f_i&1+\pi d_i \\
-\sigma({}^te_i) &-\sigma(1+\pi d_i) &\pi+\pi^3c_i \end{pmatrix}    & \quad  \textit{if  $L_i$ is \textit{free of type $I$}};\\
  \xi^{(i-1)/2}\cdot \pi a_i  & \quad  \textit{otherwise}.
      \end{array} \right.
\]
Here, $a_i$  is a formal  $(n_i-2 \times n_i-2)$-matrix (resp. $(n_i \times n_i)$-matrix) if $L_i$ is \textit{free of type I} (resp. otherwise).
Non-diagonal entries of $a_i$ are in $B\otimes_AR$
and the $j$-th diagonal entry of $a_i$ is of the form $\pi^3 x_i^j$ with $x_i^j \in R$.
In addition, for non-digonal entries of $a_i$, we have the relation  $\sigma({}^ta_i)=-a_i$.
And $b_i, d_i, e_i$ are matrices of suitable sizes with entries in $B\otimes_AR$ and $c_i, f_i$ are elements in $R$.

\item Assume that $i$ is odd and that $L_i$ is \textit{bound of type I}.
Then $$ \delta_{i-1}(0,\cdots, 0, 1)\cdot f_{i-1,i}+\delta_{i+1}(0,\cdots, 0, 1)\cdot f_{i+1,i} = \pi f_{i, i}^{\ast}$$
for a matrix $f_{i, i}^{\ast}$ of size $(1 \times n_i)$ with entries in $B\otimes_AR$.
\end{enumerate}
\textit{   }

To simplify notation, each element
$$((f_{i,j})_{i< j}, (a_i, x_i^j)_{\textit{$L_i$ of type II}}, (a_i, x_i^j, b_i, c_i)_{\textit{$L_i$ of type $I^o$}},
(a_i, x_i^j, b_i, c_i, d_i, e_i, f_i)_{\textit{$L_i$ of type $I^e$}},$$
$$(a_i, x_i^j, b_i, c_i, d_i, e_i, f_i)_{\textit{$L_i$ free of type $I$ with $i$ odd}}, 
(a_i, x_i^j, f_{i,i}^{\ast})_{\textit{$L_i$ bound of type $I$ with $i$ odd}})   $$
 of  $\underline{H}(R)$
is denoted by $(f_{i,j}, a_i \cdots f_i)$.

\begin{Rmk}\label{r33}
\begin{enumerate}\item
Recall that $\delta$ is a unit element in $A$ such that $\delta\equiv 1 \mathrm{~mod~}2$ and $\pi=\sqrt{2\delta}$.
Note that the given hermitian form $h$ is an element of $\underline{H}(A)$.
We represent the given hermitian form $h$ by a hermitian matrix $\begin{pmatrix} \pi^{i}\cdot h_i\end{pmatrix}$
whose $(i,i)$-block is $\pi^i\cdot h_i$ for all $i$, and all of whose remaining blocks are $0$.
Then:
\begin{enumerate}
\item If $i$ is even and $L_i$ is \textit{of type} $\textit{I}^o$,
then $\pi^i\cdot h_i$ has the following form (with $\gamma_i\in A$):
$$\xi^{i/2}\begin{pmatrix} \begin{pmatrix} 0&1\\1&0\end{pmatrix}& & & \\ &\ddots & & \\ & &\begin{pmatrix} 0&1\\1&0\end{pmatrix}& \\ & & & 1+2\gamma_i \end{pmatrix}.$$

\item  If $i$ is even and $L_i$ is \textit{of type} $\textit{I}^e$,
then $\pi^i\cdot h_i$ has the following form (with $\gamma_i\in A$):
$$\xi^{i/2}\begin{pmatrix} \begin{pmatrix} 0&1\\1&0\end{pmatrix}& & & \\ &\ddots & & \\ & &\begin{pmatrix} 0&1\\1&0\end{pmatrix}& \\ & & & \begin{pmatrix} 1&1\\1&2\gamma_i\end{pmatrix} \end{pmatrix}.$$

\item If $i$ is even and $L_i$ is \textit{of type} $\textit{II}$,
then $\pi^i\cdot h_i$ has the following form (with $\gamma_i\in A$):
$$\xi^{i/2}\begin{pmatrix} \begin{pmatrix} 0&1\\1&0\end{pmatrix}& & & \\ &\ddots & & \\ & &\begin{pmatrix} 0&1\\1&0\end{pmatrix}& \\ & & & \begin{pmatrix} 2\delta&1\\1&2\gamma_i\end{pmatrix} \end{pmatrix}.$$

\item If $i$ is odd and $L_i$ is \textit{free of type I},
then $\pi^i\cdot h_i$ has the following form (with $\gamma_i\in A$):
$$\xi^{(i-1)/2}\begin{pmatrix} \begin{pmatrix} 0&\pi\\ \sigma(\pi)&0\end{pmatrix}& & & \\ &\ddots & & \\ & &\begin{pmatrix} 0&\pi\\ \sigma(\pi)&0\end{pmatrix}& \\ & & & \begin{pmatrix} 4\gamma_i&\pi\\ \sigma(\pi)&2\delta\end{pmatrix} \end{pmatrix}.$$

\item If $i$ is odd and $L_i$ is \textit{bound of type I or of type II},
then $\pi^i\cdot h_i$ has the following form:
$$\xi^{(i-1)/2}\begin{pmatrix} \begin{pmatrix} 0&\pi\\ \sigma(\pi)&0\end{pmatrix}& & & \\ &\ddots & & \\ & &\begin{pmatrix} 0&\pi\\ \sigma(\pi)&0\end{pmatrix}& \\ & & & \begin{pmatrix} 0&\pi\\ \sigma(\pi)&0 \end{pmatrix} \end{pmatrix}.$$
\end{enumerate}
\textit{ }

\item Let $R$ be a $\kappa$-algebra.
  We also denote by $h$ the element of $\underline{H}(R)$ which is the image of $h\in \underline{H}(A)$
   under the natural map from $\underline{H}(A)$ to $\underline{H}(R)$.
   Recall that we denote each element of $\underline{H}(R)$ by  $(f_{i, j}, a_i\cdots f_i)$.
 Then the tuple $(f_{i, j}, a_i\cdots f_i)$ denoting $h\in \underline{H}(R)$ is defined by the conditions:
  \begin{enumerate}
\item  If $i\neq j$, then  $f_{i,j}=0$.
\item If $i$ is even and $L_i$ is \textit{of type I}, then $$a_i=\begin{pmatrix} \begin{pmatrix} 0&1\\1&0\end{pmatrix}& &  \\ &\ddots &  \\ & & \begin{pmatrix} 0&1\\1&0\end{pmatrix}\end{pmatrix}, \textit{thus $x_i^j=0$},$$
    $$b_i=0, d_i=0, e_i=0, f_i=0, c_i=0.$$
    If $i$ is even and $L_i$ is \textit{of type II}, then $$a_i=\begin{pmatrix} \begin{pmatrix} 0&1\\1&0\end{pmatrix}& & & \\ &\ddots & & \\ & &\begin{pmatrix} 0&1\\1&0\end{pmatrix}& \\ & & & \begin{pmatrix} 2\cdot 1&1\\1&2\cdot\bar{\gamma}_i\end{pmatrix} \end{pmatrix}, \textit{thus $x_i^j=0$ with $j\leq n_i-2$, $x_i^{n_i-1}=1$, $x_i^{n_i}=\bar{\gamma}_i$}.$$
\item If $i$ is odd, then
$$a_i=\begin{pmatrix} \begin{pmatrix} 0&1\\-1&0\end{pmatrix}& &  \\ &\ddots &  \\ & & \begin{pmatrix} 0&1\\-1&0\end{pmatrix}\end{pmatrix}, \textit{thus $x_i^j=0$},$$
    $$b_i=0, d_i=0, e_i=0,  c_i=0, f_{i,i}^{\ast}=0, f_i=\bar{\gamma}_i.$$
Here, $\bar{\gamma}_i\in \kappa$ is the reduction of $\gamma_i$ mod $2$.

  \end{enumerate}
\end{enumerate}
\end{Rmk}

 \subsection{The smooth affine group scheme \textit{\underline{G}}}\label{sags}

\begin{Thm}\label{t34}
    For any flat $A$-algebra $R$, the group $\underline{M}^{\ast}(R)$ acts on $\underline{H}(R)$ on the right by
    $f\circ m = \sigma({}^tm)\cdot f\cdot m$. This action is represented by an action morphism
     \[\underline{H} \times \underline{M}^{\ast} \longrightarrow \underline{H} .\]
\end{Thm}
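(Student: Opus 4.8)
The plan is to verify that the formula $f\circ m = \sigma({}^tm)\cdot f\cdot m$ does indeed define a right action of $\underline{M}^{\ast}$ on $\underline{H}$ at the level of points over flat $A$-algebras, and then, since $\underline{H}$ and $\underline{M}^{\ast}$ are both represented by flat $A$-schemes (affine spaces, as recorded earlier), to argue that this functorial action is automatically realized by a morphism of schemes. The second point is the easy half: an action on $R$-points functorial in the flat $A$-algebra $R$ extends uniquely to a morphism of the representing schemes, because $\underline{M}^{\ast}$ and $\underline{H}$ are flat over $A$ and hence determined by their restriction to flat $A$-algebras (this is exactly the mechanism already used to define $\underline{M}$ and $\underline{H}$ on non-flat algebras in Remark \ref{r31} and Section \ref{h}), so I would state this and refer to the analogous argument in \cite{C2}.

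The substantive step is to check well-definedness: for $f\in\underline{H}(R)$ and $m\in\underline{M}^{\ast}(R)$, the form $f':=\sigma({}^tm)\cdot f\cdot m$ again lies in $\underline{H}(R)$, together with the associativity and identity axioms. Associativity and the identity are formal once well-definedness holds, since $\sigma({}^t(mm'))\cdot f\cdot(mm') = \sigma({}^tm')\cdot(\sigma({}^tm)\cdot f\cdot m)\cdot m'$ and $\sigma({}^t1)\cdot f\cdot 1 = f$. So the heart of the proof is to run through conditions (a)--(e) (equivalently the matrix conditions (1)--(7)) in the definition of $\underline{H}$ and confirm each is preserved. I would organize this as follows. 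First observe that $m\in\underline{M}^{\ast}(R)\subset\underline{M}(R)$ stabilizes each of the lattices $A_i,B_i,W_i,X_i$ (all $i$) and $Z_i$ ($i$ even), and induces the identity on $A_i/B_i$, on $W_i/(X_i\cap Z_i)$, and on $B_i^{\perp}/A_i^{\perp}$; moreover $m$ is invertible, so it in fact \emph{preserves} each such lattice and acts as the identity on the relevant quotients (not merely maps into). Condition (a) for $f'$: for $x\in L\otimes R$ and $a_i\in A_i\otimes R$ we have $f'(x,a_i)=f(mx,ma_i)$, and since $ma_i\in A_i\otimes R$ this lies in $\pi^iB\otimes R$ by condition (a) for $f$. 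Conditions (b),(c),(d): here I use that $m a_i \equiv a_i \bmod B_i$, that $m w_i\equiv w_i\bmod X_i\cap Z_i$ for $w_i\in W_i$, together with the congruence computations already carried out in Step (3) of Section \ref{mc} (the identity $\frac{1}{2}\cdot\frac{1}{2^m}q(gw-w)\equiv 0\bmod 2$ and the fact that $\frac1{2^m}h(X_i,W_i)\subset\pi B$ with $\pi+\sigma(\pi)\in 4A$); these show $\xi^{-m}f'(a_i,a_i)\equiv\xi^{-m}f(a_i,a_i)\bmod 2$, $\pi^{-i}f'(a_i,w_i)\equiv\pi^{-i}f(a_i,w_i)\bmod\pi$, and $\xi^{-m}(f'(w_i,w_i)-f(w_i,w_i))\in(4)$. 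Finally condition (e) for $i$ odd uses that $m$ preserves $B_i$ and $B_i^{\perp}$ and induces the identity on $B_i^{\perp}/A_i^{\perp}$, hence also (by taking adjoints, as in Step (5) of Section \ref{mc}) on $A_i/B_i$, giving $f'(B_i,B_i^{\perp})\subset B\otimes R$ and $f'(a_i,b_i')\equiv h(a_i,b_i')\bmod B\otimes R$.

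I expect the main obstacle to be bookkeeping rather than conceptual: one must be careful that each defining condition of $\underline{H}$ is stated in terms of the \emph{same} lattices that $m$ is constrained to preserve, and that the congruences hold in the integral (and $2$-adic) sense claimed — in particular the interplay between the quadratic condition (b)/(d) and the characteristic-$2$ phenomena, where $q(x+y)\ne q(x)+q(y)$ and the cross term $h(x,y)+h(y,x)=h(x,y)+\sigma(h(x,y))$ only lies in a deeper power of $(2)$ because $\pi+\sigma(\pi)\in(4)$. Since the analogous statement is Theorem 3.4 (or its numbered counterpart) in \cite{C2} and the present $\underline{M},\underline{H}$ are built to satisfy exactly the compatibilities needed, I would present the verification of (a) and (e) in detail, note that (b),(c),(d) follow from the congruence identities of Step (3) of Section \ref{mc} applied with $f$ in place of $h$, and remark that associativity, the identity axiom, and the passage from the functor of points to an honest action morphism are formal and parallel to \cite{C2}, which I would cite rather than repeat.
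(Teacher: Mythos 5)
Your proposal is correct and follows essentially the same route as the paper: reduce to checking that $f\circ m$ satisfies the defining conditions (a)--(e) of $\underline{H}(R)$ by writing $ma_i=a_i+b_i$, $mw_i=w_i+x_i$ with $x_i\in X_i\cap Z_i$, etc., and exploiting the congruences already satisfied by $f$ together with $\pi+\sigma(\pi)\in(4)$, while deferring representability and the formal group-action axioms to the analogous argument in \cite{C2}. The only difference is cosmetic: the paper spells out conditions (d) and (e) and cites \cite{C2} for (a)--(c), whereas you detail (a) and (e) and defer (b)--(d) to the Step (3) computations, but the underlying verification is the same.
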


\begin{proof}
 We start with any $m\in \underline{M}^{\ast}(R)$ and $f\in \underline{H}(R)$.
 In order to show that  $\underline{M}^{\ast}(R)$ acts on the right of $\underline{H}(R)$ by $f\circ m = \sigma({}^tm)\cdot f\cdot m$,
it suffices to show that $f\circ m$ satisfies  conditions (a) to (e) given in the beginning of  Section \ref{h}.
The proof that $f\circ m$ satisfies  conditions (a) to  (c) is  similar to the proof of Theorem 3.4 in \cite{C2} and so we skip it.\\

For condition (d), 
it suffices to show that
$$\xi^{-m} f(mw_i, mw_i)-\xi^{-m}h(w_i,w_i) \in (4),$$ where $w_i\in W_i$.
Since $m$ induces the identity on $W_i/(X_i\cap Z_i)$, we can write $mw_i=w_i+x_i$ where $x_i\in X_i\cap Z_i$.
Thus it suffices to show that $$\xi^{-m} \left( f(w_i, x_i)+f(x_i, w_i)+f(x_i, x_i)   \right) \in (4).$$
Since $\xi^{-m}  f(w_i, x_i)\equiv \xi^{-m}  h(w_i, x_i)$ mod $\pi$ by condition (c) and
$\xi^{-m}  h(w_i, x_i) \in (\pi)$ by the definition of $X_i$, we can see that
$$\xi^{-m}  f(w_i, x_i) \in (\pi) \textit{ and so }
\xi^{-m} \left( f(w_i, x_i)+f(x_i, w_i)   \right) \in (4).$$
Furthermore, since $x_i\in Z_i$ and clearly $x_i\in W_i$, we can see that
$$\xi^{-m} f(x_i, x_i) - \xi^{-m}  h(x_i, x_i) \in (4) \textit{ and }
\xi^{-m}  h(x_i, x_i) \in (4).$$
This completes the proof of condition (d).\\

For  condition (e), it suffices to show that $$f(ma_i, mb_i^{\prime})-h(a_i, b_i^{\prime})\in B,
  \textit{ where $a_i\in A_i$ and $b_i^{\prime}\in B_i^{\perp}$.}$$
We write $ma_i=a_i+b_i$ and $mb_i^{\prime}=b_i^{\prime}+a_i^{\prime}$, where $b_i\in B_i$ and $a_i^{\prime} \in A_i^{\perp}$.
Hence it suffices to show $$f(a_i+b_i, a_i^{\prime})+f(b_i, b_i^{\prime})\in B.$$
Since $a_i+b_i, b_i\in A_i$ and $a_i', b_i'\in B_i^{\perp}$,
we can see that
$$\left(f(a_i+b_i, a_i^{\prime})+f(b_i, b_i^{\prime})\right)-\left(h(a_i+b_i, a_i^{\prime})+h(b_i, b_i^{\prime})\right)\in B
\textit{ and } h(a_i+b_i, a_i^{\prime})+h(b_i, b_i^{\prime})\in B.$$
This completes the proof of condition (e).

The proof of the representability of this action is similar to that of the proof of Theorem 3.4 in \cite{C2} and we may skip it.
\end{proof}

\begin{Rmk}\label{r35}
Let $R$ be a $\kappa$-algebra.
We explain the above action morphism in terms of $R$-points.
Choose an element  $(m_{i,j}, s_i\cdots w_i)$ in $ \underline{M}^{\ast}(R) $ as explained in Section \ref{m}
and  express this element formally as a matrix $m=\begin{pmatrix}\pi^{max\{0,j-i\}}m_{i,j}\end{pmatrix}$
with $z_i^{\ast}, m_{i, i}^{\ast}, m_{i, i}^{\ast\ast}$.
We also choose an element $(f_{i,j}, a_i \cdots f_i)$ of $\underline{H}(R)$ and express this element formally as a matrix
 $f=\begin{pmatrix}\pi^{max\{i,j\}}f_{i,j}\end{pmatrix}$ with $f_{i, i}^{\ast}$ as explained in Section \ref{h}.

We then compute the formal matrix product $\sigma({}^tm)\cdot f\cdot m$  and denote it by the formal matrix $\begin{pmatrix}\pi^{max\{i,j\}}\tilde{f}_{i,j}'\end{pmatrix}$ with  $(\tilde{f}_{i,j}', \tilde{a}_i' \cdots \tilde{f}_i')$.
 Here, the description of the formal matrix $\begin{pmatrix}\pi^{max\{i,j\}}\tilde{f}_{i,j}'\end{pmatrix}$ with
 $(\tilde{f}_{i,j}', \tilde{a}_i' \cdots \tilde{f}_i')$ is as explained in Section \ref{h}.
We emphasize that in the above formal computation,
we distinguish $-1$ from $1$ formally.
If $L_i$ is \textit{bound of type I} with $i$ odd, then
let
$$\pi (\tilde{f}_{i, i}^{\ast})'=\delta_{i-1}(0,\cdots, 0, 1)\cdot \tilde{f}_{i-1,i}'+\delta_{i+1}(0,\cdots, 0, 1)\cdot \tilde{f}_{i+1,i}'$$
 formally.
 We formally compute the right hand side and then it is of the form $\pi X$.
Here, $X$ involves $m_{i,i}^{\ast}, m_{i,i}^{\ast\ast}$.
Then $(\tilde{f}_{i, i}^{\ast})'$ is defined as $X$.


 We now let $\pi^2$ be zero in each entry of the formal matrices
$$(\tilde{f_{i,j}'})_{i< j},  (\tilde{b_i'})_{\textit{$L_i$ of type $I^o$}},
(\tilde{b_i'}, \tilde{d_i'}, \tilde{e_i'})_{\textit{$L_i$ of type $I^e$ or free of type I with i odd}},
(\tilde{(f_{i,i}^{\ast})'})_{\textit{$L_i$ bound of type $I$ with $i$ odd}}  $$
and in each nondiagonal entry of the formal matrix  $(\tilde{a}_i')$.
Then these entries are elements in $B\otimes_AR$.

We also let $\pi^2$ be zero in
$(\tilde{x}_i^j)', (\tilde{c}_i')_{\textit{$L_i$ of type $I^o$}},$
 $(\tilde{f}_i', \tilde{c}_i')_{\textit{$L_i$ of type $I^e$ or free of type $I$ with $i$ odd}}$.
 Note that $(\tilde{x}_i^j)'$ is a  diagonal entry of a formal matrix $\tilde{a}_i'$.
 Then these entries are elements in $R$.

 Let $(f_{i,j}', a_i' \cdots f_i')$ be the reduction of $(\tilde{f}_{i,j}', \tilde{a}_i' \cdots \tilde{f}_i')$ as explained above,
 i.e. by letting $\pi^2$ be zero in the entries of formal matrices as described above.
 Then $(f_{i,j}', a_i' \cdots f_i')$ is an element of $\underline{H}(R)$
and the composition $(f_{i,j}, a_i \cdots f_i)\circ (m_{i,j}, s_i\cdots w_i)$ is $(f_{i,j}', a_i' \cdots f_i')$.

We can also write $(f_{i,j}', a_i' \cdots f_i')$ explicitly in terms of $(f_{i,j}, a_i \cdots f_i)$ and $(m_{i,j}, s_i\cdots w_i)$
like  the product of $(m_{i,j}, s_i\cdots w_i)$ and $(m_{i,j}', s_i'\cdots w_i')$ explained in Section \ref{m}.
However, this is complicated and we do not use it in this generality.
On the other hand, we  explicitly calculate  $(f_{i,j}, a_i \cdots f_i)\circ (m_{i,j}, s_i\cdots w_i)$ when
$(f_{i,j}, a_i \cdots f_i)$ is the given hermitian form $h$ and $(m_{i,j}, s_i\cdots w_i)$ satisfies certain  conditions on each block.
This explicit calculation  will be done in Appendix \ref{App:AppendixA}.
\end{Rmk}

 \begin{Thm}\label{t36}
  Let $\rho$ be the morphism $\underline{M}^{\ast} \rightarrow \underline{H}$ defined by $\rho(m)=h \circ m$,
  which is induced by the  action morphism  of Theorem \ref{t34}.
  Then $\rho$ is smooth of relative dimension dim $\mathrm{U}(V, h)$.
  \end{Thm}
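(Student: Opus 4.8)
The plan is to deduce the smoothness of $\rho$ from the known smoothness of $\underline{M}^{\ast}$ and $\underline{H}$ over $A$ by a fibrewise tangent space computation. Recall that $\underline{M}^{\ast}$ is smooth over $A$ of relative dimension $2n^2$ (Remarks \ref{r31} and \ref{r32}), that $\underline{H}$ is smooth over $A$ of relative dimension $n^2$, being an affine space of dimension $n^2$ over $A$, and that $\mathrm{U}(V,h)$, the unitary group of the rank-$n$ hermitian space $(V,h)$, has dimension $n^2$. Since $A$ is a discrete valuation ring and $\underline{M}^{\ast}$ is $A$-flat, I would first invoke the fibral criteria for flatness and for smoothness to reduce the claim to the smoothness of the two fibres $\rho_F\colon\underline{M}^{\ast}_F\to\underline{H}_F$ and $\rho_\kappa\colon\underline{M}^{\ast}_\kappa\to\underline{H}_\kappa$; the relative dimension is then automatically $\dim\underline{M}^{\ast}-\dim\underline{H}=2n^2-n^2=n^2=\dim\mathrm{U}(V,h)$.

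For each of the two fibres, source and target are smooth over a field, so by the Jacobian criterion it suffices to prove that the differential of each of $\rho_F$, $\rho_\kappa$ is surjective at every geometric point. Here I would exploit equivariance: writing the right action of Theorem \ref{t34} as $f\circ m$, one has $\rho(mn)=h\circ(mn)=(h\circ m)\circ n=\rho(m)\circ n$, and both the right translation $m\mapsto mn$ on $\underline{M}^{\ast}$ and the map $f\mapsto f\circ n$ on $\underline{H}$ are isomorphisms; since $\underline{M}^{\ast}$ is a group scheme, every geometric point of a fibre is obtained from the identity by such a translation, so surjectivity of the differential everywhere reduces to surjectivity at the identity section. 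Passing to dual numbers, $\rho(1+\varepsilon m)=\sigma({}^t(1+\varepsilon m))\cdot h\cdot(1+\varepsilon m)=h+\varepsilon\bigl(\sigma({}^tm)\cdot h+h\cdot m\bigr)$, so the differential at $1$ is (the appropriate reduction of) the linear map $m\mapsto\sigma({}^tm)\cdot h+h\cdot m$ on the tangent space to $\underline{M}^{\ast}$ at the identity.

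On the generic fibre $\underline{M}^{\ast}_F=\mathrm{Res}_{E/F}\mathrm{GL}_E(V)$ this is the classical statement that $m\mapsto\sigma({}^tm)\cdot h+h\cdot m$ carries the Lie algebra of $\mathrm{Res}_{E/F}\mathrm{GL}_E(V)$ onto the space of hermitian forms, with kernel $\mathrm{Lie}\,\mathrm{U}(V,h)$ --- equivalently, in characteristic zero the orbit map of $\mathrm{Res}_{E/F}\mathrm{GL}_E(V)$ acting on hermitian forms is smooth --- which already gives the smoothness of $\rho_F$ of relative dimension $\dim\mathrm{U}(V,h)$.

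The main obstacle will be the corresponding surjectivity on the special fibre. For this I would use the explicit coordinate descriptions of $\underline{M}^{\ast}_\kappa$ (Section \ref{m}), of $\underline{H}_\kappa$ (Section \ref{h}) and of the action morphism on $\kappa$-points (Remark \ref{r35}), and verify block by block, according to the parity of $i$ and the type and subtype of $L_i$, that the linearization at the identity of $m\mapsto h\circ m$ surjects onto each of the coordinate functions $(f_{i,j},\,a_i,x_i^j,b_i,c_i,d_i,e_i,f_i,f_{i,i}^{\ast})$ describing a general element of $\underline{H}_\kappa$. The congruences defining the lattices $A_i,B_i,W_i,X_i,Z_i$ and $B_i^{\perp},A_i^{\perp}$, the auxiliary coordinates $z_i^{\ast},m_{i,i}^{\ast},m_{i,i}^{\ast\ast}$ on $\underline{M}^{\ast}$ and $f_{i,i}^{\ast}$ on $\underline{H}$, and the identity $\sigma(\pi)=-\pi$, are precisely arranged so that these surjections hold, but the bookkeeping is heavy: the delicate point, and the reason \textit{Case 2} is harder than \textit{Case 1}, is the coupling between neighbouring blocks $L_{i-2},L_{i-1},L_i,L_{i+1},L_{i+2}$ through these auxiliary coordinates. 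I would organize the computation around the explicit formulas for $h\circ m$ with $m$ satisfying suitable conditions on each block, which I would carry out in Appendix \ref{App:AppendixA}; feeding these into the tangent space argument above yields the surjectivity of $d(\rho_\kappa)$ at the identity, hence the smoothness of $\rho_\kappa$, and therefore of $\rho$, of relative dimension $n^2=\dim\mathrm{U}(V,h)$.
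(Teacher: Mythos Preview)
Your overall strategy is sound and leads to a correct proof, but the organization differs from the paper's in one structural point worth noting.

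You reduce to surjectivity of the differential at the identity via the equivariance $\rho(mn)=\rho(m)\circ n$; the paper (following \cite{GY}, Lemma 5.5.2) instead works at an arbitrary point $m\in\underline{M}^{\ast}(\bar\kappa)$ but factors the differential $X\mapsto\sigma({}^tm)\cdot h\cdot X+\sigma({}^tX)\cdot h\cdot m$ as a composite of three maps through an auxiliary functor $T_3$:
\[
T_1(\bar\kappa)\xrightarrow{\;X\mapsto h\cdot X\;}T_3(\bar\kappa)\xrightarrow{\;Y\mapsto\sigma({}^tm)\cdot Y\;}T_3(\bar\kappa)\xrightarrow{\;Y\mapsto\sigma({}^tY)+Y\;}T_2(\bar\kappa).
\]
Here $T_3$ is a space of matrices satisfying congruence conditions tailored so that the third map --- the hermitianization --- is \emph{obviously} surjective onto the tangent space $T_2$ of $\underline{H}$; the $m$-dependence is isolated in the middle arrow, which is a bijection since $m$ is invertible; and the real work is checking that $X\mapsto h\cdot X$ carries the conditions defining $T_1$ (the tangent space of $\underline{M}^{\ast}$, including the auxiliary coordinates $z_i^{\ast},m_{i,i}^{\ast},m_{i,i}^{\ast\ast}$) bijectively onto those defining $T_3$. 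Your equivariance reduction achieves the same elimination of $m$ more cheaply, but the block-by-block surjectivity check you then propose at the identity is exactly where the paper's $T_3$ factorization pays off: rather than verifying directly that the linearization hits every coordinate $(f_{i,j},a_i,\dots,f_{i,i}^{\ast})$ of $\underline{H}$, the paper designs $T_3$ so that the hermitianization step is trivial and the remaining verifications are that certain conditions on matrices are preserved under left multiplication by $h$ or $\sigma({}^tm)$ --- a cleaner and more mechanical check than the one you outline.
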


  \begin{proof}
  The theorem follows from Theorem 5.5 in \cite{GY} and the following lemma.
  \end{proof}

      \begin{Lem}\label{l37}
      The morphism $\rho \otimes \kappa : \underline{M}^{\ast}\otimes \kappa \rightarrow \underline{H}\otimes \kappa$
      is smooth of relative dimension $\mathrm{dim~} \mathrm{U}(V, h)$.
      \end{Lem}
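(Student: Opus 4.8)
The plan is to verify smoothness and the relative dimension of $\rho\otimes\kappa$ by exhibiting, at a suitable basepoint of the source, that the differential of $\rho\otimes\kappa$ is surjective with kernel of the expected dimension, and then to bootstrap from one point to all points by the group action. First I would reduce to a single distinguished point: since $\underline{M}^{\ast}\otimes\kappa$ is a (connected) group scheme and $\rho(m)=h\circ m$ is an orbit map composed with a translation, it suffices to check smoothness at the identity element $e\in\underline{M}^{\ast}(\kappa)$ (or at any lift of it), because translation by $\underline{M}^{\ast}\otimes\kappa$ on itself and the transitivity of the action on the orbit of $h$ transport the infinitesimal situation at $e$ to every point. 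Thus the heart of the matter is a computation of the tangent map $d\rho_e : T_e(\underline{M}^{\ast}\otimes\kappa) \to T_h(\underline{H}\otimes\kappa)$.

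Next I would identify the relevant tangent spaces explicitly using the dual-numbers description. An element of $T_e(\underline{M}^{\ast}\otimes\kappa)$ is of the form $1+\varepsilon m$ with $\varepsilon^2=0$ and $m$ an element of the "Lie-algebra-level" version of $\underline{M}'$ over $\kappa$ (subject to the linearized forms of conditions (1)--(4) defining $\underline{M}$, together with the $z_i^{\ast}, m_{i,i}^{\ast}, m_{i,i}^{\ast\ast}$ data), while $T_h(\underline{H}\otimes\kappa)$ is the $\kappa$-vector space of "infinitesimal hermitian forms" satisfying the linearized versions of conditions (a)--(e) from the beginning of Section~\ref{h}. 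Under $\rho$ the point $1+\varepsilon m$ maps to $h\circ(1+\varepsilon m) = h + \varepsilon\big(\sigma({}^tm)\cdot h + h\cdot m\big)$, so $d\rho_e(m) = \sigma({}^tm)\,h + h\,m$. I would then observe that the kernel of $d\rho_e$ is precisely the infinitesimal automorphisms, i.e. the $\kappa$-points of $\mathrm{Lie}(\underline{G}\otimes\kappa)$, whose dimension is $\dim\mathrm{U}(V,h)$ by the construction of $\underline{G}$ together with Theorem~5.5 of \cite{GY}; hence the relative-dimension claim will follow once surjectivity of $d\rho_e$ is established, by comparing $\dim\underline{M}^{\ast}\otimes\kappa = \dim\mathrm{Res}_{B/A}\mathrm{End}_B(L)\otimes\kappa$ (computed in Remark~\ref{r31}, it is $2n^2$) with $\dim\underline{H}\otimes\kappa = n^2$ and $\dim\mathrm{Lie}(\underline{G}\otimes\kappa)=\dim\mathrm{U}(V,h)$, using $2n^2 - n^2 = \dim\mathrm{U}(V,h)$ over $F$ and flatness of the three schemes to pass to $\kappa$.

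The main obstacle, and the bulk of the work, is the surjectivity of $d\rho_e\otimes\kappa$: given an arbitrary infinitesimal hermitian form $f'$ satisfying conditions (a)--(e), one must produce $m$ in the $\kappa$-version of $\underline{M}'$ with $\sigma({}^tm)h + hm = f'$. Because $p=2$, the symmetric/alternating dichotomy collapses and the residue-characteristic-2 subtleties force a careful block-by-block analysis: for each Jordan block $L_i$ one solves the equation separately according to the type ($I^o$, $I^e$, type $II$, free of type $I$ with $i$ odd, bound of type $I$), using the explicit shapes of $h$ recorded in Remark~\ref{r33}.(1) and the explicit shapes of elements of $\underline{M}$ and $\underline{H}$ from Sections~\ref{m} and~\ref{h}; one then checks that the off-diagonal blocks $m_{i,j}$ ($i\neq j$) can absorb the off-diagonal data of $f'$, and that the auxiliary coordinates $z_i^{\ast}$, $m_{i,i}^{\ast}$, $m_{i,i}^{\ast\ast}$, $c_i$, $f_i$, $x_i^j$ match up. The key structural point making this possible is that the conditions cutting out $\underline{M}$ and $\underline{H}$ were engineered precisely so that $d\rho_e$ is "triangular" with respect to a suitable filtration of the index set, so that the solvability reduces to a finite list of elementary linear equations over $\kappa$, each manifestly solvable; once that is verified, standard smoothness criteria (the Jacobian/infinitesimal-lifting criterion, e.g. as in \cite{GY}) yield the conclusion. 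Since the construction and the required computations parallel those of \cite{C2} (now in the harder \textit{Case 2}), I would carry out the block analysis in detail only for the genuinely new block types and otherwise invoke the analogous arguments of \cite{C2}.
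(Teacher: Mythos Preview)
Your proposal is essentially correct but follows a genuinely different route from the paper, so it is worth recording the contrast.

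The paper does \emph{not} reduce to the identity element. Instead it proves surjectivity of $\rho_{\ast,m}$ at \emph{every} $m\in\underline{M}^{\ast}(\bar\kappa)$ simultaneously, by introducing an auxiliary functor $T_3$ (a space of matrices $y$ cut out by a hybrid list of block conditions (a)--(e)) and factoring $\rho_{\ast,m}$ as
\[
T_1(\bar\kappa)\xrightarrow{X\mapsto hX} T_3(\bar\kappa)\xrightarrow{Y\mapsto \sigma({}^t m)Y} T_3(\bar\kappa)\xrightarrow{Y\mapsto \sigma({}^tY)+Y} T_2(\bar\kappa).
\]
The third map is a surjection by construction of $T_3$ and is essentially trivial; the second is a bijection using only that $m$ is invertible; all the work goes into the first map, which is shown to be a bijection by checking, for flat $A$-algebras, that multiplication by $h$ carries $T_1$ into $T_3$ and multiplication by $h^{-1}$ carries $T_3$ into $T_1$. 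Thus the paper cleanly separates the ``hermitian symmetrization'' step (trivial) from the ``shape compatibility with $h$'' step (the actual content), and the latter is verified condition by condition rather than block by block.

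Your reduction to $m=e$ via right-translation equivariance is a legitimate shortcut the paper does not take, and your direct block-by-block solving of $\sigma({}^tm)h+hm=f'$ over $\kappa$ would succeed, but it is messier: you would in effect be reproving the $T_3$ bijection and the symmetrization surjection intertwined, without the organizing principle that $T_3$ provides. The ``triangularity'' you allude to is exactly what the factorization through $T_3$ makes precise.

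One minor caution: invoking Theorem~5.5 of \cite{GY} to compute $\dim\mathrm{Lie}(\underline{G}\otimes\kappa)$ is circular here, since that is precisely how the paper deduces Theorem~\ref{t36} from the present lemma. Fortunately your alternative dimension count $2n^2-n^2=n^2=\dim\mathrm{U}(V,h)$ is independent of this and suffices, so this is not a gap, just a redundancy to delete.
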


\begin{proof}
 The proof is based on Lemma 5.5.2 in \cite{GY} and is parallel to Lemma 3.7 of \cite{C2}.
    It is enough to check the statement over the algebraic closure $\bar{\kappa}$ of $\kappa$.
    By \cite{H}, III.10.4, it suffices to show that, for any $m \in \underline{M}^{\ast}(\bar{\kappa})$,
    the induced map on the Zariski tangent space $\rho_{\ast, m}:T_m \rightarrow T_{\rho(m)}$ is surjective.

   We define the two functors from the category of commutative flat $A$-algebras to the category of abelian groups as follows:
 \[T_1(R)=\{m-1 : m\in\underline{M}(R)\},\]
     \[T_2(R)=\{f-h : f\in\underline{H}(R)\}.\]

   The functor $T_1$ (resp. $T_2$) is representable by a flat $A$-algebra which is a polynomial ring over $A$ of $2n^2$ (resp. $n^2$) variables
   by Lemma 3.1 of \cite{C1}.
    Moreover, each of them is represented by a commutative group scheme since  they are closed under addition.
        In fact, $T_1$ is the same as the functor $\underline{M}^{\prime}$ in Remark \ref{r31}.

   We still need to introduce another functor on flat $A$-algebras.
   Define $T_3(R)$ to be the set of all $(n \times n)$-matrices $y$ over $B\otimes_AR$ satisfying the following conditions:
   \begin{enumerate}
  \item[(a)] The $(i,j)$-block $y_{i,j}$ of $y$ has entries in $\pi^{max(i,j)}B\otimes_AR$ so that
   $$y=\begin{pmatrix} \pi^{max(i,j)}y_{i,j}\end{pmatrix}.$$
      Here, the size of $y_{i,j}$ is $n_i\times n_j$.
\item[(b)] Assume that $i$ is even.
\begin{itemize}
\item[(i)] If $L_i$ is \textit{of type} $\textit{I}^o$, then $y_{i,i}$ is of the form
   \[\begin{pmatrix} s_i&\pi y_i\\ \pi v_i&\pi z_i \end{pmatrix}\in \mathrm{M}_{n_i}(B\otimes_AR)\]
     where $s_i$ is an $(n_i-1) \times (n_i-1)$ matrix, etc.
\item[(ii)]  If  $L_i$ is \textit{of type} $\textit{I}^e$, then $y_{i,i}$ is of the form
   \[\begin{pmatrix} s_i&r_i&\pi t_i\\ y_i&x_i&\pi w_i\\ \pi v_i&\pi u_i&\pi z_i \end{pmatrix}\in \mathrm{M}_{n_i}(B\otimes_AR)\]
     where $s_i$ is an $(n_i-2) \times (n_i-2)$-matrix, etc.
\end{itemize}
\item[(c)] Assume that  $i$ is even and that $L_i$ is \textit{of type I}. Then
     $$z_i+\delta_{i-2}k_{i-2, i}+\delta_{i+2}k_{i+2, i} \in (\pi).$$
    Here,
    \begin{itemize}
    \item[(i)]  $z_i$ is in the $(n_i\times n_i)^{th}$-entry of $y_{i,i}$ as described in the above  Step (b).
\item[(ii)] $k_{i-2, i}$ (resp. $k_{i+2, i}$) is the $(n_{i-2}\times n_i)^{th}$-entry
(resp. $(n_{i+2}\times n_i)^{th}$-entry) of the matrix $y_{i-2, i}$ (resp. $y_{i+2, i}$)
if $L_{i-2}$ (resp. $L_{i+2}$) is \textit{of type} $\textit{I}$.
\end{itemize}
\item[(d)] Assume that $i$ is odd.
Consider the following $(1\times n_i)$-matrix:
$$
\left\{
  \begin{array}{l l}
v_i\cdot y_{i, i}   &   \quad  \textit{if $L_i$ is \textit{free of type I}};\\
  \delta_{i-1}v_{i-1}\cdot y_{i-1, i}+\delta_{i+1}v_{i+1}\cdot y_{i+1, i}   & \quad  \textit{if $L_i$ is \textit{bound of type I}}.
    \end{array} \right.
$$
Here,
    \begin{itemize}
 \item[(i)]  $v_{i}=(0,\cdots, 0, 1, 0)$ of size $1\times n_{i}$.
  \item[(ii)] $v_{i-1}$ (resp. $v_{i+1}$)$=(0,\cdots, 0, 1)$  of size $1\times n_{i-1}$ (resp. $1\times n_{i+1}$).\\
 \end{itemize}

Then each entry of the above matrix  lies  in the ideal $(\pi)$.\\
\item[(e)]
Assume that $i$ is odd.
Consider the following $(1\times n_i)$-matrix:
$$
\left\{
  \begin{array}{l l}
v_i\cdot {}^ty_{i, i}   &   \quad  \textit{if $L_i$ is \textit{free of type I}};\\
  \delta_{i-1}v_{i-1}\cdot {}^ty_{i, i-1}+\delta_{i+1}v_{i+1}\cdot {}^ty_{i, i+1}   & \quad  \textit{if $L_i$ is \textit{bound of type I}}.
    \end{array} \right.
$$
Here, $v_{i}, v_{i-1}, v_{i+1}$ are as described in the above Step (d).
Then each entry of the above matrix lies in the  ideal $(\pi)$.\\
     \end{enumerate}

     The functor $T_3$ is represented by a flat $A$-scheme which is isomorphic to an affine space by Lemma 3.1 of \cite{C1}.
     Moreover it is represented by  a commutative group scheme since it is closed under addition.
So far, we have defined three functors $T_1, T_2, T_3$ and these are represented by schemes. Therefore, we can talk about their $\bar{\kappa}$-points.

We identify $T_m$ with $T_1(\bar{\kappa})$ and $T_{\rho(m)}$ with $T_2(\bar{\kappa})$. 
   The map $\rho_{\ast, m}:T_m \rightarrow T_{\rho(m)}$ is then $X \mapsto \sigma(m^t)\cdot h\cdot X + \sigma(X^t)\cdot h\cdot m$.
For an explanation of these identifications and the map, we refer to the argument explained from the second half of page 475 to
the top of page 477 in \cite{C2}.
An explanation of the explicit computation of the map is also explained in \cite{C2} and we reproduce it here.

We explain how to compute  $X\mapsto  \sigma(m)^t\cdot h \cdot X+\sigma(X)^t\cdot h \cdot m$ explicitly.
Recall that for a $\kappa$-algebra $R$, we denote an element $m$  of $\underline{M}(R)$ by $(m_{i,j}, s_i\cdots w_i)$ with a formal matrix interpretation
$m=(\pi^{max\{0, j-i\}}m_{i,j}) \mathrm{~together~with~}z_i^{\ast}, m_{i,i}^{\ast}, m_{i,i}^{\ast\ast}$ (cf.  Section \ref{m})
and  we denote an element $f$ of $\underline{H}(R)$  by $(f_{i,j}, a_i\cdots f_i)$   with a formal matrix interpretation
$f=(\pi^{max\{i,j\}}f_{i,j}) \mathrm{~together~with~}f_{i,i}^{\ast}$ (cf. Section \ref{h}).
Similarly, we can also denote an element $X$ of $T_1(\bar{\kappa})$ by $(m_{i,j}', s_i'\cdots w_i')$ with a formal matrix interpretation $X=(\pi^{max\{0, j-i\}}m_{i,j}') \mathrm{~together~with~}(z_i')^{\ast}, (m_{i,i}')^{\ast}, (m_{i,i}')^{\ast\ast}$ and an element $Z$ of $T_2(\bar{\kappa})$ by $(f_{i,j}', a_i'\cdots f_i')$ with a formal matrix interpretation  $Z=(\pi^{max\{i,j\}}f_{i,j}')\mathrm{~together~with~}(f_{i,i}')^{\ast}$.
Then we formally compute $X \mapsto \sigma(m^t)\cdot h\cdot X + \sigma(X^t)\cdot h\cdot m$
and consider the reduction of the formal matrix $ \sigma(m^t)\cdot h\cdot X + \sigma(X^t)\cdot h\cdot m$
 in a manner similar to that of the reduction   explained in Remark \ref{r35}.
We denote  this reduction by  $(f_{i,j}'', a_i''\cdots f_i'')$ with a formal matrix interpretation $(\pi^{max\{i,j\}}f_{i,j}'')\mathrm{~together~with~}(f_{i,i}'')^{\ast}$.
This $(f_{i,j}'', a_i''\cdots f_i'')$ may and shall be identifed with an element of $T_2(\bar{\kappa})$ in the manner just described.
Then $\rho_{\ast, m}(X)$ is the element $Z=(f_{i,j}'', a_i''\cdots f_i'')$ of $T_2(\bar{\kappa})$.\\

   To prove the surjectivity of $\rho_{\ast, m}:T_1(\bar{\kappa}) \rightarrow T_2(\bar{\kappa})$, it suffices to show the following three statements:
       \begin{itemize}
   \item[(1)] $X \mapsto h\cdot X $ defines a bijection $T_1(\bar{\kappa}) \rightarrow T_3(\bar{\kappa})$;
   \item[(2)] for any $m \in \underline{M}^{\ast}(\bar{\kappa})$, $Y \mapsto \sigma({}^t m) \cdot Y$ defines a bijection from $T_3(\bar{\kappa})$ to itself;
   \item[(3)] $Y \mapsto \sigma({}^t Y) + Y$ defines a surjection $T_3(\bar{\kappa}) \rightarrow T_2(\bar{\kappa})$.
\end{itemize}
Here, all the above maps are  interpreted as in Remark \ref{r35} (if they are well-defined).
Then $\rho_{\ast, m}$ is the composite of these three.
 Condition  (3) is direct from the construction of $T_3(\bar{\kappa})$. Hence we provide the proof of (1) and (2).\\

For (1), by using the argument explained in the last two paragraphs of page 477 in \cite{C2},
it suffices to show that
the two functors $T_1(R)\longrightarrow T_3(R), X\mapsto h\cdot X (\in \mathrm{M}_{n\times n}(B\otimes_AR))$ and
  $T_3(R) \longrightarrow T_1(R), Y \mapsto h^{-1}\cdot Y (\in \mathrm{M}_{n\times n}(B\otimes_AR))$
   are  well-defined for all flat $A$-algebras $R$.
  In other words, we only need to show that $h\cdot X \in T_3(R)$ and $h^{-1}\cdot Y\in T_1(R)$.
We represent $h$ by a hermitian block matrix $\begin{pmatrix} \pi^{i}\cdot h_i\end{pmatrix}$
   with a matrix $(\pi^{i}\cdot h_i)$ for the $(i,i)$-block and $0$ for the remaining blocks as in Remark \ref{r33}.(1).

   For the first functor, it suffices to show that  $h\cdot X$ satisfies the five conditions defining the functor $T_3$.
    Here, $X\in T_1(R)$ for a flat $A$-algebra $R$.
  We express $$X=\begin{pmatrix} \pi^{max\{0,j-i\}}x_{i,j} \end{pmatrix}.$$
  Then
   $$h\cdot X = \begin{pmatrix} \pi^{max(i,j)}y_{i,j}\end{pmatrix}.$$
  Here, $y_{i,i}=h_i\cdot x_{i,i}$.
The proof that $h\cdot X$ satisfies conditions (a) and (b) is similar to that of Lemma 3.7 of \cite{C2} and so we skip it.

For condition (c), let $L_i$ be \textit{of type I} with \textit{i} even.
Recall that we denote $X$ by $(m_{i,j}', s_i'\cdots w_i')$.
Then  the $(n_i\times n_i)^{th}$-entry of $y_{i,i}$ is $\pi (1+2\gamma_i) z_i'$ or $\pi (z_i'+2\gamma_i w_i')$ if $L_i$ is \textit{of type $I^o$} or \textit{of type $I^e$},
respectively.
The  $(n_{i-2}\times n_i)^{th}$-entry (resp. $(n_{i+2}\times n_i)^{th}$-entry) of
the matrix $y_{i-2, i}$ (resp. $y_{i+2, i}$) is $k_{i-2, i}'+2 k_{i-2}'$ (resp. $k_{i+2, i}'+2 k_{i+2}'$), for some $k_{i-2}', k_{i+2}' \in B\otimes_AR$
if $L_{i-2}$ (resp. $L_{i+2}$) is \textit{of type} $\textit{I}$.
Therefore, $$(1+2\gamma_i) z_i' ~(\textit{or } z_i'+2\gamma_i w_i') + \delta_{i-2}(k_{i-2, i}'+2 k_{i-2}')+\delta_{i+2}(k_{i+2, i}'+2 k_{i+2}') \in (\pi)$$
since  $z_i'+\delta_{i-2}k_{i-2, i}'+\delta_{i+2}k_{i+2, i}'\in (\pi)$.

For conditions (d) and (e),
assume that $L_i$ is \textit{free of type I} with \textit{i} odd.
Then by observing $h_i\cdot x_{i,i} ~(=y_{i,i})$, we can easily see that each entry of
$$(0,\cdots, 0, 1, 0)\cdot y_{i,i} \textit{ and } (0,\cdots, 0, 1, 0)\cdot {}^ty_{i,i}$$
is contained in the ideal $(\pi)$.\\
We now assume that $L_i$ is \textit{bound of type I} with \textit{i} odd.
    If $L_{i-1}$ is \textit{of type $I^o$} (resp. $I^e$), then
    $(0,\cdots, 0, 1)\cdot y_{i-1, i}= (0,\cdots, 0, 1)\cdot x_{i-1, i}$ mod 2
    (resp. $(0,\cdots, 0, 1)\cdot y_{i-1, i} = (0,\cdots, 0, 1, 0)\cdot x_{i-1, i}$ mod 2).
Thus, $$\delta_{i-1}(0,\cdots, 0, 1)\cdot y_{i-1, i}+\delta_{i+1}(0,\cdots, 0, 1)\cdot y_{i+1, i} =
\delta_{i-1}v_{i-1}\cdot x_{i-1, i}+\delta_{i+1}v_{i+1}\cdot x_{i+1, i} = 0 \textit{ mod }\pi.$$
Here, $v_{i-1}$ or $v_{i+1}$ is $(0,\cdots, 0, 1)$ or $(0,\cdots, 0, 1, 0)$, as explained in Step (d)
of the description of an element of $\underline{M}(R)$ for a flat $A$-algebra $R$ in Section \ref{m}.
The proof of condition (e) is similar to the above and so we skip it.

   For the second functor, we express $Y=\begin{pmatrix} \pi^{max(i,j)}y_{i,j}\end{pmatrix}$ and
   $h^{-1}=\begin{pmatrix} \pi^{-i}\cdot h_i^{-1}\end{pmatrix}$.
   Then we have the following:
   $$h^{-1}\cdot Y = \begin{pmatrix} \pi^{max\{0,j-i\}}x_{i,j} \end{pmatrix}.$$
  Here, $x_{i,i}=h_i^{-1}\cdot y_{i,i}$.

Then it suffices to show that $h^{-1}\cdot Y = \begin{pmatrix} \pi^{max\{0,j-i\}}x_{i,j} \end{pmatrix}$ satisfies the conditions
defining $T_1(R)$ for a flat $A$-algebra $R$.
Indeed, we do not describe the conditions defining $T_1(R)$ explicitly in this paper.
However, these conditions can be read off from the conditions defining $\underline{M}(R)$ (cf. Section \ref{mc})
because of the definition of the functor $T_1$.
The proof that $h^{-1}\cdot Y$ satisfies the first two conditions
is similar to that of Lemma 3.7 of \cite{C2} and the rest is similar to the above case.
Thus we skip them.

For (2), by using the argument explained from the last paragraph of page 479 to the first paragraph of page 480  in \cite{C2},
it suffices to show that the functor
$$\underline{M}^{\ast}(R)\times T_3(R)\longrightarrow T_3(R), ~~~~~ (m, Y)\mapsto \sigma({}^tm)\cdot Y, $$
 for a flat $A$-algebra $R$, is well-defined.
 In other words, we only need to show that $\sigma({}^tm)\cdot Y\in T_3(R)$.

For a flat $A$-algebra, we choose an element $m\in \underline{M}^{\ast}(R)$ and $Y\in T_3(R)$
and    we again express $m=\begin{pmatrix} \pi^{max\{0,j-i\}}m_{i,j} \end{pmatrix}$ and $Y=\begin{pmatrix} \pi^{max(i,j)}y_{i,j}\end{pmatrix}$.
The proof that  $\sigma({}^t m) \cdot Y $  satisfies the conditions (a) and (b) in the definition of $T_3(R)$
is similar to that of Lemma 3.7 of \cite{C2} and the rest is similar to the above case (1).
Thus we skip them.
\end{proof}

 Let $\underline{G}$ be the stabilizer of $h$ in $\underline{M}^{\ast}$. It is an affine group subscheme of $\underline{M}^{\ast}$, defined over $A$.
 Thus we have the following theorem.
 \begin{Thm}\label{t38}
 The group scheme $\underline{G}$ is smooth, and $\underline{G}(R)=\mathrm{Aut}_{B\otimes_AR}(L\otimes_A R,h\otimes_A R)$ for any \'{e}tale $A$-algebra $R$.
 \end{Thm}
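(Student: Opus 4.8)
The plan is to reduce both assertions to the smoothness of $\rho$ established in Theorem \ref{t36} together with the matrix computations of Section \ref{mc}. First I would observe that, by construction, $\underline{G}$ is the stabilizer of the section $h\in\underline{H}(A)$ under the action of Theorem \ref{t34}, i.e. the scheme-theoretic fibre
\[
\underline{G}=\rho^{-1}(h)=\underline{M}^{\ast}\times_{\underline{H},\,h}\mathrm{Spec}(A),
\]
the base change of $\rho\colon\underline{M}^{\ast}\to\underline{H}$ along $h\colon\mathrm{Spec}(A)\to\underline{H}$. Since $\rho$ is a smooth morphism by Theorem \ref{t36} and smoothness is stable under base change, $\underline{G}\to\mathrm{Spec}(A)$ is smooth; it is non-empty, as it contains the identity section, and has relative dimension $\dim\mathrm{U}(V,h)$. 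On the generic fibre $\underline{M}^{\ast}\otimes_A F=\mathrm{Res}_{E/F}\mathrm{GL}_E(V)$ and the stabilizer of $h$ is $\mathrm{U}(V,h)$, so $\underline{G}\otimes_A F=\mathrm{U}(V,h)$.

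Next I would identify $\underline{G}(R)$ for an étale $A$-algebra $R$. Such an $R$ is a finite product of finite unramified extensions of $A$, hence flat over $A$, so the explicit flat-algebra descriptions of $\underline{M}(R)\supseteq\underline{M}^{\ast}(R)$ and of $\underline{H}(R)$ from Sections \ref{m} and \ref{h} are available, and
\[
\underline{G}(R)=\{\,m\in\underline{M}^{\ast}(R):h\circ m=h\,\}=\{\,m\in\underline{M}^{\ast}(R):\sigma({}^tm)\cdot h\cdot m=h\,\}.
\]
If $m$ lies in this set, then $m$ is an invertible $B\otimes_A R$-linear endomorphism of $L\otimes_A R$ preserving $h\otimes_A R$, so $m\in\mathrm{Aut}_{B\otimes_A R}(L\otimes_A R,h\otimes_A R)=\underline{G}^{\prime}(R)$; this is the easy inclusion. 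For the reverse inclusion I would take $g\in\underline{G}^{\prime}(R)$ and invoke Section \ref{mc}: its content is precisely that any such $g$ stabilizes $A_i\otimes_A R$, $B_i\otimes_A R$, $W_i\otimes_A R$, $X_i\otimes_A R$ for all $i$ and $Z_i\otimes_A R$ for even $i$, and induces the identity on $A_i\otimes_A R/B_i\otimes_A R$ for all $i$, on $W_i\otimes_A R/(X_i\cap Z_i)\otimes_A R$ for even $i$, and on $B_i^{\perp}\otimes_A R/A_i^{\perp}\otimes_A R$ for odd $i$, i.e. $g$ satisfies conditions (1)--(4) defining $\underline{M}(R)$. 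Applying the same to $g^{-1}\in\underline{G}^{\prime}(R)$ shows $g,g^{-1}\in\underline{M}(R)$, hence $g\in\underline{M}^{\ast}(R)$; and since $g$ preserves $h$ we get $h\circ g=h$, so $g\in\underline{G}(R)$. This yields $\underline{G}(R)=\underline{G}^{\prime}(R)=\mathrm{Aut}_{B\otimes_A R}(L\otimes_A R,h\otimes_A R)$.

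The main obstacle is this last inclusion — showing that an arbitrary $B\otimes_A R$-automorphism of $L\otimes_A R$ preserving $h$ automatically satisfies all the lattice-stabilization and identity-on-quotient conditions cutting out $\underline{M}$ — but this is exactly the case-by-case verification already carried out in Section \ref{mc}, so in the write-up it is a citation rather than new work. Finally, combining the smoothness of $\underline{G}$, the identification $\underline{G}\otimes_A F=\mathrm{U}(V,h)$ of generic fibres, and the computation of étale points with the uniqueness statement in Proposition 3.7 of \cite{GY}, one concludes that $\underline{G}$ is the smooth integral model of $\mathrm{U}(V,h)$ attached to $(L,h)$, which is what is needed for Section \ref{cv}.
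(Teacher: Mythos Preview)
Your proposal is correct and is essentially the argument the paper has in mind: the paper's proof consists of a citation to the analogous Theorem 3.8 in \cite{C2}, and what you have written is precisely that argument—smoothness of $\underline{G}=\rho^{-1}(h)$ by base change of the smooth morphism $\rho$ from Theorem \ref{t36}, and the identification of étale points via the lattice-stabilization and identity-on-quotient properties established in Section \ref{mc}.
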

\begin{proof}
The proof is similar to that of
Theorem 3.8 in \cite{C2} and so we skip it.
\end{proof}

As in \textit{Case 1} mentioned in the paragraph following Theorem 3.8 of \cite{C2},
 in the theorem, the equality holds only for  an \'{e}tale $A$-algebra $R$
since we obtain conditions defining $\underline{M}$ by considering properties of elements of
$\mathrm{Aut}_{B\otimes_AR}(L\otimes_A R,h\otimes_A R)$ for an \'{e}tale $A$-algebra $R$ (cf. Section \ref{mc}).
For example, let $(L, h)$ be the hermitian lattice of rank 1 as given in Appendix \ref{App:AppendixB}.
For simplicity, let $\pi^2=2$.
As a set, $\mathrm{Aut}_{B\otimes_AR}(L\otimes_A R,h\otimes_A R)$ is the same as
$\{(a, b):a,b\in R \textit{ and } a^2-2b^2=1\}$ for a flat $A$-algebra $R$.
Thus we cannot guarantee that $a-1$ is contained in the ideal $(2)$, which should be necessary
in order that $(a, b)$ is an element of $\underline{G}(R)$.

\section{The special fiber of the smooth integral model}\label{sf}

In this section, we will determine the structure of the special fiber $\tilde{G}$ of $\underline{G}$
by determining the maximal reductive quotient and the component group when $E/F$ satisfies \textit{Case 2},
 by adapting the approach of Section 4 of \cite{C1} and Section 4 of \cite{C2}.
From this section to the end, the identity matrix is denoted by id.

    \subsection{The reductive quotient of the special fiber}\label{red}

Assume that $i=2m$ is even.
Recall that $Z_i$ is the sublattice of $B_i$ such that $Z_i/\pi B_i$ is the radical of the quadratic form $\frac{1}{2^{m+1}}q$ mod 2 on $B_i/\pi B_i$,
where $\frac{1}{2^{m+1}}q(x)=\frac{1}{2^{m+1}}h(x,x)$.

\begin{Thm}\label{t43}
Assume that $i=2m$ is even.
Let $\bar{q}_i$ denote the nonsingular quadratic form $\frac{1}{2^{m+1}}q$ mod 2 on $B_i/Z_i$.
Then there exists a unique morphism of algebraic groups
$$\varphi_i:\tilde{G}\longrightarrow \mathrm{O}(B_i/Z_i, \bar{q}_i)_{\mathrm{red}}$$
defined over $\kappa$, where $\mathrm{O}(B_i/Z_i, \bar{q}_i)_{\mathrm{red}}$ is the reduced subgroup scheme of $\mathrm{O}(B_i/Z_i, \bar{q}_i)$,
such that for all \'{e}tale local $A$-algebras $R$ with residue field $\kappa_R$ and
every $\tilde{m} \in \underline{G}(R)$ with reduction $m\in \tilde{G}(\kappa_R)$,
$\varphi_i(m)\in \mathrm{GL}(B_i\otimes_AR/Z_i\otimes_AR)$ is induced by the action of $\tilde{m}$ on $L\otimes_AR$
(which preserves $B_i\otimes_AR$ and $Z_i\otimes_AR$ by the construction of $\underline{M}$).
\end{Thm}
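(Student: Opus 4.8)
The plan is to construct $\varphi_i$ over $\kappa$ in three stages, following the approach of Section~4 of \cite{C1} and Section~4 of \cite{C2}. Write $\tilde M^{\ast}=\underline M^{\ast}\otimes_A\kappa$, so $\tilde G=\underline G\otimes_A\kappa$ sits inside $\tilde M^{\ast}$. By the construction of $\underline M$ in Section~\ref{m}, every element of $\underline M^{\ast}(R)$ stabilizes $B_i\otimes_AR$ and, as $i$ is even, $Z_i\otimes_AR$, hence acts $R$-linearly on $(B_i/Z_i)\otimes_AR$. Since $\pi B_i\subseteq Z_i$, the $B$-module $B_i/Z_i$ is killed by $\pi$, hence by $2$, so it is a finite-dimensional $\kappa$-vector space; consequently, for a $\kappa$-algebra $R$ one has $(B_i/Z_i)\otimes_AR=(B_i/Z_i)\otimes_\kappa R$ and $\underline M^{\ast}(R)=\tilde M^{\ast}(R)$. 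Thus, restricting to $\kappa$-algebras, the action is a natural homomorphism $\tilde M^{\ast}(R)\to\mathrm{GL}(B_i/Z_i)(R)$, i.e.\ by Yoneda a homomorphism of $\kappa$-group schemes $\tilde M^{\ast}\to\mathrm{GL}(B_i/Z_i)$; restricting it to $\tilde G$ yields $\bar\psi\colon\tilde G\to\mathrm{GL}(B_i/Z_i)$.

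Next I would show that $\bar\psi$ factors through $\mathrm{O}(B_i/Z_i,\bar q_i)_{\mathrm{red}}$. Since $\underline G$ is smooth over $A$ by Theorem~\ref{t38}, $\tilde G$ is reduced and of finite type over the perfect field $\kappa$; hence it is enough to check that $\bar\psi(g)$ preserves $\bar q_i$ for every $g\in\tilde G(\bar\kappa)$, and then to use that a morphism from a reduced finite-type $\kappa$-scheme whose image is contained set-theoretically in a closed subscheme factors through the reduction of that subscheme. Given $g\in\tilde G(\bar\kappa)$, note that $\bar\kappa$ is the increasing union of the residue fields $\kappa_R$ of the \'etale local $A$-algebras $R$ (the finite unramified extensions of $A$); as $\tilde G$ is of finite type, $g$ comes from some $g_0\in\tilde G(\kappa_R)$, which lifts to $\tilde g\in\underline G(R)$ by smoothness. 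By Theorem~\ref{t38}, $\tilde g\in\mathrm{Aut}_{B\otimes_AR}(L\otimes_AR,h\otimes_AR)$, so $\tilde g$ preserves $h\otimes_AR$, hence the function $x\mapsto\tfrac1{2^{m+1}}h(x,x)$ on $B_i\otimes_AR$, hence the induced quadratic form $\bar q_i$ on $(B_i/Z_i)\otimes_AR=(B_i/Z_i)\otimes_\kappa\kappa_R$ (here $R/2R=\kappa_R$ because $2$ is a uniformizer of $A$ and $R$ is \'etale local over $A$). Since $\bar\psi(g)$ is by construction exactly this induced automorphism, $\bar\psi(g)\in\mathrm{O}(B_i/Z_i,\bar q_i)(\bar\kappa)$. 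This gives the desired $\varphi_i\colon\tilde G\to\mathrm{O}(B_i/Z_i,\bar q_i)_{\mathrm{red}}$, a homomorphism of $\kappa$-algebraic groups.

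The stated compatibility is then immediate from the construction: for an \'etale local $A$-algebra $R$ with residue field $\kappa_R$ and $\tilde m\in\underline G(R)$ with reduction $m\in\tilde G(\kappa_R)$, unwinding the definition of $\bar\psi$ identifies $\varphi_i(m)\in\mathrm{GL}(B_i/Z_i\otimes_\kappa\kappa_R)=\mathrm{GL}(B_i\otimes_AR/Z_i\otimes_AR)$ with the automorphism induced by the action of $\tilde m$ on $L\otimes_AR$. For uniqueness, any morphism $\tilde G\to\mathrm{O}(B_i/Z_i,\bar q_i)_{\mathrm{red}}$ with this property agrees with $\varphi_i$ on $\tilde G(\kappa_R)$ for every \'etale local $A$-algebra $R$, hence on $\tilde G(\bar\kappa)$ by the union above; since $\tilde G$ is reduced and of finite type over $\kappa$ and the target is separated, the two morphisms coincide.

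I expect the main obstacle to be the reduction step of the second paragraph, namely making sure that preservation of the hermitian form $h$ on $B_i\otimes_AR$ really descends compatibly to $\bar q_i$ on the quotient $B_i/Z_i$; this is precisely where the characteristic-$2$ features intervene — the definition of $Z_i/\pi B_i$ as the radical of $\tfrac1{2^{m+1}}q\bmod 2$, and the need to take $\mathrm{O}(B_i/Z_i,\bar q_i)_{\mathrm{red}}$ rather than the possibly non-smooth $\mathrm{O}(B_i/Z_i,\bar q_i)$ itself (reducedness of $\tilde G$ is what makes the factorization through the reduced subgroup automatic). The remaining points — that the action of $\underline M^{\ast}$ on $B_i/Z_i$ is genuinely given by a scheme morphism in spite of the formal-matrix description of $\underline M(R)$ for non-flat $R$, and that $\bar q_i$ is nonsingular so that $\mathrm{O}(B_i/Z_i,\bar q_i)_{\mathrm{red}}$ is a smooth closed subgroup scheme of $\mathrm{GL}(B_i/Z_i)$ over the perfect field $\kappa$ — are handled exactly as in \cite{C1} and \cite{C2}.
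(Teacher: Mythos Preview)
Your proposal is correct and follows essentially the same route as the paper, which defers the argument to Theorem~4.3 of \cite{C2} and then records only the explicit matrix description of the image of $m\in\tilde G(\kappa_R)$ under $\varphi_i$ (case by case according to the type of $L_i$). Those explicit formulas are used later in the paper (e.g.\ in the analysis of $\mathrm{Ker}\,\tilde\varphi$ and in Theorem~\ref{t45}), so while your conceptual argument suffices for the statement as written, you would still need to supply them to proceed.
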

\begin{proof}
The proof of this theorem is similar to that of Theorem 4.3 of \cite{C2}. 
Thus we only provide the image of an element $m$ of $\tilde{G}(\kappa_R)$ in
$\mathrm{O}(A_i/Z_i, \bar{q}_i)_{\mathrm{red}}(\kappa_R)$, where $R$ is an \'{e}tale local $A$-algebra with $\kappa_R$ as its residue field.

Recall that $\underline{G}$ is a closed subgroup scheme of $\underline{M}^{\ast}$ and $\tilde{G}$ is a closed subgroup scheme of $\tilde{M}$,
 where $\tilde{M}$ is the special fiber of $\underline{M}^{\ast}$.
 Thus we may consider an element of  $\tilde{G}(\kappa_R)$ as an element of $\tilde{M}(\kappa_R)$.
 Based on Section \ref{m}, an element $m$ of $\tilde{G}(\kappa_R)$ may be written as, say,
  $(m_{i,j}, s_i \cdots w_i)$ and it has the following formal matrix description:
  $$m= \begin{pmatrix} \pi^{max\{0,j-i\}}m_{i,j} \end{pmatrix} \textit{ together with } z_i^{\ast}, m_{i,i}^{\ast}, m_{i,i}^{\ast\ast}.$$
Here, if $i$ is even and $L_i$ is \textit{of type} $\textit{I}^o$ or \textit{of type} $\textit{I}^e$, then
 $$m_{i,i}=\begin{pmatrix} s_i&\pi y_i\\ \pi v_i&1+\pi z_i \end{pmatrix} \textit{or}
\begin{pmatrix} s_i&r_i&\pi t_i\\ \pi y_i&1+\pi x_i&\pi z_i\\ v_i&u_i&1+\pi w_i \end{pmatrix},$$
 respectively,
where
 $s_i\in M_{(n_i-1)\times (n_i-1)}(B\otimes_A\kappa_R)$ (resp.  $s_i\in M_{(n_i-2)\times (n_i-2)}(B\otimes_A\kappa_R)$), etc.

We can write  $m_{i, i}=(m_{i, i})_1+\pi\cdot (m_{i, i})_2$ when $L_i$ is \textit{of type II} and
for each block of $m_{i,i}$ when $L_i$ is \textit{of type I},  $s_i=(s_i)_1+\pi\cdot (s_i)_2$ and so on.
We can also write $m_{i, j}=(m_{i, j})_1+\pi\cdot (m_{i, j})_2$ when $i\neq j$.
Here,
$(m_{i, i})_1, (m_{i, i})_2\in M_{n_i\times n_i}(\kappa_R) \subset M_{n_i\times n_i}(B\otimes_A\kappa_R)$ when $L_i$ is \textit{of type II}
 and so on, and $\pi$ stands for $\pi\otimes 1\in B\otimes_A\kappa_R$.
Note that the description of the multiplication in $\tilde{M}(\kappa_R)$ given in Section \ref{m} forces   $(m_{i,i})_1$
(when $L_i$ is \textit{of type II}) and $(s_i)_1$ to be invertible.

Then an element $m$ of $\tilde{G}(\kappa_R)$ maps to

\[
\left\{
  \begin{array}{l l}
  \begin{pmatrix} (s_i)_1&0\\
\mathcal{X}_i&1 \end{pmatrix} & \quad \textit{if $L_i$ is \textit{of type} $\textit{I}^o$};\\
  \begin{pmatrix} (s_i)_1&0\\
\mathcal{Y}_i&1 \end{pmatrix} & \quad \textit{if $L_i$ is \textit{of type} $\textit{I}^e$};\\
  \begin{pmatrix} (m_{i,i})_1&0\\
\mathcal{Z}_i&1 \end{pmatrix} & \quad \textit{if $L_i$ is \textit{bound of type II}};\\
(m_{i,i})_1   & \quad \textit{if $L_i$ is \textit{free of type II}}.
    \end{array} \right.
\]

Here,
\[
\left\{
  \begin{array}{l}
 \mathcal{X}_i=(v_i)_1+(\delta_{i-2}e_{i-2}\cdot (m_{i-2, i})_1+\delta_{i+2}e_{i+2}\cdot (m_{i+2, i})_1)\tilde{e_i};\\
 \mathcal{Y}_i=((y_i)_1+\sqrt{\bar{\gamma}_i}(v_i)_1)+(\delta_{i-2}e_{i-2}\cdot (m_{i-2, i})_1
 +\delta_{i+2}e_{i+2}\cdot (m_{i+2, i})_1)\tilde{e_i};\\
 \mathcal{Z}_i=\delta_{i-1}^{\prime}e_{i-1}\cdot (m_{i-1, i})_1+\delta_{i+1}^{\prime}e_{i+1}\cdot (m_{i+1, i})_1+
 \delta_{i-2}e_{i-2}\cdot (m_{i-2, i})_1+\delta_{i+2}e_{i+2}\cdot (m_{i+2, i})_1,
 \end{array} \right.
\]

where

\begin{enumerate}
\item When $j$ is even, $e_{j}=(0,\cdots, 0, 1)$ (resp. $e_j=(0,\cdots, 0, 1, 0)$) of size $1\times n_{j}$
if $L_{j}$ is \textit{of type} $\textit{I}^o$ (resp. \textit{of type} $\textit{I}^e$).
\item $\tilde{e_i}=\begin{pmatrix} \mathrm{id}\\0 \end{pmatrix}$ of size $n_i\times (n_{i}-1)$ (resp. $n_i\times (n_{i}-2)$), where $\mathrm{id}$ is the identity matrix of size $(n_i-1)\times (n_{i}-1)$ (resp. $(n_i-2)\times (n_{i}-2)$) if $L_{i}$ is \textit{of type} $\textit{I}^o$ (resp. \textit{of type} $\textit{I}^e$).
\item 
 $\bar{\gamma}_i$ is as explained in Remark \ref{r33}.(2).
\item $
\delta_{j}^{\prime} = \left\{
  \begin{array}{l l}
  1    & \quad  \textit{if $j$ is odd and $L_j$ is \textit{free of type I}};\\
  0    &   \quad  \textit{otherwise}.
    \end{array} \right.
$
\item When $j$ is odd,   $e_{j}=(0,\cdots, 0, 1)$  of size $1\times n_{j}$.
 \end{enumerate}
\end{proof}

 Note that  if the dimension of $B_i/Z_i$ is even and positive,
 then $\mathrm{O}(B_i/Z_i, \bar{q}_i)_{\mathrm{red}} (= \mathrm{O}(B_i/Z_i, \bar{q}_i))$  is disconnected.
  If the dimension of $B_i/Z_i$ is odd, then
    $\mathrm{O}(B_i/Z_i, \bar{q}_i)_{\mathrm{red}} (= \mathrm{SO}(B_i/Z_i, \bar{q}_i))$  is connected.
The dimension of $B_i/Z_i$, as a $\kappa$-vector space, is as follows:
\[
\left\{
  \begin{array}{l l}
  n_i-1 & \quad  \textit{if $L_i$ is \textit{of type} $\textit{I}^e$};\\
  n_i & \quad  \textit{if $L_i$ is \textit{of type} $\textit{I}^o$ or \textit{free of type II}};\\
  n_i+1 & \quad \textit{if $L_i$ is \textit{bound of type II}}.
    \end{array} \right.
\]
\textit{   }

We next assume that $i=2m-1$ is odd.
Recall that $Y_i$ is the sublattice $B_i$ such that $Y_i/\pi A_i$ is the radical of the alternating bilinear form
$\frac{1}{\pi}\cdot\frac{1}{2^{m-1}} h$ mod $\pi$ on $B_i/\pi A_i$.
\begin{Lem}\label{l42}
Let $i$ be odd.
Then  each element of $\underline{M}(R)$, for a flat $A$-algebra $R$, preserves $Y_i\otimes_A R$.
\end{Lem}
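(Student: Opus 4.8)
The plan is to describe $Y_i$ intrinsically in terms of the lattices $B_i$ and $B_i^{\perp}$ that already appear in the definition of $\underline{M}$, and then to deduce stability of $Y_i\otimes_A R$ from the stability conditions built into $\underline{M}(R)$. No new construction is needed; the content lies in pinning down the correct description of $Y_i$.

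First I would unwind the definition of $Y_i$. Write $i=2m-1$; since $s(A_i)=(\pi^i)$, the lattice $Y_i=Y(A_i)$ is the sublattice of $B_i$ for which $Y_i/\pi A_i$ is the radical of the alternating form $\frac1\pi\cdot\frac1{2^{m-1}}h \bmod \pi$ on $B_i/\pi A_i$. Using $\pi^{2}=2\delta$ with $\delta\in A^{\times}$, one sees that for $y,z\in B_i$ the quantity $\frac{1}{\pi 2^{m-1}}h(y,z)$ lies in $B$ (because $h(B_i,B_i)\subseteq h(A_i,A_i)\subseteq\pi^iB$), and that it lies in $\pi B$ precisely when $h(y,z)\in\pi^{2}\cdot 2^{m-1}B=\pi^{2m}B=\pi^{i+1}B$. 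Hence
$$Y_i=\{\,y\in B_i : h(y,B_i)\subseteq\pi^{i+1}B\,\}=B_i\cap\pi^{i+1}B_i^{\perp},$$
where the second equality uses $\{\,x\in V:h(x,B_i)\subseteq\pi^{i+1}B\,\}=\pi^{i+1}B_i^{\perp}$, valid because $i+1$ is even so $\sigma(\pi^{i+1})=\pi^{i+1}$.

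It then remains to check that every $m\in\underline{M}(R)$ preserves each of the two lattices on the right-hand side. Preservation of $B_i\otimes_A R$ is condition (1) in the definition of $\underline{M}$. For $B_i^{\perp}$: since $B_i\subseteq A_i$ we have $A_i^{\perp}\subseteq B_i^{\perp}$, and condition (4) says $m$ induces the identity on $B_i^{\perp}\otimes_A R/A_i^{\perp}\otimes_A R$ for odd $i$; in particular $m$ maps $B_i^{\perp}\otimes_A R$ into itself, and since $m$ is $B\otimes_A R$-linear it also preserves $\pi^{i+1}B_i^{\perp}\otimes_A R$. Because $R$ is flat over $A$, tensoring with $R$ commutes with the intersection of the two $A$-lattices $B_i$ and $\pi^{i+1}B_i^{\perp}$ inside $V\otimes_A R$, so $Y_i\otimes_A R=(B_i\otimes_A R)\cap(\pi^{i+1}B_i^{\perp}\otimes_A R)$; as $m$ preserves both factors of the intersection, it preserves $Y_i\otimes_A R$. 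The only step requiring attention is the bookkeeping of powers of $\pi$ against powers of $2$ via $\pi^{2}=2\delta$, together with the (trivial) parity observation that makes $\pi^{i+1}$ a $\sigma$-fixed scalar; beyond that I do not foresee any genuine obstacle.
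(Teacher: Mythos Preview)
Your proof is correct and follows essentially the same approach as the paper: the paper also establishes $Y_i=\pi^{i+1}B_i^{\perp}\cap B_i$ and then says the lemma follows directly. Your version is more explicit about why stability of the intersection follows (invoking conditions (1) and (4) in the definition of $\underline{M}$, the $B\otimes_A R$-linearity to pass to $\pi^{i+1}B_i^{\perp}$, and flatness to commute $\otimes_A R$ with the intersection), whereas the paper leaves those points implicit.
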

\begin{proof}
We claim that  $Y_i=\pi^{i+1}B_i^{\perp}\cap B_i$. Then the lemma follows from this directly.
The inclusion $Y_i \subseteq \pi^{i+1}B_i^{\perp}\cap B_i$ is clear by the definition of $Y_i$.
For the other direction, 
we choose $a=\pi^{i+1}b^{\perp} \in \pi^{i+1}B_i^{\perp}\cap B_i$ where $b^{\perp}\in B_i^{\perp}$.
Then $h(a, b')=\pi^{i+1}h(b^{\perp}, b')\in \pi^{i+1}B$ for any $b'\in B_i$.
This completes the proof.
\end{proof}

\begin{Thm}\label{t44}
Assume that $i=2m-1$ is odd.
Let $h_i$ denote the nonsingular alternating bilinear form $\frac{1}{\pi}\cdot\frac{1}{2^{m-1}} h$ mod $\pi$ on $B_i/Y_i$.
Then there exists a unique morphism of algebraic groups
$$\varphi_i:\tilde{G}\longrightarrow \mathrm{Sp}(B_i/Y_i, h_i)$$
defined over $\kappa$ such that for all \'{e}tale local $A$-algebras $R$ with residue field $\kappa_R$ and
every $\tilde{m} \in \underline{G}(R)$ with reduction $m\in \tilde{G}(\kappa_R)$,
$\varphi_i(m)\in \mathrm{GL}(B_i\otimes_AR/Y_i\otimes_AR)$ is induced by the action of $\tilde{m}$ on $L\otimes_AR$
(which preserves $B_i\otimes_AR$ and $Y_i\otimes_AR$ by Lemma \ref{l42}).
\end{Thm}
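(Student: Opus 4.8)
The plan is to run the same argument as for Theorem~\ref{t43}, in its symplectic incarnation (as in Section~4 of \cite{C1} and the corresponding result in Section~4 of \cite{C2}). First I observe that $\underline{M}$ stabilizes $B_i\otimes_A R$ by condition~(1) in the definition of $\underline{M}$ and stabilizes $Y_i\otimes_A R$ by Lemma~\ref{l42} (this being a closed condition that holds on all flat $A$-algebras, it holds universally since $A$ is a discrete valuation ring and $\underline{M}$ is $A$-flat). Because $\pi A_i\subseteq Y_i$ and $B_i\subseteq A_i$, we have $\pi B_i\subseteq Y_i$, so $B_i/Y_i$ is a $\kappa$-vector space; hence the action of $\underline{M}$ on $L$ descends, over the special fiber, to a morphism of $\kappa$-group schemes $\tilde{M}\to\mathrm{GL}(B_i/Y_i)$, which I restrict to the closed subgroup $\tilde{G}$. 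It remains to check that this morphism factors through $\mathrm{Sp}(B_i/Y_i,h_i)$, and that the factorization is the unique morphism compatible with the $L\otimes_A R$-action.

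For the factorization I argue on points. Let $R$ be an \'etale local $A$-algebra with residue field $\kappa_R$ and $m\in\tilde{G}(\kappa_R)$. By smoothness of $\underline{G}$ (Theorem~\ref{t38}), $m$ lifts to some $\tilde{m}\in\underline{G}(R)$, an honest isometry of $(L\otimes_A R,h\otimes_A R)$ stabilizing $B_i\otimes_A R$; thus $\frac{1}{\pi}\cdot\frac{1}{2^{m-1}}h(\tilde{m}x,\tilde{m}y)=\frac{1}{\pi}\cdot\frac{1}{2^{m-1}}h(x,y)$ for $x,y\in B_i\otimes_A R$, and reducing mod $\pi$ shows that the automorphism induced on $B_i/Y_i$ preserves $h_i$ (the identity depending only on reductions, it is independent of the chosen lift). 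So $\varphi_i(m)\in\mathrm{Sp}(B_i/Y_i,h_i)(\kappa_R)$ for every such $m$. Since $\tilde{G}$ is smooth over $\kappa$, hence reduced, and the points arising from \'etale local $A$-algebras (with $\kappa_R$ ranging over all finite extensions of $\kappa$) are Zariski-dense in $\tilde{G}$, while $\mathrm{Sp}(B_i/Y_i,h_i)$ is a closed, hence separated, subscheme of $\mathrm{GL}(B_i/Y_i)$, the scheme morphism $\tilde{G}\to\mathrm{GL}(B_i/Y_i)$ factors through $\mathrm{Sp}(B_i/Y_i,h_i)$; the same density-plus-reducedness argument yields uniqueness, since the required value of $\varphi_i$ on each such point is forced.

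As with Theorem~\ref{t43}, the substantive part of the write-up is to record $\varphi_i(m)$ explicitly in a $\kappa$-basis of $B_i/Y_i$ in terms of the block data $(m_{i,j},s_i\cdots w_i)$ of $m\in\tilde{G}(\kappa_R)$. Using the structure theorem (Theorem~\ref{210}, Remark~\ref{r211}) and the explicit descriptions of $A_i$, $B_i$, $Y_i$ recalled in Remark~2.11 of \cite{C2}: the hyperbolic summands $H(i)$ of $L_i$ already carry a nonsingular alternating form mod $\pi$ after dividing by $\frac{1}{\pi}\cdot\frac{1}{2^{m-1}}$ (using $\pi^2=2\delta$ and $\delta\equiv 1$ mod $2$), so they contribute split symplectic planes to $B_i/Y_i$; the rank-$2$ block $K$ ($=H(1)$ when $L_i$ is \textit{of type II} or \textit{bound of type I}, $=A(4a,2\delta,\pi)$ when $L_i$ is \textit{free of type I}) contributes a further split plane, resp. drops the dimension by two, and one checks $\dim_\kappa(B_i/Y_i)$ is even in every case. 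In the basis so obtained, $\varphi_i(m)$ is $(s_i)_1$ (resp. $(m_{i,i})_1$) corrected by lower-triangular-type contributions of the off-diagonal blocks $m_{i,i\pm 1}$, $m_{i,i\pm 2}$, built exactly as the quantities $\mathcal{X}_i,\mathcal{Y}_i,\mathcal{Z}_i$ are built in the statement of Theorem~\ref{t43}; writing these contributions out, together with the \textit{bound}-type equations linking $L_i$ to $L_{i\pm 1}$, is the bulk of the proof.

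The main obstacle I anticipate is precisely this bookkeeping: pinning down how the neighbouring Jordan components $L_{i\pm 1}$ (of opposite parity, type I) and $L_{i\pm 2}$ contribute to the quotient $B_i/Y_i$, and verifying the mod-$\pi$ congruences for $\frac{1}{\pi}\cdot\frac{1}{2^{m-1}}h$ that identify the radical datum cut out by $Y_i$. These congruences are delicate because the basic identities $\pi^2=2\delta$, $\delta\equiv 1$ mod $2$, and $\pi+\sigma(\pi)\in 4A$ all interact — this is the source of the extra technical difficulty of \textit{Case 2} over \textit{Case 1} — but once the combinatorics of the Jordan blocks is under control, every remaining step is formal and strictly parallel to Theorem~\ref{t43} and the corresponding results of \cite{C1} and \cite{C2}.
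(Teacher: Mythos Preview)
Your general strategy --- construct the morphism on $\tilde{M}$ via the action on $B_i/Y_i$, check on \'etale local lifts that it lands in $\mathrm{Sp}$, and invoke smoothness/density for uniqueness --- is exactly the paper's, which in turn defers to the proof of Theorem~\ref{t43} and its analogue in \cite{C2}.

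The one point where you go astray is the explicit formula. You expect $\varphi_i(m)$ to be $(s_i)_1$ or $(m_{i,i})_1$ ``corrected by lower-triangular-type contributions of the off-diagonal blocks $m_{i,i\pm 1}$, $m_{i,i\pm 2}$'' built like $\mathcal{X}_i,\mathcal{Y}_i,\mathcal{Z}_i$ in the even case. In fact there are \emph{no} such corrections here: the paper's answer is simply
\[
\varphi_i(m)=\begin{cases}(m_{i,i})_1 & \text{if $L_i$ is of type II or bound of type I},\\ (s_i)_1 & \text{if $L_i$ is free of type I}.\end{cases}
\]
The reason the even case acquires the extra row $\mathcal{X}_i,\mathcal{Y}_i,\mathcal{Z}_i$ is that $\dim_\kappa(B_i/Z_i)$ can exceed $n_i$ (it is $n_i+1$ when $L_i$ is bound of type II), so the quotient genuinely mixes $L_i$ with its neighbours. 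For odd $i$ one always has $\dim_\kappa(B_i/Y_i)\le n_i$ (namely $n_i$ or $n_i-2$), and a direct check from Remark~2.11 of \cite{C2} shows that a basis of $B_i/Y_i$ can be taken entirely inside the image of $L_i$; the induced map is then read off from the diagonal block alone. So the bookkeeping you anticipate as the ``main obstacle'' essentially evaporates in the odd case, and the proof is shorter than Theorem~\ref{t43}, not harder.
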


\begin{proof}
As in the above theorem,
 we only provide the image of an element $m$ of $\tilde{G}(\kappa_R)$ into
$\mathrm{Sp}(B_i/Y_i, h_i)$, where $R$ is an \'{e}tale local $A$-algebra with $\kappa_R$ as its residue field.

As in Theorem \ref{t43},  an element $m$ of $\tilde{G}(\kappa_R)$ may be written as, say,
  $(m_{i,j}, s_i \cdots w_i)$ and it has the following formal matrix description:
$$m= \begin{pmatrix} \pi^{max\{0,j-i\}}m_{i,j} \end{pmatrix}  \textit{ together with } z_i^{\ast}, m_{i,i}^{\ast}, m_{i,i}^{\ast\ast}.$$
Here, if $i$ is odd and $L_i$ is \textit{free of type I}, then
 $$m_{i,i}=\begin{pmatrix} s_i&\pi r_i&t_i\\  y_i&1+\pi x_i& u_i\\\pi  v_i&\pi z_i&1+\pi w_i \end{pmatrix},$$
where
 $s_i\in M_{(n_i-2)\times (n_i-2)}(B\otimes_A\kappa_R)$, etc.
If $i$ is odd and  $L_i$ is \textit{of type II} or \textit{bound of type I}, then
$m_{i,i}\in M_{n_i\times n_i}(B\otimes_A\kappa_R)$.

We can write  $m_{i, i}=(m_{i, i})_1+\pi\cdot (m_{i, i})_2$ when $L_i$ is \textit{of type II} or \textit{bound of type I} and
for each block of $m_{i,i}$ when $L_i$ is \textit{free of type I},  $s_i=(s_i)_1+\pi\cdot (s_i)_2$ and so on.
Here,
$(s_i)_1, (s_i)_2\in M_{(n_i-2)\times (n_i-2)}(\kappa_R) \subset M_{(n_i-2)\times (n_i-2)}(B\otimes_A\kappa_R)$ when $L_i$ is \textit{free of type I}
 and so on, and $\pi$ stands for $\pi\otimes 1\in B\otimes_A\kappa_R$.
Note that the description of the multiplication in $\tilde{M}(\kappa_R)$ given in Section \ref{m} forces   $(m_{i,i})_1$
(when $L_i$ is \textit{of type II} or  \textit{bound of type I}) and $(s_i)_1$ to be invertible.

Then $m$   maps to 
\[
\left\{
  \begin{array}{l l}
  (m_{i,i})_1 & \quad  \textit{if $L_i$ is \textit{of type II} or  \textit{bound of type I}};\\
  (s_i)_1 & \quad \textit{if $L_i$ is \textit{free of type I}}.
    \end{array} \right.
\]
\end{proof}

Note that the dimension of $B_i/Y_i$, as a $\kappa$-vector space, is as follows:
\[
\left\{
  \begin{array}{l l}
  n_i & \quad  \textit{if $L_i$ is \textit{of type II} or  \textit{bound of type I}};\\
  n_i-2 & \quad \textit{if $L_i$ is \textit{free of type I}}.
    \end{array} \right.
\]

\begin{Thm}\label{t45}
The morphism $\varphi$ defined by
$$\varphi=\prod_i \varphi_i : \tilde{G} ~ \longrightarrow  ~\prod_{i:even} \mathrm{O}(B_i/Z_i, \bar{q}_i)_{\mathrm{red}}\times \prod_{i:odd} \mathrm{Sp}(B_i/Y_i, h_i)$$
is surjective.
\end{Thm}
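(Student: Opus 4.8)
The plan is to pass to the algebraic closure $\bar\kappa$ of $\kappa$ and to show that $\varphi$ is surjective on $\bar\kappa$-points. Since $\varphi$ is a homomorphism of algebraic groups over $\kappa$, its image is a closed subgroup $\bar H$ of the target $P=\prod_{i\ \mathrm{even}}\mathrm{O}(B_i/Z_i,\bar q_i)_{\mathrm{red}}\times\prod_{i\ \mathrm{odd}}\mathrm{Sp}(B_i/Y_i,h_i)$, so it suffices to exhibit enough $\bar\kappa$-points of $\tilde G$ to force $\bar H=P$. The first step is a decoupling reduction. Because $h=\bigoplus_j\pi^jh_j$ is the orthogonal direct sum attached to the Jordan splitting $L=\bigoplus_jL_j$, every auxiliary lattice $A_j,B_j,W_j,X_j,Y_j,Z_j$ respects this splitting; hence, for a fixed index $i$, a matrix that equals the identity outside the $(i,i)$-block and whose $(i,i)$-block is an isometry of $(L_i,h_i)$ obeying the shape constraints of Section \ref{m} with all cross-block terms ($k_{i\pm2,i}$, $m_{i\pm1,i}$, and so on) set equal to zero defines an element of $\tilde G(\bar\kappa)$. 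Such block-supported elements form a subgroup of $\tilde G(\bar\kappa)$, and by the explicit formulas of Theorems \ref{t43} and \ref{t44} the restriction of $\varphi$ to this subgroup sends it to $\{\mathrm{id}\}\times\cdots\times\mathrm{im}(\varphi_i)\times\cdots\times\{\mathrm{id}\}$; thus $\bar H\supseteq\prod_i\mathrm{im}(\varphi_i)$, and it remains to prove, for each $i$ separately, that $\mathrm{im}(\varphi_i)=G_i(\bar\kappa)$, where $G_i$ is the $i$-th factor of $P$.

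For each $i$ I would argue by generators. Over the algebraically closed field $\bar\kappa$, the symplectic group $\mathrm{Sp}(B_i/Y_i,h_i)$ is generated by its transvections $x\mapsto x+c\,h_i(x,v)v$, and the group of $\bar\kappa$-points of $\mathrm{O}(B_i/Z_i,\bar q_i)_{\mathrm{red}}$ is generated by the reflections $t_v: x\mapsto x+\bar q_i(v)^{-1}b_i(x,v)v$ with $\bar q_i(v)\ne 0$, by Dieudonn\'e's generation theorem (whose only exceptional case requires the base field to be $\mathbb{F}_2$, and the degenerate cases with $G_i$ trivial are vacuous). Since $\varphi_i$ is a homomorphism whose image is the subgroup of $G_i(\bar\kappa)$ generated by the images of the block-supported isometries of $(L_i,h_i)$, it is enough to exhibit, for each of the above generators, one explicit $B$-linear isometry of $(L_i\otimes_AF,h_i)$ lying in the integral model attached by this section to the single $\pi^i$-modular lattice $L_i$ — written in the basis of Theorem \ref{210}, with $h_i$ in the normal form of Remark \ref{r33}.(1) — whose reduction modulo $\pi$, computed exactly as in Remark \ref{r35} and Theorems \ref{t43}--\ref{t44}, is the prescribed transvection or reflection. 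This already handles the non-identity component in the only disconnected case, namely $B_i/Z_i$ of even positive dimension, which occurs precisely when $L_i$ is \textit{free of type II}. The symplectic transvections are realized by short explicit matrices; the orthogonal reflections are realized by the standard ``Eichler''-type one-parameter isometries, adjusted so as to respect the prescribed form of the diagonal block (the entries $1+\pi z_i$, $1+\pi w_i$, etc.).

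The main obstacle is precisely this last verification, in the orthogonal case and in residue characteristic $2$. One must keep track of the exact normalization $\bar q_i=\frac{1}{2^{m+1}}q\bmod 2$ and of the parameter $\gamma_i$, and then use the identities special to \textit{Case 2} — $\pi^2=2\delta$ with $\delta\equiv 1\bmod 2$, $\pi+\sigma(\pi)\in 4A$, and $\xi=\pi\sigma(\pi)$ — to check that each chosen lift satisfies \emph{all} of the defining conditions of $\underline M$ and fixes $h$: the prescribed shape of $m_{i,i}$, the constraint $z_i+\delta_{i-2}k_{i-2,i}+\delta_{i+2}k_{i+2,i}\in(\pi)$ for even $i$, and the conditions on $m_{i,i}^{\ast}$ and $m_{i,i}^{\ast\ast}$ for odd $i$. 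This is where \textit{Case 2} is genuinely more delicate than the corresponding step of \cite{C2}, and the computation must be organized according to the type of $L_i$: $I^o$, $I^e$, and \textit{of type II} for $i$ even, and \textit{free of type I}, \textit{bound of type I}, and \textit{of type II} for $i$ odd. Once $\mathrm{im}(\varphi_i)=G_i(\bar\kappa)$ has been established for every $i$, assembling the block-supported elements produced by the decoupling step yields all of $\prod_iG_i(\bar\kappa)=P(\bar\kappa)$, so $\bar H=P$ and $\varphi$ is surjective.
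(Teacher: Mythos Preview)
Your decoupling-and-generators strategy diverges from the paper's, which instead leans on the dimension equality
\[
\dim \tilde G \;=\; \dim \mathrm{Ker}\,\varphi \;+\; \sum_{i\ \mathrm{even}} \dim \mathrm{O}(B_i/Z_i,\bar q_i)_{\mathrm{red}} \;+\; \sum_{i\ \mathrm{odd}} \dim \mathrm{Sp}(B_i/Y_i,h_i)
\]
(Equation~(\ref{e41}), proved via Lemma~\ref{l46} in Appendix~\ref{App:AppendixA}). This forces $\mathrm{Im}\,\varphi$ to contain the identity component of the target, so the paper only has to hit one element in each non-identity component; that happens only when $L_i$ is \textit{free of type II} with $i$ even, and a single explicit rank-two element with nontrivial Dickson invariant suffices. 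Your plan would trade that long kernel computation for explicit lifts of all generators, which is an appealing exchange in principle.

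However, your decoupling step breaks down when $L_i$ is \textit{bound of type II} with $i$ even. In that case the target factor is $\mathrm{SO}(n_i+1,\bar q_i)$, and by the formula in Theorem~\ref{t43} the image of a block-supported element has the shape
\[
\begin{pmatrix} (m_{i,i})_1 & 0 \\ \mathcal{Z}_i & 1 \end{pmatrix},
\qquad
\mathcal{Z}_i \;=\; \delta_{i-1}' e_{i-1}\cdot (m_{i-1,i})_1 + \delta_{i+1}' e_{i+1}\cdot (m_{i+1,i})_1 + \delta_{i-2} e_{i-2}\cdot (m_{i-2,i})_1 + \delta_{i+2} e_{i+2}\cdot (m_{i+2,i})_1.
\]
Every term in $\mathcal{Z}_i$ is a cross-block entry, so your block-diagonal elements all have $\mathcal{Z}_i=0$ and land in the proper subgroup $\bigl\{\bigl(\begin{smallmatrix} A & 0 \\ 0 & 1 \end{smallmatrix}\bigr)\bigr\}\subsetneq \mathrm{SO}(n_i+1,\bar q_i)$. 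No amount of lifting generators of the single-lattice group for $L_i$ will produce the missing direction, because that extra dimension of $B_i/Z_i$ genuinely comes from the adjacent type~I Jordan components. (Put differently: when you ``consider $L_i$ alone'', it ceases to be bound and its $B_i/Z_i$ shrinks to dimension $n_i$, so you are not even mapping to the right group.) The paper's dimension argument is exactly what absorbs these cross-block contributions without having to construct them by hand.

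A secondary issue is that even in the unproblematic cases you do not actually construct the lifts; you assert that Eichler-type isometries ``adjusted so as to respect the prescribed form of the diagonal block'' will do, but in residue characteristic~$2$ with the congruence constraints $z_i\in(\pi)$, $m_{i,i}^\ast$, $m_{i,i}^{\ast\ast}$ of Section~\ref{m}, this is precisely the delicate verification, and the proposal stops short of it.
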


\begin{proof}
 Let us first prove the theorem under the assumption that
 \begin{equation}\label{e41}
\textit{dim $\tilde{G}$ = dim $\mathrm{Ker~}\varphi$   + $\sum_{i:\mathrm{even}}$ (dim $\mathrm{O}(B_i/Z_i, \bar{q}_i)_{\mathrm{red}}$)
  + $\sum_{i:\mathrm{odd}}$ (dim $\mathrm{Sp}(B_i/Y_i, h_i)$).}
    \end{equation}
  This equation will be proved in Appendix \ref{App:AppendixA}.
Thus $\mathrm{Im~}\varphi$ contains the identity component of
$\prod_{i:even} \mathrm{O}(B_i/Z_i, \bar{q}_i)_{\mathrm{red}}\times \prod_{i:odd} \mathrm{Sp}(B_i/Y_i, h_i)$.
Here $\mathrm{Ker~}\varphi$ denotes the kernel of $\varphi$ and $\mathrm{Im~}\varphi$ denotes the image of $\varphi.$
Note that it is well known that the image of a homomorphism of algebraic groups is a closed subgroup.

Recall from Section \ref{m} that a matrix form of an element of $\tilde{G}(R)$ for a $\kappa$-algebra $R$ is written $(m_{i,j}, s_i \cdots w_i)$
with the formal matrix interpretation
$$m= \begin{pmatrix} \pi^{max\{0,j-i\}}m_{i,j} \end{pmatrix}  \textit{ together with } z_i^{\ast}, m_{i,i}^{\ast}, m_{i,i}^{\ast\ast}.$$
We represent the given hermitian form  $h$ by a hermitian matrix $\begin{pmatrix} \pi^{i}\cdot h_i\end{pmatrix}$
   with $\pi^{i}\cdot h_i$ for the $(i,i)$-block and $0$ for the remaining blocks, as in Remark \ref{r33}.(1).

 Let $\mathcal{H}$ be the set of even integers $i$ such that $\mathrm{O}(B_i/Z_i, \bar{q_i})_{\mathrm{red}}$ is disconnected.
Notice that $\mathrm{O}(B_i/Z_i, \bar{q_i})_{\mathrm{red}}$ is disconnected exactly when $L_i$ with $i$ even is \textit{free of type II}.
We  first prove that $ \varphi_i$, for such an even integer $i$, is surjective.
We prove this by a series of reductions, after which we will be able to assume that $L$ is of rank two.

For such an even integer $i$  with a \textit{free of type II} lattice $L_i$, we define the closed  scheme $H_i$ of $\tilde{G}$ by the equations $m_{j,k}=0$ if $ j\neq k$, and $m_{j,j}=\mathrm{id}$ if $j \neq i$.
An element of $H_i(R)$ for a $\kappa$-algebra $R$ can be represented by a matrix of the form
$$\begin{pmatrix} id&0& & \ldots& & &0\\ 0&\ddots&& & & &\\ & &id& & & & \\  \vdots & & &m_{i,i} & & &\vdots
\\ & & & & id & & \\  & & & & &\ddots &0 \\ 0& & &\ldots & &0 &id \end{pmatrix}
\textit{ with $z_j^{\ast}=0, m_{j,j}^{\ast}=0, m_{j,j}^{\ast\ast}=0.$}$$
Obviously, $H_i$ has a group scheme structure.
We claim that $\varphi_i$ is surjective from  $H_i$ to
$\mathrm{O}(B_i/Z_i, \bar{q_i})_{\mathrm{red}}$ (recall that $B_i=A_i$ and $Z_i=X_i$ since $L_i$ is \textit{free of type II}).
Note that equations defining $H_i$ are induced by the formal matrix equation $$\sigma({}^tm_{i,i})(\pi^{i}\cdot h_i)m_{i,i}=\pi^{i}\cdot h_i$$
 which is interpreted as in Remark \ref{r35}.
 We emphasize that, in this formal matrix equation, we work with $m_{i,i}$, not $m$, because of the description of $H_i$.
Note that none of the congruence conditions mentioned in Section \ref{mc} involves any entry from $m_{i,i}$.

On the other hand, let us consider the hermitian lattice $L_i$ independently as a $\pi^i$-modular lattice.
Since there is only one non-trivial Jordan component for this lattice and $i$ is even, the smooth integral model associated to $L_i$ is determined by  the following formal matrix equation which is interpreted as in Remark \ref{r35}:
$$\sigma({}^tm)(\pi^{i}\cdot h_i)m=\pi^{i}\cdot h_i,$$
where $m$ is an $(n_i\times n_i)$-matrix and is not subject to any congruence condition.

We consider the map from $H_i$ to the special fiber  of the smooth integral model
associated to the hermitian lattice $L_i$ such that $m_{i,i}$ maps to $m$.
Since $m_{i,i}$ and $m$ are subject to the same set of equations,
this map is an isomorphism as algebraic groups.
In addition, this map induces compatibility between
the morphism $\varphi_i$ from  $H_i$ to $\mathrm{O}(B_i/Z_i, \bar{q_i})_{\mathrm{red}}$
and the morphism from the special fiber of the smooth integral model associated to  $L_i$ to $\mathrm{O}(B_i/Z_i, \bar{q_i})_{\mathrm{red}}$.
Thus, in order to show that $\varphi_i$ is surjective from $H_i$ to
$\mathrm{O}(B_i/Z_i, \bar{q_i})_{\mathrm{red}}$, we may and do assume that $L=L_i$ and in this case $B_i=A_i=L_i$ and $Z_i=X_i=\pi L_i$.
For simplicity, we can also assume that $i=0$.

Because of Equation (\ref{e41}) stated at the beginning of the proof,
the dimension of the image of $\varphi_i$, as a $\kappa$-algebraic group, is the same as that of
$\mathrm{O}(B_i/Z_i, \bar{q_i})_{\mathrm{red}} (=\mathrm{O}(L_i/\pi L_i, \bar{q_i}))$.
Therefore, the image of $\varphi_i$ contains the identity component of $\mathrm{O}(L_i/\pi L_i, \bar{q_i})$, namely $\mathrm{SO}(L_i/\pi L_i, \bar{q_i})$.
Since $\mathrm{O}(L_i/\pi L_i, \bar{q_i})$ has  two connected components, we only need to show the surjectivity of $\varphi_i$
at the level of $\kappa$-points and it suffices to show that the image of $\varphi_i(\kappa)$ contains at least one element which is not contained in $\mathrm{SO}(L_i/\pi L_i, \bar{q_i})(\kappa)$, where $\mathrm{SO}(L_i/\pi L_i, \bar{q_i})(\kappa)$ is the group of $\kappa$-points of the algebraic group $\mathrm{SO}(L_i/\pi L_i, \bar{q_i})$.

Recall that $L_i=\bigoplus_{\lambda}H_{\lambda}\oplus A(2\delta, 2b, 1)$
for a certain $b\in A$ and $\delta (\in A) \equiv 1 \mathrm{~mod~}2$,  cf. Theorem \ref{210}.
We  consider the orthogonal group associated to the quadratic $\kappa$-space $A(2\delta, 2b, 1)/\pi A(2\delta, 2b, 1)$ of dimension $2$.
Then this group is embedded into $\mathrm{O}(L_i/\pi L_i, \bar{q_i})(\kappa)$ as a closed subgroup
and we denote the embedded group by $\mathrm{O}(A(2\delta, 2b, 1)/\pi A(2\delta, 2b, 1), \bar{q_i})(\kappa)$.

We express an element $m_{i,i}\in H_i(R)$, for a $\kappa$-algebra $R$, as $\begin{pmatrix} x&y\\ z&w \end{pmatrix}$
such that $x=(x)_1+\pi \cdot(x)_2$ and so on, where
$(x)_1, (x)_2 \in M_{(n_i-2)\times(n_i-2)}(R)\subset M_{(n_i-2)\times(n_i-2)}(R\otimes_AB)$ and $\pi$ stands for $1\otimes \pi\in R\otimes_AB$.
Consider the closed subscheme of $H_i$ defined by the equations  $x=id, y=0, z=0$.
An argument similar to one used above to reduce to the case where $L = L_i$ shows that this subscheme is isomorphic to the special fiber
of the smooth integral model associated to the hermitian lattice $A(2\delta, 2b, 1)$ of rank $2$.
Then under the map $\varphi_i(\kappa)$, an element of this subgroup maps to an element of
$\mathrm{O}(A(2\delta, 2b, 1)/\pi A(2\delta, 2b, 1), \bar{q_i})(\kappa)$ of the form $\begin{pmatrix} id&0\\ 0&(w)_1 \end{pmatrix}$.
Note that $\mathrm{O}(A(2\delta, 2b, 1)/\pi A(2\delta, 2b, 1), \bar{q_i})(\kappa)$ is not contained in $\mathrm{SO}(L_i/\pi L_i, \bar{q_i})(\kappa)$.
Thus it suffices to show that the restriction of $\varphi_i(\kappa)$ to the above subgroup of $H_i(\kappa)$,
which is given by letting  $x=id, y=0, z=0$, is surjective onto
$\mathrm{O}(A(2\delta, 2b, 1)/\pi A(2\delta, 2b, 1), \bar{q_i})(\kappa)$ and we may and do assume that $L=L_i=A(2\delta, 2b, 1)$ of rank $2$.

Let $m_{i,i}=\begin{pmatrix} r&s\\ t&u \end{pmatrix}$ be an element of $H_i(\kappa)$ such that $r=(r)_1+\pi \cdot(r)_2$ and so on,
where $(r)_1, (r)_2 \in R\subset R\otimes_AB$ and $\pi$ stands for $1\otimes \pi\in R\otimes_AB$.
Recall that $\pi=\sqrt{2\delta}$ for a certain unit $\delta\in A$ such that $\delta\equiv 1 \mathrm{~mod~}2$ so that
$\sigma(\pi)=-\pi$, as mentioned in Section \ref{Notations}.
Let $\bar{b}\in \kappa$ be the reduction of $b$ modulo $\pi$.
Then the equations defining  $H_i(\kappa)$  are
 $$(r)_1^2+(r)_1(t)_1+\bar{b}(t)_1^2=1, (r)_1(u)_1+(t)_1(s)_1=1,$$
 $$ (s)_1^2+(s)_1(u)_1+\bar{b}(u)_1^2=\bar{b}, (r)_1(u)_2+(r)_2(u)_1+(t)_1(s)_2+(t)_2(s)_1=0.$$
Under the map $\varphi_i(\kappa)$, $m_{i,i}$ maps to  $\begin{pmatrix} (r)_1&(s)_1\\ (t)_1&(u)_1 \end{pmatrix}$.
Note that the quadratic form $\bar{q_i}$ restricted to $A(2\delta, 2b, 1)/\pi A(2\delta, 2b, 1)$ is given by the matrix
$\begin{pmatrix} 1&1\\ 0&b \end{pmatrix}$.

We now choose an element of $H_i(\kappa)$ by setting
\[(r)_1=(s)_1=(u)_1=1, (t)_1=0, (r)_2=(s)_2=(t)_2=(u)_2=0.  \]
Then under the morphism $\varphi_i(\kappa)$,
this element maps to  $\begin{pmatrix} 1&1\\ 0&1 \end{pmatrix} \in \mathrm{O}(A(2\delta, 2b, \pi)/\pi A(2\delta, 2b, \pi), \bar{q}_i)(\kappa)$
whose Dickson invariant is nontrivial so that it is not contained   in $\mathrm{SO}(A(2\delta, 2b, \pi)/\pi A(2\delta, 2b, \pi), \bar{q}_i)(\kappa)$.

Therefore, $\varphi_i(\kappa)$ induces a surjection from $H_i(\kappa)$ to
$\mathrm{O}(A(2\delta, 2b, 1)/\pi A(2\delta, 2b, 1), \bar{q_i})(\kappa)$ for $i\in \mathcal{H}$.

The proof to show that $\varphi=\prod_i \varphi_i$ is surjective is similar to that of Theorem 4.5 in \cite{C2}
explained from the last paragraph of page 485 to the first paragraph of page 486 and so we skip it.

Now it suffices to prove Equation (\ref{e41}) made at the beginning of the proof, which is the next lemma.
\end{proof}

\begin{Lem}\label{l46}

$\mathrm{Ker~}\varphi $ is  smooth and unipotent  of dimension $l$.
In addition, the number of connected components of $\mathrm{Ker~}\varphi $ is $2^\beta$.
 Here,
 \begin{itemize}
\item   $l$ is such that
\[\textit{$l$  + $\sum_{i:\mathrm{even}}$ (dim $~\mathrm{O}(B_i/Z_i, \bar{q}_i)_{\mathrm{red}}$)
+ $\sum_{i:\mathrm{odd}}$ (dim $~\mathrm{Sp}(B_i/Y_i, h_i)$) = dim $\tilde{G} ~(=n^2)$.}\]
 \item   $\beta$ is the number of  integers $j$ such that $L_j$ is of type I and $L_{j+2}, L_{j+3}, L_{j+4}$ (resp. $L_{j-1}, L_{j+1},$
 $L_{j+2},  L_{j+3}$) are of type II if $j$ is even (resp. odd).
\end{itemize}

\end{Lem}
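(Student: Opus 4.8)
The plan is to give an explicit description of $\mathrm{Ker}\,\varphi$, exhibit it as an iterated extension of vector groups together with a single finite \'etale $2$-group, and read off all three assertions from that structure; the heavy part of this bookkeeping is what gets carried out in Appendix~\ref{App:AppendixA}, in parallel with the analogous analysis in Section~4 of \cite{C1} and \cite{C2}.

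First I would make $\mathrm{Ker}\,\varphi$ explicit. Using the formulas for the $\varphi_i$ obtained in the proofs of Theorem~\ref{t43} and Theorem~\ref{t44}, and writing an element of $\tilde G(R)$ over a $\kappa$-algebra $R$ as $(m_{i,j}, s_i\cdots w_i)$ with formal matrix $(\pi^{\max\{0,j-i\}}m_{i,j})$ together with $z_i^{\ast}, m_{i,i}^{\ast}, m_{i,i}^{\ast\ast}$, and decomposing each block as $(\cdot)_1+\pi(\cdot)_2$ in $B\otimes_A R = R[\pi]/(\pi^2)$, the condition $m\in\mathrm{Ker}\,\varphi$ amounts to: $(s_i)_1=\mathrm{id}$ and $\mathcal X_i=0$ (resp. $\mathcal Y_i=0$) for $L_i$ even of type $I^o$ (resp. $I^e$); $(m_{i,i})_1=\mathrm{id}$ and $\mathcal Z_i=0$ for $L_i$ even bound of type II; $(m_{i,i})_1=\mathrm{id}$ for $L_i$ even free of type II and for $L_i$ odd of type II or bound of type I; and $(s_i)_1=\mathrm{id}$ for $L_i$ odd free of type I. Thus $\mathrm{Ker}\,\varphi$ is the closed subscheme of $\tilde G$ (hence of $\tilde M$) cut out by these equations together with the defining relations of $\tilde G$, namely $\sigma({}^tm)\cdot h\cdot m = h$ (interpreted as in Remark~\ref{r35}) and the congruence relations recorded by $z_i^{\ast}, m_{i,i}^{\ast}, m_{i,i}^{\ast\ast}$.

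Next I would filter $\mathrm{Ker}\,\varphi$ using the descending lattice chain $\cdots\supseteq A_i\supseteq A_{i+1}\supseteq\cdots$, which every element of $\underline M$ stabilizes. Put $K_r=\{m\in\mathrm{Ker}\,\varphi : (m-\mathrm{id})(A_i\otimes R)\subseteq A_{i+r}\otimes R \text{ for all } i\}$; these are closed normal subgroup schemes with $[K_r,K_s]\subseteq K_{r+s}$, and since the block-matrix multiplication of Section~\ref{m} only moves data to deeper levels, each graded piece $K_r/K_{r+1}$ ($r\ge1$) carries the additive group law on the affine space of its entries cut out by the \emph{linearized} hermitian and congruence relations, hence is a vector group $\mathbb G_a^{d_r}$ — in particular smooth, connected and unipotent. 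It then remains to analyze the bottom quotient $\mathrm{Ker}\,\varphi/K_1$, which is exactly the image of $\mathrm{Ker}\,\varphi$ in $\prod_i\mathrm{GL}(A_i\otimes R/A_{i+1}\otimes R)$. Block by block, using the structure theorem (Theorem~\ref{210}) and Remark~\ref{r33}, the equations above force this image in each factor to be unipotent; the only subtlety is that the congruence relations tying $z_i^{\ast}$ (for $L_i$ even of type I) and $m_{i,i}^{\ast}, m_{i,i}^{\ast\ast}$ (for $L_i$ odd bound of type I) to entries of the neighbouring blocks $m_{i\pm1,i}$, $m_{i\pm2,i}$ decide whether a residual discrete ``sign'' degree of freedom attached to a free type~I Jordan block can be deformed to the identity. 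A case analysis of these relations against the types of $L_{j\pm1}, L_{j\pm2}, L_{j+3}, L_{j+4}$ shows that an isolated $\mathbb Z/2$ survives exactly in the configurations listed in the statement, so that $\mathrm{Ker}\,\varphi/K_1 \cong U_0\times(\mathbb Z/2)^{\beta}$ with $U_0$ connected smooth unipotent.

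Assembling: $\mathrm{Ker}\,\varphi$ is an iterated extension of the smooth unipotent groups $K_r/K_{r+1}$, of $U_0$, and of the finite \'etale group $(\mathbb Z/2)^{\beta}$; hence it is smooth and unipotent, its identity component has dimension $\dim U_0 + \sum_{r\ge1}\dim(K_r/K_{r+1})$, and $\pi_0(\mathrm{Ker}\,\varphi)\cong(\mathbb Z/2)^{\beta}$, of order $2^{\beta}$. The dimension assertion — which is Equation~(\ref{e41}) — then follows by combining $\dim\tilde G = \dim\mathrm U(V,h) = n^2$ (Theorems~\ref{t36} and~\ref{t38}), the standard values of $\dim\mathrm O(B_i/Z_i,\bar q_i)_{\mathrm{red}}$ and $\dim\mathrm{Sp}(B_i/Y_i,h_i)$, and the count of $\dim U_0 + \sum_{r\ge1}\dim(K_r/K_{r+1})$ read off from the explicit description. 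I expect the main obstacle to be twofold: pinning down the exact combinatorial condition for $\beta$ — tracking how the extra congruence data $z_i^{\ast}, m_{i,i}^{\ast}, m_{i,i}^{\ast\ast}$ propagate between Jordan blocks and determining precisely when the residual $\mathbb Z/2$ is absorbed by a neighbouring connected factor — together with the lengthy bookkeeping of the dimension count; both are genuinely heavier than in \textit{Case 1}, since the extra ramification level ($G_3=0$ in place of $G_2=0$) introduces the additional $*$-data and one more layer of off-diagonal interaction, which is why the detailed verification is carried out in Appendix~\ref{App:AppendixA}.
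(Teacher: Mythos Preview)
Your high-level plan---filter $\mathrm{Ker}\,\varphi$ and identify the graded pieces---is right in spirit, but the specific filtration you propose does not work. The problem is that for a $\kappa$-algebra $R$, an element of $\tilde G(R)$ does not act faithfully on $L\otimes_A R$: the smooth model $\underline G$ was built with extra coordinates $z_i^{\ast}$, $m_{i,i}^{\ast}$, $m_{i,i}^{\ast\ast}$ precisely so as to be smooth, and these are invisible under the map to $\mathrm{End}_{B\otimes R}(L\otimes R)$ (this is exactly the content of the paragraph after Theorem~\ref{t38}). Take the rank-$1$ example of Appendix~\ref{App:AppendixB}: there $\tilde M$ is coordinatized by $z_0^{\ast}\in B\otimes_A R$, and the formal matrix is $1+2z_0^{\ast}$, which equals $1$ in $B\otimes_A R$ since $2=0$. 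So every element acts trivially on every $A_i\otimes R$, your $K_r$ equals $\mathrm{Ker}\,\varphi$ for all $r$, and $\mathrm{Ker}\,\varphi/K_1$ is trivial---yet $\mathrm{Ker}\,\varphi=\tilde G\cong\mathbb A^1\times\mathbb Z/2\mathbb Z$ here, with $\beta=1$. The $\mathbb Z/2\mathbb Z$ is simply not detected by the lattice-chain filtration, because it lives entirely in the auxiliary coordinate $(z_0^{\ast})_1$.

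What the paper actually uses is a single-step \emph{$\pi$-adic} filtration: set $\tilde M^1 = 1+\underline{\pi M'}$ (so $\underline{\pi M'}(R)=(\pi\otimes1)\cdot\underline M'(R)$, Lemma~\ref{la3}) and $\tilde G^1 = \mathrm{Ker}(\tilde G\to\tilde M/\tilde M^1)$. The work then splits in two. Theorem~\ref{ta4} writes the defining equations of $\tilde G^1$ inside $\tilde M^1$ and shows directly, by elimination, that $\tilde G^1$ is an affine space of an explicit dimension. Theorem~\ref{ta6} (with Lemma~\ref{la8}) realizes $\mathrm{Ker}\,\varphi/\tilde G^1$ as a closed subscheme of $\mathrm{Ker}\,\tilde\varphi/\tilde M^1$, lists its equations block by block (Equations~(\ref{ea20})--(\ref{ea32})), and shows it is $\mathbb A^{l'}\times(\mathbb Z/2\mathbb Z)^{\beta}$. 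The $(\mathbb Z/2\mathbb Z)^{\beta}$ does not come from any image in $\prod_i\mathrm{GL}(A_i/A_{i+1})$; it comes from the fact that, for each $j\in\mathcal B$, an appropriate sum of the ``$c_i$-equations'' from the $(2,2)$- or $(3,3)$-blocks collapses to a single equation of the form $X(X+1)=0$ (Equation~(\ref{32'})), where $X$ is a linear combination of the $z_{j-l}^{\ast}$ and $\bar\gamma_{j-l}u_{j-l}^{\ast}$. This factorization, and whether the neighbouring off-diagonal terms survive or cancel in that sum, is exactly where the type conditions on $L_{j\pm1},\ldots,L_{j+4}$ enter. Lemma~\ref{l46} itself is then the short exact sequence $1\to\tilde G^1\to\mathrm{Ker}\,\varphi\to\mathrm{Ker}\,\varphi/\tilde G^1\to1$ together with Theorems~\ref{ta4} and~\ref{ta6}.
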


Recall that the zero lattice with $i$ even is \textit{of type II}.
If $i$ is odd, then the zero lattice is \textit{of type II} only when both $L_{i-1}$ and $L_{i+1}$ are \textit{of type II}.
The proof is postponed to Appendix \ref{App:AppendixA}.

\begin{Rmk}\label{r47}
We summarize the description of Im $\varphi_i$ as follows.
     \[
      \begin{array}{c|c|c}
      \mathrm{type~of~lattice~}  L_i &  i & \mathrm{Im~}  \varphi_i \\
      \hline
      \textit{II, free}& even & \mathrm{O}(n_i, \bar{q}_i)\\
      \textit{II, bound}& even & \mathrm{SO}(n_i+1, \bar{q}_i)\\
      \textit{I}^o & even & \mathrm{SO}(n_i, \bar{q}_i)\\
      \textit{I}^e & even & \mathrm{SO}(n_i-1, \bar{q}_i)\\
      \textit{II} & odd & \mathrm{Sp}(n_i, h_i)\\
      \textit{I, bound} & odd & \mathrm{Sp}(n_i, h_i)\\
      \textit{I, free} & odd & \mathrm{Sp}(n_i-2, h_i)\\
      \end{array}
    \]
        Let $i$ be even and $L_i$ be \textit{free of type II}.
Then $B_i/Z_i=L_i/\pi L_i$ is a $\kappa$-vector space with even dimension.
We now consider the question of whether the orthogonal group $\mathrm{O}(B_i/Z_i, \bar{q}_i) ~(=\mathrm{O}(n_i, \bar{q}_i))$
is  split or nonsplit.


By Theorem \ref{210}, we have that
$L_i=\bigoplus_{\lambda}H_{\lambda}\oplus A(2\delta, 2b_i, 1)$ for certain $b_i\in A$ and $\delta (\in A) \equiv 1 \mathrm{~mod~}2$.
Thus the orthogonal group $\mathrm{O}(B_i/Z_i, \bar{q}_i) ~(=\mathrm{O}(n_i, \bar{q}_i))$ is split
if and only if the quadratic space  $A(2\delta, 2b_i, 1)/\pi A(2\delta, 2b_i, 1)$ is isotropic.
Recall that $\pi=-\sigma(\pi)$. 
Using this, the quadratic form on $A(2\delta, 2b_i, 1)/\pi A(2\delta, 2b_i, 1)$  is $q(x, y)=x^2+xy+\bar{b}_iy^2$,
where $\bar{b}_i$ is the reduction of $b_i$ in $\kappa$.

We consider the identity $q(x, y)=x^2+xy+\bar{b}_iy^2=0$.
If $y=0$, then $x=0$. Assume that $y\neq 0$.
Then we have that $\bar{b}_i=(x/y)^2+x/y$.

Thus we can see that there exists a solution of the equation $z^2+z=\bar{b}_i$ over $\kappa$ if and only if $q(x, y)$ is isotropic if and only if
$\mathrm{O}(B_i/Z_i, \bar{q}_i) ~(=\mathrm{O}(n_i, \bar{q}_i))$ is split.
\end{Rmk}

\subsection{The construction of component groups}\label{cg}

The purpose of this subsection is to define a surjective morphism from $\tilde{G}$ to $(\mathbb{Z}/2\mathbb{Z})^{\beta}$,
where $\beta$  is the number of  integers $j$ such that $L_j$ is \textit{of type I} and $L_{j+2}, L_{j+3}, L_{j+4}$ (resp. $L_{j-1}, L_{j+1},$
 $L_{j+2},  L_{j+3}$) are \textit{of type II} if $j$ is even (resp. odd), as defined in Lemma \ref{l46}.

We start with reproducing the definitions of the sublattices $L^i$ and $C(L)$ of $L$ given in Definitions 4.8 and 4.9 of \cite{C2}.
   \begin{Def}\label{d48}
    We set $L^0=L$ and inductively define, for positive integers $i$,
    \[L^i:=\{x\in L^{i-1} | h(x, L^{i-1})\subset (\pi^i)\}.\]
    When $i=2m$ is even,
      $$L^{2m}=\pi^m(L_0\oplus L_1)\oplus\pi^{m-1}(L_2\oplus L_3)\oplus \cdots \oplus \pi(L_{2m-2}\oplus L_{2m-1})\oplus \bigoplus_{i\geq 2m}L_i.$$
    \end{Def}
     We choose a Jordan splitting for the hermitian lattice $(L^{2m}, \xi^{-m}h)$ as follows:
     $$L^{2m}=\bigoplus_{i \geq 0} M_i,$$ where
     $$M_0=\pi^mL_0\oplus\pi^{m-1}L_2\oplus \cdots \oplus \pi L_{2m-2}\oplus L_{2m},$$
     $$M_1=\pi^mL_1\oplus\pi^{m-1}L_3\oplus \cdots \oplus \pi L_{2m-1}\oplus L_{2m+1}$$
     $$\mathrm{and}~ M_k=L_{2m+k} \mathrm{~if~} k\geq 2.$$
Here, $M_i$ is $\pi^i$-modular.
We caution that the hermitian form we use on $L^{2m}$ is not $h$, but its rescaled version $\xi^{-m}h$.
Thus   $M_i$ is $\pi^i$-modular, not $\pi^{2m+i}$-modular.
\begin{Def}\label{d49}
We define $C(L)$ to be the sublattice of $L$ such that $$C(L)=\{x\in L \mid h(x,y) \in (\pi) \ \ \mathrm{for}\ \ \mathrm{all}\ \ y \in B(L)\}.$$
    \end{Def}

We  choose any even integer $j=2m$ such that $L_{j}$ is \textit{of type I} and $L_{j+2}, L_{j+3}, L_{j+4}$ are \textit{of type II}
(possibly zero by our convention), and consider the Jordan splitting
$$L^{j}=\bigoplus_{i \geq 0} M_i$$
defined above.
The reason that we require  $L_{j+2}, L_{j+3}, L_{j+4}$ to be \textit{of type II} is explained in  Step (1)
which will be stated below.
    We stress that
    \[
\left\{
  \begin{array}{l }
  \textit{$M_0$ is nonzero and \textit{of type I}, since it contains $L_{j}$ as a direct summand};\\
     \textit{$M_1$ is \textit{bound of type I}, and all of $M_2 (=L_{j+2}), M_3 (=L_{j+3}), M_4 (=L_{j+4})$ are \textit{of type II}.}
         \end{array} \right.
\]

 That $M_1$ is \textit{bound of type I} does not guarantee that the norm of $M_1$ (=$n(M_1)$) is the ideal $(4)$
 since $M_1=\pi^{j/2}L_1\oplus\pi^{j/2-1}L_3\oplus \cdots \oplus \pi L_{j-1}\oplus L_{j+1}$.
 If $n(M_1)=(2)$, then we choose a suitable basis of both $M_0$ and $M_1$  such that
 the associated Jordan splitting for $M_0\oplus M_1$ is $M_0'\oplus M_1'$ with  $n(M_1')=(4)$
 (cf. Lemma 2.9 and the following  paragraph in \cite{C2}).
 Thus we may and do assume that  $n(M_1)=(4)$. 

   Choose a basis $(\langle e_i\rangle, e)$ (resp. $(\langle e_i\rangle, a, e)$) for $M_0$
     so that $M_0=\bigoplus_{\lambda}H_{\lambda}\oplus K$
      when the rank of $M_0$ is odd (resp. even).
      Here, we follow the notation from Theorem \ref{210}.
     Note that the Gram matrix associated to $(a, e)$, when the rank of $M_0$ is even, is
     $\begin{pmatrix} 1&1\\1&2b\end{pmatrix}$ with $b\in A$.
 Then $B(L^{j})$ is spanned by $$(\langle e_i\rangle, \pi e) ~(resp.~ (\langle e_i\rangle, \pi a, e)) \textit{~and~}  M_1 \oplus (\bigoplus_{i\geq 2} M_i)$$
     and  $C(L^{j})$ is spanned by
     $$(\langle \pi e_i\rangle, e) ~(resp.~ (\langle \pi e_i\rangle, \pi a, e)) \textit{~and~}  M_1 \oplus (\bigoplus_{i\geq 2} M_i).$$\\

We now construct a morphism $\psi_j : \tilde{G} \rightarrow \mathbb{Z}/2\mathbb{Z}$ as follows.
(There are 3 cases.)\\

(1) Firstly, we assume that $M_0$ is \textit{of type} $\textit{I}^e$.
We choose a Jordan splitting for the hermitian lattice $(C(L^j), \xi^{-m}h)$ as follows:
$$C(L^j)=\bigoplus_{i \geq 1} M_i^{\prime},$$
where $$M_1^{\prime}=(\pi)a\oplus Be\oplus M_1, ~~~ M_2^{\prime}=(\oplus_i(\pi)e_i)\oplus M_2, ~~~ \mathrm{and}~ M_k^{\prime}=M_k \mathrm{~if~} k\geq 3.$$
Here, $M_i^{\prime}$ is $\pi^i$-modular and $(\pi)$ is the ideal of $B$ generated by a uniformizer $\pi$.
Notice that $M_2^{\prime}$ is \textit{of type II}, since both $\oplus_i(\pi)e_i$ and $M_2$ are \textit{of type II},
and that both $M_3^{\prime}$ and $M_4^{\prime}$ are \textit{of type II} as well.
The lattice $M_1^{\prime}$ is \textit{of type I}, since the Gram matrix associated to $(\pi)a\oplus Be$ is
$\begin{pmatrix} -2\delta&\pi\\-\pi&2b\end{pmatrix}$ with $\delta (\in A) \equiv 1 \mathrm{~mod~}2$.
Thus $M_1^{\prime}$ is \textit{free of type I} since the adjacent two lattices $M_0^{\prime}$ (which is empty) and $M_2^{\prime}$ are \textit{of type II}.

Then consider  the sublattice $Y(C(L^j))$ of $C(L^j)$ and choose a Jordan splitting for the hermitian lattice
$(Y(C(L^j)), \xi^{-(m+1)}h)$ as follows:
$$Y(C(L^j))=\bigoplus_{i \geq 0} M_i^{\prime\prime},$$
where $M_i^{\prime\prime}$ is $\pi^i$-modular.
We explain the above Jordan splitting precisely.
Since $C(L^j)=\bigoplus_{i \geq 1} M_i^{\prime}$ and $M_1'$ is \textit{free of type I},
$Y(C(L^j))=Y(M_1')\oplus \bigoplus_{i \geq 2} M_i^{\prime}$.

\begin{enumerate}
\item[(i)] If $b\in (2)$, then $Y(M_1')= (2)a\oplus Be\oplus \pi M_1$.
The lattice $(2)a\oplus Be$ is $\pi^2$-modular \textit{of type II} and  $\pi M_1$ is $\pi^3$-modular \textit{of type II}.
Since we rescale $Y(C(L^j))$ by $\xi^{-1}$,
we have that
$$M_0''=\left((2)a\oplus Be\right)\oplus M_2'=\left((2)a\oplus Be\right)\oplus (\oplus_i(\pi)e_i)\oplus M_2,$$
$$M_1''=\pi M_1\oplus M_3'=\pi M_1\oplus M_3, ~~~M_2''=M_4'=M_4.$$
Thus, $M_0''$ is \textit{free of type II} as both $M_1''$ and $M_2''$ are \textit{of type II}.

\item[(ii)] If $b\in A$ is a unit, then let $\sqrt{b}$ be an element of $A$ such that $\sqrt{b}^2\equiv b$ mod $2$.
We choose a basis $(\pi a, \pi a+1/\sqrt{b}\cdot e)$ for the component $(\pi)a\oplus Be$ of $M_1'$
whose associated Gram matrix is
$\begin{pmatrix} -2\delta& -2\delta+\pi/\sqrt{b} \\-2\delta-\pi/\sqrt{b}& -2\delta+2b/\sqrt{b}^2 \end{pmatrix}$.
Here, the $(2,2)$-component $-2\delta+2b/\sqrt{b}^2$ is contained in the ideal $(4)$ since  $\delta\equiv 1$ mod $2$.
Thus, as in the above case (i), we have that
$$Y(M_1')= (2)a\oplus B\left( \pi a+1/\sqrt{b}\cdot e  \right) \oplus \pi M_1.$$
Here, $(2)a\oplus B\left( \pi a+1/\sqrt{b}\cdot e  \right)$ is $\pi^2$-modular \textit{of type II} and
 $\pi M_1$ is $\pi^3$-modular \textit{of type II}.
Since we rescale $Y(C(L^j))$ by $\xi^{-1}$,
we have that
$$M_0''=\left((2)a\oplus B\left( \pi a+1/\sqrt{b}\cdot e  \right)\right) \oplus M_2'=\left((2)a\oplus B\left( \pi a+1/\sqrt{b}\cdot e  \right)\right) \oplus (\oplus_i(\pi)e_i)\oplus M_2,$$
$$M_1''=\pi M_1\oplus M_3'=\pi M_1\oplus M_3, ~~~M_2''=M_4'=M_4.$$
Thus, $M_0''$ is \textit{free of type II} as both $M_1''$ and $M_2''$ are \textit{of type II}.\\
\end{enumerate}

Therefore, we conclude that the $\pi^0$-modular  Jordan component of $Y(C(L^j))$, which is  $M_0''$,  is \textit{free of type II}.
The reason of our assumption that $L_{j+2}, L_{j+3}, L_{j+4}$ are \textit{of type II}, while $L_j$  is \textit{of type I},
is to make $M_0''$  \textit{free of type II}.

Let $G_j$ denote the special fiber of the smooth integral model associated
to the hermitian lattice $(Y(C(L^j)), \xi^{-(m+1)}h)$.
If $m$ is an element of the group of $R$-points of the naive integral model
associated to the hermitian lattice $L$, for a flat $A$-algebra $R$, then $m$ stabilizes the hermitian lattice $(Y(C(L^j))\otimes_AR, \xi^{-(m+1)}h\otimes 1)$ as well.
This fact induces a morphism from $\tilde{G}$ to $G_j$ (cf. the second paragraph of page 488 in \cite{C2}).
Moreover, since $M_0^{\prime\prime}$ is \textit{free of type II} and nonzero, we have a morphism from $G_j$ to the even orthogonal group associated to $M_0^{\prime\prime}$
 as explained in Section \ref{red}.
Thus, the Dickson invariant of this orthogonal group induces the morphism
$$\psi_j : \tilde{G} \longrightarrow \mathbb{Z}/2\mathbb{Z}.$$
\textit{ }

(2) We next assume that $M_0$ is \textit{of type} $\textit{I}^o$.
We choose a Jordan splitting for the hermitian lattice $(C(L^j), \xi^{-m}h)$ as follows:
$$C(L^j)=\bigoplus_{i \geq 0} M_i^{\prime},$$
where $$M_0^{\prime}= Be, ~~~M_1^{\prime}=M_1, ~~~ M_2^{\prime}=(\oplus_i(\pi)e_i)\oplus M_2, ~~~ \mathrm{and}~ M_k^{\prime}=M_k \mathrm{~if~} k\geq 3.$$
Here, $M_i^{\prime}$ is $\pi^i$-modular and $(\pi)$ is the ideal of $B$ generated by a uniformizer $\pi$.
Notice that the rank of the $\pi^0$-modular lattice $M_0^{\prime}$ is 1 and
that all of the lattices $M_2^{\prime}, M_3^{\prime}, M_4^{\prime}$ are \textit{of type II}.
If $G_j$ denotes the special fiber of the smooth integral model associated to the hermitian lattice $(C(L^j), \xi^{-m}h)$,
then we  have a morphism from $\tilde{G}$ to $G_j$ as in the above argument (1).

We now consider the new hermitian lattice $M_0^{\prime}\oplus C(L^j)$.
Then for a flat $A$-algebra $R$, there is a natural  embedding from the group of $R$-points of the naive integral model
associated to the hermitian lattice $(C(L^j), \xi^{-m}h)$
to that of  the hermitian lattice $M_0^{\prime}\oplus C(L^j)$ such that
$m$ maps to $\begin{pmatrix} 1&0 \\ 0&m \end{pmatrix}$,
where $m$ is an element of the former group.
This fact induces a morphism from the smooth integral model associated to the hermitian lattice $(C(L^j), \xi^{-m}h)$
to the smooth integral model associated to the hermitian lattice $M_0^{\prime}\oplus C(L^j)$
(cf. from the last paragraph of page 488 to the first paragraph of page 489 in \cite{C2}).
In Remark \ref{r410}, we describe this morphism explicitly in terms of matrices.

Thus we have a morphism from the special fiber $G_j$ of the smooth integral model associated to $C(L^j)$  to the special fiber
$G_j'$ of the smooth integral model associated to  $M_0^{\prime}\oplus C(L^j)$.
Note that $(M_0^{\prime}\oplus M_0^{\prime})\oplus \bigoplus_{i \geq 1} M_i^{\prime}$ is a Jordan splitting of the hermitian lattice $M_0^{\prime}\oplus C(L^j)$.
Let $G_j''$ be the special fiber of the smooth integral model associated to
$Y(C((M_0^{\prime}\oplus M_0^{\prime})\oplus \bigoplus_{i \geq 1} M_i^{\prime}))$.
Since the $\pi^0$-modular lattice $M_0^{\prime}\oplus M_0^{\prime}$ is \textit{of type} $\textit{I}^e$,
we have a morphism $G_j'\rightarrow \mathbb{Z}/2\mathbb{Z}$ obtained by factoring through $G_j''$
and the corresponding even orthogonal group with the Dickson invariant as constructed in argument (1).
$\psi_j$ is defined to be the composite
$$\psi_j : \tilde{G} \rightarrow G_j \rightarrow G_j' \rightarrow  \mathbb{Z}/2\mathbb{Z}.$$

\begin{Rmk}\label{r410}
In this remark, we describe the morphism from the smooth integral model $\underline{G}_j$ associated to the hermitian lattice
$(C(L^j), \xi^{-m}h)$
to the smooth integral model $\underline{G}_j'$
associated to the hermitian lattice $M_0^{\prime}\oplus C(L^j)$ as given in argument (2) above, in terms of matrices.
Let $R$ be a flat $A$-algebra.
We choose an element in $\underline{G}_j(R)$
and express it as a  matrix $m= \begin{pmatrix} \pi^{max\{0,j-i\}}m_{i,j} \end{pmatrix}$.
Then $m_{0,0}=\begin{pmatrix} 1+2 z_0^{\ast} \end{pmatrix}$ since $M_0'$ is \textit{of type I} with rank 1 and
$M_2'$ is \textit{of type II}
so that we may and do write $m$ as
$m= \begin{pmatrix} 1+2z_0^{\ast} &m_1\\m_2&m_3 \end{pmatrix}$.
We consider a morphism from $\underline{G}_j$ to $\mathrm{Aut}_{B}(M_0^{\prime}\oplus C(L^j))$
such that $m$ maps to
$$T=\begin{pmatrix} 1&0 \\ 0&m \end{pmatrix}=\begin{pmatrix} 1&0 &0\\ 0&1+2 z_0^{\ast} &m_1\\0& m_2&m_3\end{pmatrix},$$
where the set of $R$-points of the group scheme $\mathrm{Aut}_{B}(M_0^{\prime}\oplus C(L^j))$ is
the automorphism group of $(M_0^{\prime}\oplus C(L^j)) \otimes_A R$ by ignoring the hermitian form.
Then the image of this morphism is represented by an affine group scheme which is isomorphic to $\underline{G}_j$.
Note that  $T$ preserves the hermitian form attached to the lattice $M_0^{\prime}\oplus C(L^j)$.

We claim that $\begin{pmatrix} 1&0 \\ 0&m \end{pmatrix}$ is contained in $\underline{G}_j'(R)$.
If this is true, then the above matrix description defines the morphism from $\underline{G}_j$ to $\underline{G}_j'$
we want to describe (cf. the last paragraph of page 489 in \cite{C2}).

We rewrite the hermitian lattice $M_0^{\prime}\oplus C(L^j)$ as $(M_0^{\prime}\oplus M_0^{\prime})\oplus (\bigoplus_{i \geq 1} M_i^{\prime})$.
Let $(e_1, e_2)$ be a basis for $(M_0^{\prime}\oplus M_0^{\prime})$
so that the corresponding Gram matrix of $(M_0^{\prime}\oplus M_0^{\prime})$  is   $\begin{pmatrix} a&0 \\ 0&a \end{pmatrix}$, where $a \equiv 1$ mod 2.
Then the hermitian lattice $(M_0^{\prime}\oplus M_0^{\prime})$ has Gram matrix $\begin{pmatrix} a&a \\ a&2a \end{pmatrix}$
with respect to the basis $(e_1, e_1+e_2)$.
The lattice $(M_0^{\prime}\oplus M_0^{\prime})$ is \textit{unimodular of type $I^e$} with rank 2.
With this basis, $T$ becomes

$$\tilde{T}=\begin{pmatrix} 1&-2 z_0^{\ast} &-m_1\\ 0&1+2 z_0^{\ast} &m_1\\0& m_2&m_3\end{pmatrix}.$$

On the other hand, an element of $\underline{G}_j'(R)$,
with respect to a basis for $M_0^{\prime}\oplus C(L^j)$ obtained by putting together
the basis $(e_1, e_1+e_2)$ for $(M_0^{\prime}\oplus M_0^{\prime})$ and a basis for $C(L^j)$, is given by an expression
$$\begin{pmatrix} 1+\pi x_0'&-2 z_0'^{\ast} & m_1'  \\ u_0'&1+\pi w_0' &m_1'' \\ m_2' &m_2''  & m_3''\end{pmatrix},$$ cf. Section \ref{mc}.
Then we can easily see that the congruence conditions on $m_1, m_2, m_3$ are the same as those of $m_1', m_2'', m_3''$, respectively,
and that the congruence conditions on $m_1'$ are included in  those of $m_1''$.
We caution that the congruence conditions on $m_1'$ are not the same as  those of $m_1''$
because of the condition (d) of the description of an element of $\underline{M}(R)$ mentioned in the argument following Remark \ref{r31}.
Thus $\tilde{T}$ is an element of $\underline{M}_j^{\ast}(R)$,
where $\underline{M}_j^{\ast}$ is the group scheme in Section \ref{m} associated to $M_0^{\prime}\oplus C(L^j)$
so that $\underline{G}_j'$ is defined as the closed subgroup scheme of $\underline{M}_j^{\ast}$ stabilizing the hermitian form
on $M_0^{\prime}\oplus C(L^j)$.

In conclusion, $\tilde{T}$ is an element of $\underline{M}_j^{\ast}(R)$ preserving
the hermitian form on $M_0^{\prime}\oplus C(L^j)$.
Therefore, it is an element of $\underline{G}_j'(R)$.

To summarize, if $R$ is a nonflat $A$-algebra, then
we can write an element of $\underline{G}_j(R)$ formally as $m= \begin{pmatrix} 1+2 z_0^{\ast} &m_1\\m_2&m_3 \end{pmatrix}$.
Then the image of $m$ in $\underline{G}_j'(R)$ is $\tilde{T}$
with respect to a basis as explained above.
\end{Rmk}

\textit{ }

(3) We choose any odd integer $j$ such that $L_{j}$ is \textit{of type I} and $L_{j-1}, L_{j+1}, L_{j+2}, L_{j+3}$ are \textit{of type II}
 (possibly zero, by our convention).
Note that the lattice $L_{j}$ is then \textit{free of type I}.
Consider the Jordan splitting
$$L^{j-1}=\bigoplus_{i \geq 0} M_i,$$
where
     $$M_0=\pi^{(j-1)/2}L_0\oplus\pi^{(j-1)/2-1}L_2\oplus \cdots \oplus \pi L_{j-3}\oplus L_{j-1},$$
     $$M_1=\pi^{(j-1)/2}L_1\oplus\pi^{(j-1)/2-1}L_3\oplus \cdots \oplus \pi L_{j-2}\oplus L_{j}$$
     $$\mathrm{and}~ M_k=L_{j-1+k} \mathrm{~if~} k\geq 2.$$
    We stress that
    \[
\left\{
  \begin{array}{l }
  \textit{$M_1$ is nonzero and \textit{of type I}, since it contains $L_{j}$ (of type I) as a direct summand};\\
     \textit{All of $M_2 (=L_{j+1}), M_3 (=L_{j+2}), M_4 (=L_{j+3}$) are \textit{of type II}.}
         \end{array} \right.
\]

We now follow the arguments of the above two cases.
\begin{enumerate}
\item[(i)] If $M_0$ is \textit{of type $I^e$}, then we follow the argument (1) with $j-1$.
We briefly summarize it below.
Since $M_0$ is \textit{of type $I$} and $n(M_1)=(2)$, we choose another basis for $M_0\oplus M_1$ whose associate Jordan splitting is
$M_0'\oplus M_1'$ with $n(M_1')=(4)$.
Consider the lattice $Y(C(L^{j-1}))=\bigoplus_{i \geq 0} M_i^{\prime\prime}$.
Then $M_0''$ is \textit{free of type II} and so we have a morphism from $G_{j-1}$ to the even orthogonal group associated to $M_0^{\prime\prime}$.
Here, $G_{j-1}$ is the special fiber of the smooth integral model associated to $Y(C(L^{j-1}))$.
Thus, the Dickson invariant of this orthogonal group induces the morphism
$$\psi_j : \tilde{G} \longrightarrow \mathbb{Z}/2\mathbb{Z}.$$

\item[(ii)] If $M_0$ is \textit{of type II}, then we follow the argument (1) with $j-1$.
Namely, if we consider the lattice $Y(C(L^{j-1}))=\bigoplus_{i \geq 0} M_i^{\prime\prime}$,
then it is easy to show that $M_0''$ is \textit{free of type II} by using the similar argument used in Step (1).
As in the above case,  we have a morphism from $G_{j-1}$ to the even orthogonal group associated to $M_0^{\prime\prime}$.
Here, $G_{j-1}$ is the special fiber of the smooth integral model associated to $Y(C(L^{j-1}))$.
Thus, the Dickson invariant of this orthogonal group induces the morphism
$$\psi_j : \tilde{G} \longrightarrow \mathbb{Z}/2\mathbb{Z}.$$

\item[(iii)] If  $M_0$ is \textit{of type $I^o$}, then we follow the argument (2) with $j-1$.
We briefly summarize it below.
As in the above case, since $M_0$ is \textit{of type $I$} and $n(M_1)=(2)$, we choose another basis for $M_0\oplus M_1$ whose associate Jordan splitting is
$M_0'\oplus M_1'$ with $n(M_1')=(4)$.
Consider two lattices $C(L^{j-1})=\bigoplus_{i \geq 0} M_i^{\prime}$ and $M_0^{\prime}\oplus C(L^{j-1})$.
Here the rank of the $\pi^0$-modular lattice $M_0^{\prime}$ is 1.
Then we can assign  the even orthogonal group  to the $\pi^0$-modular Jordan component of $Y\left(C(M_0^{\prime}\oplus C(L^{j-1}))\right)$.
Thus, the Dickson invariant of this orthogonal group induces the morphism
$$\psi_j : \tilde{G} \longrightarrow \mathbb{Z}/2\mathbb{Z}.$$
\end{enumerate}



(4) Combining all cases, we have the morphism $$\psi=\prod_j \psi_j : \tilde{G} \longrightarrow (\mathbb{Z}/2\mathbb{Z})^{\beta},$$
where $\beta$ is the number of  integers $j$ such that $L_j$ is \textit{of type I} and $L_{j+2}, L_{j+3}, L_{j+4}$ (resp. $L_{j-1},$ $L_{j+1}, L_{j+2}, L_{j+3}$) are \textit{of type II} (possibly zero, by our convention) if $j$ is even (resp. odd).

\begin{Thm}\label{t411}
 The morphism $$\psi=\prod_j \psi_j : \tilde{G} \longrightarrow (\mathbb{Z}/2\mathbb{Z})^{\beta}$$ is surjective.

Moreover, the morphism $$\varphi \times \psi : \tilde{G} \longrightarrow  \prod_{i:even} \mathrm{O}(B_i/Z_i, \bar{q}_i)_{\mathrm{red}} \times \prod_{i:odd} \mathrm{Sp}(B_i/Y_i, h_i)\times (\mathbb{Z}/2\mathbb{Z})^{\beta}$$
is also surjective.

\end{Thm}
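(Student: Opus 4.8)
We may work over $\bar\kappa$, since surjectivity of a morphism of finite-type group schemes can be checked after this base change. Both assertions ask for surjectivity onto a product $P\times Q$ with $Q=(\mathbb{Z}/2\mathbb{Z})^{\beta}$ finite, where $P$ is trivial for the first assertion and $P=\prod_{i:\mathrm{even}}\mathrm{O}(B_i/Z_i,\bar q_i)_{\mathrm{red}}\times\prod_{i:\mathrm{odd}}\mathrm{Sp}(B_i/Y_i,h_i)$ for the ``Moreover''. I will deduce the ``Moreover'' from the first assertion using the elementary fact that a subgroup $H\subseteq P\times Q$ with $\{e\}\times Q\subseteq H$ and $\mathrm{pr}_P(H)=P$ must equal $P\times Q$ (given $(p,q)$, pick $q_0$ with $(p,q_0)\in H$ and multiply by $(e,q_0^{-1}q)\in H$). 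Take $H$ to be the image of $\varphi\times\psi$. Since every element of $\mathrm{Ker}\,\varphi$ maps under $\varphi\times\psi$ into $\{e\}\times Q$, and since $\psi(\mathrm{Ker}\,\varphi)=Q$ by the claim proved below, we get $\{e\}\times Q\subseteq H$; and $\mathrm{pr}_P(H)=\mathrm{Im}\,\varphi=P$ by Theorem~\ref{t45}. Hence $\varphi\times\psi$ is surjective once we know that $\psi$ restricted to $\mathrm{Ker}\,\varphi$ is surjective onto $Q$, and this last statement also contains the first assertion, as $\mathrm{Ker}\,\varphi\subseteq\tilde G$.

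So it remains to show $\psi|_{\mathrm{Ker}\,\varphi}\colon\mathrm{Ker}\,\varphi\to(\mathbb{Z}/2\mathbb{Z})^{\beta}$ is surjective. By Lemma~\ref{l46}, $\mathrm{Ker}\,\varphi$ is smooth and has exactly $2^{\beta}$ connected components, and since $Q$ is finite the morphism $\psi|_{\mathrm{Ker}\,\varphi}$ kills $(\mathrm{Ker}\,\varphi)^{\circ}$ and so factors through $\pi_0(\mathrm{Ker}\,\varphi)$, a group of order $2^{\beta}=|Q|$. It therefore suffices to produce, for each of the $\beta$ integers $j$ described in Lemma~\ref{l46}, an element $g_j\in\mathrm{Ker}\,\varphi(\bar\kappa)$ with $\psi_j(g_j)=1$ and $\psi_{j'}(g_j)=0$ for all $j'\neq j$; the classes of the $g_j$ then form a basis of $(\mathbb{Z}/2\mathbb{Z})^{\beta}$. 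For a given such $j$, the element $g_j$ should be the identity outside a bounded window of Jordan components around $j$, and inside that window should realize a nontrivial Dickson invariant on the \emph{free of type II} $\pi^0$-modular component $M_0''$ occurring in the construction of $\psi_j$ in Section~\ref{cg}: concretely one transports, through the successive lattices $L^{j}$, $C(L^{j})$, $Y(C(L^{j}))$ (and, in the even $I^o$ and odd cases, through the doubling construction of cases~(2)--(3) of Section~\ref{cg}), the transvection-type element exhibited in the rank-two reduction at the end of the proof of Theorem~\ref{t45}. One then has to check: (i) $g_j$ lies in $\bigcap_i\mathrm{Ker}\,\varphi_i$, because this reflection is invisible on every $B_i/Z_i$ and $B_i/Y_i$; (ii) $\psi_j(g_j)=1$, which holds by construction; and (iii) $\psi_{j'}(g_j)=0$ for $j'\neq j$, for which one uses that the defining hypotheses on a valid index ($L_j$ of type I with $L_{j+2},L_{j+3},L_{j+4}$ of type II when $j$ is even, resp. $L_{j-1},L_{j+1},L_{j+2},L_{j+3}$ of type II when $j$ is odd) force distinct valid indices to be far enough apart that $\psi_{j'}$ only involves Jordan components disjoint from the support of $g_j$. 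Items (i) and (iii), together with the explicit description of each $\psi_j$ unwound from its layered construction, are the analogue of the corresponding parts of the proof of Theorem~4.11 in \cite{C2}, and I would carry the argument out by importing those localization computations.

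The step I expect to be the genuine obstacle is exactly this: checking that one and the same element $g_j$ is simultaneously annihilated by every $\varphi_i$ and by every $\psi_{j'}$ with $j'\neq j$ while satisfying $\psi_j(g_j)=1$. This forces one to track, through the chain $L\supseteq L^{j}\supseteq C(L^{j})\supseteq Y(C(L^{j}))$ and the doubling in the $I^o$ and odd cases, precisely which Jordan blocks feed into each $\psi_j$ and how a localized unipotent element propagates --- bookkeeping that is routine in spirit but lengthy, and for which the arguments of Section~4 of \cite{C2} are the right template.
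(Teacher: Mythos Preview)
Your overall strategy is correct and matches the paper's core approach. The reduction of the ``Moreover'' assertion to the first via the elementary subgroup fact about $P\times Q$ is cleaner than the paper's treatment, which simply refers to the analogous argument in \cite{C2}. One small point: your invocation of Lemma~\ref{l46} is not doing logical work, since once you have produced $g_j$ with $\psi_j(g_j)=1$ and $\psi_{j'}(g_j)=0$ for $j'\neq j$, the images $\psi(g_j)$ already form a basis of $(\mathbb{Z}/2\mathbb{Z})^{\beta}$ regardless of how many components $\mathrm{Ker}\,\varphi$ has.

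Where your sketch is imprecise is in the construction of $g_j$. You propose to transport the rank-two transvection from the proof of Theorem~\ref{t45}, but that element lives in a block $L_i$ that is \emph{free of type II}, whereas here $L_j$ is \emph{of type I}; the link is only indirect, through the auxiliary lattice $M_0''$ in the definition of $\psi_j$. The paper instead writes down an explicit closed subgroup $F_j\subset\tilde G$ directly (identity outside block $j$, and in block $j$ of the shape $\left(\begin{smallmatrix}\mathrm{id}&0\\0&1+2z_j^{\ast}\end{smallmatrix}\right)$, or its $I^e$ and odd-$j$ variants), shows in Lemma~\ref{la9} that $F_j\cong\mathbb{A}^1\times\mathbb{Z}/2\mathbb{Z}$, and then computes $\psi_j|_{F_j}$ by pushing an element of $F_j$ through the chain $L^{j}\supset C(L^{j})\supset Y(C(L^{j}))$ (with the doubling step in the $I^o$ and odd cases) and reading off the Dickson invariant as $(z_j^{\ast})_1$ or $(z_j)_1$. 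This computation is the bulk of the proof and splits into roughly seven subcases according to the types of $M_0$, $L_j$, and $M_1$; it is done by direct matrix manipulation in each case rather than by importing anything from Theorem~\ref{t45}. So you have correctly located the obstacle, and $F_j$ is precisely the concrete realization of your $g_j$, but the paper carries out the verification rather than appealing to an abstract transport. For your item~(iii) the paper argues, as you anticipate, via the separation $j'-j\geq 4$ or $5$ between valid indices and again refers to \cite{C2}.
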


\begin{proof}
We first show that $\psi_j$ is surjective.
Recall that for such an integer $j$, $L_j$ is \textit{of type I} and $L_{j+2}, L_{j+3}, L_{j+4}$ (resp. $L_{j-1}, L_{j+1},$
 $L_{j+2},  L_{j+3}$) are \textit{of type II} if $j$ is even (resp. odd).
We define  the closed subgroup scheme $F_j$ of $\tilde{G}$ defined by the following equations:
\begin{itemize}
\item $m_{i,k}=0$ \textit{if $i\neq k$};
\item $m_{i,i}=\mathrm{id}, z_i^{\ast}=0, m_{i,i}^{\ast}=0, m_{i,i}^{\ast\ast}=0$ \textit{if $i\neq j$};
\item and for $m_{j,j}$,
\[\left \{
  \begin{array}{l l}
  s_j=\mathrm{id~}, y_j=0, v_j=0, z_j=\pi z_j^{\ast} & \quad  \textit{if $i$ is even and $L_i$ is \textit{of type} $\textit{I}^o$};\\
  s_j=\mathrm{id~}, r_j=t_j=y_j=v_j=u_j=w_j=0, z_j=\pi z_j^{\ast}  & \quad \textit{if $i$ is even and  $L_i$ is \textit{of type} $\textit{I}^e$};\\
  s_j=\mathrm{id~}, r_j=t_j=y_j=v_j=u_j=w_j=0 & \quad \textit{if $i$ is odd and  $L_i$ is \textit{free of type I}}.\\
    \end{array} \right.\]
\end{itemize}
A formal matrix form of an element of $F_j(R)$ for a $\kappa$-algebra $R$ is then
\[\begin{pmatrix} id&0& & \ldots& & &0\\ 0&\ddots&& & & &\\ & &id& & & & \\  \vdots & & &m_{j,j} & & &\vdots
\\ & & & & id & & \\  & & & & &\ddots &0 \\ 0& & &\ldots & &0 &id \end{pmatrix}$$
such that $$m_{j,j}=\left\{
\begin{array}{l l}
\begin{pmatrix}id&0\\0&1+2 z_j^{\ast} \end{pmatrix} & \quad \textit{if $j$ is even and $L_j$ is of type $I^o$};\\
\begin{pmatrix}id&0&0\\0&1+\pi x_j&2 z_j^{\ast}\\0&0&1 \end{pmatrix} & \quad \textit{if $j$ is even and  $L_j$ is of type $I^e$};\\
\begin{pmatrix}id&0&0\\0&1+\pi x_j&0\\0&\pi z_j&1 \end{pmatrix} & \quad \textit{if $j$ is odd and  $L_j$ is free of type $I$}.
\end{array}\right.\]
We emphasize that we have $2z_j^{\ast}$, not $\pi z_j$, when $j$ is even.

In Lemma \ref{la9}, we will show  that   $F_j$ is isomorphic to $ \mathbb{A}^{1} \times \mathbb{Z}/2\mathbb{Z}$ as a $\kappa$-variety
so that it has exactly two connected components,
by enumerating equations defining $F_j$ as a closed subvariety of an affine space of dimension $2$ (resp. $4$)
if $j$ is even and $L_j$ is \textit{of type $\textit{I}^o$} (resp. otherwise).
Here,  $\mathbb{A}^{1}$ is an affine space of dimension $1$.
These equations are necessary in this theorem and thus we state them in Equation (\ref{e42}) below.
We  refer to Lemma \ref{la9} for the proof.
We write $x_j=(x_j)_1+\pi \cdot(x_j)_2$, $z_j=(z_j)_1+\pi \cdot(z_j)_2$,  and $z_j^{\ast}=(z_j^{\ast})_1+\pi \cdot(z_j^{\ast})_2$,
where $(x_j)_1, (x_j)_2, (z_j)_1, (z_j)_2, (z_j^{\ast})_1, (z_j^{\ast})_2 \in R \subset R\otimes_AB$ and $\pi$ stands for $1\otimes \pi\in R\otimes_AB$.
Then the equations defining $F_j$ as a closed subvariety of an affine space of dimension $2$ (resp. $4$),
if $j$ is even and $L_j$ is \textit{of type $I^o$} (resp.  otherwise), are
\begin{equation}\label{e42}
\left\{
\begin{array}{l l}
(z_j^{\ast})_1+(z_j^{\ast})_1^2=0 & \quad \textit{if $j$ is even and $L_j$ is of type $I^o$};\\
(x_j)_1=0, (x_j)_2+(z_j^{\ast})_1=0, (z_j^{\ast})_1+(z_j^{\ast})_1^2=0 & \quad \textit{if $j$ is even and $L_j$ is of type $I^e$};\\
(z_j)_1+(z_j)_1^2=0,   (x_j)_1=0,  (z_j)_1+(x_j)_2=0 & \quad \textit{if $j$ is odd and  $L_j$ is free of type $I$}.
\end{array}\right.
\end{equation}

The proof of the surjectivity of $\psi_j$ is given below.
The main idea is to show that $\psi_j|_{F_j}$ is surjective.
First assume that $j$ is even.
There are 4 cases according to the types of $M_0$ and $L_j$.
Recall that $\bigoplus_{i \geq 0} M_i$ is a Jordan splitting of  a rescaled hermitian lattice $(L^{j}, \xi^{-j/2}h)$
and that $M_0=\pi^{j/2}L_0\oplus\pi^{j/2-1}L_2\oplus \cdots \oplus \pi L_{j-2}\oplus L_{j}$.

\begin{enumerate}
\item
Assume that both $M_0$ and $L_j$ are \textit{of type $I^e$}.
In this case and the next case, we will describe $\psi_j|_{F_j} : F_j \rightarrow \mathbb{Z}/2\mathbb{Z}$ explicitly in terms of a formal matrix.
To do that, we will first describe a morphism from $F_j$ to the special fiber of the smooth integral model associated to $L^j$.
Then we will describe a morphism from $F_j$ to the even orthogonal group associated to $M_0''$,
where $M_0''$ is a Jordan component of $Y(C(L^j))=\bigoplus_{i \geq 0} M_i''$,
and compute the Dickson invariant of the image of an element of $F_j$ in this orthogonal group.


We write $M_0=N_0\oplus L_j$, where $N_0$ is unimodular with even rank. Thus $N_0$ is either \textit{of type II} or \textit{of type $I^e$}.
First we assume that $N_0$ is \textit{of type $I^e$}.
Then we can write $N_0=(\oplus_{\lambda'}H_{\lambda'})\oplus A(1, 2b, 1)$ and $L_j=(\oplus_{\lambda''}H_{\lambda''})\oplus A(1, 2b', 1)$ by Theorem \ref{210},
where  $H_{\lambda'}=H(0)=H_{\lambda''}$ and $b, b'\in A$.
Thus we write  $M_0=(\oplus_{\lambda}H_{\lambda})\oplus A(1, 2b, 1)\oplus A(1, 2b', 1)$, where $H_{\lambda}=H(0)$.
For this choice of a basis of $L^j=\bigoplus_{i \geq 0} M_i$,
the image of a fixed element of $F_j$ in the special fiber of the smooth integral model associated to $L^j$ is
$$\begin{pmatrix} id&0 &0\\ 0 &\begin{pmatrix} 1+\pi x_j & 2 z_j^{\ast}\\ 0 & 1 \end{pmatrix}  &0 \\ 0& 0 &id \end{pmatrix}.$$
Here, $id$ in the $(1,1)$-block  corresponds to the direct summand $(\oplus_{\lambda}H_{\lambda})\oplus A(1, 2b, 1)$ of $M_0$ and
the diagonal block $\begin{pmatrix} 1+\pi x_j & 2 z_j^{\ast}\\ 0 & 1 \end{pmatrix} $  corresponds to the direct summand $A(1, 2b', 1)$ of $M_0$.

Let $(e_1, e_2, e_3, e_4)$ be a basis for the direct summand $A(1, 2b, 1)\oplus A(1, 2b', 1)$ of $M_0$.
Since this is  \textit{unimodular of type $I^e$}, we can choose another basis based on Theorem \ref{210}.
With the basis $(-2be_1+e_2, (2b'-1)e_1+e_3-e_4, e_3, e_2+e_4)$, denoted by $(e_1', e_2', e_3', e_4')$,
$A(1, 2b, 1)\oplus A(1, 2b', 1)$ becomes
$A(2b(2b-1), 2b'(2b'-1), -(2b-1)(2b'-1))\oplus A(1, 2(b+b'), 1)$.
Here,  $A(2b(2b-1), 2b'(2b'-1), -(2b-1)(2b'-1))$ is \textit{unimodular of type II}. 
Thus we can write
$$M_0=(\oplus_{\lambda}H_{\lambda})\oplus \left( Be_1'\oplus Be_2' \right)
\oplus \left(Be_3'\oplus Be_4'\right).$$
For this basis, the image of a fixed element of $F_j$ in the special fiber of the smooth integral model associated to $L^j$ is
$$\begin{pmatrix} id&0 &0\\ 0 &\begin{pmatrix} 1&0&0&0 \\ 0&1&0&0 \\ 0&\pi x_j-2z_j^{\ast} &1 +\pi x_j & 2 z_j^{\ast}\\0&0& 0 & 1 \end{pmatrix}  &0 \\ 0& 0 &id \end{pmatrix}.$$
Here, $id$ in the $(1,1)$-block corresponds to the direct summand $(\oplus_{\lambda}H_{\lambda})$ of $M_0$ and
the diagonal block $\begin{pmatrix} 1&0&0&0 \\ 0&1&0&0 \\ 0&\pi x_j-2z_j^{\ast} &1 +\pi x_j & 2 z_j^{\ast}\\0&0& 0 & 1 \end{pmatrix}$ corresponds to
$A(2b(2b-1), 2b'(2b'-1), -(2b-1)(2b'-1))\oplus A(1, 2(b+b'), 1)$ with a basis $(e_1', e_2', e_3', e_4')$.

We now describe  the image of a fixed element of $F_j$ in the even orthogonal group associated to $M_0''$,
where $M_0''$ is a Jordan component of $Y(C(L^j))=\bigoplus_{i \geq 0} M_i''$.
There are 3 cases depending on whether $b+b'$ is a unit or not, and whether $M_1=\oplus H(1)$ (possibly empty)
or $M_1=A(4b'', 2\delta, \pi) \oplus (\oplus H(1))$ with $b''\in A$.
We will see in Step (iii) below that the case $M_1=A(4b'', 2\delta, \pi) \oplus (\oplus H(1))$ with $b''\in A$ is reduced to the case
$M_1=\oplus H(1)$ (possibly empty). \\

\begin{enumerate}
\item[(i)] Assume that $M_1=\oplus H(1)$ and  $b+b'\in (2)$. Then
$$M_0''=   \left( (\pi) e_1'\oplus (\pi) e_2' \right)\oplus
  \left((2)e_3'\oplus Be_4'\right) \oplus (\oplus_{\lambda}\pi H_{\lambda})\oplus M_2,$$
as explained in the argument (i) of  Step (1) in the construction of $\psi_j$.
For this basis,  the image of a fixed element of $F_j$ in the orthogonal group associated to $M_0''/\pi M_0''$ is
$$T_1=\begin{pmatrix} \begin{pmatrix} 1&0&0&0 \\ 0&1&0&0 \\ 0& (x_j)_1 &1&  (z_j^{\ast})_1\\0&0& 0 & 1 \end{pmatrix} &0 \\ 0 & id  \end{pmatrix}.$$
Here,  $(z_j^{\ast})_1$ (resp. $(x_j)_1$) is in $R$ such that
$z_j^{\ast}=(z_j^{\ast})_1+\pi \cdot (z_j^{\ast})_2$ (resp. $x_j=(x_j)_1+\pi \cdot (x_j)_2$) as explained in the paragraph before Equation (\ref{e42}).
  The Dickson invariant of $T_1$
is the same as that of  $\begin{pmatrix} 1&0&0&0 \\ 0&1&0&0 \\ 0& (x_j)_1 &1&  (z_j^{\ast})_1\\0&0& 0 & 1 \end{pmatrix}$.
Here, we consider $\begin{pmatrix} 1&0&0&0 \\ 0&1&0&0 \\ 0& (x_j)_1 &1&  (z_j)_1\\0&0& 0 & 1 \end{pmatrix}$ as an element of the orthogonal group associated to
$\left( (\pi) e_1'\oplus (\pi) e_2' \right)\oplus  \left((2)e_3'\oplus Be_4'\right)$.
On the other hand, by Equation (\ref{e42}), the equations defining $F_j$ are
$$(x_j)_2=(z_j^{\ast})_1, ~~~ (x_j)_1=0, ~~~ (z_j^{\ast})_1+(z_j^{\ast})_1^2=0.$$
Since $(x_j)_1=0$, the Dickson invariant of $\begin{pmatrix} 1&0&0&0 \\ 0&1&0&0 \\ 0& (x_j)_1 &1&  (z_j^{\ast})_1\\0&0& 0 & 1 \end{pmatrix}$
is the same as  that of  $\begin{pmatrix} 1 &  (z_j^{\ast})_1\\ 0 & 1 \end{pmatrix}$.
In order to compute the Dickson invariant, we use the scheme-theoretic description of the Dickson invariant explained in Remark 4.4 of \cite{C1}.
The Dickson invariant of an orthogonal group of the quadratic space with dimension 2 is explicitly given at the end of the proof of Lemma 4.5 in \cite{C1}.
Based on this, the Dickson invariant of $\begin{pmatrix} 1 &  (z_j^{\ast})_1\\ 0 & 1 \end{pmatrix}$ is $(z_j^{\ast})_1$.
Note that $(z_j^{\ast})_1$ is indeed an element of $\mathbb{Z}/2\mathbb{Z}$ by Equation (\ref{e42}).

In conclusion, $(z_j^{\ast})_1$ is the image of a fixed element of $F_j$ under the map $\psi_j$.
Since $(z_j^{\ast})_1$ can be either $0$ or $1$,
 $\psi_j|_{F_j}$ is surjective onto $\mathbb{Z}/2\mathbb{Z}$ and thus $\psi_j$ is surjective.\\


\item[(ii)] Assume that $M_1=\oplus H(1)$ and that $b+b'$ is a unit.
Then,  $$M_0''=  \left( (\pi)e_1'\oplus (\pi)e_2' \right)\oplus $$
$$\left((2)e_3'\oplus B\left(\pi e_3'+1/\sqrt{b+b'}e_4'\right)\right) \oplus (\oplus_{\lambda}\pi H_{\lambda})\oplus M_2,$$
as explained in the argument (ii) of  Step (1) in the construction of $\psi_j$.
For this basis,  the image of a fixed element of $F_j$ in the orthogonal group associated to $M_0''/\pi M_0''$ is
$$T_1=\begin{pmatrix} \begin{pmatrix} 1&0&0&0 \\ 0&1&0&0 \\ 0& (x_j)_1 &1&  (x_j)_1+1/\sqrt{b+b'}(z_j^{\ast})_1\\0&0& 0 & 1 \end{pmatrix} &0 \\ 0 & id  \end{pmatrix}.$$
Since $(x_j)_1=0$ by  Equation (\ref{e42}),  the Dickson invariant of $T_1$ is the same as that of
 $\begin{pmatrix} 1&\sqrt{b+b'}(z_j^{\ast})_1\\0& 1 \end{pmatrix}$, which turns to be
$(z_j^{\ast})_1$ by using the similar argument used in  the above case (i).

In conclusion, $(z_j^{\ast})_1$ is the image of a fixed element of $F_j$ under the map $\psi_j$.
Since $(z_j^{\ast})_1$ can be either $0$ or $1$ by Equation (\ref{e42}),
 $\psi_j|_{F_j}$ is surjective onto $\mathbb{Z}/2\mathbb{Z}$ and thus $\psi_j$ is surjective.\\

\item[(iii)] Assume that $M_1=A(4b'', 2\delta, \pi) \oplus (\oplus H(1))$ with $b''\in A$.
Let $(e_5, e_6)$ be a basis for $A(4b'', 2\delta, \pi)$.
Recall that $(e_3', e_4')$ is a basis for the direct summand $A(1, 2(b+b'), 1)$ of $M_0$.
We choose another basis $(e_3'-e_4', e_3')$ for $A(1, 2(b+b'), 1)$ whose associated Gram matrix is
$\begin{pmatrix} -1+2(b+b')& 0 \\ 0 & 1 \end{pmatrix}$.
Then choose a basis $(e_3'-e_4', e_3'-e_5, e_5-\frac{2b''\pi}{\delta}e_6, e_6+\pi e_3')$
for the lattice spanned by $(e_3'-e_4', e_3', e_5, e_6)$ whose  associated Gram matrix is
$$\begin{pmatrix} 1+2(b+b')&0&0&0\\ 0&1+4b''& 0&0 \\ 0&0 & -4b''(1+4b'')&-\pi(1+4b'')\\0&0&\pi(1+4b'')  &0    \end{pmatrix}$$
(cf. Lemma 2.9 and the following paragraph of loc. cit. in \cite{C2}).
Note that this lattice is the same as $A(1, 2(b+b'), 1)\oplus  A(4b'', 2\delta, \pi)$.
Since the lattice spanned by  $(e_5-\frac{2b''\pi}{\delta}e_6, e_6+\pi e_3')$ is $\pi^1$-modular with the norm $(4)$,
it is isometric to $H(1)$ by Theorem 2.2 of \cite{C2}.
Now choose another basis $(e_3'-e_5, e_4'-e_5, e_5-\frac{2b''\pi}{\delta}e_6, e_6+\pi e_3')$ for the above lattice
$A(1, 2(b+b'), 1)\oplus  A(4b'', 2\delta, \pi)$   such that the associated Gram matrix is
$$\begin{pmatrix} 1+4b''&1+4b''&0&0\\ 1+4b''&2(1+b+b'+2b'')& 0&0 \\ 0&0 & -4b''(1+4b'')&-\pi(1+4b'')\\0&0&\pi(1+4b'')  &0    \end{pmatrix}.$$

Let
\[\left\{
\begin{array}{l}
\tilde{M}_0=(\oplus_{\lambda}H_{\lambda})\oplus \left( Be_1'\oplus Be_2' \right)\\
  \textit{        } \oplus \left( B(e_3'-e_5)\oplus B(e_4'-e_5) \right);\\
\tilde{M}_1=B(e_5-\frac{2b''\pi}{\delta}e_6)\oplus B(e_6+\pi e_3') \oplus (\oplus H(1)).
\end{array}\right.
\]
Then
$\tilde{M}_0\oplus\tilde{M}_1\oplus(\oplus_{i\geq 2}M_i)$ is another Jordan splitting of $L^j$,
where $\tilde{M}_0$ (resp. $\tilde{M}_1$) is $\pi^0$-modular (resp. $\pi^1$-modular)
and $\tilde{M}_1$ is isometric to $\oplus H(1)$.

For this choice of a basis,
the block associated to $\tilde{M}_0\oplus M_2$ of the image of a fixed element of $F_j$ in the special fiber of the smooth integral model associated to $L^j$ is
$$\begin{pmatrix} id&0 &0\\ 0 &\begin{pmatrix} 1&0&0&0 \\ 0&1&0&0 \\ 0&\frac{1}{1+4b''}(\pi x_j-2z_j^{\ast})
&\frac{1}{1+4b''}(1 +\pi x_j) & \frac{1}{1+4b''}(2 z_j^{\ast})\\0&0& 0 & 1 \end{pmatrix}  &0 \\ 0& 0 &id \end{pmatrix}.$$
Here, $id$ in the $(1,1)$-block  corresponds to the direct summand $(\oplus_{\lambda}H_{\lambda})$ of $\tilde{M}_0$
and $id$ in the $(3,3)$-block  corresponds to $M_2$.

We now apply the argument of  Steps (i) and (ii) to the above Jordan splitting $\tilde{M}_0\oplus\tilde{M}_1\oplus(\oplus_{i\geq 2}M_i)$ of $L^j$ since $\tilde{M}_1$ is isometric to $\oplus H(1)$.
As explained in Steps (i) and (ii), in order to describe the image of a fixed element of $F_j$ in the orthogonal group associated to $M_0''$,
we only need the above block associated to $\tilde{M}_0\oplus M_2$.
Note that  $\frac{1}{1+4b''}$ is a unit and $(x_j)_1=0$. 
Then $(z_j^{\ast})_1$ is the image of a fixed element of $F_j$ under the map $\psi_j$.
Since $(z_j^{\ast})_1$ can be either $0$ or $1$ by Equation (\ref{e42}),
 $\psi_j|_{F_j}$ is surjective onto $\mathbb{Z}/2\mathbb{Z}$ and thus $\psi_j$ is surjective.\\
\end{enumerate}

If $N_0$ is \textit{of type II}, then the proof of the surjectivity of $\psi_j$ is similar  to
and simpler than   that of the above cases when $N_0$ is \textit{of type $I^e$} and so we skip it.\\

\item Assume that $M_0$  is \textit{of type $I^e$} and   $L_j$ is  \textit{of type $I^o$}.
We write $M_0=N_0\oplus L_j$, where $N_0$ is unimodular with odd rank so that it is  \textit{of type $I^o$}.
Then we can write $N_0=(\oplus_{\lambda'}H_{\lambda'})\oplus (a)$ and $L_j=(\oplus_{\lambda''}H_{\lambda''})\oplus (a')$ by Theorem \ref{210},
where $H_{\lambda'}=H(0)=H_{\lambda''}$ and $a, a'\in A$ such that $a, a' \equiv 1$ mod 2.
Thus we write  $M_0=(\oplus_{\lambda}H_{\lambda})\oplus (a)\oplus (a')$, where $H_{\lambda}=H(0)$.
For this choice of a basis of $L^j=\bigoplus_{i \geq 0} M_i$,
the image of a fixed element of $F_j$ in the special fiber of the smooth integral model associated to $L^j$ is
$$\begin{pmatrix} id&0 &0\\ 0 &\begin{pmatrix} 1+ 2 z_j^{\ast}  \end{pmatrix}  &0 \\ 0& 0 &id \end{pmatrix}.$$
Here, $id$ in the $(1,1)$-block  corresponds to the direct summand $(\oplus_{\lambda}H_{\lambda})\oplus (a)$ of $M_0$ and
the diagonal block $\begin{pmatrix} 1+ 2 z_j^{\ast} \end{pmatrix} $  corresponds to the direct summand $(a')$ of $M_0$.

Let $(e_1, e_2)$ be a basis for the direct summand $(a)\oplus (a')$ of $M_0$.
Since this is  \textit{unimodular of type $I^e$},  we can choose another basis $(e_1, e_1+e_2)$
such that the associated Gram matrix is $A(a, a+a', a)$, where $a+a'\in (2)$, so that 
\[M_0=(\oplus_{\lambda}H_{\lambda})\oplus A(a, a+a', a).\]

For this basis, the image of a fixed element of $F_j$ in the special fiber of the smooth integral model associated to $L^j$ is
\begin{equation}\label{e4.3}
\begin{pmatrix} id&0 &0\\ 0 &\begin{pmatrix} 1 & -2 z_j^{\ast}\\ 0 & 1+2 z_j^{\ast} \end{pmatrix}  &0 \\ 0& 0 &id \end{pmatrix}.
\end{equation}
Here, the diagonal block $\begin{pmatrix}  1 & -2 z_j^{\ast}\\ 0 & 1+2 z_j^{\ast} \end{pmatrix} $ corresponds to $A(a, a+a', a)$ with a basis $(e_1, e_1+e_2)$
and $id$ in the $(1,1)$-block  corresponds to the direct summand $\oplus_{\lambda}H_{\lambda}$ of $M_0$.

We now describe  the image of a fixed element of $F_j$ in the even orthogonal group associated to $M_0''$,
where $M_0''$ is a Jordan component of $Y(C(L^j))=\bigoplus_{i \geq 0} M_i''$.
As in the above case (1),
there are 3 cases depending on whether $(a+a')/2$ is a unit or not, and whether $M_1=\oplus H(1)$ (possibly empty)
or $M_1=A(4b'', 2\delta, \pi) \oplus (\oplus H(1))$ with $b''\in A$.
We will see in Step (iii) below that the case $M_1=A(4b'', 2\delta, \pi) \oplus (\oplus H(1))$ with $b''\in A$ is reduced to the case
$M_1=\oplus H(1)$ (possibly empty). \\

\begin{enumerate}
\item[(i)] Assume that $M_1=\oplus H(1)$ and  $(a+a')/2\in (2)$. Then
$$M_0''=   \left((2)e_1\oplus B(e_1+e_2)\right) \oplus (\oplus_{\lambda}\pi H_{\lambda})\oplus M_2,$$
as explained in the argument (i) of  Step (1) in the construction of $\psi_j$.
For this basis,  the image of a fixed element of $F_j$ in the orthogonal group associated to $M_0''/\pi M_0''$ is
$$T_1=\begin{pmatrix} \begin{pmatrix}  1 &  (z_j^{\ast})_1\\ 0 & 1 \end{pmatrix} &0 \\ 0 & id  \end{pmatrix}.$$
Here,  $(z_j^{\ast})_1$ is in $R$ such that  $z_j^{\ast}=(z_j^{\ast})_1+\pi\cdot (z_j^{\ast})_2$
as explained in the paragraph before Equation (\ref{e42}).
Then by using a similar argument used in the above case (1),
 the Dickson invariant of $T_1$ is $(z_j^{\ast})_1$.

 In conclusion, $(z_j^{\ast})_1$ is the image of a fixed element of $F_j$ under the map $\psi_j$.
Since $(z_j^{\ast})_1$ can be either $0$ or $1$ by Equation (\ref{e42}),
 $\psi_j|_{F_j}$ is surjective onto $\mathbb{Z}/2\mathbb{Z}$ and thus $\psi_j$ is surjective.\\

\item[(ii)] Assume that $M_1=\oplus H(1)$ and  that $(a+a')/2$ is a unit.
Then  $$M_0''= (2)e_1\oplus B\left(\pi e_1+1/\sqrt{(a+a')/2}\cdot (e_1+e_2)\right) \oplus (\oplus_{\lambda}\pi H_{\lambda})\oplus M_2,$$
as explained in the argument (ii) of  Step (1) in the construction of $\psi_j$.
For this basis,  the image of a fixed element of $F_j$ in the orthogonal group associated to $M_0''/\pi M_0''$ is
$$T_1=\begin{pmatrix} \begin{pmatrix}  1 &  1/\sqrt{(a+a')/2}(z_j^{\ast})_1\\ 0 & 1 \end{pmatrix} &0 \\ 0 & id  \end{pmatrix}.$$
As in the  case (ii) of the above step (1),
 the Dickson invariant of $T_1$
is the same as that of $\begin{pmatrix}  1 &  1/\sqrt{(a+a')/2}(z_j^{\ast})_1\\ 0 & 1 \end{pmatrix}$,
which turns to be $(z_j^{\ast})_1$.

 In conclusion, $(z_j^{\ast})_1$ is the image of a fixed element of $F_j$ under the map $\psi_j$.
Since $(z_j^{\ast})_1$ can be either $0$ or $1$ by Equation (\ref{e42}),
 $\psi_j|_{F_j}$ is surjective onto $\mathbb{Z}/2\mathbb{Z}$ and thus $\psi_j$ is surjective.\\

\item[(iii)]  Assume that $M_1=A(4b'', 2\delta, \pi) \oplus (\oplus H(1))$ with $b''\in A$.
Let $(e_3, e_4)$ be a basis for $A(4b'', 2\delta, \pi)$.
Recall that $e_2$ is a basis for the direct summand $(a')$ of $M_0$.
Then choose a basis $(e_2-e_3, e_3-\frac{2b''\pi}{\delta}e_4, e_4+\pi e_2)$, denoted by $(e_2', e_3', e_4')$, 
for the lattice spanned by $(e_2, e_3, e_4)$ such that the associated Gram matrix is
$$\begin{pmatrix} a'(1+4a'b'')&0&0\\ 0 & -4b''(1+4b'')&-\pi(1+4b'')\\ 0&\pi(1+4b'')  &2\delta(1-a') \end{pmatrix}$$
(cf. Lemma 2.9 and the following paragraph of loc. cit. in \cite{C2}).
Note that this lattice is the same as $(a')\oplus  A(4b'', 2\delta, \pi)$.
Since the lattice spanned by  $(e_3', e_4')$ is $\pi^1$-modular with the norm $(4)$,
it is isometric to $H(1)$ by Theorem 2.2 of \cite{C2}.
Now choose another basis $(e_1, e_1+e_2', e_3', e_4')$ for the  lattice
$(a)\oplus (a')\oplus  A(4b'', 2\delta, \pi)$ such that the associated Gram matrix is
$$\begin{pmatrix} a&a&0&0\\ a&a+a'(1+4a'b'')& 0&0 \\ 0&0 & -4b''(1+4b'')&-\pi(1+4b'')\\0&0&\pi(1+4b'')  &2\delta(1-a')    \end{pmatrix}.$$

Let
\[\left\{
\begin{array}{l}
\tilde{M}_0=(\oplus_{\lambda}H_{\lambda})\oplus  \left( Be_1\oplus B(e_1+e_2') \right);\\
\tilde{M}_1=Be_3'\oplus Be_4' \oplus (\oplus H(1)).
\end{array}\right.
\]
Then
$\tilde{M}_0\oplus\tilde{M}_1\oplus(\oplus_{i\geq 2}M_i)$ is another Jordan splitting of $L^j$,
where $\tilde{M}_0$ (resp. $\tilde{M}_1$) is $\pi^0$-modular (resp. $\pi^1$-modular)
and $\tilde{M}_1$ is isometric to $\oplus H(1)$.

For this choice of a basis,
the block associated to $\tilde{M}_0\oplus M_2$ of the image of a fixed element of $F_j$ in the special fiber of the smooth integral model associated to $L^j$ is
$$\begin{pmatrix} id&0 &0\\ 0 &\begin{pmatrix} 1&\frac{1}{1+4b''}(-2z_j^{\ast})\\
0 & 1+\frac{1}{1+4b''}(2 z_j^{\ast})\end{pmatrix}  &0 \\ 0& 0 &id \end{pmatrix}.$$
Here,  $id$ in the $(1,1)$-block corresponds to the direct summand $(\oplus_{\lambda}H_{\lambda})$ of $\tilde{M}_0$
and $id$ in the $(3,3)$-block  corresponds to $M_2$.

We now apply the argument of  Steps (i) and (ii) to the above Jordan splitting $\tilde{M}_0\oplus\tilde{M}_1\oplus(\oplus_{i\geq 2}M_i)$ of $L^j$ since $\tilde{M}_1$ is isometric to $\oplus H(1)$.
As explained in Steps (i) and (ii), in order to describe the image of a fixed element of $F_j$ in the orthogonal group associated to $M_0''$,
we only need the above block associated to $\tilde{M}_0\oplus M_2$.
Note that  $\frac{1}{1+4b''}$ is a unit. 
Then $(z_j^{\ast})_1$ is the image of a fixed element of $F_j$ under the map $\psi_j$.
Since $(z_j^{\ast})_1$ can be either $0$ or $1$ by Equation (\ref{e42}),
 $\psi_j|_{F_j}$ is surjective onto $\mathbb{Z}/2\mathbb{Z}$ and thus $\psi_j$ is surjective.\\
\end{enumerate}

\item Assume that both $M_0$ and $L_j$ are \textit{of type $I^o$}.
In this case, we will describe $\psi_j|_{F_j} : F_j \rightarrow \mathbb{Z}/2\mathbb{Z}$ explicitly in terms of a formal matrix.
To do that, we will first describe a morphism from $F_j$ to the special fiber of the smooth integral model associated to $L^j$
and  then to $G_j$. Recall that $G_j$ is  the special fiber of the smooth integral model associated to $C(L^j)=\bigoplus_{i \geq 0} M_i^{\prime}$.
Then we will describe a morphism from $F_j$ to the special fiber of the smooth integral model associated to $M_0'\oplus C(L^j)$
and to the special fiber of the smooth integral model associated to $Y(C(M_0'\oplus C(L^j)))$.
Finally,  we will describe a morphism from $F_j$ to a certain even orthogonal group associated to $Y(C(M_0'\oplus C(L^j)))$
 and compute the Dickson invariant of the image of an element of $F_j$ in this orthogonal group.

We write $M_0=N_0\oplus L_j$, where $N_0$ is unimodular with even rank.
Thus $N_0$ is either \textit{of type II} or \textit{of type $I^e$}.
First we assume that $N_0$ is \textit{of type $I^e$}.
Then we can write $N_0=(\oplus_{\lambda'}H_{\lambda'})\oplus A(1, 2b, 1)$ and $L_j=(\oplus_{\lambda''}H_{\lambda''})\oplus (a)$ by Theorem \ref{210},
where  $H_{\lambda'}=H(0)=H_{\lambda''}$, $b\in A$, and  $a (\in A) \equiv 1$ mod 2.
Thus we write  $M_0=(\oplus_{\lambda}H_{\lambda})\oplus A(1, 2b, 1)\oplus (a)$, where $H_{\lambda}=H(0)$.
For this choice of a basis of $L^j=\bigoplus_{i \geq 0} M_i$,
the image of a fixed element of $F_j$ in the special fiber of the smooth integral model associated to $L^j$ is
$$\begin{pmatrix} id&0 &0\\ 0 &\begin{pmatrix} 1+2 z_j^{\ast}  \end{pmatrix}  &0 \\ 0& 0 &id \end{pmatrix}.$$
Here, $id$ in the $(1,1)$-block corresponds to the direct summand $(\oplus_{\lambda}H_{\lambda})\oplus A(1, 2b, 1)$ of $M_0$ and
the diagonal block $\begin{pmatrix} 1+2 z_j^{\ast}\end{pmatrix}$ corresponds to the direct summand $(a)$ of $M_0$.

Let $(e_1, e_2, e_3)$ be a basis for the direct summand $A(1, 2b, 1)\oplus (a)$ of $M_0$.
Since this is  \textit{unimodular of type $I^o$}, we can choose another basis based on Theorem 2.2 of \cite{C2}.
Namely, if we choose $(-2be_1+e_2, -ae_1+e_3,  e_2+e_3)$ as another basis, then
$A(1, 2b, 1)\oplus (a)$ becomes $A(2b(2b-1), a(a+1), a(2b-1))\oplus (a+2b)$.
Here, $A(2b(2b-1), a(a+1), a(2b-1))$ is \textit{unimodular of type II}.
Thus we can write that
$$M_0=(\oplus_{\lambda}H_{\lambda})\oplus \left(B(-2be_1+e_2)\oplus B(-ae_1+e_3)\right)\oplus B(e_2+e_3).$$
For this basis, the image of a fixed element of $F_j$ in the special fiber of the smooth integral model associated to $L^j$ is
$$\begin{pmatrix} id&0 &0\\ 0 &
\begin{pmatrix}1&\frac{-2a}{a+2b}z_j^{\ast} & \frac{-2a}{a+2b}z_j^{\ast}\\
0&1+\frac{4(a+b)}{a+2b}z_j^{\ast} &\frac{4b}{a+2b}z_j^{\ast}\\0&\frac{-2a}{a+2b}z_j^{\ast}&1+\frac{2a}{a+2b} z_j^{\ast} \end{pmatrix}
&0 \\ 0& 0 &id \end{pmatrix}.$$
Here,
the $(2,2)$-block of the above formal matrix  corresponds to $B(-2be_1+e_2)\oplus$ $B(-ae_1+e_3)\oplus$ $B(e_2+e_3)$ with the Gram matrix  $A(2b(2b-1), a(a+1), a(2b-1))\oplus (a+2b)$
and $id$ in the $(1,1)$-block corresponds to the direct summand $(\oplus_{\lambda}H_{\lambda})$ of $M_0$.
The above formal matrix can be simplified by observing a formal matrix description of an element of $\underline{M}(R)$ for a $\kappa$-algebra $R$,
explained in Section \ref{m}.
Since $A(2b(2b-1), a(a+1), a(2b-1))\oplus (a+2b)$ is \textit{of type $I^o$},
the $(2,2)$-block of the above formal matrix turns to be
\begin{equation}\label{e4.4}
\begin{pmatrix}1&0 & \frac{-2a}{a+2b}z_j^{\ast}\\
0&1&0\\0&\frac{-2a}{a+2b}z_j^{\ast}&1+\frac{2a}{a+2b} z_j^{\ast} \end{pmatrix}.
\end{equation}

Then the direct summand $M_0'$ of $C(L^j)=\oplus_{i\geq 0}M_i'$ is $B(e_2+e_3)$ of rank 1.
Recall that $$M_0^{\prime}= B(e_2+e_3), ~~~M_1^{\prime}=M_1, ~~~ M_2^{\prime}=\left( \left(\pi B(-2be_1+e_2)\oplus \pi B(-ae_1+e_3)\right)\oplus(\oplus_{\lambda}\pi H_{\lambda})\right)\oplus M_2,$$
$$ \mathrm{and}~ M_k^{\prime}=M_k \mathrm{~if~} k\geq 3.$$
The image of a fixed element of  $F_j$ in the special fiber of the smooth integral model associated to $C(L^j)$ is then
$$\begin{pmatrix}
\begin{pmatrix}1+\frac{2a}{a+2b} z_j^{\ast}&0 & \frac{-2\pi a}{a+2b}z_j^{\ast}\\
\frac{-\pi a}{a+2b}z_j^{\ast}&1&0\\0&0&1 \end{pmatrix} &0 \\
0& id\end{pmatrix}.$$
Here, the $(1,1)$-block  corresponds to $B(e_2+e_3)\oplus \left(\pi B(-2be_1+e_2)\oplus \pi B(-ae_1+e_3)\right)$.

We now describe the image of a fixed element of $F_j$ in  the special fiber of the smooth integral model associated to
$M_0'\oplus C(L^j)=(M_0^{\prime}\oplus M_0^{\prime})\oplus (\bigoplus_{i \geq 1} M_i^{\prime})$.
If $(e_1', e_2')$ is a basis for $(M_0^{\prime}\oplus M_0^{\prime})$, then
we choose another basis $(e_1', e_1'+e_2')$  for $(M_0^{\prime}\oplus M_0^{\prime})$.
For this basis, based on the description of the morphism from   the smooth integral model associated to
$C(L^j)$ to  the smooth integral model associated to
$M_0'\oplus C(L^j)$ explained in Remark \ref{r410},
the image of a fixed element of $F_j$ in the special fiber of the smooth integral model associated to $M_0'\oplus C(L^j)$ is
$$\begin{pmatrix}\begin{pmatrix}1&\frac{-2a}{a+2b} z_j^{\ast} &0 & \frac{2\pi a}{a+2b}z_j^{\ast}\\
0&1+\frac{2a}{a+2b} z_j^{\ast}&0 & \frac{-2\pi a}{a+2b}z_j^{\ast}\\
0&\frac{-\pi a}{a+2b}z_j^{\ast}&1&0\\0&0&0&1 \end{pmatrix}&0
\\ 0&id\end{pmatrix}.$$
Here,   the $(1,1)$-block  corresponds to  $(M_0^{\prime}\oplus M_0^{\prime})\oplus \left(\pi B(-2be_1+e_2)\oplus \pi B(-ae_1+e_3)\right)$.

We now follow Step (2) with $M_0'\oplus C(L^j)$ such that the above formal matrix corresponds to the formal matrix (\ref{e4.3}).
Recall that a Jordan splitting of $M_0'\oplus C(L^j)$ is
$$M_0'\oplus C(L^j)=(M_0^{\prime}\oplus M_0^{\prime})\oplus (\bigoplus_{i \geq 1} M_i^{\prime}),$$
where, $M_0^{\prime}\oplus M_0^{\prime}$ with a basis $(e_1', e_1'+e_2')$ is $\pi^0$-modular, $M_i'$ is $\pi^i$-modular,
and   $M_2'=\left(\pi B(-2be_1+e_2)\oplus \pi B(-ae_1+e_3)\right)\oplus (\oplus_{\lambda}\pi H_{\lambda})\oplus M_2$.
We consider a Jordan splitting $Y(C(M_0'\oplus C(L^j)))=\bigoplus_{i \geq 0} M_i''$.
Then $M_0''$, which is the only lattice needed in the desired orthogonal group, can be described by using $M_0^{\prime}\oplus M_0^{\prime}$, 
$M_1^{\prime}$, and  $M_2'$.
The only difference between the above formal matrix and the formal matrix (\ref{e4.3}) of Step (2) is the appearance of $\frac{2\pi a}{a+2b}z_j^{\ast}$ in the $(1, 4)$ and $(2, 4)$-entries,
and $\frac{-\pi a}{a+2b}z_j^{\ast}$ in the $(3, 2)$-entry of the above formal matrix.
However, these entries will be zero after reduction to the orthogonal group associated to $M_0''$
since $M_0''$ is \textit{free of type II} so that the  diagonal block associated to $M_0''$ has no congruence condition.
Thus in the corresponding orthogonal group, all entries having $\pi$ as a factor become zero.

 Then by using the result of Step (2),
 $(z_j^{\ast})_1$ is the image of a fixed element of $F_j$ under the map $\psi_j$.
Since $(z_j^{\ast})_1$ can be either $0$ or $1$ by Equation (\ref{e42}),
 $\psi_j|_{F_j}$ is surjective onto $\mathbb{Z}/2\mathbb{Z}$ and thus $\psi_j$ is surjective.\\

If $N_0$ is \textit{of type II}, then the proof of the surjectivity of $\psi_j$ is similar  to
and simpler than   that of the above case when $N_0$ is \textit{of type $I^e$} and so we skip it.\\

 \item Assume that  $M_0$ is \textit{of type $I^o$} and that $L_j$ is \textit{of type $I^e$}.
We write $M_0=N_0\oplus L_j$, where $N_0$ is unimodular with odd rank so that it is  \textit{of type $I^o$}.
Then we can write $N_0=(\oplus_{\lambda'}H_{\lambda'})\oplus (a)$ and $L_j=(\oplus_{\lambda''}H_{\lambda''})\oplus A(1, 2b, 1)$ by Theorem \ref{210},
where  $H_{\lambda'}=H(0)=H_{\lambda''}$ and $a, b \in A$ such that $a \equiv 1$ mod 2.
We  write  $M_0=(\oplus_{\lambda}H_{\lambda})\oplus (a)\oplus A(1, 2b, 1)$, where $H_{\lambda}=H(0)$.
For this choice of a basis of $L^j=\bigoplus_{i \geq 0} M_i$,
the image of a fixed element of $F_j$ in the special fiber of the smooth integral model associated to $L^j$ is
$$\begin{pmatrix} id&0 &0\\ 0 &\begin{pmatrix} 1+\pi x_j & 2 z_j^{\ast}\\ 0 & 1 \end{pmatrix}  &0 \\ 0& 0 &id \end{pmatrix}.$$
Here, $id$ in the $(1, 1)$-block corresponds to the direct summand $(\oplus_{\lambda}H_{\lambda})\oplus (a)$ of $M_0$ and
the diagonal block $\begin{pmatrix} 1+\pi x_j & 2 z_j^{\ast}\\ 0 & 1 \end{pmatrix} $  corresponds to the direct summand $A(1, 2b, 1)$ of $M_0$.

Let $(e_1, e_2, e_3)$ be a basis for the direct summand $(a)\oplus A(1, 2b, 1)$ of $M_0$.
Since this is  \textit{unimodular of type $I^o$}, we can choose another basis based on Theorem \ref{210}.
Namely, if we choose $(-2be_2+e_3, e_1-ae_2,  e_1+e_3)$ as another basis, then
$(a)\oplus A(1, 2b, 1)$ becomes $A(2b(2b-1), a(a+1), a(2b-1))\oplus (a+2b)$.
Here,  $A(2b(2b-1), a(a+1), a(2b-1))$ is \textit{unimodular of type II}. 
Thus we can write that $M_0=(\oplus_{\lambda}H_{\lambda})\oplus A(2b(2b-1), a(a+1), a(2b-1))\oplus (a+2b)$.
For this basis, the image of a fixed element of $F_j$ in the special fiber of the smooth integral model associated to $L^j$ is
$$\begin{pmatrix} id&0 &0\\ 0 &
\begin{pmatrix} 1+\frac{2}{a+2b}(b\pi x_j-z_j^{\ast})&\frac{a\pi x_j}{a+2b} &\frac{-2}{a+2b}z_j^{\ast}\\
\frac{2}{a+2b}(b\pi x_j-z_j^{\ast}) &1+\frac{a\pi x_j}{a+2b} &\frac{-2}{a+2b}z_j^{\ast} \\
 \frac{-2}{a+2b}(b\pi x_j-z_j^{\ast})& \frac{-a\pi x_j}{a+2b} &1+\frac{2}{a+2b}z_j^{\ast} \end{pmatrix}
   &0 \\ 0& 0 &id \end{pmatrix}.$$
Here, the $(2,2)$-block  corresponds to $A(2b(2b-1), a(a+1), a(2b-1))\oplus (a+2b)$.

As in  Case (3), the  above formal matrix
can be simplified by observing a formal matrix description of an element of $\underline{M}(R)$ for a $\kappa$-algebra $R$,
explained in Section \ref{m}.
Since $A(2b(2b-1), a(a+1), a(2b-1))\oplus (a+2b)$ is \textit{of type $I^o$} and $(x_j)_1=0$ (by Equation (\ref{e42})),
the $(2,2)$-block of the above formal matrix turns to be
$$\begin{pmatrix} 1&0 &\frac{-2}{a+2b}z_j^{\ast}\\
0&1 &\frac{-2}{a+2b}z_j^{\ast} \\
 \frac{2}{a+2b}z_j^{\ast}& \frac{-a\pi x_j}{a+2b} &1+\frac{2}{a+2b}z_j^{\ast} \end{pmatrix}.$$

 We now follow Step (3) with $L^j$ such that the above formal matrix corresponds to the formal matrix (\ref{e4.4}).
If we switch the order of the  first two vectors in the basis of  $A(2b(2b-1), a(a+1), a(2b-1))$, then 
 the only difference between the above formal matrix and the formal matrix (\ref{e4.4}) of Step (3) is the appearance of
  $\frac{-2}{a+2b}z_j^{\ast}$  in the $(2, 3)$-entry 
and $\frac{-a\pi x_j}{a+2b}$ in the $(3, 2)$-entry of the above formal matrix.
However, these entries will be zero after reduction to the orthogonal group associated to $M_0''$,
where $M_0''$ is the $\pi^0$-modular Jordan component of $Y(C(M_0'\oplus C(L^j)))$,
since $M_0''$ is \textit{free of type II} so that the  diagonal block associated to $M_0''$ has no congruence condition.
Thus in the corresponding orthogonal group, all entries having $\pi$ as a factor become zero.

 Then by using the result of Step (3),
 $(z_j^{\ast})_1$ is the image of a fixed element of $F_j$ under the map $\psi_j$.
Since $(z_j^{\ast})_1$ can be either $0$ or $1$ by Equation (\ref{e42}),
 $\psi_j|_{F_j}$ is surjective onto $\mathbb{Z}/2\mathbb{Z}$ and thus $\psi_j$ is surjective.\\

\end{enumerate}

So far, we showed that $\psi_j$ is surjective when $j$ is even.
We now show that $\psi_j$ is surjective when $j$ is odd.
Recall that  when $j$ is odd, $L_j$ is \textit{free of type I} and $L_{j-1}, L_{j+1}, L_{j+2}, L_{j+3}$  are \textit{of type II}.
Recall that $\bigoplus_{i \geq 0} M_i$ is a Jordan splitting of  a rescaled hermitian lattice $(L^{j-1}, \xi^{-(j-1)/2}h)$
and that $M_1=\pi^{(j-1)/2}L_1\oplus\pi^{(j-1)/2-1}L_3\oplus \cdots \oplus \pi L_{j-2}\oplus L_{j}$.
We can also let $L_j=\left(\oplus H(1)\right)\oplus A(4b, 2\delta, \pi)\textit{  with $b\in A$}$ by Theorem \ref{210} so that
$n(L_j)=(2)$.

We write $M_1=L_j\oplus N_1$, where $N_1$ is $\pi^1$-modular so that $n(N_1)=(2)$  or $n(N_1)=(4)$.
If  $n(N_1)=(4)$, then the proof of the surjectivity of $\psi_j$ is similar  to
and simpler than   that of the  case $n(N_1)=(2)$.
Thus  we assume that $n(N_1)=(2)$.
Then by Theorem \ref{210}, $N_1=\left(\oplus H(1)\right)\oplus A(4b', 2\delta, \pi)$  with $b'\in A$.
Let $(e_1, e_2, e_3, e_4)$ be a basis for $A(4b, 2\delta, \pi)\oplus A(4b', 2\delta, \pi)$.
Then the  diagonal block (associated to $(e_1, e_2, e_3, e_4)$) of
the image of a fixed element of $F_j$ in the special fiber of the smooth integral model associated to $L^{j-1}$ is
$\begin{pmatrix}1+\pi x_j&0&0&0\\\pi z_j&1&0&0\\0&0&1&0\\0&0&0&1 \end{pmatrix}$.
We choose another basis $$(e_1-e_3, \pi e_3+e_4, e_2+e_4, e_1-2b\pi/\delta\cdot e_2), 
\textit{ denoted by $(e_1', e_2', e_3', e_4')$},$$
for the lattice $A(4b, 2\delta, \pi)\oplus A(4b', 2\delta, \pi)$
so that the associated Gram matrix is
$$A(4(b+b'), -2\delta(1+4b'), \pi(1+4b'))\oplus A(4\delta, -4b(1+4b), \pi(1+4b)).$$
Here, the former lattice $A(4(b+b'), -2\delta(1+4b'), \pi(1+4b'))$ is $\pi^1$-modular with the norm $(2)$ and
the latter lattice $A(4\delta, -4b(1+4b), \pi(1+4b))$ is $\pi^1$-modular with the norm $(4)$ (so that it is isometric to $H(1)$ by Theorem \ref{210}).
Then by observing a formal matrix description of an element of $\underline{M}(R)$ for a $\kappa$-algebra $R$,
explained in Section \ref{m}, the above formal matrix turns to be
$$\begin{pmatrix}1-2z_j&0&0&0\\\pi z_j&1&0&\pi z_j\\\pi z_j&0&1&0\\\pi x_j+2 z_j&0&0&1 \end{pmatrix}.$$

We now follow the argument of the first case dealing with $j-1$ even.
Namely, for a Jordan splitting $Y(C(L^{j-1}))=\oplus_{i\geq 0}M_i''$,
 we describe the image of a fixed element of $F_j$ in the orthogonal group associated to $M_0''$.
There are three cases depending on the types of $M_0$.\\

\begin{enumerate}
\item Assume that $M_0$ is \textit{of type II}.
In this case,
$$M_0''=Be_1'\oplus (\pi)e_2'\oplus M_2\oplus \pi M_0.$$
Since $M_0''$ is \textit{free of type II},  the image of a fixed element of $F_j$ in the orthogonal group associated to $M_0''$ is
$$\begin{pmatrix}1&0&0\\ (z_j)_1&1&0\\0&0&id\\ \end{pmatrix}.$$
Here, $z_j=(z_j)_1+\pi \cdot (z_j)_2$ and $id$ is associated to $M_2\oplus \pi M_0$.
 Then the Dickson invariant of the above matrix is $(z_j)_1$.

 In conclusion, $(z_j)_1$ is the image of a fixed element of $F_j$ under the map $\psi_j$.
Since $(z_j)_1$ can be either $0$ or $1$ by Equation (\ref{e42}),
 $\psi_j|_{F_j}$ is surjective onto $\mathbb{Z}/2\mathbb{Z}$ and thus $\psi_j$ is surjective.\\

\item Assume that $M_0$ is \textit{of type $I^e$}.
Let $M_0=\left(\oplus H(0)\right)\oplus A(1, 2a, 1)$ and let $(e_5, e_6)$ be a basis for $A(1, 2a, 1)$.
We consider the lattice spanned by $(e_5, e_6, e_1', e_2')$ with the Gram matrix $A(1, 2a, 1)\oplus A(4(b+b'), -2\delta(1+4b'), \pi(1+4b'))$.
    Then by Theorem \ref{210},
    there is a suitable basis for this lattice such that the norm of the $\pi^1$-modular Jordan component is the ideal $(4)$.
    Namely, we choose
    $$(e_5-e_1', e_6-e_1', e_1'-\frac{2\pi(b+b') }{\delta(1+4b')}e_2', \pi e_5+\frac{1}{1+4b'}e_2').$$
    Here, a method to find the above basis follows from the argument used in Case (iii) of Case (1) with $j$ even.
Then the lattice spanned by the latter two vectors is $\pi^1$-modular with the norm $(4)$
(so that it is isometric to $H(1)$ by Theorem \ref{210}) and the lattice spanned by the former two vectors is
$A(1+4(b+b'), 2a+4(b+b'), 1+4(b+b'))$, which is $\pi^0$-modular and \textit{of type $I^e$}.
Let
\[\left\{
\begin{array}{l}
\tilde{M}_0=\left(\oplus H(0)\right)\oplus \left( B(e_5-e_1')\oplus B(e_6-e_1') \right);\\
\tilde{M}_1=\left(\oplus H(1)\right)\oplus \left( Be_3'\oplus Be_4'  \right) 
 \oplus \left( B(e_1'-\frac{2\pi(b+b') }{\delta(1+4b')}e_2')\oplus B(\pi e_5+\frac{1}{1+4b'}e_2')  \right).
\end{array}\right.
\]
Then
$\tilde{M}_0\oplus\tilde{M}_1\oplus(\oplus_{i\geq 2}M_i)$ is another Jordan splitting of $L^{j-1}$,
where $\tilde{M}_0$  is $\pi^0$-modular and \textit{of type $I^e$} and
 $\tilde{M}_1$ is isometric to $\oplus H(1)$.

For $\tilde{M}_0\oplus M_2$, the associated diagonal  block of the image of a fixed element of $F_j$
in the special fiber of the smooth integral model associated to $L^{j-1}$ is

$$\begin{pmatrix}id&0&0 \\ 0&\begin{pmatrix}1+2z_j&2z_j\\ 0&1 \end{pmatrix}&0
\\ 0&0&id   \end{pmatrix}.$$
Here, the $(2,2)$-block corresponds to $\left( B(e_5-e_1')\oplus B(e_6-e_1') \right)$.


We now  follow the argument used in Steps (i) and (ii) of Step (1) in even case.
Namely, the lattice $M_0''$ can be constructed by using $\tilde{M}_0$ and $M_2$.
Then we can easily check that the Dickson invariant of the image of a fixed element of $F_j$ in the orthogonal group associated to $M_0''$
is  $(z_j)_1$.

 In conclusion, $(z_j)_1$ is the image of a fixed element of $F_j$ under the map $\psi_j$.
Since $(z_j)_1$ can be either $0$ or $1$ by Equation (\ref{e42}),
 $\psi_j|_{F_j}$ is surjective onto $\mathbb{Z}/2\mathbb{Z}$ and thus $\psi_j$ is surjective.\\

 \item Assume that $M_0$ is \textit{of type $I^o$}.
Let $M_0=\left(\oplus H(0)\right)\oplus (a)$ with $a\equiv 1$ mod $2$  and let $(e_5)$ be a basis for $(a)$.
We consider the lattice spanned by $(e_5, e_1', e_2')$ with the Gram matrix $(a)\oplus A(4(b+b'), -2\delta(1+4b'), \pi(1+4b'))$.
    Then by Theorem \ref{210},
    there is a suitable basis  for this lattice such that the norm of the $\pi^1$-modular Jordan component is the ideal $(4)$.
    Namely, we choose
    $$(e_5-e_1', e_1'-\frac{2\pi(b+b') }{\delta(1+4b')}e_2', \pi e_5+\frac{a}{1+4b'}e_2').$$
     Here, a method to find the above basis follows from the argument used in Case (iii) of Case (1) with $j$ even.
Then the lattice spanned by the latter two vectors is $\pi^1$-modular with the norm $(4)$
(so that it is isometric to $H(1)$ by Theorem \ref{210}) and the lattice spanned by the first vector is $(a+4(b+b'))$.
Let
\[\left\{
\begin{array}{l}
\tilde{M}_0=\left(\oplus H(0)\right)\oplus B(e_5-e_1');\\
\tilde{M}_1=\left(\oplus H(1)\right)\oplus \left( Be_3'\oplus Be_4'  \right) 
  \oplus \left( B(e_1'-\frac{2\pi(b+b') }{\delta(1+4b')}e_2')\oplus B(\pi e_5+\frac{a}{1+4b'}e_2')  \right).
\end{array}\right.
\]
Then
$\tilde{M}_0\oplus\tilde{M}_1\oplus(\oplus_{i\geq 2}M_i)$ is another Jordan splitting of $L^{j-1}$,
where $\tilde{M}_0$  is $\pi^0$-modular and \textit{of type $I^o$} and
 $\tilde{M}_1$ is isometric to $\oplus H(1)$.

For $\tilde{M}_0\oplus M_2$, the associated diagonal  block of the image of a fixed element of $F_j$
in the special fiber of the smooth integral model associated to $L^{j-1}$ is
$$\begin{pmatrix}id&0&0 \\ 0&1+2z_j&0
\\ 0&0&id   \end{pmatrix}.$$
Here, the $(2,2)$-block corresponds to $B(e_5-e_1')$.

We now  follow the argument used in  Step (3) in even case.
Then we can easily check that the Dickson invariant of the image of a fixed element of $F_j$ in the orthogonal group associated to $M_0''$
is  $(z_j)_1$.

 In conclusion, $(z_j)_1$ is the image of a fixed element of $F_j$ under the map $\psi_j$.
Since $(z_j)_1$ can be either $0$ or $1$ by Equation (\ref{e42}),
 $\psi_j|_{F_j}$ is surjective onto $\mathbb{Z}/2\mathbb{Z}$ and thus $\psi_j$ is surjective.\\

\end{enumerate}

So far, we have proved that $\psi_j$ is surjective.
 Let $\mathcal{B}$ be the set of  integers $j$ such that $L_j$ is \textit{of type I} and $L_{j+2}, L_{j+3}, L_{j+4}$
 (resp. $L_{j-1}, L_{j+1},$ $L_{j+2}, L_{j+3}$) are \textit{of type II} if $j$ is even (resp. odd).
 Choose   $j, j' \in \mathcal{B}$ with $j<j'$.
 By using the fact that $j'-j\geq 5$ if $j$ is even and $j'-j\geq 4$ if $j$ is odd,
 the proof of the surjectivity of the morphism $\psi=\prod_{j\in \mathcal{B}}\psi_j$
 is similar to that of Theorem 4.11 in \cite{C2} (cf. from 7th line of page 497 to the first paragraph of page 498 in \cite{C2}).
 The proof of the surjectivity of  $\varphi \times \psi$ is similar to that of Theorem 4.11 in \cite{C2}
explained in  the second paragraph of page 498.
Thus we skip them.
\end{proof}

     \subsection{The maximal reductive quotient}\label{mred}

     We finally have the structure theorem for the algebraic group $\tilde{G}$.

     \begin{Thm}\label{t412}
     The morphism $$\varphi \times \psi : \tilde{G} \longrightarrow \prod_{i:even} \mathrm{O}(B_i/Z_i, \bar{q}_i)_{\mathrm{red}} \times \prod_{i:odd} \mathrm{Sp}(B_i/Y_i, h_i)\times (\mathbb{Z}/2\mathbb{Z})^{\beta}$$
      is surjective and the kernel is unipotent and connected.
     Consequently, $$ \prod_{i:even} \mathrm{O}(B_i/Z_i, \bar{q}_i)_{\mathrm{red}} \times \prod_{i:odd} \mathrm{Sp}(B_i/Y_i, h_i)\times (\mathbb{Z}/2\mathbb{Z})^{\beta}$$ is the maximal reductive quotient.
 Here, $\mathrm{O}(B_i/Z_i, \bar{q}_i)_{\mathrm{red}}$ and $\mathrm{Sp}(B_i/Y_i, h_i)$ are explained in Section \ref{red} (especially Remark \ref{r47})
 and $\beta$ is defined in Lemma \ref{l46}.
     \end{Thm}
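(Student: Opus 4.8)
The plan is to bootstrap from the three facts already established: the surjectivity of $\varphi\times\psi$ (Theorem \ref{t411}), the structure of $\mathrm{Ker}\,\varphi$ provided by Lemma \ref{l46} (smooth, unipotent, of dimension $l$, with exactly $2^\beta$ connected components), and the explicit subgroup schemes $F_j\subseteq\tilde G$ constructed in the proof of Theorem \ref{t411}. Since surjectivity is Theorem \ref{t411}, the only real content is that $N:=\mathrm{Ker}(\varphi\times\psi)$ is unipotent and connected; the ``consequently'' clause is then formal, using that $\kappa$ is a finite, hence perfect, field of characteristic $2$ (so unipotent radicals behave well and commute with the passage to $\bar\kappa$).

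First I would observe that $N=\mathrm{Ker}\,\varphi\cap\mathrm{Ker}\,\psi$ is a normal subgroup scheme of $\tilde G$ (being the kernel of a homomorphism) contained in $\mathrm{Ker}\,\varphi$, hence unipotent by Lemma \ref{l46}. For connectedness, consider $\bar\psi:=\psi|_{\mathrm{Ker}\,\varphi}\colon \mathrm{Ker}\,\varphi\to(\mathbb Z/2\mathbb Z)^\beta$. Since the target is finite \'etale, $\bar\psi$ kills the identity component of $\mathrm{Ker}\,\varphi$ and factors through a homomorphism of finite groups $\pi_0(\mathrm{Ker}\,\varphi)\to(\mathbb Z/2\mathbb Z)^\beta$. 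I would then check that this map is surjective: for each $j$ in the index set of Lemma \ref{l46} the subgroup $F_j$ lies inside $\mathrm{Ker}\,\varphi$ — indeed $F_j$ is supported in a single diagonal block, and the image formulas of Theorems \ref{t43} and \ref{t44} give $\varphi_i(m)=\mathrm{id}$ for every $i$ and every $m\in F_j$ — while $\psi_j|_{F_j}$ is onto $\mathbb Z/2\mathbb Z$ and, by the spacing estimates ($j'-j\ge 5$ for $j$ even, $\ge 4$ for $j$ odd) already invoked in Theorem \ref{t411}, $\psi_{j'}|_{F_j}$ is trivial for $j'\neq j$; multiplying elements of the various $F_j$ together exhibits the surjectivity of $\bar\psi$. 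Because $\pi_0(\mathrm{Ker}\,\varphi)$ and $(\mathbb Z/2\mathbb Z)^\beta$ both have order $2^\beta$ (Lemma \ref{l46}; one may base change to $\bar\kappa$ here), this surjection is an isomorphism. Consequently $\mathrm{Ker}\,\bar\psi$, which is open and closed in $\mathrm{Ker}\,\varphi$ and therefore a union of its connected components, is precisely the identity component. Hence $N=\mathrm{Ker}\,\bar\psi=(\mathrm{Ker}\,\varphi)^\circ$ is smooth, connected, and unipotent.

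To finish, I would argue: the target is smooth, so surjectivity of $\varphi\times\psi$ makes it faithfully flat and identifies $\tilde G/N$ with $\prod_{i\,\mathrm{even}}\mathrm O(B_i/Z_i,\bar q_i)_{\mathrm{red}}\times\prod_{i\,\mathrm{odd}}\mathrm{Sp}(B_i/Y_i,h_i)\times(\mathbb Z/2\mathbb Z)^\beta$. Each factor has trivial unipotent radical — the orthogonal and symplectic groups are reductive, the odd-dimensional cases reduce to the connected semisimple $\mathrm{SO}$, and $(\mathbb Z/2\mathbb Z)^\beta$ is finite \'etale — so $R_u(\tilde G/N)=1$. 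Combined with the fact that $N$ is smooth, connected, unipotent, and normal, this forces $N=R_u(\tilde G)$, whence the displayed product is the maximal reductive quotient of $\tilde G$.

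The step I expect to require the most care, though it is mild, is the verification that $\bar\psi$ is surjective: one must genuinely re-engage with the block descriptions to confirm that the $F_j$ sit inside $\mathrm{Ker}\,\varphi$ and that the $\psi_{j'}$ for $j'\neq j$ vanish on $F_j$, drawing on the constructions of Section \ref{cg}. Everything else is soft group-scheme theory once Lemma \ref{l46} and Theorem \ref{t411} are in hand.
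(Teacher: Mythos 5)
Your proposal is correct and follows essentially the route the paper intends (the paper defers to Theorem 4.12 of \cite{C2}, but the ingredients are exactly the ones you use): surjectivity is Theorem \ref{t411}, unipotence of the kernel comes from $\mathrm{Ker}(\varphi\times\psi)\subseteq\mathrm{Ker}\,\varphi$ and Lemma \ref{l46}, and connectedness follows from the counting argument that $\psi$ induces a surjection, hence an isomorphism, $\pi_0(\mathrm{Ker}\,\varphi)\to(\mathbb{Z}/2\mathbb{Z})^\beta$ (both of order $2^\beta$), detected on the subgroups $F_j\subseteq\mathrm{Ker}\,\varphi$ as in the proofs of Theorem \ref{t411} and Lemma \ref{la9}. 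The remaining identification of the kernel with the unipotent radical is the same soft argument the paper relies on.
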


     \begin{proof}
The proof is similar to that of Theorem 4.12 in \cite{C2} and so we skip it.
 \end{proof}

\section{Comparison of volume forms and final formulas}\label{cv}

This section is based on Section 5 of \cite{C2}.
We refer to loc. cit. and Section 3.2 of \cite{GY} for a detailed explanation.
Let $H$ be the $F$-vector space of hermitian forms on $V=L\otimes_AF$.
Let $M'=\mathrm{End}_{B}(L)$  
and let $H'=\{\textit{f : f is a hermitian form on $L$}\}$. 
Regarding  $\mathrm{End}_EV$ and $H$ as varieties over $F$,
let $\omega_M$ and $\omega_H$ be nonzero,  translation-invariant forms on   $\mathrm{End}_EV$ and $H$, respectively, 
with normalization
$$\int_{M'}|\omega_M|=1 \mathrm{~and~}  \int_{H'}|\omega_H|=1.$$

We choose another nonzero,  translation-invariant forms $\omega^{\prime}_M$ and $\omega^{\prime}_H$ on  $\mathrm{End}_EV$ and $H$, respectively,
with normalization
$$\int_{\underline{M}(A)}|\omega^{\prime}_M|=1 \mathrm{~and~}  \int_{\underline{H}(A)}|\omega^{\prime}_H|=1 $$
(cf. the second paragraph of page 499 in \cite{C2}).
By Theorem \ref{t36}, we have an exact sequence of locally free sheaves on $\underline{M}^{\ast}$:
\[ 0\longrightarrow \rho^{\ast}\Omega_{\underline{H}/A} \longrightarrow \Omega_{\underline{M}^{\ast}/A}
\longrightarrow \Omega_{\underline{M}^{\ast}/\underline{H}} \longrightarrow 0. \]
Put $\omega^{\mathrm{can}}=\omega^{\prime}_M/\rho^{\ast}\omega^{\prime}_H$.
For a detailed explanation of what $\omega_M'/\rho^{\ast}\omega_H'$ means, we refer to Section 3.2 of \cite{GY}.
It follows that
$\omega^{\mathrm{can}}$ is a differential of top degree on $\underline{G}$, which is  invariant under the generic fiber of $\underline{G}$,
and which has nonzero reduction on the special fiber.

Recall that $2$ is a uniformizer of $A$.
\begin{Lem}\label{l51}
We have:
  $$|\omega_M|=|2|^{N_M}|\omega_M^{\prime}|, \ \ \ \  N_M=\sum_{\textit{$L_i$:type I}}2n_i+\sum_{i<j}(j-i)\cdot n_i\cdot n_j-a,$$
  $$|\omega_H|=|2|^{N_H}|\omega_H^{\prime}|, \ \ \ \ N_H=\sum_{L_i:\textit{type I}}n_i+\sum_{i<j}j\cdot n_i\cdot n_j+\sum_{\textit{i:even}} \frac{i+2}{2} \cdot n_i
  +\sum_{\textit{i:odd}} \frac{i+3}{2} \cdot n_i+\sum_i d_i-a,$$
  $$|\omega^{\mathrm{ld}}|=|2|^{N_M-N_H}|\omega^{\mathrm{can}}|.$$
  Here,
  \begin{itemize}
  \item $a$ is the total number of  $L_i$'s such that $i$ is odd and $L_i$ is \textit{free of type I}.
  \item $d_i=i\cdot n_i\cdot (n_i-1)/2$.
  \end{itemize}
\end{Lem}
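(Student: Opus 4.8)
The plan is to follow the argument of Lemma 5.1 of \cite{C2} (which treats \textit{Case 1}), tracking the numerical changes forced by \textit{Case 2}. Recall from Section 3.2 of \cite{GY} that each of $|\omega_M|/|\omega_M^{\prime}|$ and $|\omega_H|/|\omega_H^{\prime}|$ is a power of $|2|$, and that the exponent is the $A$-valuation of the determinant of the $A$-linear change of coordinates between the \emph{natural} coordinates on $\mathrm{End}_EV$ (resp. $H$) --- those for which $\omega_M$ (resp. $\omega_H$) is the standard translation-invariant form normalized on $M^{\prime}=\mathrm{End}_B(L)$ (resp. on $H^{\prime}$) --- and the coordinates $(m_{i,j},s_i\cdots w_i)$ (resp. $(f_{i,j},a_i\cdots f_i)$) furnished by the representing schemes $\underline{M}^{\prime}$ (a polynomial ring in $2n^2$ variables, Remark \ref{r31}) and $\underline{H}$ (affine of dimension $n^2$, Section \ref{h}). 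Since $\omega_M$ is additively invariant and $\underline{M}(A)=1+\underline{M}^{\prime}(A)$, it is harmless to work with $\underline{M}^{\prime}$, and after ordering the off-diagonal block coordinates before the diagonal ones the change of coordinates is block triangular, so each determinant is a product of local contributions that can be read off block by block. Thus the whole proof reduces to an elementary (if lengthy) bookkeeping, using repeatedly that multiplication by $\pi$ (resp. by $2\delta=\pi^2$) on $B$ has $|\det|_A=|2|$ (resp. $|2|^2$), and that $\pi^i\in A$ with $v_A(\pi^i)=i/2$ when $i$ is even.

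For $N_M$: an upper off-diagonal block $(i,j)$ with $i<j$ has natural entries $\pi^{j-i}m_{i,j}$, contributing $|2|^{(j-i)n_in_j}$, which gives the term $\sum_{i<j}(j-i)n_in_j$; lower off-diagonal blocks and type-$II$ diagonal blocks are unconstrained. For $L_i$ of type $I^o$ or $I^e$ with $i$ even, the constrained entries $\pi y_i,\pi v_i$ (and, for $I^e$, $\pi x_i,\pi w_i$) each contribute $|2|$, while the substitution $z_i=\pi z_i^{\ast}-\delta_{i-2}k_{i-2,i}-\delta_{i+2}k_{i+2,i}$ makes the relevant natural entry equal $\pi z_i=2\delta z_i^{\ast}-(\text{off-block terms})$, so the $z_i^{\ast}$-slot contributes $|2|^2$; summing yields exactly $2n_i$. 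For $L_i$ \emph{free of type I} with $i$ odd, the same count (now with $\pi z_i$ a genuine coordinate and no $z_i^{\ast}$) gives $2n_i-1$; this is why the type-I sum $\sum_{L_i:\text{type I}}2n_i$ is corrected by $-a$. For $L_i$ \emph{bound of type I} with $i$ odd, the diagonal block is unconstrained, but the defining equations for $m_{i,i}^{\ast}$ and $m_{i,i}^{\ast\ast}$ reparametrize one row and one column of adjacent off-diagonal blocks; a short computation of the same $\pi\cdot\pi=2\delta$ type shows these contribute $|2|^{2n_i}$ beyond what $\sum_{i<j}(j-i)n_in_j$ already counts. Adding all contributions gives the stated $N_M$.

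For $N_H$: the actual $(i,j)$-block of the hermitian matrix is $\pi^{\max\{i,j\}}f_{i,j}$, so an upper block ($i<j$) contributes $|2|^{jn_in_j}$, giving $\sum_{i<j}jn_in_j$. For a diagonal block, the actual block equals $\xi^{\lfloor i/2\rfloor}$ (times $\pi$ when $i$ is odd) applied to the constrained matrix of Section \ref{h}, which is $\pm\pi^i$ times that matrix; its off-diagonal entries are $\pm\pi^i$ times free $B$-coordinates, contributing $\sum_i d_i$ with $d_i=in_i(n_i-1)/2$. The diagonal entries of the constrained matrix are $2x_i^j$ for $i$ even and $\pi^3x_i^j$ for $i$ odd, and the last one or two entries have the special shapes $1+2\gamma_i+4c_i$, $1+\pi d_i$, $2\gamma_i+4c_i$, $\pi+\pi^3c_i$, $\pi b_i$, $e_i$ of conditions (2)--(6); taking $v_A$ of $2\pi^i$, $4\pi^i$, $\pi^{i+1}$, $\pi^{i+3}$, etc. case by case --- exactly as in the check for type $I^o$, where $i\binom{n_i-1}{2}+(1+\tfrac i2)(n_i-1)+(i+1)(n_i-1)+(2+\tfrac i2)$ collapses to $\tfrac i2 n_i^2+2n_i=d_i+n_i+\tfrac{i+2}{2}n_i$ --- one finds that the diagonal contribution of a type-I block is $n_i+\tfrac{i+2}{2}n_i+d_i$ ($i$ even), $n_i+\tfrac{i+3}{2}n_i+d_i-1$ ($i$ odd, free, including the $m_{i,i}^{\ast},m_{i,i}^{\ast\ast}$ reparametrizations), and of a type-II block is $\tfrac{i+2}{2}n_i+d_i$ ($i$ even) or $\tfrac{i+3}{2}n_i+d_i$ ($i$ odd). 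Summing over $i$ produces the stated $N_H$, the $-1$'s from odd free type-I blocks accounting for $-a$. Finally $\omega^{\mathrm{ld}}=\omega_M/\rho^{\ast}\omega_H$ and $\omega^{\mathrm{can}}=\omega^{\prime}_M/\rho^{\ast}\omega^{\prime}_H$ by definition, so $|\omega^{\mathrm{ld}}|=(|\omega_M|/|\omega^{\prime}_M|)\cdot(|\omega^{\prime}_H|/|\omega_H|)\cdot|\omega^{\mathrm{can}}|=|2|^{N_M-N_H}|\omega^{\mathrm{can}}|$.

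The main obstacle is simply the volume of the casework in the $N_H$ computation: one must normalize each Jordan block by the correct power of $\xi$ and $\pi$ (different for even and odd $i$, with the \textit{Case 2} relation $\pi^2=2\delta$ entering through conditions (2)--(6) and through the factor $2\delta$ in the $z_i^{\ast}$-substitution on the $M$-side), keep the reparametrizing variables $z_i^{\ast},m_{i,i}^{\ast},m_{i,i}^{\ast\ast},x_i^j,b_i,c_i,d_i,e_i,f_i$ straight, and compare against the \emph{full} lattice $H^{\prime}$ of integral hermitian forms rather than the $h$-shaped sublattice. None of this is conceptually deep --- it is the same procedure as in \cite{C2}, Lemma 5.1 --- but the arithmetic must be carried out carefully, the recurring point being that the powers of $\pi$ always combine with the fixed even exponents so that every local contribution lands in $|2|^{\mathbb{Z}}$.
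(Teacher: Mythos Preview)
Your proposal is correct and takes essentially the same approach as the paper: the paper's own proof simply reads ``The proof is similar to that of Lemma 5.1 in \cite{C2} and so we skip it,'' and you have carried out precisely that plan, supplying the block-by-block bookkeeping that the paper omits. Your sketch of how the \textit{Case 2} relation $\pi^2=2\delta$ enters through the $z_i^{\ast}$, $m_{i,i}^{\ast}$, $m_{i,i}^{\ast\ast}$, and diagonal-entry substitutions is consistent with the constructions of Sections \ref{m} and \ref{h}, and your accounting for the $-a$ correction from odd free type-I blocks matches the stated formulas.
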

\begin{proof}
The proof is similar to that of Lemma 5.1 in \cite{C2} and so we skip it.
\end{proof}

Let $f$ be the cardinality of $\kappa$.
The local density is defined as
\[\beta_L= \frac{1}{[G:G^{\circ}]}\cdot \lim_{N\rightarrow \infty} f^{-N~dim G}\#\underline{G}'(A/\pi^N A). \]
Here, $\underline{G}'$ is the naive integral model described at the beginning of Section \ref{csm} and $G$ is the generic fiber of $\underline{G}'$
and $G^{\circ}$ is the identity component of $G$.
In our case, $G$ is the unitary group $\mathrm{U}(V, h)$, where $V=L\otimes_AF$.
Since $\mathrm{U}(V, h)$ is connected, $G^{\circ}$ is the same as $G$ so that $[G:G^{\circ}]=1$.

Then based on Lemma 3.4 and Section 3.9 of \cite{GY}, we finally have the following local density formula.

\begin{Thm}\label{t52}
Let $f$ be the cardinality of $\kappa$.
The local density of ($L,h$) is

$$\beta_L=f^N \cdot f^{-\mathrm{dim~} \mathrm{U}(V, h)} \#\tilde{G}(\kappa),$$

where $$N=N_H-N_M=\sum_{i<j}i\cdot n_i\cdot n_j+\sum_{\textit{i:even}} \frac{i+2}{2} \cdot n_i
  +\sum_{\textit{i:odd}} \frac{i+3}{2} \cdot n_i+\sum_i d_i-\sum_{L_i:\textit{type I}}n_i.$$
Here, $\#\tilde{G}(\kappa)$ can be computed explicitly based on Remark 5.3.(1) of \cite{C2} and Theorem \ref{t412}.
\end{Thm}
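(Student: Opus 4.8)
The plan is to follow the framework of Section 3 of \cite{GY}, which rewrites the defining limit of the local density as a $p$-adic integral over the integral points of the smooth model $\underline{G}$, and then to feed into it the two main inputs already prepared: the volume-form comparison of Lemma \ref{l51} and the structure of the special fiber $\tilde{G}$ from Section \ref{sf}. First I would note that, since $\underline{G}$ agrees with the naive model $\underline{G}'$ on all \'etale $A$-algebras and $A$ is itself such an algebra, $\underline{G}(A)=\underline{G}'(A)$; moreover $\mathrm{U}(V,h)$ is connected, so the index $[G:G^{\circ}]$ appearing in the definition of $\beta_L$ equals $1$. Then, by Lemma 3.4 and Section 3.9 of \cite{GY}, the limit defining $\beta_L$ equals the integral $\int_{\underline{G}(A)}|\omega^{\mathrm{ld}}|$, where $\omega^{\mathrm{ld}}=\omega_M/\rho^{\ast}\omega_H$ is the differential built from the invariant forms $\omega_M,\omega_H$ normalized by $\int_{M'}|\omega_M|=1$ and $\int_{H'}|\omega_H|=1$ as at the start of this section.

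Next I would invoke Lemma \ref{l51}, which gives $|\omega^{\mathrm{ld}}|=|2|^{N_M-N_H}|\omega^{\mathrm{can}}|$, so that $\beta_L=|2|^{N_M-N_H}\int_{\underline{G}(A)}|\omega^{\mathrm{can}}|$. Since $2$ is a uniformizer of $A$ and $\kappa$ has $f$ elements, $|2|=f^{-1}$, hence $|2|^{N_M-N_H}=f^{N_H-N_M}=f^{N}$. Because $\underline{G}$ is smooth over $A$ of relative dimension $\dim\mathrm{U}(V,h)$ by Theorem \ref{t38}, and $\omega^{\mathrm{can}}$ is a translation-invariant top-degree differential with nonzero reduction on the special fiber — which, $\tilde{G}$ being a group, forces the reduction to be nowhere vanishing and hence $\omega^{\mathrm{can}}$ to generate the sheaf of top differentials — the standard integration formula for smooth schemes equipped with such a form (this is exactly the computation underlying Section 3.9 of \cite{GY}) yields $\int_{\underline{G}(A)}|\omega^{\mathrm{can}}|=f^{-\dim\tilde{G}}\,\#\tilde{G}(\kappa)$, with $\dim\tilde{G}=\dim\underline{G}=\dim\mathrm{U}(V,h)$. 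Combining these gives $\beta_L=f^{N}\,f^{-\dim\mathrm{U}(V,h)}\,\#\tilde{G}(\kappa)$.

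Finally, the explicit formula for $N=N_H-N_M$ is a direct subtraction of the two expressions in Lemma \ref{l51}: the two copies of $a$ cancel; one of the two copies of $\sum_{L_i:\mathrm{type}\ I}n_i$ cancels, leaving $-\sum_{L_i:\mathrm{type}\ I}n_i$; the difference $\sum_{i<j}j\,n_i n_j-\sum_{i<j}(j-i)n_i n_j$ collapses to $\sum_{i<j}i\,n_i n_j$; and the terms $\sum_i d_i$, $\sum_{i:\mathrm{even}}\frac{i+2}{2}n_i$, $\sum_{i:\mathrm{odd}}\frac{i+3}{2}n_i$ are carried over unchanged from $N_H$, giving precisely the claimed $N$. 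The remaining clause, that $\#\tilde{G}(\kappa)$ can be made fully explicit, follows by combining Theorem \ref{t412} — which identifies the maximal reductive quotient of $\tilde{G}$ with a product of orthogonal groups $\mathrm{O}(B_i/Z_i,\bar q_i)_{\mathrm{red}}$, symplectic groups $\mathrm{Sp}(B_i/Y_i,h_i)$ and $(\mathbb{Z}/2\mathbb{Z})^{\beta}$, the kernel being connected and unipotent — with the point-counting recipe of Remark 5.3.(1) of \cite{C2}. I expect the only genuine subtlety in writing this out to be confirming that the normalizations used here (in particular that $\omega^{\mathrm{ld}}$ built from $M'$ and $H'$ is exactly the differential whose integral computes the mod-$\pi^N$ point count) match those of \cite{GY}; but this is the identical bookkeeping already carried out in \cite{C2}, and all the substantive work — the smooth model, Theorem \ref{t412}, and Lemma \ref{l51} — is in hand, so this theorem is essentially a formal consequence.
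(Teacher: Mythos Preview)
Your proposal is correct and is essentially the same approach as the paper's: the paper's proof is a one-line invocation of Lemma 3.4 and Section 3.9 of \cite{GY}, and you have simply unpacked that invocation (the passage from the point-count limit to the integral $\int_{\underline{G}(A)}|\omega^{\mathrm{ld}}|$, the substitution from Lemma~\ref{l51}, and the evaluation of $\int_{\underline{G}(A)}|\omega^{\mathrm{can}}|$ via smoothness of $\underline{G}$) together with the elementary subtraction $N=N_H-N_M$. There is nothing missing or different in substance.
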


\begin{Rmk}\label{r53}

As  in Remark 7.4 of \cite{GY}, although we have assumed that $n_i=0$ for $i<0$,
it is easy to check that the formula in the preceding theorem remains true without this assumption.
\end{Rmk}

 \appendix
 \section{The proof of  Lemma \ref{l46}} \label{App:AppendixA}

The proof of Lemma 4.5 is based on  Proposition 6.3.1 in \cite{GY} and Appendix A in \cite{C2}.\\ 
We first state a theorem of Lazard which is repeatedly used in this paper.
Let $U$ be a group scheme of finite type over $\kappa$ which is isomorphic to an affine space as an algebraic variety.
Then $U$ is connected smooth unipotent group
(cf.  IV, $\S$ 4, Theorem 4.1 and IV, $\S$ 2, Corollary 3.9 in \cite{DG}).

For preparation, we  state several lemmas.

\begin{Lem}\label{la1} (Lemma 6.3.3. in \cite{GY})

Let $1 \rightarrow X\rightarrow Y\rightarrow Z\rightarrow 1$ be an exact sequence of group schemes that are locally of finite type over $\kappa$,
where  $\kappa$ is a perfect field. Suppose that $X$ is smooth, connected, and unipotent.
Then $1 \rightarrow X(R)\rightarrow Y(R)\rightarrow Z(R)\rightarrow 1$ is exact for any $\kappa$-algebra $R$. 
\end{Lem}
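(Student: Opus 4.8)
The plan is to reproduce (or, in the text, simply cite) the argument of Lemma 6.3.3 in \cite{GY}. First I would observe that the functor of points of any group scheme preserves kernels, so exactness of $1 \to X(R) \to Y(R) \to Z(R)$ is automatic for every $\kappa$-algebra $R$; the only real content is surjectivity of $Y(R) \to Z(R)$. Fix $z \in Z(R)$, viewed as a morphism $z : \mathrm{Spec}\, R \to Z$. Since the given sequence is exact, $Z$ is the fppf quotient $Y/X$, so $Y \to Z$ is an $X$-torsor; pulling back along $z$ produces an $X_R$-torsor $P$ over $\mathrm{Spec}\, R$, and a lift of $z$ to $Y(R)$ is precisely an element of $P(R)$. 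Because $X$ is smooth, $P$ is smooth, hence its isomorphism class lies in the \'{e}tale cohomology set $H^1(\mathrm{Spec}\, R, X)$. Thus it suffices to prove that this pointed set is trivial for all $R$.

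To establish the vanishing I would use perfectness of $\kappa$ decisively: a smooth connected unipotent $\kappa$-group admits a composition series $X = X_0 \supset X_1 \supset \cdots \supset X_n = 1$ by closed subgroups normal in $X$ with each successive quotient isomorphic to $\mathbb{G}_a$ (over an imperfect field this can fail, which is exactly why the hypothesis is present). I would then induct on $\dim X$: with $X' := X_1$, the exact sequence $1 \to X' \to X \to \mathbb{G}_a \to 1$ yields an exact sequence of pointed sets $H^1(\mathrm{Spec}\,R, X') \to H^1(\mathrm{Spec}\,R, X) \to H^1(\mathrm{Spec}\,R, \mathbb{G}_a)$. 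The last term vanishes since $\mathrm{Spec}\, R$ is affine and $H^1$ of an affine scheme with coefficients in a quasi-coherent sheaf is zero, so every class in $H^1(\mathrm{Spec}\,R, X)$ lies in the image of $H^1(\mathrm{Spec}\,R, X')$; by the inductive hypothesis the latter set is a single point, so its image — and therefore $H^1(\mathrm{Spec}\,R, X)$ itself — is the base point. The base case $X \cong \mathbb{G}_a$ is the same vanishing statement.

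The delicate part of this argument is the non-abelian cohomology bookkeeping rather than any computation: one must justify that $Y \to Z$ is genuinely an $X$-torsor (which is what ``exact sequence of group schemes'' provides, via faithfully flat descent), that smooth torsors are classified by \'{e}tale rather than merely fppf cohomology, and that the displayed sequence of pointed sets is exact at the middle term in the usual sense (kernel $=$ preimage of the base point). These are all standard facts, but each should be invoked explicitly; I expect no genuine obstacle beyond this, and once the filtration and these formalities are in hand the d\'{e}vissage is routine. In fact, in every instance where this lemma is applied in the present paper the group $X$ (typically $\mathrm{Ker}\,\varphi$ or one of the explicit unipotent groups built in Sections \ref{csm}--\ref{sf}) is already presented as an affine space with group law assembled from copies of $\mathbb{G}_a$, so one could short-circuit the filtration step and quote it directly.
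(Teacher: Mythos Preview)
Your proof is correct and is the standard argument. The paper itself does not prove this lemma at all: it is stated as ``(Lemma 6.3.3 in \cite{GY})'' and is simply quoted from Gan--Yu without any argument. Your sketch is essentially what one finds in \cite{GY} (reduce to vanishing of $H^1$ for an $X$-torsor over an affine base, filter $X$ by $\mathbb{G}_a$'s using perfectness of $\kappa$, and d\'evissage), so there is no divergence in approach---you have merely unpacked what the paper leaves as a citation.
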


Let $\tilde{M}$ be the special fiber of $\underline{M}^{\ast}$ and let $R$ be a $\kappa$-algebra.
Recall that we have described an element   and
the multiplication of elements of $\underline{M}(R)$  in Section  \ref{m}.
Based on these,
an element of $\tilde{M}(R)$ is
$$m= \begin{pmatrix} \pi^{max\{0,j-i\}}m_{i,j} \end{pmatrix} \mathrm{~with~}z_i^{\ast}, m_{i,i}^{\ast}, m_{i,i}^{\ast\ast}$$
satisfying the following:
\begin{itemize}
\item[(a)] If $i$ is even and $L_i$ is \textit{of type} $\textit{I}^o$ (resp. \textit{of type} $\textit{I}^e$), then
 $$m_{i,i}=\begin{pmatrix} s_i&\pi y_i\\ \pi v_i&1+\pi z_i \end{pmatrix} \textit{(resp.
$\begin{pmatrix} s_i&r_i&\pi t_i\\ \pi y_i&1+\pi x_i&\pi z_i\\ v_i&u_i&1+\pi w_i \end{pmatrix}$)},$$
where
 $s_i\in M_{(n_i-1)\times (n_i-1)}(B\otimes_AR)$ (resp.  $s_i\in M_{(n_i-2)\times (n_i-2)}(B\otimes_AR)$), etc.,
 and $s_i$ mod $\pi\otimes 1$ is invertible.

\item[(b)] If $i$ is odd and $L_i$ is \textit{free of type I}, then
 \[m_{i,i}=\begin{pmatrix} s_i&\pi r_i&t_i\\  y_i&1+\pi x_i& u_i\\\pi  v_i&\pi z_i&1+\pi w_i \end{pmatrix},\]
  where $s_i\in M_{(n_i-2)\times (n_i-2)}(B\otimes_AR)$, etc., and $s_i$ mod $\pi\otimes 1$ is invertible.
\item[(c)] For the remaining $m_{i,j}$'s except for the cases explained above, $m_{i,j}\in M_{n_i\times n_j}(B\otimes_AR)$
and $m_{i,i}$  mod $\pi\otimes 1$ is invertible.
\item[(d)]
Assume that $i$ is even and that $L_i$ is \textit{of type I}.  Then
$$z_i+\delta_{i-2}k_{i-2, i}+\delta_{i+2}k_{i+2, i}=\pi z_i^{\ast}$$
such that  $z_i^{\ast}\in B\otimes_AR$.
This equation is considered in $B\otimes_AR$ and $\pi$ stands for $\pi\otimes 1\in B\otimes_AR$.
 Here,
    \begin{itemize}
  \item[(i)] $z_i$ is an entry of $m_{i,i}$ as described in the above step (a).
\item[(ii)] $k_{i-2, i}$ (resp. $k_{i+2, i}$) is the $(n_{i-2}, n_i)^{th}$-entry (resp. $(n_{i+2}, n_i)^{th}$-entry) of the matrix $m_{i-2, i}$
(resp. $m_{i+2, i}$)
if $L_{i-2}$ (resp. $L_{i+2}$) is \textit{of type} $\textit{I}^o$.
\item[(iii)] $k_{i-2, i}$ (resp. $k_{i+2, i}$) is the $(n_{i-2}-1, n_i)^{th}$-entry (resp. $(n_{i+2}-1, n_i)^{th}$-entry) of the matrix $m_{i-2, i}$
(resp. $m_{i+2, i}$)
if $L_{i-2}$ (resp. $L_{i+2}$) is \textit{of type} $\textit{I}^e$.
\end{itemize}

\item[(e)] Assume that $i$ is odd and that $L_i$ is \textit{bound of type I}. Then
$$\delta_{i-1}v_{i-1}\cdot m_{i-1, i}+\delta_{i+1}v_{i+1}\cdot m_{i+1, i}=\pi m_{i,i}^{\ast}$$
such that  $m_{i,i}^{\ast} \in M_{1\times n_i}(B\otimes_AR)$.
This equation is considered in $B\otimes_AR$ and $\pi$ stands for $\pi\otimes 1\in B\otimes_AR$.
Here,
    \begin{itemize}
   \item[(i)] $v_{i-1}=(0,\cdots, 0, 1)$ (resp. $v_{i-1}=(0,\cdots, 0, 1, 0)$) of size $1\times n_{i-1}$
if $L_{i-1}$ is \textit{of type} $\textit{I}^o$ (resp. \textit{of type} $\textit{I}^e$).
   \item[(ii)] $v_{i+1}=(0,\cdots, 0, 1)$ (resp. $v_{i+1}=(0,\cdots, 0, 1, 0)$) of size $1\times n_{i+1}$
if $L_{i+1}$ is \textit{of type} $\textit{I}^o$ (resp. \textit{of type} $\textit{I}^e$).
 \end{itemize}

 \item[(f)] Assume that $i$ is odd and that $L_i$ is \textit{bound of type I}.
  Then
 $$ \delta_{i-1}v_{i-1}\cdot {}^tm_{i, i-1}+\delta_{i+1}v_{i+1}\cdot {}^tm_{i, i+1} = \pi m_{i,i}^{\ast\ast}$$
 such that $ m_{i,i}^{\ast\ast} \in M_{1\times n_i}(B\otimes_AR)$.
This equation is considered in $B\otimes_AR$ and $\pi$ stands for $\pi\otimes 1\in B\otimes_AR$.
Here,
 $v_{i-1}$ (resp. $v_{i+1}$)$=(0,\cdots, 0, 1)$  of size $1\times n_{i-1}$ (resp. $1\times n_{i+1}$).\\

\end{itemize}

Let 
  \[\tilde{M_i}= \mathrm{GL}_{B/\pi B}(B_i/Y_i) \textit{ for all $i$}.\]

Let $s_i=m_{i,i}$ if $L_i$ is \textit{of type II} or if $L_i$ is \textit{bound of type I} with $i$ odd in the above description of an element of $\tilde{M}(R)$.
Then $s_i$ mod $\pi\otimes 1$ is an element of $\tilde{M}_i(R)$.
 Therefore, we have a surjective morphism of algebraic groups
 $$r : \tilde{M} \longrightarrow \prod\tilde{M}_i, ~~~~~~~ m \mapsto \prod \left(\textit{$s_i$ mod $\pi\otimes 1$}\right)$$
 defined over $\kappa$. We now have the following easy lemma:
 \begin{Lem}\label{la2}
The kernel of $r$ is the unipotent radical $\tilde{M}^+$ of $\tilde{M}$, and $\prod\tilde{M}_i$ is the maximal reductive quotient of $\tilde{M}$.\\
\end{Lem}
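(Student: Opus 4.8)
\textbf{Proof proposal for Lemma \ref{la2}.}

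The plan is to exhibit an explicit splitting of $r$ and to identify $\ker r$ concretely so that both assertions follow from the theorem of Lazard quoted at the start of Appendix \ref{App:AppendixA}. First I would describe $\ker r$ as a scheme. An element $m$ of $\tilde M(R)$ lies in $\ker r$ exactly when, for every $i$, the reduction mod $\pi\otimes 1$ of the distinguished ``invertible'' block — $s_i$ in the type $I^o$, type $I^e$, and odd free-of-type-$I$ cases, and $m_{i,i}$ in the remaining cases — equals the identity. Writing every entry of every block as $(\,\cdot\,)_1 + \pi\cdot(\,\cdot\,)_2$ with the coefficients in $R$ (as is done throughout Section \ref{m} and Section \ref{red}), the defining equations of $\ker r$ inside the affine space parametrizing $\tilde M$ become: the first-layer part $(s_i)_1$ (resp.\ $(m_{i,i})_1$) is the identity, while all the remaining coordinates — the second layers $(s_i)_2$, the off-diagonal blocks $(m_{i,j})_1,(m_{i,j})_2$ for $i\neq j$, the blocks $y_i,v_i,z_i,\dots$ that carry an explicit factor of $\pi$, and the auxiliary variables $z_i^\ast, m_{i,i}^\ast, m_{i,i}^{\ast\ast}$ — are free. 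So $\ker r$ is cut out by setting a subset of the affine coordinates to $0$ or to $1$, hence $\ker r$ is isomorphic as a $\kappa$-variety to an affine space. By Lazard's theorem (the statement recalled just before Lemma \ref{la1}, citing \cite{DG}), $\ker r$ is therefore a connected smooth unipotent group scheme over $\kappa$.

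Next I would show $\ker r$ is the unipotent radical of $\tilde M$, i.e.\ that the quotient $\prod \tilde M_i$ is reductive and that $r$ admits a section. For the section, send a tuple $(g_i)_i \in \prod\tilde M_i(R)$ to the block-diagonal element of $\tilde M(R)$ whose $(i,i)$-block has distinguished invertible part equal to $g_i$ and all other coordinates equal to $0$ (taking $s_i = g_i$ with the surrounding $\pi$-blocks zero, resp.\ $m_{i,i}=g_i$), and with $z_i^\ast=m_{i,i}^\ast=m_{i,i}^{\ast\ast}=0$; one checks directly from the multiplication formulas of Section \ref{m} that this is a group homomorphism and that composing with $r$ is the identity. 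Thus $\tilde M \cong \ker r \rtimes \prod\tilde M_i$. Since $\ker r$ is smooth connected unipotent and normal, and since each $\tilde M_i = \mathrm{GL}_{B/\pi B}(B_i/Y_i)$ is reductive (a product of general linear groups over $\kappa$, using that $B/\pi B$ is a quotient ring of $\kappa$-algebras — indeed $B\otimes_A\kappa/(\pi\otimes 1)$ — so $\mathrm{GL}$ over it is a Weil restriction of $\mathrm{GL}_{n_i}$ and hence reductive), the identity component of the unipotent radical of $\tilde M$ is contained in $\ker r$; conversely $\ker r$ is unipotent normal and connected, so $\ker r$ equals the unipotent radical $\tilde M^+$ and $\prod\tilde M_i \cong \tilde M/\tilde M^+$ is the maximal reductive quotient.

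The one genuinely delicate point — the main obstacle — is verifying that the proposed block-diagonal section is actually multiplicative and that $r$ itself is a morphism of group schemes, both of which require reading off the multiplication rule for $\tilde M(R)$ exactly as spelled out in Section \ref{m}: one must confirm that the distinguished invertible part $(s_i)_1$ (resp.\ $(m_{i,i})_1$) of a product depends only on the corresponding parts of the factors — this is visible from the formulas, e.g.\ $s_i'' = s_i s_i' + \pi(\cdots)$ reduces to $(s_i)_1(s_i')_1$ mod $\pi$ — and that the auxiliary constraints defining $\tilde M$ (the conditions of types (d), (e), (f) in the description of $\tilde M(R)$) are compatible with restricting to block-diagonal elements with vanishing auxiliary data. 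Once that bookkeeping is done, everything else is formal. Since all of this runs exactly parallel to the corresponding argument in Appendix A of \cite{C2} (Lemma A.2 there), I would present the scheme-theoretic description of $\ker r$ and the section, note the Lazard application, and refer to \cite{C2} for the routine verification of multiplicativity.
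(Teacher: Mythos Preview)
Your argument is correct and follows essentially the same route as the paper: identify $\ker r$ as an affine space via the explicit matrix coordinates, apply Lazard's theorem to conclude it is smooth connected unipotent, and use reductivity of $\prod_i \tilde M_i$ to finish. The paper's proof is more economical in that it omits your section construction entirely --- once $\ker r$ is known to be a smooth connected unipotent normal subgroup with reductive quotient, it is automatically the unipotent radical (the image of $R_u(\tilde M)$ in the reductive quotient must be trivial), so the semidirect product decomposition is not needed; also note that $B/\pi B \cong \kappa$ directly, so $\tilde M_i$ is simply $\mathrm{GL}$ over $\kappa$ with no Weil restriction involved.
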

\begin{proof}
Since $\prod\tilde{M}_i$ is a reductive group, we only have to show that the kernel of $r$ is a connected smooth unipotent group.
By the description of the morphism $r$ in terms of matrices explained above,
 the kernel of $r$ is isomorphic to an affine space  as an algebraic variety over $\kappa$.
Therefore, it is a connected smooth unipotent group by a theorem of Lazard which is stated at the beginning of Appendix \ref{App:AppendixA}.
\end{proof}

Recall that we have defined the morphism $\varphi$ in Section \ref{red}.
 The  morphism $\varphi$ extends to     an obvious morphism
 $$\tilde{\varphi} : \tilde{M} \longrightarrow  \prod_{i:even}\mathrm{GL}_{\kappa}(B_i/Z_i) \times  \prod_{i:odd}\mathrm{GL}_{\kappa}(B_i/Y_i) $$
 such that $\tilde{\varphi}|_{\tilde{G}}=\varphi $.
 Note that $Y_i\otimes_AR$, when $i$ is odd, is preserved by an element of $\underline{M}(R)$ for a flat $A$-algebra $R$ (cf. Lemma \ref{l42}).
 By using this,
the construction of $\tilde{\varphi}$ is similar to Theorems \ref{t43} and \ref{t44}  and thus we skip it.
 Let $R$ be a $\kappa$-algebra.
Based on the description of the morphism $\varphi_i$ explained in Section \ref{red},
$\mathrm{Ker~}\tilde{\varphi}(R)$ is the subgroup of  $\tilde{M}(R)$ defined by the following conditions:

\begin{enumerate}
\item[(a)]  If $i$ is even and $L_i$ is \textit{of type I},  $s_i=\mathrm{id}$ mod $\pi \otimes 1$.
\item[(b)]  If $i$ is even and $L_i$ is \textit{of type II}, $m_{i,i}=\mathrm{id}$ mod $\pi \otimes 1$.
\item[(c)] Let $i$ be even and $L_i$ be \textit{bound of type II}. Then $\delta_{i-1}^{\prime}e_{i-1}\cdot m_{i-1, i}+\delta_{i+1}^{\prime}e_{i+1}\cdot m_{i+1, i}+\delta_{i-2}e_{i-2}\cdot m_{i-2, i}+\delta_{i+2}e_{i+2}\cdot m_{i+2, i}=0$
    mod $\pi \otimes 1$. \\
Let $i$ be even and $L_i$ be \textit{of type I}. Then
$v_i(\mathrm{resp.~}(y_i+\sqrt{\bar{\gamma}_i}v_i))+(\delta_{i-2}e_{i-2}\cdot m_{i-2, i}+\delta_{i+2}e_{i+2}\cdot m_{i+2, i})\tilde{e_i}=0$
mod $\pi \otimes 1$
 if $L_i$ is \textit{of type} $\textit{I}^o$ (resp. \textit{of type} $\textit{I}^e$).

Here,
\begin{itemize}
\item[(i)] $
\delta_{j}^{\prime} = \left\{
  \begin{array}{l l}
  1    & \quad  \textit{if $j$ is odd and $L_j$ is \textit{free of type I}};\\
  0    &   \quad  \textit{otherwise}.
    \end{array} \right.
$
\item[(ii)] If $j$ is odd, then  $e_{j}=(0,\cdots, 0, 1)$  of size $1\times n_{j}$.
\item[(iii)] When $j$ is even, $e_{j}=(0,\cdots, 0, 1)$ (resp. $e_j=(0,\cdots, 0, 1, 0)$) of size $1\times n_{j}$
if $L_{j}$ is \textit{of type} $\textit{I}^o$ (resp. \textit{of type} $\textit{I}^e$).
\item[(iv)] $v_i$ and  $y_i$ are blocks of $m_{i, i}$ as explained in the description of an element of $\tilde{M}(R)$ above,
and $\bar{\gamma}_i$ is as explained in Remark \ref{r33}.(2).
\item[(v)] $\tilde{e_i}=\begin{pmatrix} \mathrm{id}\\0 \end{pmatrix}$ of size $n_i\times (n_{i}-1)$ (resp. $n_i\times (n_{i}-2)$), where $\mathrm{id}$ is the identity matrix of size $(n_i-1)\times (n_{i}-1)$ (resp. $(n_i-2)\times (n_{i}-2)$) if $L_{i}$ is \textit{of type} $\textit{I}^o$ (resp. \textit{of type} $\textit{I}^e$).
 \end{itemize}

\item[(d)]  If $i$ is odd and $L_i$ is \textit{of type II} or \textit{bound of type I}, $m_{i,i}=\mathrm{id}$  mod $\pi \otimes 1$.
\item[(e)] If $i$ is odd and $L_i$ is \textit{free of type I}, $s_i=\mathrm{id}$  mod $\pi \otimes 1$.\\
\end{enumerate}
It is obvious that $\mathrm{Ker~}\tilde{\varphi}$ is a closed subgroup scheme of the unipotent radical $\tilde{M}^+$ of $\tilde{M}$ and  is smooth and unipotent
 since it is isomorphic to an affine space as an algebraic variety over $\kappa$.

For completeness of the content in this section, we repeat the argument written in from the last paragraph of page 502 to page 503 in \cite{C2}.

Recall from Remark \ref{r31} that we  defined the functor $\underline{M}^{\prime}$ such that $(1+\underline{M}^{\prime})(R)=\underline{M}(R)$
inside $\mathrm{End}_{B\otimes_AR}(L \otimes_A R)$ for a flat $A$-algebra $R$.
Thus there is an isomorphism of set valued functors
$$1+ :  \underline{M}^{\prime} \longrightarrow \underline{M}, ~~~ m\mapsto 1+m,$$
where $m\in \underline{M}^{\prime}(R)$ for a flat $A$-algebra $R$.
We define a new operation $\star$ on $\underline{M}^{\prime}(R)$ such that $x\star y=x+y+xy$ for a flat $A$-algebra $R$.
Since $\underline{M}^{\prime}(R)$ is closed under addition and multiplication, it is also closed under the new operation $\star$.
Moreover, it has  $0$ as an identity element  with respect to $\star$.
Thus $\underline{M}^{\prime}$ may and shall be considered as a scheme of monoids with $\star$.
 We claim that the above morphism $1+$ is an isomorphism of monoid schemes.
 Namely, we claim  the following commutative diagram of schemes:
 \[\xymatrixcolsep{5pc}\xymatrix{
\underline{M}^{\prime}\times \underline{M}^{\prime} \ar[d]^{\star} \ar[r]^{(1+)\times (1+)} &\underline{M}\times
\underline{M}\ar[d]^{multiplication}\\
\underline{M}^{\prime} \ar[r]^{1+} &\underline{M}}\]
Since all schemes are irreducible and smooth, it suffices to check the commutativity of the diagram at the level of flat $A$-points
as explained in the third paragraph from below in Remark \ref{r32} of \cite{C2}, and this is obvious.

Since $\underline{M}^{\ast}$ is an open subscheme of $\underline{M}$,  $(1+)^{-1}(\underline{M}^{\ast})$ is an open subscheme of $\underline{M}^{\prime}$.
The composite of the following three morphisms
 \[\xymatrixcolsep{5pc}\xymatrix{
(1+)^{-1}(\underline{M}^{\ast})  \ar[r]^{(1+)} &\underline{M}^{\ast} \ar[r]^{inverse} &\underline{M}^{\ast}\ar[r]^{(1+)^{-1}}&
(1+)^{-1}(\underline{M}^{\ast})}\]
defines the inverse morphism on the scheme of monoids $(1+)^{-1}(\underline{M}^{\ast})$ with respect to the operation $\star$.
Thus we can see that $(1+)^{-1}(\underline{M}^{\ast})$ is a  group scheme with respect to $\star$ and
 the morphism $1+$ is   an isomorphism of group schemes between $(1+)^{-1}(\underline{M}^{\ast})$ and $\underline{M}^{\ast}$.

Let $R$ be a $\kappa$-algebra.
Since the morphism $1+$ is an isomorphism of monoid schemes between $\underline{M}^{\prime}$ and $\underline{M}$,
we can write each element of $\underline{M}(R)$ as $1+x$ with $x \in \underline{M}^{\prime}(R)$.
Here,  $1+x$ means the image of $x$ under the morphism $1+$ at the level of $R$-points.
Note that  $\underline{M}^{\prime}(R)$ is a $B\otimes_AR$-algebra for any $A$-algebra $R$ with respect to the original multiplication on it,
 not  the operation $\star$.
In particular,  $\underline{M}^{\prime}(R)$ is a $(B/2B)\otimes_AR$-algebra for any $\kappa$-algebra $R$.
Therefore, we consider the following two functors:
$$
\left\{
  \begin{array}{l}
\textit{the subfunctor $\underline{\pi M^{\prime}} : R \mapsto (\pi\otimes 1) \underline{M}^{\prime}(R)$ of $\underline{M}^{\prime}\otimes\kappa$};\\
\textit{the subfunctor $\tilde{M}^1:R\mapsto 1+\underline{\pi M^{\prime}}(R)$ of $\mathrm{Ker~}\tilde{\varphi}$}.
    \end{array} \right.
$$
Here, by $1+\underline{\pi M^{\prime}}(R)$, we mean the image of $\underline{\pi M^{\prime}}(R)$
inside $\underline{M}(R) (=\tilde{M}(R))$ under the morphism $1+$ at the level of $R$-points.
That   $1+\underline{\pi M^{\prime}}(R)$ is contained in $\mathrm{Ker~}\tilde{\varphi}(R)$ can easily be checked
 by observing the construction of $\tilde{\varphi}$.
The multiplication on $\tilde{M}^1$  is as follows:
for  two elements $1+\pi x$ and $1+\pi y$ in $\tilde{M}^1(R)$,
based on the above commutative diagram, the product of $1+\pi x$ and $1+\pi y$  is
$$(1+\pi x)\cdot(1+\pi y)=1+\pi x\star \pi y=1+(\pi (x+y)+\pi^2(xy))=1+\pi (x+y).$$
Here, $\pi$ stands for $\pi\otimes 1 \in B\otimes_AR$.
Then we have the following  lemma.
\begin{Lem}\label{la3}
(i) The functor $\tilde{M}^1$ is representable by a smooth, connected, unipotent group scheme over $\kappa$.
Moreover, $\tilde{M}^1$ is a closed normal subgroup of $\mathrm{Ker~}\tilde{\varphi} $.

(ii) The quotient group scheme $\mathrm{Ker~}\tilde{\varphi}/\tilde{M}^1$ represents the functor
$$R\mapsto \mathrm{Ker~}\tilde{\varphi}(R)/\tilde{M}^1(R)$$
by Lemma \ref{la1} and is smooth, connected, and unipotent.\\
\end{Lem}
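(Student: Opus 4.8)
The statement to prove is Lemma \ref{la3}, which has two parts. For part (i), the plan is to show directly that $\tilde{M}^1$ is representable by an affine space over $\kappa$, and then invoke the theorem of Lazard recalled at the beginning of Appendix \ref{App:AppendixA}. The key observation is the multiplication formula just computed before the statement: for $1+\pi x, 1+\pi y \in \tilde{M}^1(R)$, one has $(1+\pi x)\cdot(1+\pi y) = 1+\pi(x+y)$, because $\pi^2 \cdot \pi^2 = 0$ in $(B/2B)\otimes_A R$ kills the $xy$-term. Hence the group law on $\tilde{M}^1$ is, up to the identification $1+\pi x \leftrightarrow x$, simply the additive group law on the functor $\underline{\pi M'} : R \mapsto (\pi\otimes 1)\underline{M'}(R)$. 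So I would first check that $\underline{\pi M'}$ is representable by an affine space: since $\underline{M'}$ is represented by a polynomial ring over $A$ of $2n^2$ variables (Remark \ref{r31}, via Lemma 3.1 of \cite{C1}), $\underline{M'}\otimes\kappa$ is an affine space over $\kappa$, and $\underline{\pi M'}$ is the image of multiplication by $\pi\otimes 1$ on $\underline{M'}\otimes\kappa$ — this is a $\kappa$-subspace scheme cut out by $\kappa$-linear equations (the kernel of multiplication by $\pi\otimes 1$ on $B/2B$, extended coefficientwise), hence itself an affine space. Therefore $\tilde{M}^1 \cong \underline{\pi M'}$ as $\kappa$-varieties is an affine space, and by Lazard it is a connected smooth unipotent group scheme. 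That it is a closed subscheme of $\mathrm{Ker}~\tilde{\varphi}$ follows because $\tilde{M}^1$ is cut out inside $\mathrm{Ker}~\tilde{\varphi}$ by the $\kappa$-linear conditions expressing that every ``constant part'' (the coefficient of $1\otimes 1$ in each matrix entry, outside the forced $1$'s on certain diagonals) vanishes; normality is checked by a direct conjugation computation, using again that $\pi^2$ annihilates the relevant terms, so that for $g \in \mathrm{Ker}~\tilde\varphi(R)$ and $1+\pi x \in \tilde M^1(R)$ one gets $g(1+\pi x)g^{-1} = 1 + \pi(gxg^{-1})$ lying in $\tilde M^1(R)$.

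For part (ii), the plan is purely formal once part (i) is in hand. We have a short exact sequence of group schemes locally of finite type over $\kappa$,
\[
1 \longrightarrow \tilde{M}^1 \longrightarrow \mathrm{Ker}~\tilde{\varphi} \longrightarrow \mathrm{Ker}~\tilde{\varphi}/\tilde{M}^1 \longrightarrow 1,
\]
where the quotient exists as a group scheme by standard theory (e.g.\ as $\mathrm{Ker}~\tilde\varphi$ is affine of finite type over the perfect field $\kappa$ and $\tilde M^1$ is a closed normal subgroup scheme). Since $\tilde{M}^1$ is smooth, connected and unipotent by part (i), Lemma \ref{la1} (Lemma 6.3.3 of \cite{GY}) gives that $R \mapsto \mathrm{Ker}~\tilde{\varphi}(R)/\tilde{M}^1(R)$ is the functor of points of the quotient, i.e.\ $1 \to \tilde M^1(R) \to \mathrm{Ker}~\tilde\varphi(R) \to (\mathrm{Ker}~\tilde\varphi/\tilde M^1)(R) \to 1$ is exact for every $\kappa$-algebra $R$. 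Finally, to see the quotient is smooth, connected and unipotent: smoothness passes to quotients of smooth group schemes; connectedness because it is a quotient of the connected (affine space) group $\mathrm{Ker}~\tilde\varphi$ — note $\mathrm{Ker}~\tilde\varphi$ is itself an affine space over $\kappa$ by the explicit matrix description given just before the statement, hence connected by Lazard; and unipotence because a quotient of a unipotent group is unipotent, and $\mathrm{Ker}~\tilde\varphi$ is unipotent (again from the affine-space description and Lazard).

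\textbf{Main obstacle.} The genuinely delicate point is verifying that $\tilde{M}^1$ is indeed a \emph{closed} normal subscheme of $\mathrm{Ker}~\tilde{\varphi}$ with the claimed functorial description, rather than merely a subfunctor — one must be careful that the conditions $(\mathrm{a})$–$(\mathrm{e})$ defining $\mathrm{Ker}~\tilde\varphi(R)$, combined with the extra ``all constant parts vanish'' conditions, are exactly expressed by vanishing of explicit polynomial (in fact $\kappa$-linear) functions on the ambient affine space, and that the auxiliary data $z_i^\ast, m_{i,i}^\ast, m_{i,i}^{\ast\ast}$ are compatible with passing to $\tilde M^1$. Once the bookkeeping of these congruence/linear conditions is organized (mirroring the description of $\mathrm{Ker}~\tilde\varphi(R)$ given just above), everything else reduces to the multiplication identity $1+\pi x)\cdot(1+\pi y)=1+\pi(x+y)$ and routine applications of Lazard's theorem and Lemma \ref{la1}; this is the same mechanism as in Appendix A of \cite{C2}, which the argument explicitly parallels.
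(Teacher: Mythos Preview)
Your proposal is correct and takes essentially the same approach as the paper, which simply defers to Lemma~A.3 of \cite{C2}; you have faithfully reconstructed that argument, pivoting on the multiplication identity $(1+\pi x)(1+\pi y)=1+\pi(x+y)$, Lazard's theorem applied to the affine-space descriptions of $\tilde{M}^1$ and $\mathrm{Ker}\,\tilde{\varphi}$, and Lemma~\ref{la1} for the quotient.
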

\begin{proof}
The proof is the same as that of Lemma A.3 of \cite{C2} and so we skip it.
\end{proof}

 This paragraph is a reproduction of 6.3.6 in \cite{GY}.
Recall that there is a closed immersion $\tilde{G}\rightarrow \tilde{M}$.
Notice that $\mathrm{Ker~}\varphi$ is the kernel of the composition $\tilde{G}\rightarrow \tilde{M} \rightarrow \tilde{M}/ \mathrm{Ker~}\tilde{\varphi}$.
We define $\tilde{G}^1$ as the kernel of the composition
\[ \tilde{G}\rightarrow \tilde{M} \rightarrow \tilde{M}/ \tilde{M}^1.\]
 Then $\tilde{G}^1$ is the kernel of the morphism $\mathrm{Ker~}\varphi\rightarrow \mathrm{Ker~}\tilde{\varphi}/\tilde{M}^1$
 and, hence, is a closed normal subgroup of $\mathrm{Ker~}\varphi$.
 The induced morphism $\mathrm{Ker~}\varphi/\tilde{G}^1\rightarrow \mathrm{Ker~}\tilde{\varphi}/\tilde{M}^1$ is a monomorphism, and thus
 $\mathrm{Ker~}\varphi/\tilde{G}^1$ is a closed subgroup scheme of $ \mathrm{Ker~}\tilde{\varphi}/\tilde{M}^1$
 by (Exp. $\mathrm{VI_B},$ Corollary 1.4.2 in \cite{SGA3}).

 \begin{Thm}\label{ta4}
 $\tilde{G}^1$ is connected, smooth, and unipotent.
  Furthermore,   the underlying algebraic variety of $\tilde{G}^1$ over $\kappa$ is  an affine space of dimension
\[
\sum_{i<j}n_in_j+\sum_{i:\mathrm{even}}\frac{n_i^2+n_i}{2}+\sum_{i:\mathrm{odd}}\frac{n_i^2-n_i}{2}
+\#\{i:\textit{$i$ is odd and $L_i$ is free of type I}\} \]
\[-\#\{i:\textit{$i$ is even and $L_i$ is of type I}\}
+ \#\{i:\textit{$i$ is even, $L_i$ is  of type I and  $L_{i+2}$ is of type II}\}.\]
 \end{Thm}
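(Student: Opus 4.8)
The strategy is to realize $\tilde{G}^1$ as the closed subscheme of the vector group $\tilde{M}^1$ cut out by the requirement of preserving the hermitian form $h$, to show that over every $\kappa$-algebra this subscheme is a linear subspace of $\tilde{M}^1$, and then to invoke the theorem of Lazard recalled at the beginning of this appendix. Since $\tilde{G}^1$ is by construction a closed subgroup scheme of finite type over $\kappa$, once it is known to be an affine space as a variety its connectedness, smoothness and unipotence follow at once, and only the dimension count remains.

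First I would make $\tilde{G}^1(R)$ explicit for a $\kappa$-algebra $R$. By definition it consists of the $m \in \tilde{G}(R)$ lying in $\tilde{M}^1(R) = 1 + \underline{\pi M'}(R)$; concretely, $m$ is a formal matrix $\bigl(\pi^{\max\{0,j-i\}}m_{i,j}\bigr)$ together with $z_i^{\ast}, m_{i,i}^{\ast}, m_{i,i}^{\ast\ast}$ whose reduction modulo $\pi \otimes 1$ is the identity, so that $m_{i,j} \equiv 0$ for $i \neq j$ and each diagonal block (or its $s_i$-part) is $\equiv \mathrm{id}$, and which satisfies $\sigma({}^{t}m)\cdot h\cdot m = h$, interpreted formally and then reduced by setting $\pi^2 = 0$ as in Remark \ref{r35}, with $h = \bigl(\pi^i h_i\bigr)$ as in Remark \ref{r33}.(1). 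The key observation, already visible in the proof of Lemma \ref{la3}, is that on $\tilde{M}^1$ multiplication is addition: $(1+\pi x)(1+\pi y) = 1 + \pi(x+y)$. Consequently, writing $m = 1 + \pi x$, the term $\pi^2\,\sigma({}^{t}x)\,h\,x$ disappears upon reduction and $\sigma({}^{t}m)hm = h$ collapses to the $\kappa$-linear condition $\sigma({}^{t}x)\,h + h\,x = 0$ (with $\sigma$ tracked formally as $\pi \mapsto -\pi$ before reduction), together with the linear constraints among $z_i^{\ast}, m_{i,i}^{\ast}, m_{i,i}^{\ast\ast}$ and the off-diagonal blocks imposed in Section \ref{m}. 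The solution set of a homogeneous linear system is a linear subspace, so $\tilde{G}^1$ is an affine space over $\kappa$ and Lazard's theorem applies.

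It remains to compute the dimension, which I would do block by block, following Appendix A of \cite{C2}. The off-diagonal blocks are paired by the Hermitian relation ($m_{j,i}$ is determined up to sign and $\sigma$ by ${}^{t}m_{i,j}$), contributing the free parameters indexed by $i < j$, that is $\sum_{i<j} n_i n_j$. A diagonal block with $i$ even contributes the part of $\pi x_{i,i}$ that is symmetric for $h_i$, namely $\frac{n_i^2 + n_i}{2}$ parameters, and with $i$ odd the alternating part, namely $\frac{n_i^2 - n_i}{2}$ parameters. The three remaining terms are corrections forced by the auxiliary coordinates $z_i^{\ast}, m_{i,i}^{\ast}, m_{i,i}^{\ast\ast}$ and the congruence conditions attached to them: for $i$ odd with $L_i$ free of type I the shape of $m_{i,i}$ (cf. Section \ref{m} and Theorem \ref{t44}) carries one extra independent entry, giving $+\#\{i \text{ odd}, L_i \text{ free of type I}\}$; for $i$ even with $L_i$ of type I the entry $z_i$ of $m_{i,i}$ is eliminated by $z_i + \delta_{i-2}k_{i-2,i} + \delta_{i+2}k_{i+2,i} = \pi z_i^{\ast}$, giving $-\#\{i \text{ even}, L_i \text{ type I}\}$; and when in addition $L_{i+2}$ is of type II the coordinate $z_i^{\ast}$ survives the reduction $\pi z_i^{\ast} \mapsto 0$ as a genuine free parameter, giving $+\#\{i \text{ even}, L_i \text{ type I}, L_{i+2} \text{ type II}\}$. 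Summing yields the asserted formula; as a consistency check it equals $n^2$ minus the sum of the dimensions of the reductive factors recorded in Section \ref{red}, which is exactly the number $l$ of Lemma \ref{l46}.

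I expect the main obstacle to be precisely this last analysis: verifying, against the formal-matrix conventions of Sections \ref{m}--\ref{h} and the ``distinguish $-1$ from $1$, then set $\pi^2 = 0$'' reduction, that the linearized equations cutting out $\tilde{G}^1$ inside $\tilde{M}^1$ are independent and that the only deviations from the naive count $\sum_{i<j} n_i n_j + \sum_{i \text{ even}} \frac{n_i^2+n_i}{2} + \sum_{i \text{ odd}} \frac{n_i^2-n_i}{2}$ occur in the three exceptional situations above; checking that these corrections do not interfere with one another uses, as in Section \ref{cg}, the gaps between the indices $i$ that can occur. Everything else — representability, connectedness, smoothness, unipotence — is then formal via Lazard.
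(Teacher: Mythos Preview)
Your overall architecture is the paper's: write $m=1+\pi x\in\tilde M^1$, show the defining equations of $\tilde G^1$ inside the affine space $\tilde M^1$ are linear, count them, and invoke Lazard. But two of your shortcuts do not go through.

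\textbf{The linearization is not as cheap as you claim.} You assert that the cross term $\pi^2\,\sigma({}^tx)\,h\,x$ vanishes upon the ``set $\pi^2=0$'' reduction of Remark~\ref{r35}. This is false for exactly those coordinates of $\underline H$ that carry a factor of $4$: when $i$ is even and $L_i$ is of type~I, the relevant entry of $h$ is $2\bar\gamma_i+4c_i$, so extracting $c_i'$ means dividing by $4=\pi^4/\delta^2$ \emph{before} reducing, and order-$\pi^4$ contributions survive. The paper makes this explicit (cases (iv)--(v) and especially step~(vi)): the quadratic term $(z_i')^2$ does appear in the raw equation~(\ref{ea18}), and it only collapses to the linear equation $z_i'=0$ of~(\ref{ea19}) after feeding in the auxiliary relation $z_i'+\delta_{i-2}k'_{i-2,i}+\delta_{i+2}k'_{i+2,i}=0$ built into $\tilde M^1$. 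So the equations are linear, but not for the reason you give; this block is where the real work is.

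\textbf{Your ``consistency check'' and your reading of the correction terms are wrong.} You claim $\dim\tilde G^1=l$. It is not: $\tilde G^1$ is strictly smaller than $\mathrm{Ker}\,\varphi$ (whose dimension is $l$), and Theorem~\ref{ta6} gives $\dim(\mathrm{Ker}\,\varphi/\tilde G^1)=l'>0$ as soon as there are two nonzero Jordan pieces. For instance with $L=L_0\oplus L_2$, $n_0=n_2=1$, both of type~$I^o$, the formula gives $\dim\tilde G^1=2$ while $l=4$. Relatedly, your account of the $+\#\{i\text{ even},\,L_i\text{ type I},\,L_{i+2}\text{ type II}\}$ term (``$z_i^{\ast}$ survives as a free parameter'') is not the mechanism: in the paper this term counts \emph{redundancies} among the equations $z_i'=0$ --- for each maximal run $L_{j-2m_j},\dots,L_j$ of type~I blocks the relations $\mathcal Z'_{j-2l}$ sum to a tautology, so exactly one $z_i'=0$ per run is not an independent constraint (step~(vi)). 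The $-\#\{i\text{ even},\,L_i\text{ type I}\}$ comes from the remaining independent $z_i'=0$ equations, and the $+\#\{i\text{ odd, free I}\}$ from case~(ii), where the diagonal block imposes one fewer equation than the naive $(n_i^2+n_i)/2$.
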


 \begin{proof}
We prove this theorem by writing out a set of equations completely defining $\tilde{G}^1$
(after all there are so many different sets of equations defining $\tilde{G}^1$).
We first introduce the following trick.
Consider the polynomial ring $\kappa[x_1, \cdots, x_n]$ and its quotient ring $\kappa[x_1, \cdots, x_n]/(x_1+P(x_2, \cdots, x_n))$.
Then the quotient ring $\kappa[x_1, \cdots, x_n]/(x_1+P(x_2, \cdots, x_n))$
 is isomorphic to $\kappa[x_2, \cdots, x_n]$ and in this case we say that \textit{$x_1$ can be eliminated by $x_2, \cdots, x_n$}.

Let $R$ be a $\kappa$-algebra.
As explained in Remark \ref{r33}.(2), we consider the given hermitian form $h$ as an element of $\underline{H}(R)$
and write it as a formal matrix $h=\begin{pmatrix} \pi^{i}\cdot h_i\end{pmatrix}$ with $(\pi^{i}\cdot h_i)$
for the $(i,i)$-block and $0$ for the remaining blocks.
We also write $h$ as $(f_{i, j}, a_i\cdots f_i)$.
Recall that the notation $(f_{i, j}, a_i\cdots f_i)$ is defined and explained in Section \ref{h}
and explicit values of $(f_{i, j}, a_i\cdots f_i)$ for the  $h$ are given in Remark \ref{r33}.(2).

We choose an element $m=(m_{i,j}, s_i\cdots w_i)\in (\mathrm{Ker~}\tilde{\varphi})(R)$ with a formal matrix interpretation
$m= \begin{pmatrix} \pi^{max\{0,j-i\}}m_{i,j} \end{pmatrix}$,
where the notation  $(m_{i,j}, s_i\cdots w_i)$ is explained in Section \ref{m}.
Then $h\circ m$ is an element of $\underline{H}(R)$ and
$(\mathrm{Ker~}\varphi)(R)$ is the set of $m$ such that  $h\circ m=(f_{i, j}, a_i\cdots f_i)$.
The action $h\circ m$ is explicitly described in Remark \ref{r35}. Based on this,
we need to write the matrix product $h\circ m=\sigma({}^tm)\cdot h\cdot m$ formally.
To do that, we write each block of $\sigma({}^tm)\cdot h\cdot m$ as follows:

The diagonal $(i,i)$-block of  the formal matrix product $\sigma({}^tm)\cdot h\cdot m$ is the following:
\begin{multline}\label{ea1}
\pi^i\left(\sigma({}^tm_{i,i})h_im_{i,i}+\sigma(\pi)\cdot\sigma({}^tm_{i-1, i})h_{i-1}m_{i-1, i}+\pi\cdot\sigma({}^tm_{i+1, i})h_{i+1}m_{i+1, i}\right)+\\
\pi^i\left((\sigma\pi)^2\cdot\sigma({}^tm_{i-2, i})h_{i-2}m_{i-2, i}+
 \pi^2\cdot\sigma({}^tm_{i+2, i})h_{i+2}m_{i+2, i}+\pi^3(\ast)\right),
\end{multline}
where $0\leq i < N$ and $(\ast)$ is a certain formal polynomial.
\\
The $(i,j)$-block of the formal matrix product  $\sigma({}^tm)\cdot h\cdot m$, where $i<j$, is the following:
\begin{equation}\label{ea2}
\pi^j\left(\sum_{i\leq k \leq j} \sigma({}^tm_{k,i})h_km_{k,j}+\sigma(\pi)\cdot\sigma({}^tm_{i-1,i})h_{i-1}m_{i-1,j}+\pi\cdot\sigma({}^tm_{j+1,i})h_{j+1}m_{j+1,j}+\pi^2(\ast)\right),
\end{equation}
where $0\leq i, j < N$ and $(\ast)$ is a certain formal polynomial.
In the following computations, we always have in mind that $\sigma(\pi)=-\pi$.  as mentioned at the beginning of Section \ref{Notations}.

Before studying $\tilde{G}^1$,
we describe the conditions for an element $m\in \tilde{M}(R)$ as above to belong to the subgroup $\tilde{M}^1(R)$.
\begin{enumerate}
\item $m_{i,j}=\pi m_{i,j}^{\prime} \mathrm{~if~} i\neq j,$

\item If $i$ is even and $L_i$ is \textit{of type} $\textit{I}^o$, then
 $$m_{i,i}= \begin{pmatrix} s_i&\pi y_i\\ \pi v_i&1+\pi z_i \end{pmatrix}=\begin{pmatrix} \mathrm{id}+\pi s_i^{\prime}&\pi^2 y_i^{\prime}\\ \pi^2 v_i^{\prime}&1+\pi^2 z_i^{\prime} \end{pmatrix}.$$
\item If $i$ is even and $L_i$ is \textit{of type} $\textit{I}^e$, then 
$$ m_{i,i}=\begin{pmatrix} s_i&r_i&\pi t_i\\ \pi y_i&1+\pi x_i&\pi z_i\\ v_i&u_i&1+\pi w_i \end{pmatrix}=
 \begin{pmatrix} \mathrm{id}+\pi s_i^{\prime}&\pi r_i^{\prime}&\pi^2t_i^{\prime}\\ \pi^2y_i^{\prime}&1+\pi^2x_i^{\prime}&\pi^2z_i^{\prime}\\ \pi v_i^{\prime}&\pi u_i^{\prime}&1+\pi^2w_i^{\prime} \end{pmatrix}.$$
 \item If $i$ is even and $L_i$ is \textit{of type II}, then $m_{i,i}=\mathrm{id}+\pi m_{i,i}^{\prime}$.
\item If $i$ is even and $L_i$ is \textit{of type I}, then $z_i^{\ast}=\pi (z_i^{\ast})^{\prime}$.
This equation yields the following (formal) equation by condition (d) of the description of an element of $\tilde{M}(R)$
given at the paragraph following Lemma \ref{la1}:

\begin{equation}\label{52}
 z_i^{\prime}+\delta_{i-2}k_{i-2, i}^{\prime}+\delta_{i+2}k_{i+2, i}^{\prime}=0 \left( =\pi (z_i^{\ast})^{\prime} \right).
\end{equation}

Here, $k_{i-2, i}=\pi k_{i-2, i}^{\prime}$ and $k_{i+2, i}=\pi k_{i+2, i}^{\prime}$,
where $k_{i-2, i}$ and $k_{i+2, i}$ are as explained in (d) of the description of an element of $\tilde{M}(R)$.
\item If $i$ is odd and $L_i$ is \textit{free of type I}, then \[m_{i,i}=\begin{pmatrix} s_i&\pi r_i&t_i\\  y_i&1+\pi x_i& u_i\\\pi  v_i&\pi z_i&1+\pi w_i \end{pmatrix}
 =\begin{pmatrix} \mathrm{id}+\pi s_i^{\prime}&\pi^2 r_i^{\prime}&\pi t_i^{\prime}\\  \pi y_i^{\prime}&1+\pi^2 x_i^{\prime}& \pi u_i^{\prime}\\\pi^2  v_i^{\prime}&\pi^2 z_i^{\prime}&1+\pi^2 w_i^{\prime} \end{pmatrix}.\]
If $i$ is odd and $L_i$ is \textit{of type II} or \textit{bound of type I}, then $m_{i,i}=\mathrm{id}+\pi m_{i,i}^{\prime}$.

\item If $i$ is odd and $L_i$ is \textit{bound of type I}, then
\[m_{i,i}^{\ast}=\pi (m_{i,i}^{\ast})^{\prime}, ~~~~~~~ m_{i,i}^{\ast\ast}=\pi (m_{i,i}^{\ast\ast})^{\prime}.\]
These two equations yield the following (formal) equations
by conditions (e) and (f) of the description of an element of $\tilde{M}(R)$
given at the paragraph following Lemma \ref{la1}:
\begin{equation}\label{ea72}
\left\{ 
  \begin{array}{l}
 \delta_{i-1}v_{i-1}\cdot m_{i-1, i}^{\prime}+\delta_{i+1}v_{i+1}\cdot m_{i+1, i}^{\prime}=0
 ~~~ \left(=\pi (m_{i,i}^{\ast})^{\prime}\right);\\
 \delta_{i-1}v_{i-1}\cdot {}^tm_{i, i-1}^{\prime}+\delta_{i+1}v_{i+1}\cdot {}^tm_{i, i+1}^{\prime} =0
 ~~~ \left(=\pi (m_{i,i}^{\ast\ast})^{\prime}\right).\\
    \end{array} 
    \right.
    \end{equation}

Here,  notations follow from those of (e) and (f) in the description of an element of $\tilde{M}(R)$.
\end{enumerate}
Here, all matrices having ${}^{\prime}$ in the superscription are considered as matrices with entries in $R$.
 When $i$ is even and $L_i$ is \textit{of type} $\textit{I}$ or when $i$ is odd and $L_i$ is \textit{free of type} $\textit{I}$,
 we formally write $m_{i,i}=\mathrm{id}+\pi m_{i,i}^{\prime}$. 
Then $\tilde{G}^1(R)$ is the set of $m\in \tilde{M}^1(R)$ such that  $h\circ m=h=(f_{i, j}, a_i\cdots f_i)$.
Since $h\circ m$ is an element of $\underline{H}(R)$,  we can write $h\circ m$ as $(f_{i, j}', a_i'\cdots f_i')$.
In what follows,
we will write $(f_{i, j}', a_i'\cdots f_i')$ in terms of $h=(f_{i, j}, a_i\cdots f_i)$ and $m$, and will compare $(f_{i, j}', a_i'\cdots f_i')$ with
$(f_{i, j}, a_i\cdots f_i)$, in order to obtain a set of equations defining $\tilde{G}^1$.\\

If we put all these (1)-(7) into  (\ref{ea2}), then we obtain
$$\pi^j\left(\sigma(1+\pi\cdot {}^tm_{i,i}')h_i\pi m_{i,j}'+\sigma(\pi\cdot {}^tm_{j,i}')h_j(1+\pi m_{j,j}')+\pi^2(\ast))\right),$$
where $(\ast)$ is a certain formal polynomial.
Therefore,
\begin{equation}\label{ea3-}
f_{i,j}'=\left(\sigma(1+\pi\cdot {}^tm_{i,i}')h_i\pi m_{i,j}'+\sigma(\pi\cdot {}^tm_{j,i}')h_j(1+\pi m_{j,j}')+\pi^2(\ast)\right),
\end{equation}
where this equation is considered in $B\otimes_AR$ and $\pi$ stands for $\pi\otimes 1 \in B\otimes_AR$.
Thus each term having $\pi^2$ as a factor is $0$ and we have
\begin{equation}\label{ea3}
f_{i,j}'=h_i\pi m_{i,j}'+\sigma(\pi\cdot {}^tm_{j,i}')h_j, \textit{where $i<j$}.
\end{equation}
This equation is of the form $f_{i,j}'=X+\pi Y$ since it is an equation in $B\otimes_AR$.
By letting  $f_{i,j}'=f_{i,j}=0$,
 we obtain
 \begin{equation}\label{ea4}
 \bar{h}_i m_{i,j}'+{}^tm_{j,i}'\bar{h}_j=0, \textit{where $i<j$},\end{equation}
 where $\bar{h}_i$ (resp. $\bar{h}_j$) is obtained by letting each term in $h_i$ (resp. $h_j$) having $\pi$ as a factor be zero
 so that  this equation is considered in $R$.
Note that $\bar{h}_i$ and $\bar{h}_j$ are invertible as matrices with entries in $R$ by Remark \ref{r33}.
Thus $m_{i,j}'=\bar{h}_i^{-1}\cdot {}^tm_{j,i}'\cdot \bar{h}_j$.
This induces that each entry of $m_{i,j}'$ is expressed as a linear combination of the entries of $m_{j,i}'$.\\

When $i$ is odd and $L_i$ is \textit{bound of type I}, we consider the above equation (\ref{ea3-}) again because of the appearance of
$m_{i,i}^{\ast}, m_{i,i}^{\ast\ast}, f_{i,i}^{\ast}$.
We rewrite $f_{i-1,i}'$ and $f_{i,i+1}'$ formally as follows:
$$f_{i-1,i}'=\left(\sigma(1+\pi\cdot {}^tm_{i-1,i-1}')h_{i-1}\pi m_{i-1,i}'+\sigma(\pi\cdot {}^tm_{i,i-1}')h_i(1+\pi m_{i,i}')+\pi^3(\ast))\right),$$
$$f_{i, i+1}'=\left(\sigma(1+\pi\cdot {}^tm_{i,i}')h_i\pi m_{i,i+1}'+\sigma(\pi\cdot {}^tm_{i+1,i}')h_{i+1}(1+\pi m_{i+1,i+1}')+\pi^3(\ast\ast))\right).$$
Here,  $(\ast)$ and $(\ast\ast)$ are  certain formal polynomials with coefficients $\pi^3$, not $\pi^2$,
since $m_{j, j'}=\pi m_{j,j'}^{\prime}$  when $j\neq j'$.
We consider the following equations formally (cf. the second paragraph of Remark \ref{r35}):
\[
\left\{
  \begin{array}{l}
\pi (f_{i, i}^{\ast})'=\delta_{i-1}(0,\cdots, 0, 1)\cdot f_{i-1,i}'+\delta_{i+1}(0,\cdots, 0, 1)\cdot f_{i+1,i}';\\
 \sigma({}^tf_{i, i+1}')=f_{i+1, i}',\\
    \end{array}
    \right.
\]
where $(f_{i, i}^{\ast})'$ is a matrix of size $(1 \times n_i)$ with entries in $B\otimes_AR$.
The above formal equation involving $(f_{i, i}^{\ast})'$ should be interpreted as follows (cf. Remark \ref{r35}).
 We formally compute the right hand side by using the above formal expansions of $f_{i-1,i}'$ and $f_{i, i+1}'$,
  the equation $\sigma({}^tf_{i, i+1}')=f_{i+1, i}'$,
  and two formal equations (\ref{ea72}) involving $(m_{i,i}^{\ast})'$ and $(m_{i,i}^{\ast\ast})'$.
  We then get the following: 
\[\pi (f_{i, i}^{\ast})'=\pi^2\left((m_{i,i}^{\ast})'-(m_{i,i}^{\ast\ast})'h_i+\dag\right) \]  
after letting each term having $\pi^3$ as a factor be zero since $(f_{i, i}^{\ast})'$ is a matrix with entries in $B\otimes_AR$.
Here, $\dag$ is a polynomial of $m_{i-1,i-1}', m_{i-1,i}', m_{i,i-1}', m_{i,i}', m_{i,i+1}', m_{i+1,i}', m_{i+1,i+1}'$.
Thus
\begin{equation}\label{ea3'}
(f_{i, i}^{\ast})'=\pi\left((m_{i,i}^{\ast})'-(m_{i,i}^{\ast\ast})'h_i+\dag\right).
\end{equation}
Since this is an equation in $B\otimes_AR$, it is of the form $X+\pi Y=0$.
By letting $(f_{i, i}^{\ast})'=f_{i, i}^{\ast}=0$,
we obtain
\begin{equation}\label{ea4'}
(m_{i,i}^{\ast})'=(m_{i,i}^{\ast\ast})'\bar{h}_i+\bar{\dag},
\end{equation}
 where $\bar{h}_i$ (resp. $\bar{\dag}$) is obtained by letting each term in $h_i$ (resp. $\dag$) having $\pi$ as a factor be zero
 so that  this equation is considered in $R$.\\

 On the other hand, we apply Equation (\ref{ea4}) to the cases $(i-1, i)$ and $(i, i+1)$ and then we have
 \[\bar{h}_{i-1} m_{i-1,i}'+{}^tm_{i,i-1}'\bar{h}_i=0,\]
 \[ \bar{h}_i m_{i,i+1}'+{}^tm_{i+1,i}'\bar{h}_{i+1}=0.\]
When $i$ is odd and $L_i$ is \textit{bound of type I}, the above two equations with the first equation of Equation (\ref{ea72})
 yield the second equation of Equation (\ref{ea72}).
Thus by combining Equations (\ref{ea4}) and  (\ref{ea4'}) together with the first equation of Equation (\ref{ea72}),
we conclude that 
\[2\left(\sum_{\textit{$L_i$:bound of type I with i odd}}n_i\right)+\left(\sum_{i<j}n_in_j\right)-\textit{variables}\]
can be eliminated among
\[2\left(\sum_{\textit{$L_i$:bound of type I with i odd}}n_i\right)+2\left(\sum_{i<j}n_in_j\right)-\textit{variables }
\textit{$\{m_{i,j}'\}_{i\neq j}$, $(m_{i,i}^{\ast})'$, $(m_{i,i}^{\ast\ast})'$}.\]

\textit{   }\\

Next, we put (1)-(7) into  (\ref{ea1}).
Then we obtain
\begin{equation}\label{ea5}
\pi^i\left(\sigma(1+\pi\cdot {}^tm_{i,i}')h_i(1+\pi m_{i,i}')+\pi^3(\ast)\right).
\end{equation}
Here, $(\ast)$ is a certain formal polynomial. 
We interpret this so as to obtain equations defining $\tilde{G}^1$.
There are 6 cases, indexed by  (i) - (vi), according to types of $L_i$.\\

\begin{enumerate}
\item[(i)] Assume that $i$ is odd and that $L_i$ is \textit{of type II} or \textit{bound of type I}.
Then $\pi^ih_i=\xi^{(i-1)/2}\pi a_i$ as explained in Section \ref{h} and thus we have
\[a_i'=\sigma(1+\pi\cdot {}^tm_{i,i}')a_i(1+\pi m_{i,i}')+\pi^3(\ast).\]
Here, the nondiagonal entries of this equation are considered in $B\otimes_AR$ and each diagonal entry of $a_i'$ is of the form $\pi^3 x_i'$ with $x_i'\in R$.
Now,  the nondiagonal entries of $-\pi^2\cdot{}^tm_{i,i}'a_i m_{i,i}'+\pi^3(\ast)$ are all $0$
since they contain $\pi^2$ as a factor.
In addition, the diagonal entries of $\pi^3(\ast)$ are  $0$ since  they contain $\pi^5$ as  a factor, which can be verified 
 by using Equation (\ref{ea72}),
and the diagonal entries of $-\pi^2\cdot{}^tm_{i,i}'a_i m_{i,i}'$ are also $0$ since  they contain $\pi^4$ as  a factor.
Thus the above equation equals
\[a_i'=a_i+\sigma(\pi)\cdot {}^tm_{i,i}'a_i+ \pi\cdot a_i m_{i,i}'.\]
By letting $a_i'=a_i$,  we have the following equation
\[   \sigma(\pi)\cdot {}^tm_{i,i}'a_i+ \pi\cdot a_i m_{i,i}'=0.\]

Based on (3) of the description of an element of $\underline{H}(R)$ for a $\kappa$-algebra $R$, which is explained in Section \ref{h},
in order to investigate this equation,
we need to consider the nondiagonal entries of $\sigma(\pi)\cdot {}^tm_{i,i}'a_i+ \pi\cdot a_i m_{i,i}'$ as elements of $B\otimes_AR$
and the diagonal entries of $\sigma(\pi)\cdot {}^tm_{i,i}'a_i+ \pi\cdot a_i m_{i,i}'$ as of the form $\pi^3 x_i$ with $x_i\in R$.
Recall from Remark \ref{r33}.(2) that
$$a_i=\begin{pmatrix} \begin{pmatrix} 0&1\\-1&0\end{pmatrix}& &  \\ &\ddots &  \\ & & \begin{pmatrix} 0&1\\-1&0\end{pmatrix}\end{pmatrix}.$$
Note that ${}^ta_i=-a_i$ and  $\sigma(\pi)=-\pi$ so that
$\sigma(\pi)\cdot {}^tm_{i,i}'a_i+ \pi\cdot a_i m_{i,i}'=\pi({}^t(a_i m_{i,i}')+a_i m_{i,i}')$.
Then we can see that each diagonal entry  as well as
each nondiagonal (upper triangular) entry of $\sigma(\pi)\cdot {}^tm_{i,i}'a_i+ \pi\cdot a_i m_{i,i}'$ produces a linear equation.
Thus there are exactly $(n_i^2+n_i)/2$ independent linear equations and $(n_i^2-n_i)/2$ entries of $m_{i,i}'$ determine all entries of $m_{i,i}'$.

For example, let $m_{i,i}'=\begin{pmatrix} x&y\\z&w\end{pmatrix}$ and $a_i=\begin{pmatrix} 0&1\\-1&0\end{pmatrix}$.
Then $$\sigma(\pi)\cdot {}^tm_{i,i}'a_i+ \pi\cdot a_i m_{i,i}'=\pi\begin{pmatrix} 2z&-x+w\\-x+w&-2y\end{pmatrix}.$$
Thus there are three linear equations $-x+w=0, ~~~ z=0, ~~~ y=0$ and $x$ determines every other entry of $m_{i,i}'$.\\

\item[(ii)] Assume that $i$ is odd and that $L_i$ is \textit{free of type $I$}.
Then

$\pi^ih_i=\xi^{(i-1)/2}\cdot \pi\begin{pmatrix} a_i&\pi b_i& e_i\\ -\sigma(\pi \cdot {}^tb_i) &\pi^3f_i&1+\pi d_i \\
-\sigma({}^te_i) &-\sigma(1+\pi d_i) &\pi+\pi^3c_i \end{pmatrix}$
as explained in Section \ref{h} and we have
\begin{multline}\label{ea6}
\begin{pmatrix} a_i'&\pi b_i'& e_i'\\ -\sigma(\pi \cdot {}^tb_i') &\pi^3f_i'&1+\pi d_i' \\
-\sigma({}^te_i') &-\sigma(1+\pi d_i') &\pi+\pi^3c_i' \end{pmatrix}=\\
\sigma(1+\pi\cdot {}^tm_{i,i}')\cdot
\begin{pmatrix} a_i&\pi b_i& e_i\\ -\sigma(\pi \cdot {}^tb_i) &\pi^3f_i&1+\pi d_i \\
-\sigma({}^te_i) &-\sigma(1+\pi d_i) &\pi+\pi^3c_i \end{pmatrix}
\cdot(1+\pi m_{i,i}')+\pi^3(\ast).
\end{multline}
Here, the nondiagonal entries of $a_i'$ as well as the entries of $b_i', e_i', d_i'$ are considered in $B\otimes_AR$,
 each diagonal entry of $a_i'$ is of the form $\pi^3 x_i$ with $x_i\in R$,
  and $c_i', f_i'$ are in $R$.
In addition, $b_i=0, d_i=0, e_i=0, c_i=0, f_i=\bar{\gamma}_i$ as explained in Remark \ref{r33}.(2) and
$a_i$ is the diagonal matrix with $\begin{pmatrix} 0&1\\-1&0\end{pmatrix}$ on the diagonal.
In the above equation, we can cancel the term $\pi^3(\ast)$ since its nondiagonal entries contain $\pi^3$ as a factor and
its diagonal entries contain $\pi^5$ as a factor
since $L_i$ is \textit{free of type $I$} so that both $L_{i-1}$ and $L_{i+1}$ are \textit{of type II}.

Note that in this case,
$m_{i,i}'=\begin{pmatrix}  s_i^{\prime}& \pi r_i^{\prime}& t_i^{\prime}\\  y_i^{\prime}&\pi x_i^{\prime}&u_i^{\prime}\\
\pi v_i^{\prime}& \pi z_i^{\prime}&\pi w_i^{\prime} \end{pmatrix}$.
Compute $\sigma(\pi\cdot {}^tm_{i,i}')\cdot\begin{pmatrix} a_i&0&0\\ 0&\pi^3 \bar{\gamma}_i&1 \\ 0&-1 &\pi \end{pmatrix}\cdot(\pi m_{i,i}')$ formally and this equals
 $\sigma(\pi)\pi\begin{pmatrix} {}^ts_i'a_is_i'+\pi^2X_i & \pi Y_i &  Z_i
\\ \sigma( \pi \cdot {}^tY_i) &\pi^2X_i'&\pi Y_i' \\ \sigma({}^tZ_i)&\sigma(\pi\cdot  {}^tY_i') &{}^tt_i'a_it_i'+\pi^2 Z_i'  \end{pmatrix}$
for certain matrices $X_i, Y_i, Z_i, X_i', Y_i', Z_i'$ with suitable sizes.
Here, the diagonal entries of ${}^ts_i'a_is_i'$ and ${}^tt_i'a_it_i'$ are zero.
Thus we can ignore the contribution from
$\sigma(\pi\cdot {}^tm_{i,i}')\cdot\begin{pmatrix} a_i&0&0\\ 0&\pi^3 \bar{\gamma}_i&1 \\ 0&-1 &\pi \end{pmatrix}\cdot(\pi m_{i,i}')$
in Equation (\ref{ea6}) and so Equation (\ref{ea6}) equals
\begin{multline*}
\begin{pmatrix} a_i'&\pi b_i'& e_i'\\ -\sigma(\pi \cdot {}^tb_i') &\pi^3f_i'&1+\pi d_i' \\
-\sigma({}^te_i') &-\sigma(1+\pi d_i') &\pi+\pi^3c_i' \end{pmatrix}=
\begin{pmatrix} a_i&0&0\\ 0&\pi^3 \bar{\gamma}_i&1 \\ 0&-1 &\pi \end{pmatrix}+\\
-\pi\begin{pmatrix}  {}^ts_i'&{}^ty_i' &-\pi\cdot {}^t v_i'\\-\pi\cdot {}^t r_i'&-\pi\cdot x_i'&-\pi\cdot  z_i' \\
 {}^t t_i'&u_i'&-\pi\cdot w_i'\end{pmatrix}
\begin{pmatrix} a_i&0&0\\ 0&\pi^3 \bar{\gamma}_i&1 \\ 0&-1 &\pi \end{pmatrix}+
\pi \begin{pmatrix} a_i&0&0\\ 0&\pi^3 \bar{\gamma}_i&1 \\ 0&-1 &\pi\end{pmatrix}
\begin{pmatrix}  s_i^{\prime}& \pi r_i^{\prime}& t_i^{\prime}\\  y_i^{\prime}&\pi x_i^{\prime}&u_i^{\prime}\\
\pi v_i^{\prime}& \pi z_i^{\prime}&\pi w_i^{\prime} \end{pmatrix}.
\end{multline*}

We interpret each block of the above equation below:

\begin{enumerate}
\item Let us  consider the $(1,1)$-block.
The computation associated to this block is similar to that for the above case (i).
Hence  there are exactly $((n_i-2)^2+(n_i-2))/2$ independent linear equations and $((n_i-2)^2-(n_i-2))/2$ entries of $s_i'$ determine all entries of $s_i'$.

\item We consider the $(1,2)$-block.
We can ignore the contribution from ${}^ty_i'\bar{\gamma}_i$  since it contains $\pi^3$ as a factor.
Then the $(1,2)$-block is
\begin{equation}\label{ea7}
b_i'=\pi(- {}^tv_i'+a_ir_i').
\end{equation}
 By letting $b_i'=b_i=0$, we have
 \[\pi(- {}^tv_i'+a_ir_i')=0\]
as an equation in $B\otimes_AR$.
Thus  there are exactly $(n_i-2)$ independent linear equations  among the entries of $v_i'$ and $r_i'$.

\item The $(1,3)$-block is
\begin{equation}\label{ea8}
  e_i'=\pi(- {}^ty_i'+a_it_i').\end{equation}
This is an equation in $B\otimes_AR$. By letting $e_i'=e_i=0$,
 there are exactly $(n_i-2)$ independent linear equations  among the entries of $y_i'$ and $t_i'$.

\item The $(2,3)$-block is
\[1+\pi d_i'=1-\pi(-\pi x_i'-\pi^2z_i')+\pi(\pi^3\bar{\gamma}_iu_i'+\pi w_i').\]
By letting $d_i'=d_i=0$, we have
\begin{equation}\label{ea9}
d_i'=\pi(x_i'+ w_i')=0.\end{equation}
This is an equation in $B\otimes_AR$.
Thus there is  exactly one independent linear equation  between  $x_i'$ and $w_i'$.

\item The $(2,2)$-block is
\begin{equation}\label{ea10}
\pi^3 f_i'=\pi^3\bar{\gamma}_i-\pi(-\pi^4\bar{\gamma}_ix_i'+\pi z_i')+\pi(\pi^4\bar{\gamma}_i x_i'+\pi z_i').
\end{equation}
Since $-\pi(-\pi^4\bar{\gamma}_ix_i'+\pi z_i')+\pi(\pi^4\bar{\gamma}_i x_i'+\pi z_i')$ contains $2\pi^5$ as a factor,  by letting $f_i'=f_i=\bar{\gamma}_i$, this equation  is trivial.

\item The $(3,3)$-block is
\[\pi+\pi^3c_i'=\pi-\pi(u_i'-\pi^2w_i')+\pi(-u_i'+\pi^2 w_i').\]
By letting $c_i'=c_i=0$,
\begin{equation}\label{ea11}
c_i'=-u_i+2w_i'=0.
\end{equation}
This is an equation in $R$.
Thus $u_i=0$ is the only independent linear equation.\\
\end{enumerate}

By combining all six cases (a)-(f),
there are exactly $((n_i-2)^2+(n_i-2))/2+2(n_i-2)+2=(n_i^2+n_i)/2-1$ independent linear equations and $(n_i^2-n_i)/2+1$ entries of
$m_{i,i}'$ determine all entries of $m_{i,i}'$.\\

\item[(iii)] Assume that $i$ is even and that $L_i$ is \textit{of type II}. This case is parallel to the above case (i).
Then $\pi^ih_i=\xi^{i/2} a_i$ as explained in Section \ref{h} and thus we have
\[
a_i'=\sigma(1+\pi\cdot {}^tm_{i,i}')a_i(1+\pi m_{i,i}')+\pi^3(\ast).
\]
Here, the nondiagonal entries of this equation are considered in $B\otimes_AR$ and each diagonal entry of $a_i'$ is of the form $2 x_i$ with $x_i\in R$.
Thus we can cancel the term $\pi^3(\ast)$ since each entry contains $\pi^3$ as a factor.
In addition, we can  cancel the term $\sigma(\pi\cdot {}^tm_{i,i}')a_i(\pi m_{i,i}')$
since its nondiagonal entries contain $\pi^2$ as a factor and
its diagonal entries contain $\pi^4$ as  a factor.
Thus the above equation equals
\[ a_i'=a_i+\sigma(\pi)\cdot {}^tm_{i,i}'a_i+ \pi\cdot a_i m_{i,i}'.
\]
By letting $a_i'=a_i$,  we have the following equation
\[\sigma(\pi)\cdot {}^tm_{i,i}'a_i+ \pi\cdot a_i m_{i,i}'=0.\]

Based on (2) of the description of an element of $\underline{H}(R)$ for a $\kappa$-algebra $R$, which is explained in Section \ref{h},
in order to investigate this equation,
we need to consider the nondiagonal entries of $\sigma(\pi)\cdot {}^tm_{i,i}'a_i+ \pi\cdot a_i m_{i,i}'$ as elements of $B\otimes_AR$
and the diagonal entries of $\sigma(\pi)\cdot {}^tm_{i,i}'a_i+ \pi\cdot a_i m_{i,i}'$ as of the form $2 x_i$ with $x_i\in R$.
Recall from Remark \ref{r33}.(2) that
$$a_i=\begin{pmatrix} \begin{pmatrix} 0&1\\1&0\end{pmatrix}& & & \\ &\ddots & & \\ & &\begin{pmatrix} 0&1\\1&0\end{pmatrix}& \\ & & & \begin{pmatrix} 2\cdot 1&1\\1&2\cdot\bar{\gamma}_i\end{pmatrix} \end{pmatrix}.$$
Note that ${}^ta_i=a_i$ and  $\sigma(\pi)=-\pi$ so that
$\sigma(\pi)\cdot {}^tm_{i,i}'a_i+ \pi\cdot a_i m_{i,i}'=\pi\left( a_i m_{i,i}'-{}^t(a_i m_{i,i}')\right)$.

Then we can see that there is no contribution coming from diagonal entries
and each nondiagonal (upper triangular) entry  produces a linear equation.
Thus there are exactly $(n_i^2-n_i)/2$ independent linear equations and $(n_i^2+n_i)/2$ entries of $m_{i,i}'$ determine all entries of $m_{i,i}'$.

For example, let $m_{i,i}'=\begin{pmatrix} x&y\\z&w\end{pmatrix}$ and $a_i=\begin{pmatrix} 2&1\\1&2\bar{\gamma}_i\end{pmatrix}$.
Then $$\pi\left( a_i m_{i,i}'-{}^t(a_i m_{i,i}')\right)=\pi\begin{pmatrix} 0&w-x+2y-2\bar{\gamma}_iz\\w-x+2y-2\bar{\gamma}_iz&0\end{pmatrix}.$$
Thus there is only one linear equation $w-x=0$ and $x, y, z$ determine all entries of $m_{i,i}'$.\\

\item[(iv)] Assume that $i$ is even and that $L_i$ is \textit{of type $I^o$}.
Then $\pi^ih_i=\xi^{i/2} \begin{pmatrix} a_i&\pi b_i\\ \sigma(\pi\cdot {}^t b_i) &1 +2\bar{\gamma}_i+4c_i \end{pmatrix}$ as explained in Section \ref{h} and thus we have
\begin{equation}\label{ea12}
\begin{pmatrix} a_i'&\pi b_i'\\ \sigma(\pi\cdot {}^t b_i') &1 +2\bar{\gamma}_i+4c_i'  \end{pmatrix}=
\sigma(1+\pi\cdot {}^tm_{i,i}')\cdot\begin{pmatrix} a_i&\pi b_i\\ \sigma(\pi\cdot {}^t b_i) &1 +2\bar{\gamma}_i+4c_i  \end{pmatrix}\cdot(1+\pi m_{i,i}')+\pi^3(\ast).
\end{equation}
Here,  the nondiagonal entries of $a_i'$ as well as the entries of $b_i'$ are considered in $B\otimes_AR$,
 each diagonal entry of $a_i'$ is of the form $2 x_i$ with $x_i\in R$, and $c_i'$ is in $R$.
In addition, $b_i=0, c_i=0$ as explained in Remark \ref{r33}.(2) and
$a_i$ is the diagonal matrix with $\begin{pmatrix} 0&1\\1&0\end{pmatrix}$ on the diagonal.

Note that in this case, $m_{i,i}'=\begin{pmatrix} s_i'&\pi y_i'\\ \pi v_i' &\pi z_i' \end{pmatrix}$.
Compute $\sigma(\pi\cdot {}^tm_{i,i}')\cdot\begin{pmatrix} a_i&0\\ 0 &1 +2\bar{\gamma}_i \end{pmatrix}\cdot (\pi m_{i,i}')$ formally and
this equals $\sigma(\pi)\pi\begin{pmatrix} {}^ts_i'a_is_i'+\pi^2X_i &\pi Y_i\\ \sigma(\pi\cdot {}^tY_i) &-\pi^2(z_i')^2+\pi^4Z_i \end{pmatrix}$
for certain matrices $X_i, Y_i, Z_i$ with suitable sizes.

Thus we can ignore the $(1,1)$ and $(1,2)$-blocks of the term $\sigma(\pi\cdot {}^tm_{i,i}')\begin{pmatrix} a_i&0\\ 0 &1 +2\bar{\gamma}_i \end{pmatrix}(\pi m_{i,i}')$ in Equation (\ref{ea12}).
On the other hand, we should consider the $(2,2)$-block of this term because of the appearance of $\pi^4(z_i')^2$.
By the same reason, we can ignore the $(1,1)$ and $(1,2)$-blocks of  the term $\pi^3(\ast)$, whereas
the $(2,2)$-block of this term should  be considered.

We interpret each block of Equation (\ref{ea12})  below:

\begin{enumerate}
\item Firstly, we consider the $(1,1)$-block.
The computation associated to this block is similar to that for the above case  (iii).
Hence  there are exactly $((n_i-1)^2-(n_i-1))/2$ independent linear equations and $((n_i-1)^2+(n_i-1))/2$ entries of $s_i'$ determine all entries of $s_i'$.

\item Secondly, we consider the $(1, 2)$-block.
Then it equals
\begin{equation}\label{13}
\pi b_i'=-\pi^2 \cdot{}^tv_i'+\pi^2\cdot a_iy_i'.
\end{equation}
By letting $b_i'=b_i=0$, we have
\[-\pi \cdot{}^tv_i'+\pi \cdot a_iy_i'=0\]
as an equation in $B\otimes_AR$.
Thus there are exactly $(n_i-1)$ independent linear equations among the entries of $v_i'$ and $y_i'$
and the entries of $v_i'$ determine all entries of $y_i'$.

\item We postpone to consider the $(2, 2)$-block later in Step (vi).\\
\end{enumerate}

By combining two cases (a) and (b), there are exactly $((n_i-1)^2-(n_i-1))/2+(n_i-1)=n_i(n_i-1)/2$ independent linear equations.
Thus  $n_i^2-1-n_i(n_i-1)/2=n_i(n_i+1)/2-1$ entries of
$m_{i,i}'$  determine all entries of $m_{i,i}'$ except for $z_i'$ and $(z_i^{\ast})'$.\\

\item[(v)] Assume that $i$ is even and that $L_i$ is \textit{of type $I^e$}.
Then

$\pi^ih_i=\xi^{i/2}\begin{pmatrix} a_i&{}^tb_i&\pi e_i\\ \sigma(b_i) &1+2f_i&1+\pi d_i \\ \sigma(\pi \cdot {}^te_i) &\sigma(1+\pi d_i) &2\bar{\gamma}_i+4c_i \end{pmatrix}$ as explained in Section \ref{h} and thus we have
\begin{multline}\label{ea13}
\begin{pmatrix} a_i'&{}^tb_i'&\pi e_i'\\ \sigma(b_i') &1+2f_i'&1+\pi d_i' \\
 \sigma(\pi \cdot {}^te_i') &\sigma(1+\pi d_i') &2\bar{\gamma}_i+4c_i' \end{pmatrix}=\\
\sigma(1+\pi\cdot {}^tm_{i,i}')\cdot
\begin{pmatrix} a_i&{}^tb_i&\pi e_i\\ \sigma(b_i) &1+2f_i&1+\pi d_i \\ \sigma(\pi \cdot {}^te_i) &\sigma(1+\pi d_i) &2\bar{\gamma}_i+4c_i \end{pmatrix}
\cdot(1+\pi m_{i,i}')+\pi^3(\ast).
\end{multline}
Here, the nondiagonal entries of $a_i'$ as well as the entries of $b_i', e_i', d_i'$ are considered in $B\otimes_AR$,
 each diagonal entry of $a_i'$ is of the form $2 x_i$ with $x_i\in R$, and $c_i', f_i'$ are in $R$.
In addition, $b_i=0, d_i=0, e_i=0, f_i=0, c_i=0$ as explained in Remark \ref{r33}.(2) and
$a_i$ is the diagonal matrix with $\begin{pmatrix} 0&1\\1&0\end{pmatrix}$ on the diagonal.

Note that in this case, $m_{i,i}'=\begin{pmatrix}  s_i^{\prime}& r_i^{\prime}&\pi t_i^{\prime}\\ \pi y_i^{\prime}&\pi x_i^{\prime}&\pi z_i^{\prime}\\  v_i^{\prime}& u_i^{\prime}&\pi w_i^{\prime} \end{pmatrix}$.
Compute $\sigma(\pi\cdot {}^tm_{i,i}')\cdot\begin{pmatrix} a_i&0&0\\ 0&1&1 \\ 0&1 &2\bar{\gamma}_i \end{pmatrix}\cdot(\pi m_{i,i}')$ formally and this equals $\sigma(\pi)\pi\begin{pmatrix} {}^ts_i'a_is_i'+\pi^2X_i & Y_i & \pi Z_i
\\ \sigma( {}^tY_i) &{}^tr_i'a_ir_i'+\pi^2X_i'&\pi Y_i' \\ \sigma(\pi\cdot {}^tZ_i)&\sigma(\pi\cdot  {}^tY_i') &-\pi^2(z_i')^2+  \pi^4 Z_i'  \end{pmatrix}$
for certain matrices $X_i, Y_i, Z_i, X_i', Y_i', Z_i'$ with suitable sizes.

Thus we can ignore the $(1, 1), (1, 2), (1, 3), (2, 2),$ and $(2, 3)$-blocks of the term
$\sigma(\pi\cdot {}^tm_{i,i}')\cdot\begin{pmatrix} a_i&0&0\\ 0&1&1 \\ 0&1 &2\bar{\gamma}_i \end{pmatrix}\cdot(\pi m_{i,i}')$
in Equation (\ref{ea13}).
As in the above case (iv), we should consider the $(3,3)$-block of this term because of the appearance of $\pi^4(z_i')^2$.
By the same reason, we can ignore the $(1, 1), (1, 2), (1, 3), (2, 2),$ and $(2, 3)$-blocks of  the term $\pi^3(\ast)$, whereas
the $(3,3)$-block of this term should  be considered.

We interpret each block of Equation (\ref{ea13}) below:

\begin{enumerate}
\item Firstly, we consider the $(1,1)$-block.
The computation associated to this block is similar to that for the above case  (iii).
Hence  there are exactly $((n_i-2)^2-(n_i-2))/2$ independent linear equations and $((n_i-2)^2+(n_i-2))/2$ entries of $s_i'$ determine all entries of $s_i'$.

\item Secondly, we consider the $(1, 2)$-block.
Then it equals
\begin{equation}\label{ea14}
{}^tb_i'={}^tb_i+\pi( -{}^tv_i'+a_ir_i').
\end{equation}
This is an equation in $B\otimes_AR$. By letting $b_i'=b_i=0$,
 there are exactly $(n_i-2)$ independent linear equations  among the entries of $v_i'$ and $r_i'$.

\item The $(1, 3)$-block is
\[\pi e_i'=\pi e_i+\pi^2({}^ty_i'+a_it_i').\]
By letting $e_i'=e_i=0$,
we have
\begin{equation}\label{ea15}
e_i'=e_i+\pi({}^ty_i'+a_it_i')=0.\end{equation}
This is an equation in $B\otimes_AR$.
Thus there are exactly $(n_i-2)$ independent linear equations  among the entries of $y_i'$ and $t_i'$.

\item The $(2, 3)$-block is
\[1+\pi d_i'=1+\pi d_i+\pi^2(x_i'+z_i'+w_i').\]
By letting $d_i'=d_i=0$, we have
\begin{equation}\label{ea16}
d_i'=\pi(x_i'+ z_i'+w_i')=0.\end{equation}
This is an equation in $B\otimes_AR$.
Thus there is  exactly one independent linear equation  among  $x_i', z_i', w_i'$.

\item The $(2\times 2)$-block is
\[1+2 f_i'=1+2 f_i+2\pi(u_i'+\pi x_i'). \]
By letting $f_i'=f_i=0$, we have
\begin{equation}\label{ea17}
f_i'=f_i+\pi(u_i'+\pi x_i')=0. 
\end{equation}
This is an equation in $R$.
Thus this equation is trivial.

\item We postpone to consider the $(3, 3)$-block later in Step (vi).\\
\end{enumerate}

By combining all five cases (a)-(e),  there are exactly $((n_i-2)^2-(n_i-2))/2+2(n_i-2)+1=(n_i^2-n_i)/2$ independent linear equations.
 Thus $n_i^2-1-(n_i^2-n_i)/2=n_i(n_i+1)/2-1$   entries of
$m_{i,i}'$ determine all entries of $m_{i,i}'$ except for $z_i'$ and $(z_i^{\ast})'$.\\

\item[(vi)] Let $i$ be even and $L_i$ be \textit{of type I}.
 Finally, we consider the $(2, 2)$-block of Equation (\ref{ea12}) if $L_i$ is \textit{of type $I^o$} or
 the $(3, 3)$-block of Equation (\ref{ea13}) if $L_i$ is \textit{of type $I^e$}
 given below:
\begin{equation}\label{ea18}
 2\bar{\gamma}_i+4c_i'=2\bar{\gamma}_i+4c_i+ 2\pi^2z_i'+\pi^4(z_i')^2-\pi^4\delta_{i-2}k_{i-2, i}'-\pi^4\delta_{i+2}k_{i+2, i}'.
\end{equation}
Here, $k_{i-2, i}'$ and $k_{i+2, i}'$ are as explained in  the condition (5) given at the paragraph following (\ref{ea2}).
Note that  the condition (5) yields Equation (\ref{52}), $z_i'+\delta_{i-2}k_{i-2, i}'+\delta_{i+2}k_{i+2, i}'=0$, in $R$.
Thus, by letting $c_i'=c_i=0$, we have
\begin{equation}\label{ea19}
 0=c_i'=c_i+z_i'=z_i'
\end{equation}
as an equation in $R$.

Note that if both $L_i$ and $L_{i+2}$ are \textit{of type I}, then $k_{i+2, i}'=k_{i,i+2}'$ by Equation (\ref{ea4}).
We now choose an even integer $j$ such that $L_j$ is \textit{of type I} and $L_{j+2}$ is \textit{of type II}.
For such $j$, there is a nonnegative integer $m_j$ such that $L_{j-2l}$ is \textit{of type I}
for every $l$ with $0\leq l \leq m_j$ and $L_{j-2(m_j+1)}$ is \textit{of type II}.
As mentioned in the above paragraph, the condition (5) yields the following equation in $R$:
\[\mathcal{Z}_{j-2l}' : z_{j-2l}'+\delta_{j-2l-2}k_{j-2l-2, j-2l}'+\delta_{j-2l+2}k_{j-2l+2, j-2l}'=0.\]
Then the sum of equations
\[\sum_{0\leq l \leq m_j}\mathcal{Z}_{j-2l}'\]
is the same as
$$\sum_{0\leq l \leq m_j}z_{j-2l}'=0$$
since $k_{j-2l+2, j-2l}'=k_{j-2l,j-2l+2}'$.
Therefore, among Equations (\ref{ea19}) for $j-2m_j \leq i \leq j$, only one of them is redundant.
In conclusion, 
there are
$$\#\{i:\textit{$i$ is even and $L_i$ is of type I}\}-\#\{i:\textit{$i$ is even, $L_i$ is  of type I and  $L_{i+2}$ is of type II}\}$$
  independent linear equations of the form $z_i'=0$.\\
\end{enumerate}

We now combine all works done in this proof. Namely, we collect the above (i), (ii), (iii), (iv), (v), (vi) which are  the interpretations of Equation (\ref{ea5}),
together with
Equations (\ref{ea4}) and (\ref{ea4'}).
To simplify notation, we say just in this paragraph that Equation (\ref{ea4'}) is linear.
Then there are exactly
$$\sum_{i<j}n_in_j+\sum_{i:\mathrm{odd}}\frac{n_i^2+n_i}{2}-\#\{i:\textit{$i$ is odd and $L_i$ is free of type I}\}+
\sum_{i:\mathrm{even}}\frac{n_i^2-n_i}{2}+$$
$$\#\{i:\textit{$i$ is even and $L_i$ is of type I}\}-\#\{i:\textit{$i$ is even, $L_i$ is  of type I and  $L_{i+2}$ is of type II}\}$$
independent linear equations among the entries of $m\in \tilde{M}^1(R)$. 
 Furthermore, all coefficients of these equations are in $\kappa$.
Therefore, we consider $\tilde{G}^1$ as a subvariety of $\tilde{M}^1$ determined by these linear equations.
Since  $\tilde{M}^1$ is an affine space of dimension $n^2$,
  the underlying algebraic variety of $\tilde{G}^1$ over $\kappa$ is  an affine space of dimension
\[
\sum_{i<j}n_in_j+\sum_{i:\mathrm{even}}\frac{n_i^2+n_i}{2}+\sum_{i:\mathrm{odd}}\frac{n_i^2-n_i}{2}
+\#\{i:\textit{$i$ is odd and $L_i$ is free of type I}\} \]
\[-\#\{i:\textit{$i$ is even and $L_i$ is of type I}\}
+ \#\{i:\textit{$i$ is even, $L_i$ is  of type I and  $L_{i+2}$ is of type II}\}.\]
This completes the proof by using a theorem of Lazard which is stated at the beginning of Appendix \ref{App:AppendixA}.
\end{proof}
\textit{  }

Let $R$ be a $\kappa$-algebra.
We describe the functor of points of the scheme  $\mathrm{Ker~}\tilde{\varphi}/\tilde{M}^1$
by using points of the scheme $(\underline{M}'\otimes\kappa)/ \underline{\pi M}'$, based on Lemma \ref{la3}.
To do that, we take the argument  in  pages 511-512 of \cite{C2}.

Recall from two paragraphs before Lemma \ref{la3}
 that $(1+)^{-1}(\underline{M}^{\ast})$, which is an open subscheme of  $\underline{M}'$, is a group scheme with the operation $\star$.
Let $\tilde{M}'$ be the special fiber of $(1+)^{-1}(\underline{M}^{\ast})$.
Since $\tilde{M}^{1}$ is a closed normal subgroup of $\tilde{M} (=\underline{M}^{\ast}\otimes\kappa)$ (cf. Lemma \ref{la3}.(i)),
$\underline{\pi M'}$, which is the inverse image of $\tilde{M}^{1}$ under the isomorphism $1+$, is a closed normal subgroup of $\tilde{M}'$.
Therefore, the morphism $1+$ induces the following isomorphism of group schemes, which is also denoted by $1+$,
$$1+  : \tilde{M}'/\underline{\pi M'}\longrightarrow \tilde{M}/\tilde{M}^{1}.$$
Note that $\tilde{M}'/\underline{\pi M'}(R)=\tilde{M}'(R)/\underline{\pi M'}(R)$ by Lemma \ref{la1}.
Thus each element of $(\mathrm{Ker~}\tilde{\varphi}/\tilde{M}^1)(R)$ is uniquely written as $1+\bar{x}$,
 where $\bar{x}\in \tilde{M}'(R)/  \underline{\pi M}'(R)$.
 Here, by $1+\bar{x}$, we mean the image of $\bar{x}$ under the morphism $1+$ at the level of $R$-points.

We still need a better description of an element of $(\mathrm{Ker~}\tilde{\varphi}/\tilde{M}^1)(R)$ by using a point of the scheme $(\underline{M}'\otimes\kappa)/ \underline{\pi M}'$.
Note that  $(\underline{M}'\otimes\kappa)/ \underline{\pi M}'$ is a quotient of group schemes with respect to the addition, whereas
$\tilde{M}'/  \underline{\pi M}'$ is a quotient of group schemes with respect to the operation $\star$.

The open immersion $\iota : \tilde{M}' \rightarrow \underline{M}'\otimes\kappa, x\mapsto x$ induces a monomorphism of monoid schemes
preserving the operation $\star$:
$$\bar{\iota} : \tilde{M}'/  \underline{\pi M}' \rightarrow (\underline{M}'\otimes\kappa)/ \underline{\pi M}'.$$
Note that although  $(\underline{M}'\otimes\kappa)/ \underline{\pi M}'$ is a quotient of group schemes with respect to the addition,
the operation $\star$ is  well-defined on $(\underline{M}'\otimes\kappa)/ \underline{\pi M}'$.
For the proof, see  the first two paragraphs from below in page 511 and the first two paragraphs in page 512 in \cite{C2}.

To summarize, the morphism $1+  : \tilde{M}'/\underline{\pi M'}\longrightarrow \tilde{M}/\tilde{M}^{1}$ is an isomorphism of group schemes and
the morphism  $\bar{\iota} : \tilde{M}'/  \underline{\pi M}' \rightarrow (\underline{M}'\otimes\kappa)/ \underline{\pi M}'$ is a monomorphism
preserving the operation $\star$.
Therefore, each element of $(\mathrm{Ker~}\tilde{\varphi}/\tilde{M}^1)(R)$ is uniquely written as $1+\bar{x}$,
 where $\bar{x}\in (\underline{M}'\otimes\kappa)(R)/ \underline{\pi M}'(R)$.
Here, by $1+\bar{x}$, we mean $(1+)\circ \bar{\iota}^{-1}(\bar{x})$.
From now on to the end of this paper, we keep the notation $1+\bar{x}$ to express an element of $(\mathrm{Ker~}\tilde{\varphi}/\tilde{M}^1)(R)$
such that $\bar{x}$ is an element of $(\underline{M}'\otimes\kappa)(R)/ \underline{\pi M}'(R)$
which is a quotient of $R$-valued points of group schemes
with respect to addition.
Then the product of two elements $1+\bar{x}$ and $1+\bar{y}$ is the same as $1+\bar{x}\star \bar{y}$ $(=1+(\bar{x} + \bar{y}+\bar{x} \bar{y}))$.

\begin{Rmk}\label{ra5}
By the above argument, we  write an element of $(\mathrm{Ker~}\tilde{\varphi}/\tilde{M}^1)(R)$ formally as
$$m= \begin{pmatrix} \pi^{max\{0,j-i\}}m_{i,j} \end{pmatrix}\mathrm{~together~with~}z_i^{\ast}, m_{i,i}^{\ast}, m_{i,i}^{\ast\ast},$$
with $s_i,\cdots, w_i$ as in Section \ref{m} such that
 each entry of each of the matrices $(m_{i,j})_{i\neq j}, s_i, \cdots, w_i$ is in $(B\otimes_AR)/(\pi\otimes 1)(B\otimes_AR)\cong R$.
In particular, based on the description of $\mathrm{Ker~}\tilde{\varphi}(R)$ given at the paragraph following  Lemma \ref{la2},
we have the following conditions on $m$:
\begin{enumerate}
\item Assume that  $L_i$ is \textit{of type I} with $i$ even or  that  $L_i$ is \textit{free of type I} with $i$ odd.
 Then $s_i=\mathrm{id}$.

\item  Assume that  $L_i$ is \textit{of type II} with $i$ even, or  that  $L_i$ is \textit{bound of type I or type II} with $i$ odd.
 Then $m_{i,i}=\mathrm{id}$.

\item Let $i$ be even.
\begin{itemize}
\item If  $L_i$ is \textit{bound of type II},
then $\delta_{i-1}^{\prime}e_{i-1}\cdot m_{i-1, i}+\delta_{i+1}^{\prime}e_{i+1}\cdot m_{i+1, i}+\delta_{i-2}e_{i-2}\cdot m_{i-2, i}+\delta_{i+2}e_{i+2}\cdot m_{i+2, i}=0$.

\item If  $L_i$ is \textit{of type I},
then  $v_i(\mathrm{resp.~}(y_i+\sqrt{\bar{\gamma}_i}v_i))+(\delta_{i-2}e_{i-2}\cdot m_{i-2, i}+\delta_{i+2}e_{i+2}\cdot m_{i+2, i})\tilde{e_i}=0$ if $L_i$ is \textit{of type} $\textit{I}^o$ (resp. \textit{of type} $\textit{I}^e$).
\end{itemize}
Here, notations are as explained in Step  (c) of  the description of an element of $\mathrm{Ker~}\tilde{\varphi}(R)$ given at the paragraph following Lemma \ref{la2}.

\item If $i$ is even and $L_i$ is \textit{of type I}, then 
$$z_i+\delta_{i-2}k_{i-2, i}+\delta_{i+2}k_{i+2, i}=0 ~~~ \left(=\pi z_i^{\ast}  \right).$$
Here,  notations are as explained in Step  (d) of  the description of an element of $\tilde{M}(R)$ given at the paragraph following Lemma \ref{la1}.

\item If $i$ is odd and $L_i$ is \textit{bound of type I}, then 
$$\delta_{i-1}v_{i-1}\cdot m_{i-1, i}+\delta_{i+1}v_{i+1}\cdot m_{i+1, i}=0 ~~~ \left(=\pi m_{i,i}^{\ast}\right).$$
Here,  notations are as explained in Step  (e) of  the description of an element of $\tilde{M}(R)$ given at the paragraph following Lemma \ref{la1}.

\item If $i$ is odd and $L_i$ is \textit{bound of type I}, then 
$$\delta_{i-1}v_{i-1}\cdot {}^tm_{i, i-1}+\delta_{i+1}v_{i+1}\cdot {}^tm_{i, i+1}=0 ~~~ \left(=\pi m_{i,i}^{\ast\ast}\right).$$
Here,  notations are as explained in Step  (f) of  the description of an element of $\tilde{M}(R)$ given at the paragraph following Lemma \ref{la1}.

\end{enumerate}
\end{Rmk}

 \begin{Thm}\label{ta6}
$\mathrm{Ker~}\varphi/\tilde{G}^1 $ is isomorphic to $ \mathbb{A}^{l^{\prime}}\times (\mathbb{Z}/2\mathbb{Z})^{\beta}$ as a $\kappa$-variety,
 where $\mathbb{A}^{l^{\prime}}$ is an affine space of dimension $l^{\prime}$.  Here,
 \begin{itemize} 
  \item $l^{\prime}$ is such that \textit{$l^{\prime}$ + dim $\tilde{G}^1=l$.}
   Note that $l$ is defined in Lemma \ref{l46} and
  that the dimension of $\tilde{G}^1$ is given in Theorem \ref{ta4}.
\item  $\beta$ is the number of  integers $j$ such that $L_j$ is of type I and $L_{j+2}, L_{j+3}, L_{j+4}$ (resp. $L_{j-1},$ $L_{j+1},$ $L_{j+2}, L_{j+3}$) are of type II if $j$ is even (resp. odd).
  \end{itemize}
   \end{Thm}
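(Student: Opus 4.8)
\textbf{Proof proposal for Theorem \ref{ta6}.}

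The plan is to exhibit $\mathrm{Ker~}\varphi/\tilde{G}^1$ explicitly as a closed subvariety of the affine space $(\underline{M}'\otimes\kappa)/\underline{\pi M'}$, cut out by a finite list of equations, and then sort those equations into (i) linear equations, which contribute only to the affine factor, and (ii) equations of the form $t^2+t=0$ (equivalently $t = t^2$), each contributing a factor of $\mathbb{Z}/2\mathbb{Z}$ and no dimension. First I would use Remark \ref{ra5} to write a general element $1+\bar{x}$ of $(\mathrm{Ker~}\tilde\varphi/\tilde{M}^1)(R)$ as a formal matrix $m = (\pi^{\max\{0,j-i\}}m_{i,j})$ together with $z_i^\ast, m_{i,i}^\ast, m_{i,i}^{\ast\ast}$, all of whose entries now live in $(B\otimes_AR)/(\pi\otimes 1)\cong R$, subject to the six conditions (1)--(6) listed there. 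Recall $\mathrm{Ker~}\varphi$ is the kernel of $\tilde G\to \tilde M/\mathrm{Ker~}\tilde\varphi$, so $\mathrm{Ker~}\varphi/\tilde G^1$ is the subscheme of $\mathrm{Ker~}\tilde\varphi/\tilde M^1$ consisting of those $1+\bar x$ that stabilize $h$, i.e.\ $h\circ(1+\bar x)=h$. So the task is to impose $h\circ m = h$ on such an $m$ modulo $(\pi\otimes 1)$, which is precisely a partial reduction of the computation already carried out in the proof of Theorem \ref{ta4}.

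Next I would run through the block-by-block analysis of $\sigma({}^tm)\cdot h\cdot m = h$ exactly as in Theorem \ref{ta4}, but now remembering that, since we have passed to $\mathrm{Ker~}\tilde\varphi/\tilde M^1$ rather than to $\tilde G^1\subset\tilde M^1$, the entries of $m$ are no longer forced to be divisible by $\pi$; consequently the quadratic terms $\sigma(\pi\cdot{}^tm_{i,i})h_i(\pi m_{i,i})$ that were discarded in Theorem \ref{ta4} now survive in the relevant blocks. The off-diagonal blocks (Equations (\ref{ea3})--(\ref{ea4}) of the earlier proof), the diagonal blocks for $L_i$ of type $II$ with $i$ odd or \emph{bound of type I}, and most entries of the diagonal blocks for $L_i$ of type $I$ all again yield \emph{linear} equations in the entries of $m$; these, together with the equations coming from Remark \ref{ra5}(4)--(6), produce the affine factor $\mathbb{A}^{l'}$, and a dimension count identical in spirit to that at the end of the proof of Theorem \ref{ta4} gives $l' + \dim\tilde G^1 = l$ (since $\dim\mathrm{Ker~}\varphi = l$ by Lemma \ref{l46}, whose dimension statement is the part of that lemma we are allowed to use here and which this theorem is designed to help prove --- so strictly one states $l'$ by \emph{definition} via $l' := \dim(\mathrm{Ker~}\varphi/\tilde G^1)$ and then the content is the $(\mathbb{Z}/2\mathbb{Z})^\beta$ factor). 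The genuinely new phenomenon is the appearance of the nonlinear constraints: in the $(2,2)$-block of Equation (\ref{ea12}) (type $I^o$) and the $(3,3)$-block of Equation (\ref{ea13}) (type $I^e$), retaining the $\pi^4(z_i')^2$ term now relevant modulo the filtration, and reducing via Remark \ref{ra5}(4), one gets for each even $j$ with $L_j$ \emph{of type I} and $L_{j+2},L_{j+3},L_{j+4}$ \emph{of type II} an equation of the shape $(z_j^\ast)_1^2 + (z_j^\ast)_1 = 0$, matching Equation (\ref{e42}); and an analogous computation for odd $j$ with $L_j$ \emph{free of type I} and $L_{j-1},L_{j+1},L_{j+2},L_{j+3}$ \emph{of type II} yields $(z_j)_1^2+(z_j)_1=0$. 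Each such equation on an otherwise free variable splits off a factor $\operatorname{Spec}\kappa[t]/(t^2+t)\cong\mathbb{Z}/2\mathbb{Z}$, and there are exactly $\beta$ of them by the definition of $\beta$ in Lemma \ref{l46}; the requirement that $L_{j+2},L_{j+3},L_{j+4}$ (resp.\ $L_{j-1},L_{j+1},L_{j+2},L_{j+3}$) be of type II is exactly what guarantees that the variable carrying the quadratic constraint is not entangled by a linear relation with a neighbouring block (this is the same reason these hypotheses were imposed in the construction of $\psi_j$ in Section \ref{cg}, cf.\ the discussion making $M_0''$ \emph{free of type II}).

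The main obstacle I anticipate is bookkeeping: showing that the system decouples, i.e.\ that after a suitable triangular change of the free affine coordinates the $\beta$ quadratic equations involve $\beta$ distinct variables each appearing in no other equation, so that the variety is literally a product $\mathbb{A}^{l'}\times(\operatorname{Spec}\kappa[t]/(t^2+t))^\beta$ rather than merely something with $2^\beta$ components. Concretely one must check that the linear equations (\ref{ea4}), (\ref{ea4'}), the $b_i',e_i',d_i'$-relations, and the relations of Remark \ref{ra5}(3)--(6) can be used to eliminate variables \emph{other than} the $(z_j^\ast)_1$ (resp.\ $(z_j)_1$) attached to the distinguished indices $j$, and that for a string of consecutive type-I even indices the telescoping argument with $\sum_{0\le l\le m_j}\mathcal Z_{j-2l}'$ (as in step (vi) of the proof of Theorem \ref{ta4}) removes precisely the redundant linear $z'$-equations without touching the endpoint quadratic constraint --- here the hypothesis that the block two (or more) steps beyond $L_j$ is of type II is what ends the string cleanly. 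Once the decoupling is established, the isomorphism $\mathrm{Ker~}\varphi/\tilde G^1\cong\mathbb{A}^{l'}\times(\mathbb{Z}/2\mathbb{Z})^\beta$ is immediate, and the count of connected components being $2^\beta$ follows, completing the proof of Lemma \ref{l46} when combined with Theorem \ref{ta4} and Lemma \ref{la3}.
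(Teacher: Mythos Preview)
Your overall strategy matches the paper's: realize $\mathrm{Ker}\,\varphi/\tilde G^1$ as a closed subscheme of $\mathrm{Ker}\,\tilde\varphi/\tilde M^1$, derive polynomial constraints from $h\circ m=h$, and separate them into linear relations (producing $\mathbb{A}^{l'}$) and quadratic ones of the form $t^2+t=0$ (producing the $(\mathbb{Z}/2\mathbb{Z})^\beta$). But there are two genuine gaps.

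\textbf{The quadratic constraints are not in a single variable.} You claim that for each $j\in\mathcal{B}$ one obtains $(z_j^\ast)_1^2+(z_j^\ast)_1=0$. In the paper this is only the equation cutting out the auxiliary subgroup $F_j$ (Lemma~\ref{la9}); the actual constraint on $\mathrm{Ker}\,\varphi/\tilde G^1$ is Equation~(\ref{32'}), namely $T_j(T_j+1)=0$ where $T_j=\sum_{l=0}^{k_j}z_{j-l}^\ast+\sum_{l=0}^{k_j}\bar\gamma_{j-l}u_{j-l}^\ast$ is a sum over an entire string of type-I indices terminating at $j$. The reason is the one you half-identify: for an individual even $i$ of type~I the relevant $(2,2)$- or $(3,3)$-block expression $c_i'$ is \emph{not} invariant under changing the lift of $m$ by an element of $\tilde M^1$ (Equation~(\ref{ea19}) gives $c_i'=c_i+z_i'$), so it does not descend to a function on the quotient. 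Only the telescoped sum $\sum_l c_{j-2l}'$ is well-defined (Equation~(\ref{ea28})), and unwinding it produces the sum $T_j$, not $z_j^\ast$ alone. Your ``triangular change of coordinates'' in the last paragraph is exactly what Lemma~\ref{la8} does to decouple these, but you should not expect the raw equations to already be in single variables.

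\textbf{Sufficiency of the equations requires an extra argument.} You write that ``the task is to impose $h\circ m=h$,'' treating the resulting equations as a presentation of $G^\dag:=\mathrm{Ker}\,\varphi/\tilde G^1$. But the equations you extract are only \emph{necessary}: they define a scheme $G^\ddag$ with $G^\dag\subset G^\ddag$, and there is no a~priori reason every $R$-point of $G^\ddag$ lifts to $\mathrm{Ker}\,\varphi(R)$. The paper closes this gap indirectly: it proves $G^\ddag\cong\mathbb{A}^{l'}\times(\mathbb{Z}/2\mathbb{Z})^\beta$ by explicit variable elimination (Lemma~\ref{la8}), checks $\dim G^\dag=\dim G^\ddag$, and then uses the explicit subgroups $F_j\subset\tilde G$ (Lemma~\ref{la9}) to exhibit a point of $(G^\dag)_{\mathrm{red}}$ in each of the $2^\beta$ components of $G^\ddag$. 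Only then does $(G^\dag)_{\mathrm{red}}=G^\ddag$ follow, forcing $G^\dag=G^\ddag$. Your proposal omits this step entirely; without it the argument establishes at most that $\mathrm{Ker}\,\varphi/\tilde G^1$ sits inside something of the right shape.
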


   \begin{proof}
 Lemma \ref{la1} and Theorem \ref{ta4} imply that $\mathrm{Ker~}\varphi/\tilde{G}^1 $ represents the functor
    $R\mapsto \mathrm{Ker~}\varphi(R)/\tilde{G}^1(R)$.
    Recall that $\mathrm{Ker~}\varphi/\tilde{G}^1$ is a closed subgroup scheme of $\mathrm{Ker~}\tilde{\varphi}/\tilde{M}^1$
    as explained at the paragraph just before Theorem \ref{ta4}.
    Let $m= \begin{pmatrix} \pi^{max\{0,j-i\}}m_{i,j} \end{pmatrix} \mathrm{~with~}z_i^{\ast}, m_{i,i}^{\ast}, m_{i,i}^{\ast\ast}$
         be an element of  $(\mathrm{Ker~}\tilde{\varphi}/\tilde{M}^1)(R)$
     such that $m$ belongs to $(\mathrm{Ker~}\varphi/\tilde{G}^1)(R)$.
     We want to find equations which $m$ satisfies.
Note that the entries of $m$ involve $(B\otimes_AR)/(\pi\otimes 1)(B\otimes_AR)$ as explained in Remark \ref{ra5}.

 Recall that $h$ is the fixed hermitian form and we consider it as an element in $\underline{H}(R)$ as explained in Remark \ref{r33}.(2).
 We write it as a formal matrix $h=\begin{pmatrix} \pi^{i}\cdot h_i\end{pmatrix}$ with $(\pi^{i}\cdot h_i)$
for the $(i,i)$-block and $0$ for the remaining blocks.
    We choose a representative $1+x\in \mathrm{Ker~}\varphi(R)$ of $m$ so that $h\circ (1+x)=h$.
    Any other representative of $m$ in  $\mathrm{Ker~}\tilde{\varphi}(R)$ is
    of the form $(1+x)(1+\pi y)$ with $y \in \underline{M}'(R)$
    and we have  $h\circ (1+x)(1+\pi y)=h\circ (1+\pi y)$.
    Notice that $h\circ (1+\pi y)$ is an element of $\underline{H}(R)$ so we express it as $(f_{i,j}', a_i' \cdots f_i')$.
    We also let  $h=(f_{i,j}, a_i \cdots f_i)$.
    Here, we follow notation from Section \ref{h}, the paragraph just before Remark \ref{r33}.
    Recall that $h=(f_{i,j}, a_i \cdots f_i)$ is described explicitly in Remark \ref{r33}.(2).
Now, $1+\pi y$ is an element of $\tilde{M}^1(R)$ and so we can use our result (Equations (\ref{ea3}), (\ref{ea3'}), 
(\ref{ea7}), (\ref{ea8}),  (\ref{ea9}),   (\ref{ea10}), (\ref{13}), 
(\ref{ea14}), (\ref{ea15}),  (\ref{ea16}),  (\ref{ea17}), (\ref{ea19}))
  stated in the proof of Theorem \ref{ta4} in order to compute $h\circ (1+\pi y)$.
Based on this, we enumerate equations which $m$ satisfies as follows:

\begin{enumerate}
\item Assume that $i<j$. By Equation (\ref{ea3}) which involves an element of $\tilde{M}^1(R)$,
each entry of $f_{i,j}'$ has $\pi$ as a factor so that $f_{i,j}'\equiv f_{i,j} (=0)$
mod $(\pi\otimes 1)(B\otimes_AR)$.
In other words, the $(i,j)$-block of $h\circ (1+x)(1+\pi y)$ divided by $\pi^{max\{i, j\}}$ is $f_{i,j} (=0)$
modulo $(\pi\otimes 1)(B\otimes_AR)$, which is independent of the choice of $1+\pi y$.
Let $\tilde{m}\in \mathrm{Ker~}\tilde{\varphi}(R)$ be a lift of $m$.
Therefore, if we write the $(i, j)$-block of $\sigma({}^t\tilde{m})\cdot h\cdot \tilde{m}$ as $\pi^{max\{i, j\}}\mathcal{X}_{i,j}(\tilde{m})$,
where $\mathcal{X}_{i,j}(\tilde{m}) \in M_{n_i\times n_j}(B\otimes_AR)$,
then the image of $\mathcal{X}_{i,j}(\tilde{m})$ in $M_{n_i\times n_j}(B\otimes_AR)/(\pi\otimes 1)M_{n_i\times n_j}(B\otimes_AR)\cong M_{n_i\times n_j}(R)$
is independent of the choice of the lift $\tilde{m}$ of $m$.
Therefore, we may denote this image by $\mathcal{X}_{i,j}(m)$.
On the other hand, by Equation (\ref{ea2}), we have the following identity:
\begin{equation}\label{ea20}
\mathcal{X}_{i,j}(m)=\sum_{i\leq k \leq j} \sigma({}^tm_{k,i})\bar{h}_km_{k,j} \mathrm{~if~}i<j.
\end{equation}
We explain how to interpret the above equation.
We know that $\mathcal{X}_{i,j}(m)$ and $m_{k,k'}$ (with $k\neq k'$) are matrices with entries in $(B\otimes_AR)/(\pi\otimes 1)(B\otimes_AR)$,
whereas $m_{i,i}$ and $m_{j,j}$ are formal matrices as explained in Remark \ref{ra5}.
Thus we consider $\bar{h}_k$, $m_{i,i}$, and $m_{j,j}$ as matrices with entries in $(B\otimes_AR)/(\pi\otimes 1)(B\otimes_AR)$
by letting $\pi$ be zero in each entry of  formal matrices $h_k$, $m_{i,i}$, and $m_{j,j}$.
Then the right hand side is computed as a sum of products of matrices (involving the usual matrix addition and multiplication)
with entries in $(B\otimes_AR)/(\pi\otimes 1)(B\otimes_AR)$.
Thus, the assignment $m\mapsto \mathcal{X}_{i,j}(m)$ is a polynomial in $m$.
Furthermore, since $m$ actually belongs to $\mathrm{Ker~}\varphi(R)/\tilde{G}^1(R)$,
 we have the following equation by the argument made at the beginning of this paragraph:
  $$\mathcal{X}_{i,j}(m)=f_{i,j}\textit{ mod $(\pi\otimes 1)(B\otimes_AR)$}=0.$$
Thus we get an $n_i\times n_j$ matrix $\mathcal{X}_{i,j}$ of polynomials on $\mathrm{Ker~}\tilde{\varphi}/\tilde{M}^1$
defined by Equation (\ref{ea20}),
vanishing on the subscheme $\mathrm{Ker~}\varphi/\tilde{G}^1$.
For example, if $j=i+1$, then
\begin{equation}\label{ea21}
\sigma({}^tm_{i,i}) \bar{h}_i m_{i,i+1}+\sigma({}^tm_{i+1,i}) \bar{h}_{i+1} m_{i+1,i+1}=0.
\end{equation}

Before moving to  the following steps, we fix  notation.
Let $m$ be an element in $(\mathrm{Ker~}\tilde{\varphi}/\tilde{M}^1)(R)$ and $\tilde{m}\in \mathrm{Ker~}\tilde{\varphi}(R)$ be its lift.
For any block $x_i$  of $m$, $\tilde{x}_i$ is denoted by the corresponding block of $\tilde{m}$ whose reduction is $x_i$.
Since $x_i$ is a block of an element of $(\mathrm{Ker~}\tilde{\varphi}/\tilde{M}^1)(R)$,
it involves $(B\otimes_AR)/(\pi\otimes 1)(B\otimes_AR)$ as explained in Remark \ref{ra5}, whereas
$\tilde{x}_i$ involves $B\otimes_AR$.
In addition,
for a block $a_i$  of $h$, $\bar{a}_i$ is denoted by  the image  of $a_i$ in  $(B\otimes_AR)/(\pi\otimes 1)(B\otimes_AR)$.\\

\item Assume that  $i$ is odd and that $L_i$ is \textit{bound of type I}.
By Equation (\ref{ea3'}) which involves an element of $\tilde{M}^1(R)$,
each entry of $(f_{i,i}^{\ast})'$ has $\pi$ as a factor so that $(f_{i,i}^{\ast})'\equiv f_{i,i}^{\ast}=0$ mod $(\pi\otimes 1)(B\otimes_AR)$.
Let $\tilde{m}\in \mathrm{Ker~}\tilde{\varphi}(R)$ be a lift of $m$.
We write 
$$\pi\mathcal{X}_{i,i}^{\ast}(\tilde{m})=\delta_{i-1}(0,\cdots, 0, 1)\cdot \mathcal{X}_{i-1,i}(\tilde{m})+
\delta_{i+1}(0,\cdots, 0, 1)\cdot \mathcal{X}_{i+1, i}(\tilde{m}) $$
formally, where $\mathcal{X}_{i,i}^{\ast}(\tilde{m}) \in M_{1\times n_i}(B\otimes_AR)$.
This equation should be interpreted as follows. 
We formally compute the right hand side then it is of the form $\pi\cdot X$, 
where $X$ involves $\tilde{m}_{i,i}^{\ast}$ and $\tilde{m}_{i,i}^{\ast\ast}$.
The left hand side $\mathcal{X}_{i,i}^{\ast}(\tilde{m})$ is then defined to be $X$.
Then by using an argument similar to the paragraph   just before Equation (\ref{ea20}) of Step (1),
 the image of $\mathcal{X}_{i,i}^{\ast}(\tilde{m})$ in $M_{1\times n_i}(B\otimes_AR)/(\pi\otimes 1)M_{1\times n_i}(B\otimes_AR)$
is independent of the choice of the lift $\tilde{m}$ of $m$.
Therefore, we may denote this image by $\mathcal{X}_{i,i}^{\ast}(m)$.
As for Equation (\ref{ea20}) of Step (1), we need to express $\mathcal{X}_{i,i}^{\ast}(m)$ as matrices.
By using conditions (5) and (6) of the description of an element of $(\mathrm{Ker~}\tilde{\varphi}/\tilde{M}^1)(R)$ given in
Remark \ref{ra5}, we have that
\begin{equation}
\mathcal{X}_{i,i}^{\ast}(m)=m_{i,i}^{\ast}+  m_{i,i}^{\ast\ast}\cdot \bar{h}_i+\mathcal{P}^{\ast}_i.
\end{equation}
Here, $\mathcal{P}^{\ast}_i$ is a polynomial with variables in the entries  of $m$ not including $m_{i,i}^{\ast}$ and $m_{i,i}^{\ast\ast}$.
Since $m$ actually belongs to $\mathrm{Ker~}\varphi(R)/\tilde{G}^1(R)$,
 we have the following equation by the argument made at the beginning of this paragraph:
  \begin{equation}\label{ea22}
 \mathcal{X}_{i,i}^{\ast}(m)=m_{i,i}^{\ast}+  m_{i,i}^{\ast\ast}\cdot \bar{h}_i+\mathcal{P}^{\ast}_i=\bar{f}_{i,i}^{\ast}=0.
  \end{equation}
Thus we get  polynomials $\mathcal{X}_{i,i}^{\ast}(m)$ on $\mathrm{Ker~}\tilde{\varphi}/\tilde{M}^1$,
vanishing on the subscheme $\mathrm{Ker~}\varphi/\tilde{G}^1$.\\

\item Assume that  $i$ is odd and that $L_i$ is \textit{free of type I}.
By Equations (\ref{ea7}), (\ref{ea8}),  (\ref{ea9}), and  (\ref{ea10}) which involve an element of $\tilde{M}^1(R)$,
each entry of $b_i', e_i', d_i'$ has $\pi$ as a factor and $f_i-f_i'$  has $\pi^2$ as a factor so that
 $b_i'\equiv b_i=0, e_i'\equiv e_i=0, d_i'\equiv d_i=0$  mod
$(\pi\otimes 1)(B\otimes_AR)$, and $f_i'= f_i=\bar{\gamma}_i$.
Let $\tilde{m}\in \mathrm{Ker~}\tilde{\varphi}(R)$ be a lift of $m$.
By using an argument similar to the paragraph   just before Equation (\ref{ea20}) of Step (1),
if we write the $(1, 2), (1,3), (2,3), (2,2)$-blocks of the $(i, i)$-block of the formal matrix product
$\sigma({}^t\tilde{m})\cdot h\cdot \tilde{m}$ as $\pi\cdot\xi^{(i-1)/2}\cdot \pi\mathcal{X}_{i,1,2}(\tilde{m})$,
$\pi\cdot\xi^{(i-1)/2}\cdot \mathcal{X}_{i,1,3}(\tilde{m})$,
$\pi\cdot\xi^{(i-1)/2}\cdot (1+\pi\mathcal{X}_{i,2,3}(\tilde{m}))$,
$\pi\cdot\xi^{(i-1)/2}\cdot \pi^3\mathcal{X}_{i,2,2}(\tilde{m})$,
respectively, where
$\mathcal{X}_{i,1,2}(\tilde{m}), \mathcal{X}_{i,1,3}(\tilde{m}) \in M_{(n_i-2)\times 1}(B\otimes_AR)$ 
and $\mathcal{X}_{i,2,3}(\tilde{m}), \mathcal{X}_{i,2,2}(\tilde{m}) \in B\otimes_AR$,
then the images of $\mathcal{X}_{i,1,2}(\tilde{m}), \mathcal{X}_{i,1,3}(\tilde{m})$
in $M_{(n_i-2)\times 1}(B\otimes_AR)/(\pi\otimes 1)M_{(n_i-2)\times 1}(B\otimes_AR)$ and
the images of $\mathcal{X}_{i,2,3}(\tilde{m}), \mathcal{X}_{i,2,2}(\tilde{m})$  in $(B\otimes_AR)/(\pi\otimes 1)(B\otimes_AR)$
are independent of the choice of the lift $\tilde{m}$ of $m$.
Therefore, we may denote these images by $\mathcal{X}_{i,1,2}(m)$, $\mathcal{X}_{i,1,3}(m)$,  $\mathcal{X}_{i,2,3}(m)$,
and $\mathcal{X}_{i,2,3}(m)$, respectively.
Note that $\mathcal{X}_{i,2,2}(\tilde{m})$ is indeed contained in $R$.
Thus $\mathcal{X}_{i,2,2}(\tilde{m})$ is naturally identified with $\mathcal{X}_{i,2,2}(m)$.
As for Equation (\ref{ea20}) of Step (1), we need to express $\mathcal{X}_{i,1,2}(m)$, $\mathcal{X}_{i,1,3}(m)$, and $\mathcal{X}_{i,2,3}(m)$
, and $\mathcal{X}_{i,2,2}(m)$
as matrices.
Recall that
\[
\pi^ih_i=\xi^{(i-1)/2}\cdot\pi \begin{pmatrix} a_i&0&0\\ 0 &\pi^3\bar{\gamma}_i&1 \\ 0 &-1 &\pi \end{pmatrix}.
\]
We  write
\[
m_{i,i}=\begin{pmatrix} id&\pi r_i& t_i\\  y_i&1+\pi x_i&u_i\\ \pi v_i&\pi z_i&1+\pi w_i \end{pmatrix}
\mathrm{~and~}
\tilde{m}_{i,i}=\begin{pmatrix} \tilde{s}_i&\pi \tilde{r}_i& \tilde{t}_i\\  \tilde{y}_i&1+\pi \tilde{x}_i&\tilde{u}_i
\\ \pi \tilde{v}_i&\pi \tilde{z}_i&1+\pi \tilde{w}_i \end{pmatrix}
\]
such that $\tilde{s}_i=\mathrm{id}$ mod $\pi \otimes 1$.
Then
\begin{multline}\label{ea23}
\sigma({}^t\tilde{m}_{i,i})h_i\tilde{m}_{i,i}=(-1)^{(i-1)/2}
\begin{pmatrix}\sigma({}^t\tilde{s}_i)&\sigma( {}^t \tilde{y}_i)&\sigma(\pi\cdot {}^t \tilde{v}_i)\\
\sigma(\pi\cdot {}^t \tilde{r}_i)&1+\sigma(\pi \tilde{x}_i)&\sigma( \pi\cdot \tilde{z}_i)\\
\sigma( {}^t \tilde{t}_i)&\sigma( {}^t \tilde{u}_i)&1+\sigma(\pi \tilde{w}_i) \end{pmatrix}\\
\begin{pmatrix} a_i&0&0\\ 0 &\pi^3\bar{\gamma}_i&1 \\ 0 &-1 &\pi \end{pmatrix}
\begin{pmatrix} \tilde{s}_i&\pi \tilde{r}_i& \tilde{t}_i\\  \tilde{y}_i&1+\pi \tilde{x}_i&\tilde{u}_i
\\ \pi \tilde{v}_i&\pi \tilde{z}_i&1+\pi \tilde{w}_i \end{pmatrix}.
\end{multline}
Then the $(1,2)$-block of $\sigma({}^t\tilde{m}_{i,i})h_i\tilde{m}_{i,i}$ is
$(-1)^{(i-1)/2}\pi\left(a_i\tilde{r}_i+\sigma({}^t\tilde{v}_i)+\sigma({}^t\tilde{y}_i)\tilde{z}_i+\pi(\ast)\right)$,
 the $(1, 3)$-block is
$(-1)^{(i-1)/2}\left(a_i\tilde{t}_i+\sigma({}^t\tilde{y}_i)+\pi(\ast\ast)\right)$,
  the $(2, 3)$-block is
 $(-1)^{(i-1)/2}(1+$ $\pi(-\sigma({}^t\tilde{r}_i)a_i\tilde{t}_i-\sigma(\tilde{x}_i)+
 \sigma(\tilde{z}_i)\tilde{u}_i+\pi^2(\ast\ast\ast)))$,
   and the $(2,2)$-block is
$(-1)^{(i-1)/2}\left(\pi^3\bar{\gamma}_i+\pi^3\left(\tilde{z}_i+\tilde{z}_i^2+\pi^2(\ast\ast\ast\ast)\right)\right)$
 for  certain polynomials $(\ast), (\ast\ast), (\ast\ast\ast), (\ast\ast\ast\ast)$.
Therefore, by observing the $(1, 2), (1,3), (2,3), (2,2)$-blocks of  Equation (\ref{ea1}) again, we have
\[\left \{
  \begin{array}{l}
  \mathcal{X}_{i,1,2}(m)=\bar{a}_ir_i+{}^tv_i+{}^ty_iz_i+\mathcal{P}^i_{1, 2};\\
  \mathcal{X}_{i,1,3}(m)=\bar{a}_it_i+{}^ty_i;\\
  \mathcal{X}_{i,2,3}(m)={}^tr_i\bar{a}_it_i+x_i+z_iu_i+\mathcal{P}^i_{2, 3}.\\
  \mathcal{X}_{i,2,2}(m)=\bar{\gamma}_i+z_i+z_i^2+1/2\left({}^tm_{i-1, i}'h_{i-1}m_{i-1, i}'+{}^tm_{i+1, i}'h_{i+1}m_{i+1, i}'\right)+\notag\\
\textit{       }  ~~~~~~~~~~ \left(\delta_{i-2}'(m_{i-2, i}^{\#})^2+\delta_{i+2}'(m_{i+2, i}^{\#})^2\right)+
\left(\delta_{i-3}(m_{i-3, i}^{\natural})^2+\delta_{i+3}(m_{i+3, i}^{\natural})^2\right).
    \end{array} \right.\]
Here, $\mathcal{P}^i_{1, 2}, \mathcal{P}^i_{2, 3}$ are suitable polynomials with variables in the entries  of $m_{i-1, i}, m_{i+1, i}$ and
\begin{itemize}
\item $m_{i\pm 1,i}^{\prime}$ is the $(n_i-1)$-th column vector of the matrix $m_{i\pm 1,i}$.
\item  $\delta_{j}^{\prime} = \left\{
  \begin{array}{l l}
  1    & \quad  \textit{if $j$ is odd and $L_j$ is \textit{free of type I}};\\
  0    &   \quad  \textit{otherwise}.
    \end{array} \right.$
\item $m_{i\pm 2, i}^{\#}$ is the $(n_{i\pm 2}\times n_i-1)$-th entry of $m_{i\pm 2, i}$.
\item $m_{i\pm 3, i}^{\natural}= \left\{
  \begin{array}{l l}
 \textit{the $n_{i\pm 3}\times (n_i-1)$-th entry of $m_{i\pm 3, i}$}    & \quad  \textit{if $L_{i \pm 3}$ is of type $I^o$};\\
 \textit{the $(n_{i\pm 3}-1)\times (n_i-1)$-th entry of $m_{i\pm 3, i}$}    & \quad  \textit{if $L_{i \pm 3}$ is of type $I^e$}.    \end{array} \right.$
 \end{itemize}
 In the right hand side of the equation including $\mathcal{X}_{i,2,2}(m)$,
 the term $1/2({}^tm_{i-1, i}'h_{i-1}m_{i-1, i}'+{}^tm_{i+1, i}'h_{i+1}m_{i+1, i}')$ should be interpreted as follows.
 We formally compute $1/2({}^tm_{i-1, i}'h_{i-1}m_{i-1, i}'+{}^tm_{i+1, i}'h_{i+1}m_{i+1, i}')$ and it is of the form $1/2(2X)$.
 Then the term $1/2({}^tm_{i-1, i}'h_{i-1}m_{i-1, i}'+{}^tm_{i+1, i}'h_{i+1}m_{i+1, i}')$ is defined as the modified $X$ by letting each term having $\pi$ as a factor in $X$ be zero.

These equations are considered in $(B\otimes_AR)/(\pi\otimes 1)(B\otimes_AR)$.
Since $m$ actually belongs to $\mathrm{Ker~}\varphi(R)/\tilde{G}^1(R)$,
 we have the following equations by the argument made at the beginning of this step:
  \begin{equation}\label{24}
\left \{
  \begin{array}{l}
 \mathcal{X}_{i,1,2}(m)=\bar{a}_ir_i+{}^tv_i+{}^ty_iz_i+\mathcal{P}^i_{1, 2}=\bar{b}_i=0;\\
\mathcal{X}_{i,1,3}(m)=\bar{a}_it_i+{}^ty_i=\bar{e}_i=0; \\
\mathcal{X}_{i,2,3}(m)={}^tr_i\bar{a}_it_i+x_i+z_iu_i+\mathcal{P}^i_{2, 3}=\bar{d}_i=0
   \end{array} \right.
  \end{equation}
  and
  \begin{multline}\label{24'}
  \mathcal{X}_{i,2,2}(m)=\bar{\gamma}_i+z_i+z_i^2+1/2\left({}^tm_{i-1, i}'h_{i-1}m_{i-1, i}'+{}^tm_{i+1, i}'h_{i+1}m_{i+1, i}'\right)+\\
\textit{       }  ~~~~~~~~~~ \left(\delta_{i-2}'(m_{i-2, i}^{\#})^2+\delta_{i+2}'(m_{i+2, i}^{\#})^2\right)+
\left(\delta_{i-3}(m_{i-3, i}^{\natural})^2+\delta_{i+3}(m_{i+3, i}^{\natural})^2\right)=\bar{f}_i=\bar{\gamma}_i.
  \end{multline}

Thus we get  polynomials $\mathcal{X}_{i,1,2}, \mathcal{X}_{i,1,3}, \mathcal{X}_{i,2,3}, \mathcal{X}_{i,2,2}-\bar{\gamma}_i$
on $\mathrm{Ker~}\tilde{\varphi}/\tilde{M}^1$,
vanishing on the subscheme $\mathrm{Ker~}\varphi/\tilde{G}^1$.\\

\item Assume that $i$ is even and that $L_i$ is \textit{of type} $\textit{I}^o$.
By Equation (\ref{13}) which involves an element of $\tilde{M}^1(R)$,
each entry of $b_i'$ has $\pi$ as a factor so that $b_i'\equiv b_i=0$ mod $(\pi\otimes 1)(B\otimes_AR)$.
Let $\tilde{m}\in \mathrm{Ker~}\tilde{\varphi}(R)$ be a lift of $m$.
By using an argument similar to the paragraph   just before Equation (\ref{ea20}) of Step (1),
 if we write the $(1, 2)$-block of the $(i, i)$-block of the formal matrix product
$\sigma({}^t\tilde{m})\cdot h\cdot \tilde{m}$ as $\xi^{i/2}\cdot \pi\mathcal{X}_{i,1,2}(\tilde{m})$,
where $\mathcal{X}_{i,1,2}(\tilde{m}) \in M_{(n_i-1)\times 1}(B\otimes_AR)$,
then the image of $\mathcal{X}_{i,1,2}(\tilde{m})$ in $M_{(n_i-1)\times 1}(B\otimes_AR)/(\pi\otimes 1)M_{(n_i-1)\times 1}(B\otimes_AR)$
is independent of the choice of the lift $\tilde{m}$ of $m$.
Therefore, we may denote this image by $\mathcal{X}_{i,1,2}(m)$.
As for Equation (\ref{ea20}) of Step (1), we need to express $\mathcal{X}_{i,1,2}(m)$ as matrices.
Recall that $\pi^ih_i=\xi^{i/2} \begin{pmatrix} a_i&0\\ 0 &1 +2\bar{\gamma}_i \end{pmatrix}
=\pi^i\cdot(-1)^{i/2}\begin{pmatrix} a_i&0\\ 0 &1 +2\bar{\gamma}_i \end{pmatrix}$. 
We  write $m_{i,i}$ as $\begin{pmatrix} id&\pi y_i\\ \pi v_i&1+\pi z_i \end{pmatrix}$
and  $\tilde{m}_{i,i}$ as $\begin{pmatrix} \tilde{s}_i&\pi \tilde{y}_i\\ \pi \tilde{v}_i&1+\pi \tilde{z}_i \end{pmatrix}$
such that $\tilde{s}_i=\mathrm{id}$ mod $\pi \otimes 1$.
Then
\begin{equation}\label{ea25'}
\sigma({}^t\tilde{m}_{i,i})h_i\tilde{m}_{i,i}=(-1)^{i/2}\begin{pmatrix}\sigma({}^t\tilde{s}_i)&\sigma(\pi\cdot {}^t \tilde{v}_i)\\ \sigma(\pi\cdot {}^t \tilde{y}_i)&1+\sigma(\pi \tilde{z}_i) \end{pmatrix}
 \begin{pmatrix} a_i&0\\ 0 &1 +2\bar{\gamma}_i \end{pmatrix} \begin{pmatrix} \tilde{s}_i&\pi \tilde{y}_i\\ \pi \tilde{v}_i&1+\pi \tilde{z}_i \end{pmatrix}.
\end{equation}
The $(1,2)$-block of $\sigma({}^t\tilde{m}_{i,i})h_i\tilde{m}_{i,i}$ is $(-1)^{i/2}\pi(a_i\tilde{y}_i-\sigma({}^t\tilde{v}_i))+\pi^2(\ast)$ for a certain polynomial $(\ast)$.
Therefore, by observing the $(1, 2)$-block  of Equation (\ref{ea1}), we have
\[
\mathcal{X}_{i,1,2}(m)=\bar{a}_iy_i+{}^tv_i+\mathcal{P}^i_{1, 2}.
\]
 Here, $\mathcal{P}^i_{1, 2}$ is a polynomial with variables in the entries  of $m_{i-1, i}, m_{i+1, i}$.
Note that this is an equation in $(B\otimes_AR)/(\pi\otimes 1)(B\otimes_AR)$.
Since $m$ actually belongs to $\mathrm{Ker~}\varphi(R)/\tilde{G}^1(R)$,
 we have the following equation by the argument made at the beginning of this paragraph:
  \begin{equation}\label{ea25}
  \mathcal{X}_{i,1,2}(m)=\bar{a}_iy_i+{}^tv_i+\mathcal{P}^i_{1, 2}=\bar{b}_i=0.
  \end{equation}
Thus we get  polynomials $\mathcal{X}_{i,1,2}$ on $\mathrm{Ker~}\tilde{\varphi}/\tilde{M}^1$,
vanishing on the subscheme $\mathrm{Ker~}\varphi/\tilde{G}^1$.\\

\item  Assume that $i$ is even and that $L_i$ is \textit{of type} $\textit{I}^e$.
The argument used in this step is similar to that of the above Step (4).
By Equations (\ref{ea14}), (\ref{ea15}),  (\ref{ea16}), and (\ref{ea17}) which involve an element of $\tilde{M}^1(R)$,
each entry of $b_i', e_i', d_i', f_i'$ has $\pi$ as a factor so that $b_i'\equiv b_i=0, e_i'\equiv e_i=0, d_i'\equiv d_i=0, f_i'\equiv f_i=0$ mod
$(\pi\otimes 1)(B\otimes_AR)$.
Let $\tilde{m}\in \mathrm{Ker~}\tilde{\varphi}(R)$ be a lift of $m$.
By using an argument similar to the paragraph   just before Equation (\ref{ea20}) of Step (1),
if we write the $(1, 2), (1,3), (2,3), (2,2)$-blocks of the $(i, i)$-block of the formal matrix product
$\sigma({}^t\tilde{m})\cdot h\cdot \tilde{m}$ as $\xi^{i/2}\cdot \mathcal{X}_{i,1,2}(\tilde{m})$,
$\xi^{i/2}\cdot \pi\mathcal{X}_{i,1,3}(\tilde{m})$, $\xi^{i/2}\cdot (1+\pi\mathcal{X}_{i,2,3}(\tilde{m}))$,
$\xi^{i/2}\cdot (1+2 \mathcal{X}_{i,2,2}(\tilde{m}))$, respectively, where
$\mathcal{X}_{i,1,2}(\tilde{m}), \mathcal{X}_{i,1,3}(\tilde{m}) \in M_{(n_i-2)\times 1}(B\otimes_AR)$ 
and $\mathcal{X}_{i,2,3}(\tilde{m}), \mathcal{X}_{i,2,2}(\tilde{m}) \in B\otimes_AR$,
then the images of $\mathcal{X}_{i,1,2}(\tilde{m}), \mathcal{X}_{i,1,3}(\tilde{m})$ 
in $M_{(n_i-2)\times 1}(B\otimes_AR)/(\pi\otimes 1)M_{(n_i-2)\times 1}(B\otimes_AR)$
and the images of $\mathcal{X}_{i,2,3}(\tilde{m}), \mathcal{X}_{i,2,2}(\tilde{m})$ in $(B\otimes_AR)/(\pi\otimes 1)(B\otimes_AR)$
are independent of the choice of the lift $\tilde{m}$ of $m$.
Therefore, we may denote these images by $\mathcal{X}_{i,1,2}(m)$, $\mathcal{X}_{i,1,3}(m)$,  $\mathcal{X}_{i,2,3}(m)$, and $\mathcal{X}_{i,2,2}(m)$ respectively.
Note that $\mathcal{X}_{i,2,2}(\tilde{m})$ is indeed contained in $R$.
Thus $\mathcal{X}_{i,2,2}(\tilde{m})$ is naturally identified with $\mathcal{X}_{i,2,2}(m)$.

As for Equation (\ref{ea20}) of Step (1),
we need to express $\mathcal{X}_{i,1,2}(m)$, $\mathcal{X}_{i,1,3}(m)$, $\mathcal{X}_{i,2,3}(m)$, and $\mathcal{X}_{i,2,2}(m)$ as matrices.
Recall that $\pi^ih_i=\xi^{i/2} \begin{pmatrix} a_i&0&0\\ 0 &1&1 \\ 0 &1 &2\bar{\gamma}_i \end{pmatrix}
=\pi^i\cdot(-1)^{i/2}\begin{pmatrix} a_i&0&0\\ 0 &1&1 \\ 0 &1 &2\bar{\gamma}_i \end{pmatrix}$.
We  write $m_{i,i}$ as $\begin{pmatrix} id&r_i&\pi t_i\\ \pi y_i&1+\pi x_i&\pi z_i\\ v_i&u_i&1+\pi w_i \end{pmatrix}$
and $\tilde{m}_{i,i}$ as $\begin{pmatrix} \tilde{s}_i&\tilde{r}_i&\pi \tilde{t}_i\\ \pi \tilde{y}_i&1+\pi \tilde{x}_i&\pi \tilde{z}_i\\ \tilde{v}_i&\tilde{u}_i&1+\pi \tilde{w}_i \end{pmatrix}$ such that $\tilde{s}_i=\mathrm{id}$ mod $\pi \otimes 1$.
Then
\begin{multline}\label{ea26}
\sigma({}^t\tilde{m}_{i,i})h_i\tilde{m}_{i,i}=(-1)^{i/2}\begin{pmatrix}\sigma({}^t\tilde{s}_i)&\sigma(\pi\cdot {}^t \tilde{y}_i)&\sigma( {}^t \tilde{v}_i)\\
\sigma({}^t \tilde{r}_i)&1+\sigma(\pi \tilde{x}_i)&\sigma(\tilde{u}_i)\\\sigma(\pi\cdot {}^t \tilde{t}_i)&\sigma(\pi\cdot {}^t \tilde{z}_i)&1+\sigma(\pi \tilde{w}_i) \end{pmatrix}\\
 \begin{pmatrix} a_i&0&0\\ 0 &1&1 \\ 0 &1 &2c_i \end{pmatrix}
\begin{pmatrix} \tilde{s}_i&\tilde{r}_i&\pi \tilde{t}_i\\ \pi \tilde{y}_i&1+\pi \tilde{x}_i&\pi \tilde{z}_i\\ \tilde{v}_i&\tilde{u}_i&1+\pi \tilde{w}_i \end{pmatrix}.
\end{multline}
The  $(1,2)$-block of $\sigma({}^t\tilde{m}_{i,i})h_i\tilde{m}_{i,i}$ is
$(-1)^{i/2}(a_i\tilde{r}_i+\sigma({}^t\tilde{v}_i))+\pi(\ast)$,
 the $(1, 3)$-block is $(-1)^{i/2}\pi(a_i\tilde{t}_i-\sigma({}^t\tilde{y}_i)+\sigma({}^t\tilde{v}_i)\tilde{z}_i)
 +\pi^2(\ast\ast)$,
  the $(2, 3)$-block is $(-1)^{i/2}(1+\pi(\sigma({}^t\tilde{r}_i)a_i\tilde{t}_i-\sigma(\tilde{x}_i)+\tilde{z}_i+\tilde{w}_i+\sigma(\tilde{u}_i)\tilde{z}_i)
 +\pi^2(\ast\ast\ast))$,
 and the  $(2, 2)$-block is
$(-1)^{i/2}(1+\sigma({}^t\bar{r}_i)a_i\bar{r}_i+2\bar{u}_i+2\bar{\gamma}_i\bar{u}_i^2-2\bar{x}_i^2$ $+\pi^4(\ast\ast\ast\ast))$
 for  certain polynomials $(\ast), (\ast\ast), (\ast\ast\ast), (\ast\ast\ast\ast)$.
Therefore, by observing the $(1, 2), (1,3), (2,3), (2,2)$-blocks of  Equation (\ref{ea1}) again, we have
\[\left \{
  \begin{array}{l}
  \mathcal{X}_{i,1,2}(m)=\bar{a}_ir_i+{}^tv_i;\\
  \mathcal{X}_{i,1,3}(m)=\bar{a}_i t_i+{}^ty_i+{}^tv_iz_i+\mathcal{P}^i_{1, 3};\\
  \mathcal{X}_{i,2,3}(m)={}^tr_i\bar{a}_it_i+x_i+z_i+ w_i+u_iz_i+\mathcal{P}^i_{2, 3};\\
  \mathcal{X}_{i,2,2}(m)=u_i+\bar{\gamma}_iu_i^2+x_i^2+1/2\cdot{}^tr_i\bar{a}_ir_i+\left(\delta_{i-2}(m_{i-2, i}^{\natural})^2+\delta_{i+2}(m_{i+2, i}^{\natural})^2\right).
    \end{array} \right.
    \]
Here, $\mathcal{P}^i_{1, 3}, \mathcal{P}^i_{2, 3}$ are suitable polynomials with variables in the entries  of $m_{i-1, i}, m_{i+1, i}$, and
$$m_{i\pm 2, i}^{\natural}= \left\{
  \begin{array}{l l}
 \textit{the $n_{i\pm 2}\times (n_i-1)$-th entry of $m_{i\pm 2, i}$}    & \quad  \textit{if $L_{i \pm 2}$ is of type $I^o$};\\
 \textit{the $(n_{i\pm 2}-1)\times (n_i-1)$-th entry of $m_{i\pm 2, i}$}    & \quad  \textit{if $L_{i \pm 2}$ is of type $I^e$}.    \end{array} \right.$$
 In the right hand side of the equation including $\mathcal{X}_{i,2,2}(m)$,
 the term $1/2\cdot{}^tr_i\bar{a}_ir_i$ should be interpreted as follows.
 We formally compute $1/2\cdot{}^tr_i\bar{a}_ir_i$ and it is of the form $1/2(2X)$.
 Then the term $1/2\cdot{}^tr_i\bar{a}_ir_i$ is defined as the modified $X$ by letting each term having $\pi$ as a factor in $X$ be zero.

These equations are considered in $(B\otimes_AR)/(\pi\otimes 1)(B\otimes_AR)$.
Since $m$ actually belongs to $\mathrm{Ker~}\varphi(R)/\tilde{G}^1(R)$,
 we have the following equations by the argument made at the beginning of this step:
  \begin{equation}\label{ea27}
\left \{
  \begin{array}{l}
 \mathcal{X}_{i,1,2}(m)=\bar{a}_ir_i+{}^tv_i=\bar{b}_i=0;\\
\mathcal{X}_{i,1,3}(m)=\bar{a}_i t_i+{}^ty_i+{}^tv_iz_i+\mathcal{P}^i_{1, 3}=\bar{e}_i=0; \\
\mathcal{X}_{i,2,3}(m)={}^tr_i\bar{a}_it_i+x_i+z_i+ w_i+u_iz_i+\mathcal{P}^i_{2, 3}=\bar{d}_i=0;\\
\mathcal{X}_{i,2,2}(m)=u_i+\bar{\gamma}_iu_i^2+x_i^2+1/2\cdot{}^tr_i\bar{a}_ir_i+\left(\delta_{i-2}(m_{i-2, i}^{\natural})^2+\delta_{i+2}(m_{i+2, i}^{\natural})^2\right)=\bar{f}_i=0.
    \end{array} \right.
  \end{equation}
Thus we get  polynomials $\mathcal{X}_{i,1,2}, \mathcal{X}_{i,1,3}, \mathcal{X}_{i,2,3}, \mathcal{X}_{i,2,2}$
on $\mathrm{Ker~}\tilde{\varphi}/\tilde{M}^1$, vanishing on the subscheme $\mathrm{Ker~}\varphi/\tilde{G}^1$.\\




\item   Assume that $i$ is even and that $L_i$ is \textit{of type I}.
By Equation (\ref{ea19}) which involves an element of $\tilde{M}^1(R)$,
we have $c_i' = c_i+z_i'$.
Since $c_i'\not\equiv c_i$, we cannot follow the argument used in the previous steps in the case of
the $(2, 2)$-block (when $L_i$ is \textit{of type $I^o$}) or the $(3,3)$-block (when $L_i$ is \textit{of type $I^e$})
 of the $(i, i)$-block of $h\circ \tilde{m}=\sigma({}^t\tilde{m})\cdot h\cdot \tilde{m}$.
Note that $c_i'$ and $z_i'$ are indeed contained in $R$ and $R$ is a $\kappa$-algebra.
Thus $c_i'$ and $z_i'$ mod $(\pi\otimes 1)(B\otimes_AR)$ are naturally identified with themselves, respectively.

 On the other hand,  we choose an even integer $j$ such that $L_j$ is \textit{of type I} and $L_{j+2}$ is \textit{of type II}.
For such $j$, there is a nonnegative integer $m_j$ such that $L_{j-2l}$ is \textit{of type I}
for every $l$ with $0\leq l \leq m_j$ and $L_{j-2(m_j+1)}$ is \textit{of type II} (cf. Step (vi) of the proof of Theorem \ref{ta4}).
Then we have
\begin{equation}\label{ea28}
\sum_{0\leq l \leq m_j}c_{j-2l}'= \sum_{0\leq l \leq m_j}\left(c_{j-2l}+z_{j-2l}'\right)
= \sum_{0\leq l \leq m_j}c_{j-2l} =0
\end{equation}
since the sum of equations $\sum_{0\leq l \leq m_j}\mathcal{Z}_{j-2l}'$ equals $\sum_{0\leq l \leq m_j}z_{j-2l}'=0$ as mentioned in Step (vi) of the proof of Theorem \ref{ta4} and $c_i=0$ by Remark \ref{r33}.(2).

We now apply the argument used in the previous steps to the above equation (\ref{ea28}).
Let $\tilde{m}\in \mathrm{Ker~}\tilde{\varphi}(R)$ be a lift of $m$.
By using an argument similar to the paragraph   just before Equation (\ref{ea20}) of Step (1),
if we write the $(2, 2)$-block (when $L_i$ is \textit{of type $I^o$}) or the $(3,3)$-block (when $L_i$ is \textit{of type $I^e$})
 of the $(i, i)$-block of $h\circ \tilde{m}=\sigma({}^t\tilde{m})\cdot h\cdot \tilde{m}$
as $\xi^{i/2}\cdot (1+2\bar{\gamma}_i+4\mathcal{F}_i(\tilde{m}))$ or
$\xi^{i/2}\cdot (2\bar{\gamma}_i+4\mathcal{F}_{i}(\tilde{m}))$ respectively, then
the image of $\sum_{0\leq l \leq m_j}\mathcal{F}_{_{j-2l}}(\tilde{m})$ in
 $(B\otimes_AR)/(\pi\otimes 1)(B\otimes_AR)$
is independent of the choice of the lift $\tilde{m}$ of $m$.
Therefore, we may denote this image by $\sum_{0\leq l \leq m_j}\mathcal{F}_{_{j-2l}}(m)$.
Note that $\sum_{0\leq l \leq m_j}\mathcal{F}_{_{j-2l}}(\tilde{m})$ is indeed contained in $R$.
Thus $\sum_{0\leq l \leq m_j}\mathcal{F}_{_{j-2l}}(\tilde{m})$ is naturally identified with
$\sum_{0\leq l \leq m_j}\mathcal{F}_{_{j-2l}}(m)$.

As for Equation (\ref{ea20}) of Step (1), we need to express $\sum_{0\leq l \leq m_j}\mathcal{F}_{_{j-2l}}(m)$ precisely.
Each entry $\tilde{x}_i$ of $(\tilde{m_{i,j}}, \tilde{s_i} \cdots \tilde{w_i})$
is expressed  as $\tilde{x}_i=(\tilde{x}_i)_1+\pi (\tilde{x}_i)_2$.
Then we have
\begin{multline}\label{ea30}
 \mathcal{F}_i(\tilde{m}) = \left\{\begin{array}{l l}
 \left(1/2\cdot{}^t(\tilde{y}_i)_1a_i(\tilde{y}_i)_1+\bar{\gamma}_i(\tilde{z}_i)_1^2\right)  & \quad  \textit{if $L_i$ is of type $I^o$};\\
 \left(1/2\cdot{}^t(\tilde{t}_i)_1a_i(\tilde{t}_i)_1+\bar{\gamma}_i(\tilde{w}_i)_1^2+(\tilde{z}_i)_1(\tilde{w}_i)_1\right)
  & \quad  \textit{if $L_i$ is of type $I^e$}  \end{array}\right.\\
 +(\tilde{z}_i)_2+(\tilde{z}_i)_2^2+(\tilde{z}_i)_1(\delta_{i-2}(\tilde{k}_{i-2, i})_1+\delta_{i+2}(\tilde{k}_{i+2,i})_1)+
 \delta_{i-2}\delta_{i+2}(\tilde{k}_{i-2, i})_1(\tilde{k}_{i+2, i})_1+\\
 \left({}^t(\tilde{m}_{i-1, i}')_1\cdot  h_{i-1}\cdot (\tilde{m}_{i-1, i}')_2+{}^t(\tilde{m}_{i+1, i}')_1\cdot  h_{i+1}\cdot (\tilde{m}_{i+1, i}')_2\right)+\\
 \frac{1}{2}\left({}^t(\tilde{m}_{i-2, i})_1'\cdot  a_{i-2}\cdot (\tilde{m}_{i-2, i})_1'+{}^t(\tilde{m}_{i+2, i})_1'\cdot a_{i+2}\cdot (\tilde{m}_{i+2, i})_1'\right)\\
  +\left\{\begin{array}{l l}
 \delta_{i-2}\left(\bar{\gamma}_{i-2}(\tilde{k}_{i-2, i})_1^2+(\tilde{k}_{i-2, i})_2^2\right) & \quad \textit{if $L_{i-2}$ is of type $I^o$}; \\
    \delta_{i-2}\left(\bar{\gamma}_{i-2}(\tilde{k}_{i-2, i})_1'^2+ (\tilde{k}_{i-2, i})_2^2+  (\tilde{k}_{i-2, i})_1\cdot (\tilde{k}_{i-2, i})_1'\right)
 & \quad \textit{if $L_{i-2}$ is of type $I^e$} \end{array}\right.\\
+ \left\{\begin{array}{l l}
 \delta_{i+2}\left(\bar{\gamma}_{i+2}(\tilde{k}_{i+2, i})_1^2+(\tilde{k}_{i+2, i})_2^2\right) & \quad  \textit{if $L_{i+2}$ is of type $I^o$};\\
 \delta_{i+2}\left(\bar{\gamma}_{i+2}((\tilde{k}_{i+2, i})_1')^2+ (\tilde{k}_{i+2, i})_2^2+  (\tilde{k}_{i+2, i})_1\cdot (\tilde{k}_{i+2, i})_1'\right)
 & \quad  \textit{if $L_{i+2}$ is of type $I^e$} \end{array}\right.\\
+ \left(\delta_{i-3}'(\tilde{k}_{i-3, i})_1^2+ \delta_{i+3}'(\tilde{k}_{i+3, i})_1^2\right)+
 \left(\delta_{i-4}(\tilde{k}_{i-4, i})_1^2+ \delta_{i+4}(\tilde{k}_{i+4, i})_1^2\right).
 \end{multline}
 Here,
 \begin{itemize}
 \item $1/2\cdot{}^t(\tilde{y}_i)_1a_i(\tilde{y}_i)_1$ in the first line should be interpreted as follows.
 We formally compute $1/2\cdot{}^t(\tilde{y}_i)_1a_i(\tilde{y}_i)_1$   and it is of the form $1/2(2X)$.
 Then the term $1/2\cdot{}^t(\tilde{y}_i)_1a_i(\tilde{y}_i)_1$ is defined as the modified $X$ by letting each term having $\pi^2$ as a factor in $X$ be zero.
 We interpret all terms having $1/2$ as a factor appeared below in this manner.

 \item $\tilde{m}_{i\pm 1, i}'$ is the last column vector of $\tilde{m}_{i\pm 1, i}$.

 \item $\tilde{m}_{i\pm 2, i}', \tilde{k}_{i\pm 2, i}, \tilde{k}_{i\pm 2, i}'$ are such that the last column vector of  $\tilde{m}_{i\pm 2, i}$ is
 \[ \left\{
  \begin{array}{l l}
\tilde{m}_{i\pm 2, i}'  & \quad \textit{if $L_{i\pm 2}$ is of type II};\\
 {}^t({}^t\tilde{m}_{i\pm 2, i}', \tilde{k}_{i\pm 2, i})  & \quad \textit{if $L_{i\pm 2}$ is of type $I^o$};\\
 {}^t({}^t\tilde{m}_{i\pm 2, i}', \tilde{k}_{i\pm 2, i}, \tilde{k}_{i\pm 2, i}') & \quad \textit{if $L_{i\pm 2}$ is of type $I^e$}.
      \end{array} \right. \]
\item $\delta_{i\pm 3}'=1$ if $L_{i\pm 3}$ is \textit{free of type I} and $0$ otherwise.

\item  $\tilde{k}_{j, i}$  is the $(n_{j}, n_i)^{th}$-entry (resp. $(n_{j}-1, n_i)^{th}$-entry) of the matrix $\tilde{m}_{j, i}$
if $L_{j}$ is \textit{of type} $\textit{I}^o$ with $j$ even or \textit{free of type I with j odd} (resp. \textit{of type $I^e$ with j even}).
\end{itemize}

Note that Condition (d) of the description of an element of $\tilde{M}(R)$ given at the paragraph following Lemma \ref{la1},
$\tilde{z}_i+\delta_{i-2}\tilde{k}_{i-2, i}+\delta_{i+2}\tilde{k}_{i+2, i}=\pi \tilde{z}_i^{\ast}$,
yields the following two formal equations:
\[ \left\{
  \begin{array}{l}
(\tilde{z}_i)_1+   \delta_{i-2}(\tilde{k}_{i-2, i})_1+\delta_{i+2}(\tilde{k}_{i+2, i})_1=2 (\tilde{z}_i^{\ast})_2;\\
(\tilde{z}_i)_2+ \delta_{i-2}(\tilde{k}_{i-2, i})_2+\delta_{i+2}(\tilde{k}_{i+2, i})_2= (\tilde{z}_i^{\ast})_1.
      \end{array} \right. \]
Using these,
 the sum of $\mathcal{F}_{j-2l}(\tilde{m})$ is the following:
\begin{multline}\label{ea31}
\sum_{l=0}^{m_j}\mathcal{F}_{j-2l}(\tilde{m})=\sum_{l=0}^{m_j}\left((\tilde{z}_{j-2l}^{\ast})_1+(\tilde{z}_{j-2l}^{\ast})_1^2   \right)+\\
 \left({}^t(\tilde{m}_{j-2m_j-1, j-2m_j}')_1\cdot  h_{j-2m_j-1}\cdot (\tilde{m}_{j-2m_j-1, j-2m_j}')_2+{}^t(\tilde{m}_{j+1, j}')_1\cdot  h_{j+1}\cdot (\tilde{m}_{j+1, j}')_2\right)+\\
 \frac{1}{2}\left({}^t(\tilde{m}_{j-2m_j-2, j-2m_j})_1'\cdot  a_{j-2m_j-2}\cdot (\tilde{m}_{j-2m_j-2, j-2m_j})_1'+{}^t(\tilde{m}_{j+2, j})_1'\cdot a_{j+2}\cdot (\tilde{m}_{j+2, j})_1'\right)\\
+ \left(\delta_{j-2m_j-3}'(\tilde{k}_{j-2m_j-3, j-2m_j})_1^2+ \delta_{j+3}'(\tilde{k}_{j+3, j})_1^2\right)+
 \left(\delta_{j-2m_j-4}(\tilde{k}_{j-2m_j-4, j-2m_j})_1^2+ \delta_{j+4}(\tilde{k}_{j+4, j})_1^2\right)+\\
 \sum_{\textit{$L_{j-2l}$ : of type $I^e$,
 $l=0$}}^{m_j}\bar{\gamma}_{j-2l}\left((\tilde{x}_i)_1^2+1/2\cdot{}^t(\tilde{r}_i)_1a_i(\tilde{r}_i)_1+
 \left(\delta_{i-2}(\tilde{m}_{i-2, i}^{\natural})_1^2+\delta_{i+2}(\tilde{m}_{i+2, i}^{\natural})_1^2\right)\right).
\end{multline}
Here, $(\tilde{m}_{i\pm 2, i}^{\natural})_1$ is as explained in the above Step (5).
Note that both $(\tilde{m}_{j-2m_j-1, j-2m_j}')_1$ and $(\tilde{m}_{j+1, j}')_1$ are zero in $R$
because of Condition (f) of the description of an element of $\tilde{M}(R)$ given at the paragraph following Lemma \ref{la1}
since $L_{j-2m_j-2}$ and $L_{j+2}$ are \textit{of type II}.

The proof of the above $\sum_{l=0}^{m_j}\mathcal{F}_{j-2l}$ is basically similar to that of Lemma A.7 of \cite{C2} and we skip it.
It is mainly based on Equation (\ref{ea2}), especially when $j-i=1\textit{ or }2$.
We also have to incorporate Equations (\ref{ea25}) and (\ref{ea27})
together with the two formal equations about $(\tilde{z_i^{\ast}})_1$ and $(\tilde{z_i^{\ast}})_2$ given just before Equation (\ref{ea31}),
and Conditions (e), (f) of the description of an element of $\tilde{M}(R)$ given at the paragraph following Lemma \ref{la1}.\\

Since $\sum_{l=0}^{m_j}\mathcal{F}_{j-2l}(\tilde{m})$ does not include any factor of type $(\tilde{x}_i)_2$,
we finally obtain $\sum_{l=0}^{m_j}\mathcal{F}_{j-2l}(m)$ as follows:
\begin{multline}\label{ea32}
 0=\sum_{0\leq l \leq m_j}c_{j-2l}=\sum_{l=0}^{m_j}\mathcal{F}_{j-2l}(m)=\sum_{l=0}^{m_j}\left(z_{j-2l}^{\ast}+(z_{j-2l}^{\ast})^2   \right)+\\
 \sum_{\textit{$L_{j-2l}$ : of type $I^e$, $l=0$}}^{m_j}\bar{\gamma}_{j-2l}\left(x_i^2+1/2\cdot{}^tr_i\bar{a}_ir_i+\left(\delta_{i-2}(m_{i-2, i}^{\natural})^2+\delta_{i+2}(m_{i+2, i}^{\natural})^2\right)\right)+\\
 \frac{1}{2}\left({}^t(m_{j-2m_j-2, j-2m_j})'\cdot  a_{j-2m_j-2}\cdot (m_{j-2m_j-2, j-2m_j})'+{}^t(m_{j+2, j})'\cdot a_{j+2}\cdot (m_{j+2, j})'\right)+\\
 \left(\delta_{j-2m_j-3}'(k_{j-2m_j-3, j-2m_j})^2+ \delta_{j+3}'(k_{j+3, j})^2\right)+
 \left(\delta_{j-2m_j-4}(k_{j-2m_j-4, j-2m_j})^2+ \delta_{j+4}(k_{j+4, j})^2\right).
\end{multline}
Here, notations as as explained in Equations (\ref{ea30}) and (\ref{ea31}).
Thus we get  polynomials $\sum_{l=0}^{m_j}\mathcal{F}_{j-2l}$
on $\mathrm{Ker~}\tilde{\varphi}/\tilde{M}^1$,
vanishing on the subscheme $\mathrm{Ker~}\varphi/\tilde{G}^1$.\\

\item
This step is similar to Step (5) in the proof of Theorem A.6 of \cite{C2}.
Recall that   $\mathcal{B}$ is the set of  integers $j$ such that $L_j$ is \textit{of type I} and $L_{j+2}, L_{j+3}, L_{j+4}$
 (resp. $L_{j-1}, L_{j+1},$ $L_{j+2}, L_{j+3}$) are \textit{of type II} if $j$ is even (resp. odd).
 We choose $j\in \mathcal{B}$.

For   $j\in \mathcal{B}$, there is a nonnegative integer $k_j$ such that $L_{j-k_j}$ is \textit{of type I},
$j-l\notin\mathcal{B}$ for all $l$ with $0 < l \leq k_j$, and
$L_{j-k_j-2}, L_{j-k_j-3}, L_{j-k_j-4}$ (resp. $L_{j-k_j+1},$ $L_{j-k_j-1},$ $L_{j-k_j-2},$ $L_{j-k_j-3}$) are \textit{of type II}
if $j-k_j$ is even (resp. odd).

We  denote  $\mathcal{X}_{i,2,2}-\bar{\gamma}_i$ by $\mathcal{F}_{i}$   when $i$ is odd and $L_i$ is \textit{free of type I} 
(cf. Equation (\ref{24'})),
and denote $\mathcal{X}_{i,2,2}$ by $\mathcal{E}_{i}$ when $i$ is even and $L_i$ is \textit{of type $I^e$} (cf. Equation (\ref{ea27})).
Then, based on Equations (\ref{24'}) and (\ref{ea32}) and $\mathcal{X}_{i,2,2}$ in Equation (\ref{ea27}), the sum of equations
$$\sum_{l=0}^{k_j}\mathcal{F}_{j-l}(m)+
\sum_{\textit{ $L_{j-l}$ : type $\textit{I}^e$, }l=0}^{k_j}
\bar{\gamma}_{j-l} \cdot \mathcal{E}_{j-l}(m)$$
equals
\begin{multline}\label{32'}
\sum_{l=0}^{k_j}\left(z_{j-l}^{\ast}+(z_{j-2l}^{\ast})^2   \right)+
\sum_{l=0}^{k_j}
\left(\bar{\gamma}_{j-l}u_{j-l}^{\ast}+(\bar{\gamma}_{j-l}u_{j-l}^{\ast})^2\right)=\\
\left(\sum_{l=0}^{k_j}z_{j-l}^{\ast}+\sum_{l=0}^{k_j}
\bar{\gamma}_{j-l}u_{j-l}^{\ast}   \right)
\left(\left(\sum_{l=0}^{k_j}z_{j-l}^{\ast}+\sum_{l=0}^{k_j}
\bar{\gamma}_{j-l}u_{j-l}^{\ast}\right)+1\right)=0.
\end{multline}

Here,
\[z_{j-l}^{\ast} = \left\{
  \begin{array}{l l}
  z_{j-l}^{\ast}    & \quad  \textit{if $j-l$ is even and $L_{j-l}$ is \textit{of type I}};\\
    z_{j-l}    & \quad  \textit{if $j-l$ is odd and $L_{j-l}$ is \textit{free of type I}};\\
     0    & \quad  \textit{otherwise},
          \end{array} \right.\]
and
\[u_{j-l}^{\ast} = \left\{
  \begin{array}{l l}
  u_{j-l}    & \quad  \textit{if $j-l$ is even and $L_{j-l}$ is \textit{of type $I^e$}};\\
     0    & \quad  \textit{otherwise}.
          \end{array} \right.\]


The proof of the above is also   basically similar to that of Lemma A.7 in \cite{C2} and we skip it.
It is mainly based on Equation (\ref{ea20}), especially when $j-i=2\textit{ or }3$.\\
\end{enumerate}

Let $G^{\ddag}$ be the subfunctor of $ \mathrm{Ker~}\tilde{\varphi}/\tilde{M}^1$ consisting of those $m$ 
satisfying Equations (\ref{ea20}), (\ref{ea22}), (\ref{24}), (\ref{24'}), (\ref{ea25}), (\ref{ea27}), and (\ref{ea32}). 
Note that such $m$ also satisfies Equation (\ref{32'}).
In Lemma \ref{la8} below, we will prove that
$G^{\ddag}$ is represented by a smooth  closed subscheme of $ \mathrm{Ker~}\tilde{\varphi}/\tilde{M}^1$
and is isomorphic to $ \mathbb{A}^{l^{\prime}}\times (\mathbb{Z}/2\mathbb{Z})^{\beta}$ as a $\kappa$-variety,
 where $\mathbb{A}^{l^{\prime}}$ is an affine space of dimension
\begin{align*}
 l^{\prime}=\sum_{i<j}n_in_j +\sum_{i:\mathrm{even~and~} L_i:\textit{of type }I^e}(n_i-1) +
 \sum_{i:\mathrm{odd~and~}L_i:\textit{free of type I}}(2n_i-2) \notag \\
 - \sum_{i:\mathrm{even~and~} L_i:\textit{bound of type II}}n_i +\#\{i:\textit{$i$ is even and $L_i$ is of type I}\} ~~~~~~~~~~~~~~~\notag \\ 
-\#\{i:\textit{$i$ is even, $L_i$ is  of type I and  $L_{i+2}$ is of type II}\}. ~~~~~~~~~~~~~~
\end{align*}

 For ease of notation, let $G^{\dag}=\mathrm{Ker~}\varphi/\tilde{G}^1$.
 Since $G^{\dag}$ and $G^{\ddag}$ are both closed subschemes of $ \mathrm{Ker~}\tilde{\varphi}/\tilde{M}^1$
 and $G^{\dag}(\bar{\kappa})\subset G^{\ddag}(\bar{\kappa})$, $(G^{\dag})_{\mathrm{red}}$ is a closed subscheme of $(G^{\ddag})_{\mathrm{red}}=G^{\ddag}$.
 It is easy to check that $\mathrm{dim~} G^{\dag} = \mathrm{dim~}G^{\ddag}$
 since  $\mathrm{dim~} G^{\dag} =\mathrm{dim~}\mathrm{Ker~}\varphi - \mathrm{dim~}\tilde{G}^1=l-\mathrm{dim~}\tilde{G}^1$
 and $\mathrm{dim~}G^{\ddag}=l'=l-\mathrm{dim~}\tilde{G}^1$.
 Here, $\mathrm{dim~}\mathrm{Ker~}\varphi = l$ is given in Lemma \ref{l46}
 and dim $\tilde{G}^1$ is given in Theorem \ref{ta4}.\\

We claim that $(G^{\dag})_{\mathrm{red}}$ contains  at least one (closed) point of each connected component of $G^{\ddag}$.
Choose an integer $j$ such that $L_j$ is \textit{of type I} and $L_{j+2}, L_{j+3}, L_{j+4}$
 (resp. $L_{j-1}, L_{j+1},$ $L_{j+2}, L_{j+3}$) are \textit{of type II} if $j$ is even (resp. odd).
Consider the closed subgroup scheme $F_j$ of $\tilde{G}$ defined by the following equations:
\begin{itemize}
\item $m_{i,k}=0$ \textit{if $i\neq k$};
\item $m_{i,i}=\mathrm{id}, z_i^{\ast}=0, m_{i,i}^{\ast}=0, m_{i,i}^{\ast\ast}=0$ \textit{if $i\neq j$};
\item and for $m_{j,j}$,
\[\left \{
  \begin{array}{l l}
  s_j=\mathrm{id~}, y_j=0, v_j=0, z_j=\pi z_j^{\ast} & \quad  \textit{if $i$ is even and $L_i$ is \textit{of type} $\textit{I}^o$};\\
  s_j=\mathrm{id~}, r_j=t_j=y_j=v_j=u_j=w_j=0, z_j=\pi z_j^{\ast}  & \quad \textit{if $i$ is even and  $L_i$ is \textit{of type} $\textit{I}^e$};\\
  s_j=\mathrm{id~}, r_j=t_j=y_j=v_j=u_j=w_j=0 & \quad \textit{if $i$ is odd and  $L_i$ is \textit{free of type I}}.\\
    \end{array} \right.\]
\end{itemize}
We will prove in Lemma \ref{la9} below
that each element of $F_j(R)$ for a $\kappa$-algebra $R$ satisfies $(z_j^{\ast})_1+(z_j^{\ast})_1^2=0$ (if $j$ is even)
or $(z_j)_1+(z_j)_1^2=0$ (if $j$ is odd),
where $z_j^{\ast}=(z_j^{\ast})_1+\pi (z_j^{\ast})_2$ and $z_j=(z_j)_1+\pi (z_j)_2$, and
that $F_j$ is isomorphic to $ \mathbb{A}^{1} \times \mathbb{Z}/2\mathbb{Z}$ as a $\kappa$-variety,
where $\mathbb{A}^{1}$ is an affine space of dimension $1$.

Notice that $F_j$ and $F_{j^{\prime}}$ commute with each other for all  integers  $j\neq j^{\prime}$, where $j, j^{\prime}\in \mathcal{B}$,
in the sense that $f_j\cdot f_{j^{\prime}}=f_{j^{\prime}}\cdot f_j$, where $f_j\in F_j $ and $ f_{j^{\prime}}\in F_{j^{\prime}}$.
Let $F=\prod_{j}F_j$.
Then $F$ is  smooth and is a closed subgroup scheme of $\mathrm{Ker~}\varphi$ as mentioned in the proof of Theorem \ref{t411}.
If $F^{\dag}$  is the image of $F$ in $G^{\dag}$, then it is smooth and thus  a closed subscheme of $(G^{\dag})_{\mathrm{red}}$.
By observing Equation (\ref{32'}) and $(z_j^{\ast})_1+(z_j^{\ast})_1^2=0$ (if $j$ is even)
or $(z_j)_1+(z_j)_1^2=0$ (if $j$ is odd) above,
we can easily see that $F^{\dag}$ contains at least one (closed) point of each connected component of $G^{\ddag}$ and this proves our claim.\\

Combining this fact with dim $G^{\dag}$ = dim $G^{\ddag}$, we conclude that $(G^{\dag})_{\mathrm{red}}\simeq G^{\ddag}$,
and hence, $G^{\dag}=G^{\ddag}$ because $G^{\dag}$ is a subfunctor of $G^{\ddag}$.
This completes the proof.
\end{proof}

\begin{Lem}\label{la8}
Let $G^{\ddag}$ be the subfunctor of $ \mathrm{Ker~}\tilde{\varphi}/\tilde{M}^1$ consisting of those $m$ 
satisfying Equations (\ref{ea20}), (\ref{ea22}), (\ref{24}), (\ref{24'}), (\ref{ea25}), (\ref{ea27}), and (\ref{ea32}). 
Note that such $m$ also satisfies Equation (\ref{32'}).
Then  $G^{\ddag}$ is represented by a smooth  closed subscheme of $ \mathrm{Ker~}\tilde{\varphi}/\tilde{M}^1$
and is isomorphic to $ \mathbb{A}^{l^{\prime}}\times (\mathbb{Z}/2\mathbb{Z})^{\beta}$ as a $\kappa$-variety,
 where $\mathbb{A}^{l^{\prime}}$ is an affine space of dimension $l^{\prime}$.
 Here,
\begin{align}
 l^{\prime}=\sum_{i<j}n_in_j +\sum_{i:\mathrm{even~and~} L_i:\textit{of type }I^e}(n_i-1) +
 \sum_{i:\mathrm{odd~and~}L_i:\textit{free of type I}}(2n_i-2) \notag \\
 - \sum_{i:\mathrm{even~and~} L_i:\textit{bound of type II}}n_i +\#\{i:\textit{$i$ is even and $L_i$ is of type I}\} ~~~~~~~~~~~~~~~\notag \\ 
-\#\{i:\textit{$i$ is even, $L_i$ is  of type I and  $L_{i+2}$ is of type II}\}. ~~~~~~~~~~~~~~
\end{align}
\end{Lem}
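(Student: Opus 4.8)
The plan is to analyze the defining equations of $G^{\ddag}$ inside $\mathrm{Ker~}\tilde{\varphi}/\tilde{M}^1$ — namely Equations (\ref{ea20}), (\ref{ea22}), (\ref{24}), (\ref{24'}), (\ref{ea25}), (\ref{ea27}), and (\ref{ea32}) — and to carry out a bookkeeping argument that mirrors the elimination argument used in the proof of Theorem \ref{ta4}, but now for the quotient scheme rather than $\tilde M^1$. First I would recall from Remark \ref{ra5} that an element $m$ of $(\mathrm{Ker~}\tilde\varphi/\tilde M^1)(R)$ is described by matrices with entries in $(B\otimes_A R)/(\pi\otimes 1)(B\otimes_A R)\cong R$, subject only to the linear conditions (1)--(6) of that remark, so that $\mathrm{Ker~}\tilde\varphi/\tilde M^1$ is itself an affine space; I would compute its dimension by counting free entries in $(m_{i,j})_{i\neq j}$ together with the various diagonal blocks, subtracting the contributions from the linear conditions in Remark \ref{ra5}.(3)--(6). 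Then I would go through the seven classes of equations one at a time.

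The bulk of the work is to show that most of these equations are \emph{linear} (or can be brought to linear form after an invertible change of coordinates) and hence each cuts the dimension by exactly one per scalar equation, so that they jointly eliminate a predictable number of variables, leaving an affine space of the stated dimension $l'$. Concretely: Equation (\ref{ea20}) expresses each off-diagonal block $m_{i,j}$ ($i<j$) in terms of the $m_{j,i}$, giving $\sum_{i<j}n_in_j$ linear eliminations; Equation (\ref{ea22}) linearly eliminates $m_{i,i}^{\ast}$ (or $m_{i,i}^{\ast\ast}$) for $L_i$ bound of type $I$ with $i$ odd; Equations (\ref{24}), (\ref{ea25}), (\ref{ea27}) are linear in the entries of $m_{i,i}'$ for $L_i$ of type $I^e$ ($i$ even) or free of type $I$ ($i$ odd), and a careful count of independent linear equations among these — paralleling cases (ii), (iv), (v) of the proof of Theorem \ref{ta4} — shows exactly $n_i-1$ (resp.\ $2n_i-2$) free variables survive. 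The genuinely nonlinear equations are the $\mathcal X_{i,2,2}$-type equations (\ref{24'}) and the summed equations (\ref{ea32}); here I would argue, exactly as in Step (5) of the proof of Theorem A.6 of \cite{C2} and as captured in Equation (\ref{32'}), that after the linear eliminations these collapse (one per $j\in\mathcal B$) to a single equation of the form $t_j(t_j+1)=0$ in a new scalar coordinate $t_j$ (a $\kappa$-linear combination of the $z_{j-l}^{\ast}$ and $\bar\gamma_{j-l}u_{j-l}^{\ast}$), while the remaining $z_i'=0$-type equations from the "type I with $L_{i+2}$ of type II" bookkeeping account for the last two terms of $l'$. Each factor $\kappa[t_j]/(t_j^2+t_j)\cong \kappa\times\kappa$ contributes a $\mathbb Z/2\mathbb Z$ to the component group, and since the $F_j$ for distinct $j\in\mathcal B$ involve disjoint sets of variables (the condition $j'-j\geq 4$ or $5$), the $\beta$ factors are independent; smoothness is automatic because $G^{\ddag}$ is, étale-locally, a product of an affine space with the smooth $0$-dimensional scheme $\mu_2$-avatar $\mathrm{Spec}\,\kappa[t]/(t^2+t)$.

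The main obstacle I anticipate is the exact count of independent linear equations coming from (\ref{24}), (\ref{ea25}), (\ref{ea27}) together with the interaction terms $\mathcal P^i_{\bullet}$ (which mix in the off-diagonal blocks $m_{i\pm1,i}$ already partially eliminated by (\ref{ea20})), and verifying that the nonlinear equations (\ref{24'}), (\ref{ea32}) really do reduce to the clean shape (\ref{32'}) after substitution — in particular tracking that no $(\tilde x_i)_2$-type variable appears in $\sum_l\mathcal F_{j-l}$, which is what allows the reduction to pass to the quotient. This is precisely the content that is "basically similar to Lemma A.7 of \cite{C2}"; I would handle it by first performing all linear eliminations to put the ambient space in triangular coordinates, then substituting into the quadratic equations and checking the leading quadratic term is a perfect square in the surviving scalar variable. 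Once the variety is identified as $\mathbb A^{l'}\times(\mathbb Z/2\mathbb Z)^{\beta}$ on $\bar\kappa$-points with matching dimension, the scheme-theoretic identity follows since $G^{\ddag}$ is cut out by explicit equations and the displayed reduced model is visibly smooth of the right dimension with the right number of components.
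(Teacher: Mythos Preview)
Your proposal is correct and follows essentially the same elimination strategy as the paper's proof. Two minor points of difference worth noting: first, the paper organizes the argument by first splitting $G^{\ddag}$ into $2^{\beta}$ closed pieces $\widetilde{G}^{\ddag}_{\mathcal{B}_1,\mathcal{B}_2}$ (one for each assignment of values $0$ or $1$ to the sums $\sum z_{j-l}^{\ast}+\sum\bar\gamma_{j-l}u_{j-l}^{\ast}$) and then shows each piece is an affine space, rather than directly factoring off the $t_j(t_j+1)=0$ equations as you suggest; second, be aware that the fourth equation $\mathcal{X}_{i,2,2}$ in (\ref{ea27}) is genuinely quadratic (it contains $\bar\gamma_i u_i^2+x_i^2$), so your blanket claim that (\ref{ea27}) is linear is not quite right --- the paper handles this by the change of variable $\tilde u_i=\bar\gamma_i u_i+\sqrt{\bar\gamma_i}\,x_i$, which renders that equation of the form $\tilde u_i+\tilde u_i^2+(\text{other})=\sqrt{\bar\gamma_i}\,x_i$ and hence eliminates one variable, and it is these equations together with (\ref{24'}) and (\ref{ea32}) whose combined redundancy (one per $j\in\mathcal B$, via (\ref{32'})) produces the $(\mathbb Z/2\mathbb Z)^{\beta}$.
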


\begin{proof}
Let $\mathcal{B}$ be the set of  integers $j$ such that $L_j$ is \textit{of type I} and $L_{j+2}, L_{j+3}, L_{j+4}$
 (resp. $L_{j-1}, L_{j+1},$ $L_{j+2}, L_{j+3}$) are \textit{of type II} if $j$ is even (resp. odd).
Equation (\ref{32'}) implies that $G^{\ddag}$ is disconnected with at least $2^\beta$ connected components (Exercise 2.19 of \cite{H}).
Here $\beta=\# \mathcal{B}$.

Let $\mathcal{B}_1$ and $\mathcal{B}_2$ be a pair of two (possibly empty) subsets  of $\mathcal{B}$
such that $\mathcal{B}$ is the disjoint union of $\mathcal{B}_1$ and $\mathcal{B}_2$.
Let  $\widetilde{G}^{\ddag}_{\mathcal{B}_1, \mathcal{B}_2}$ be the subfunctor of $ \mathrm{Ker~}\tilde{\varphi}/\tilde{M}^1$ consisting of those $m$   satisfying Equations(\ref{ea20}), (\ref{ea22}), (\ref{24}), (\ref{24'}), (\ref{ea25}), (\ref{ea27}), and (\ref{ea32}),
 the equations $\sum_{l=0}^{k_j}z_{j-l}^{\ast}+\sum_{l=0}^{k_j}
\bar{\gamma}_{j-l}u_{j-l}^{\ast}=0$ for any $j\in \mathcal{B}_1$,
 and  the equations $\sum_{l=0}^{k_j}z_{j-l}^{\ast}+\sum_{l=0}^{k_j}
\bar{\gamma}_{j-l}u_{j-l}^{\ast}=1$ for any $j\in \mathcal{B}_2$.
Here $k_j$ is the integer associated to $j$ defined in the paragraph before Equation (\ref{32'}).
We claim that
 $\widetilde{G}^{\ddag}_{\mathcal{B}_1, \mathcal{B}_2}$ is represented by a smooth  closed subscheme of $ \mathrm{Ker~}\tilde{\varphi}/\tilde{M}^1$
and is isomorphic to $ \mathbb{A}^{l^{\prime}}$.
Since the scheme $G^{\ddag}$ is a direct product of $\widetilde{G}^{\ddag}_{\mathcal{B}_1,
\mathcal{B}_2}$'s for any such pair of $\mathcal{B}_1, \mathcal{B}_2$ by Exercise 2.19 of \cite{H},
the lemma follows from this claim.

It is obvious that $\widetilde{G}^{\ddag}_{\mathcal{B}_1, \mathcal{B}_2}$ is represented by a closed subscheme of $ \mathrm{Ker~}\tilde{\varphi}/\tilde{M}^1$
since the equations defining $\widetilde{G}^{\ddag}_{\mathcal{B}_1, \mathcal{B}_2}$ as a subfunctor of $ \mathrm{Ker~}\tilde{\varphi}/\tilde{M}^1$ are all polynomials.
Thus it suffices to show that $\widetilde{G}^{\ddag}_{\mathcal{B}_1, \mathcal{B}_2}$ is isomorphic to an affine space $ \mathbb{A}^{l^{\prime}}$.
Our strategy to show this is that 
 the coordinate ring of $\widetilde{G}^{\ddag}_{\mathcal{B}_1, \mathcal{B}_2}$ is isomorphic to a polynomial ring, which is also used in the proof of Lemma A.8 in \cite{C2}.
To do that, we use the following trick over and over.
We consider the polynomial ring $\kappa[x_1, \cdots, x_n]$ and its quotient ring $\kappa[x_1, \cdots, x_n]/(x_1+P(x_2, \cdots, x_n))$.
Then the quotient ring $\kappa[x_1, \cdots, x_n]/(x_1+P(x_2, \cdots, x_n))$
 is isomorphic to $\kappa[x_2, \cdots, x_n]$ and in this case we say that \textit{$x_1$ can be eliminated by $x_2, \cdots, x_n$}.

By the description of an element of $(\mathrm{Ker~}\tilde{\varphi}/\tilde{M}^1)(R)$ in Remark \ref{ra5},
we see that $\mathrm{Ker~}\tilde{\varphi}/\tilde{M}^1$ is isomorphic to an affine space of dimension
$$2\sum_{i<j}n_in_j-\sum_{\textit{i:even and $L_i$:bound of type II}}n_i+ \sum_{\textit{i:even and $L_i$:of type $I^o$}}n_i+$$
$$\sum_{\textit{i:even and $L_i$:of type $I^e$}}(3n_i-2)
 +\sum_{\textit{i:odd and $L_i$:free of type $I$}}(4n_i-4)$$
  with variables $(m_{i,j})_{i\neq j}, (y_i, v_i, z_i, z_i^{\ast})_{\textit{i:even and $L_i$:of type $I^o$}},
 (r_i, t_i, y_i, v_i, x_i, z_i, u_i, w_i, z_i^{\ast})_{\textit{i:even and $L_i$:of type $I^e$}}$,
$ (r_i, t_i, y_i, v_i, x_i, z_i, u_i, w_i)_{\textit{i:odd and $L_i$:free of type I}}$,
$ (m_{i,i}^{\ast}, m_{i,i}^{\ast\ast})_{\textit{i:odd and $L_i$:bound of type I}}$, such that

\begin{itemize}
\item Let $i$ be even and $L_i$ be \textit{bound of type II}.
Then $\delta_{i-1}^{\prime}e_{i-1}\cdot m_{i-1, i}+\delta_{i+1}^{\prime}e_{i+1}\cdot m_{i+1, i}+\delta_{i-2}e_{i-2}\cdot m_{i-2, i}+\delta_{i+2}e_{i+2}\cdot m_{i+2, i}=0$.
Here, notations are as explained in Step  (c) of  the description of an element of $\mathrm{Ker~}\tilde{\varphi}(R)$ given at the paragraph following Lemma \ref{la2}.

\item Let $i$ be even and $L_i$ be \textit{of type I}.
Then  $v_i(\mathrm{resp.~}(y_i+\sqrt{\bar{\gamma}_i}v_i))+(\delta_{i-2}e_{i-2}\cdot m_{i-2, i}+\delta_{i+2}e_{i+2}\cdot m_{i+2, i})\tilde{e}_i=0$ if $L_i$ is \textit{of type} $\textit{I}^o$ (resp. \textit{of type} $\textit{I}^e$).
Here, notations are as explained in Step  (c) of  the description of an element of $\mathrm{Ker~}\tilde{\varphi}(R)$ given at the paragraph following Lemma \ref{la2}.

\item If $i$ is even and $L_i$ is \textit{of type I}, then
$z_i+\delta_{i-2}k_{i-2, i}+\delta_{i+2}k_{i+2, i}=0.$
Here,  notations are as explained in Step  (d) of  the description of an element of $\tilde{M}(R)$ given at the paragraph following Lemma \ref{la1}.

\item If $i$ is odd and $L_i$ is \textit{bound of type I}, then
$\delta_{i-1}v_{i-1}\cdot m_{i-1, i}+\delta_{i+1}v_{i+1}\cdot m_{i+1, i}=0.$
Here,  notations are as explained in Step  (e) of  the description of an element of $\tilde{M}(R)$ given at the paragraph following Lemma \ref{la1}.

\item If $i$ is odd and $L_i$ is \textit{bound of type I}, then
$\delta_{i-1}\cdot m_{i, i-1}'+\delta_{i+1}\cdot m_{i, i+1}'=0.$
Here,  $m_{i, i-1}'$ (resp. $m_{i, i+1}'$) is the last column vector of $m_{i, i-1}$ (resp. $m_{i, i+1}$).
\end{itemize}

Thus we can see that $v_i, z_i$ (resp. $y_i, z_i$) can be eliminated by other variables
when $L_i$ is \textit{of type} $\textit{I}^o$ (resp. \textit{of type} $\textit{I}^e$), and so on.

From now on, we eliminate suitable variables based on Equations (\ref{ea20}), (\ref{ea22}), (\ref{24}), (\ref{24'}), (\ref{ea25}), (\ref{ea27}), and (\ref{ea32}),
 the equations $\sum_{l=0}^{k_j}z_{j-l}^{\ast}+\sum_{l=0}^{k_j}
\bar{\gamma}_{j-l}u_{j-l}^{\ast}=0$ for all $j\in \mathcal{B}_1$,
 and  the equations $\sum_{l=0}^{k_j}z_{j-l}^{\ast}+\sum_{l=0}^{k_j}
\bar{\gamma}_{j-l}u_{j-l}^{\ast}=1$ for all $j\in \mathcal{B}_2$.

\begin{enumerate}
\item We first consider Equation (\ref{ea20}).
This case is  similar to the case (1) in the proof of Lemma A.8 in \cite{C2}.
Thus, all lower triangular blocks $m_{j,i}$ can be eliminated by upper triangular blocks $m_{i,j}$ together with
other variables arose in diagonal blocks.

On the other hand, if $i$ is odd and $L_i$ is \textit{bound of type I},
then we have two equations 
involving $m_{i-1, i}, m_{i+1, i}, m_{i, i-1}', m_{i, i+1}'$ as explained above in this proof.
Since these two equations involve lower triangular blocks, we should figure it out in terms of upper triangular blocks.
By Equation (\ref{ea21}) we have that
$$v_{i-1}\cdot m_{i-1, i}={}^tm_{i, i-1}'h_i \textit{   and   } v_{i+1}\cdot m_{i+1, i}={}^tm_{i, i+1}'h_i.$$
Therefore, these two equations  involving $m_{i-1, i}, m_{i+1, i}, m_{i, i-1}', m_{i, i+1}'$ explained above in this proof
 are identical and they are the same as the following single equation:
$$\delta_{i-1}v_{i-1}\cdot m_{i-1, i}+\delta_{i+1}\cdot {}^tm_{i, i+1}'h_i=0.$$ \\

\item By considering Equation (\ref{ea22}), we see that $m_{i,i}^{\ast}$ can be eliminated by $m_{i,i}^{\ast\ast}$ when
$L_i$ is \textit{bound of type I} with $i$ odd.\\

\item We consider Equation (\ref{24}).
If  $L_i$ is \textit{free of type $I$} with $i$ odd, then $v_i$ can be eliminated by $\left(y_iz_i, r_i, m_{i-1,i}, m_{i,i+1}\right)$,
$y_i$  can be eliminated by $\left(t_i\right)$, and $x_i$ can be eliminated by $\left(z_iu_i, r_i, t_i, m_{i-1,i}, m_{i,i+1}\right)$.\\

\item We  consider Equation (\ref{ea25}).
If  $L_i$ is \textit{of type $I^o$}, then $v_i$ can be eliminated by $y_i$ and $m_{i-1,i}, m_{i,i+1}$.\\

\item We consider Equation (\ref{ea27}).
If  $L_i$ is \textit{of type $I^e$}, then $v_i$ can be eliminated by $\left(r_i\right)$,
$y_i$ can be eliminated by $\left(t_i, v_iz_i, m_{i-1,i}, m_{i,i+1}\right)$, and
 $w_i$ can be eliminated by $\left(r_i, t_i, z_i, x_i, u_i, m_{i-1,i}, m_{i,i+1}\right)$.\\

\item Finally, we consider Equations (\ref{24'}) and (\ref{ea32}), $\mathcal{X}_{i,2,2}$ in Equation (\ref{ea27}),  together with
 the equations $\sum_{l=0}^{k_j}z_{j-l}^{\ast}+\sum_{l=0}^{k_j}
\bar{\gamma}_{j-l}u_{j-l}^{\ast}=0$ for any $j\in \mathcal{B}_1$,
 and  the equations $\sum_{l=0}^{k_j}z_{j-l}^{\ast}+\sum_{l=0}^{k_j}
\bar{\gamma}_{j-l}u_{j-l}^{\ast}=1$ for any $j\in \mathcal{B}_2$.

We first consider $\mathcal{X}_{i,2,2}$ in Equation (\ref{ea27}).
If $\gamma_i$ is not a unit, then  $\bar{\gamma}_i=0$ and so $u_i$ can be eliminated.
If $\gamma_i$ is  a unit so that $\bar{\gamma}_i\neq 0$, then we add $\sqrt{\bar{\gamma}_i}x_i$
to both sides of $\bar{\gamma}_i\mathcal{X}_{i,2,2}=0$.
Then we have
$$\left(\bar{\gamma}_iu_i+\sqrt{\bar{\gamma}_i}x_i\right)+\left(\sqrt{\bar{\gamma}_i}x_i+\bar{\gamma}_iu_i\right)^2+
\bar{\gamma}_i/2{}^tr_ia_ir_i+\bar{\gamma}_i\left(\delta_{i-2}(m_{i-2, i}^{\natural})^2+\delta_{i+2}(m_{i+2, i}^{\natural})^2\right)=\sqrt{\bar{\gamma}_i}x_i.$$
If we let $\tilde{u}_i=\bar{\gamma}_iu_i+\sqrt{\bar{\gamma}_i}x_i$, then $u_i$ can be eliminated by $\tilde{u}_i, x_i$.
In addition, the above equation yields that $x_i$ can be eliminated by $\tilde{u}_i, r_i, m_{i-2, i}^{\natural}, m_{i+2, i}^{\natural}$.
Therefore, since we introduce one new variable $\tilde{u}_i$ and eliminate two variables $u_i$ and $x_i$,
the effect of $\mathcal{X}_{i,2,2}$ in Equation (\ref{ea27}) is to eliminated one variable.

Similarly,  each of Equations (\ref{24'}) and (\ref{ea32}) eliminates one variable.
The proof of this is similar to the above or the argument of Step (4) in the proof of Lemma A.8 of \cite{C2}.
Thus we skip it.

The equation $\sum_{l=0}^{k_j}z_{j-l}^{\ast}+\sum_{l=0}^{k_j}
\bar{\gamma}_{j-l}u_{j-l}^{\ast}=0$ for any $j\in \mathcal{B}_1$,
 and  the equation $\sum_{l=0}^{k_j}z_{j-l}^{\ast}+\sum_{l=0}^{k_j}
\bar{\gamma}_{j-l}u_{j-l}^{\ast}=1$ for any $j\in \mathcal{B}_2$ yield that
for each $j\in \mathcal{B}$,
only one of equations of type Equations (\ref{24'}) or (\ref{ea32}), or $\mathcal{X}_{i,2,2}$ in Equation (\ref{ea27})
is reduntant, and the associated one variable,
say $z_{j}^{\ast}$ for simplicity, can be eliminated by other $z_{j-l}^{\ast}, u_{j-l}^{\ast}$.\\
\end{enumerate}

We now combine all cases (1)-(6) observed above.
\begin{enumerate}
\item[(a)] By (1) and (2), we eliminate $\sum_{i<j}n_in_j$ variables. 
\item[(b)] By (3), we eliminate $\sum_{\textit{i:odd and $L_i$:free of type $I$}}(2n_i-3)$ variables.
\item[(c)] By (4), we eliminate $\sum_{\textit{i:even and $L_i$:of type $I^o$}}(n_i-1)$ variables.
\item[(c)] By (5), we eliminate $\sum_{\textit{i:even and $L_i$:of type $I^e$}}(2n_i-3)$ variables.
\item[(d)] By (6), we eliminate
$$\#\{\textit{i:odd such that $L_i$ is free of type I}\}+\#\{\textit{i:even such that $L_i$ is of type $I^e$}\}+$$
$$\textit{$\#$\{i:even such that $L_i$ is of type I and $L_{i+2}$ is of type II\}$+\beta-\beta$ variables.}$$
Here $\beta=\# \mathcal{B}$. \\
\end{enumerate}

Recall  from the beginning of the proof that  $ \mathrm{Ker~}\tilde{\varphi}/\tilde{M}^1$ is isomorphic to an affine space of dimension
$$2\sum_{i<j}n_in_j-\sum_{\textit{i:even and $L_i$:bound of type II}}n_i+ \sum_{\textit{i:even and $L_i$:of type $I^o$}}n_i+$$
$$\sum_{\textit{i:even and $L_i$:of type $I^e$}}(3n_i-2)
 +\sum_{\textit{i:odd and $L_i$:free of type $I$}}(4n_i-4).$$
Therefore, the dimension of $\widetilde{G}^{\ddag}_{\mathcal{B}_1, \mathcal{B}_2}$ is
\begin{multline}\label{ea40}
 l^{\prime}=\sum_{i<j}n_in_j +\sum_{i:\mathrm{even~and~} L_i:\textit{of type }I^e}(n_i-1) +
 \sum_{i:\mathrm{odd~and~}L_i:\textit{free of type I}}(2n_i-2) \\
 - \sum_{i:\mathrm{even~and~} L_i:\textit{bound of type II}}n_i +\#\{i:\textit{$i$ is even and $L_i$ is of type I}\}  \\
-\#\{i:\textit{$i$ is even, $L_i$ is  of type I and  $L_{i+2}$ is of type II}\},
\end{multline}
which finishes the proof.
\end{proof}

\begin{Lem}\label{la9}
Let $F_j$ be  the closed subgroup scheme  of $\tilde{G}$ defined by the following equations:
\begin{itemize}
\item $m_{i,k}=0$ \textit{if $i\neq k$};
\item $m_{i,i}=\mathrm{id}, z_i^{\ast}=0, m_{i,i}^{\ast}=0, m_{i,i}^{\ast\ast}=0$ \textit{if $i\neq j$};
\item and for $m_{j,j}$,
\[\left \{
  \begin{array}{l l}
  s_j=\mathrm{id~}, y_j=0, v_j=0, z_j=\pi z_j^{\ast} & \quad  \textit{if $i$ is even and $L_i$ is \textit{of type} $\textit{I}^o$};\\
  s_j=\mathrm{id~}, r_j=t_j=y_j=v_j=u_j=w_j=0, z_j=\pi z_j^{\ast}  & \quad \textit{if $i$ is even and  $L_i$ is \textit{of type} $\textit{I}^e$};\\
  s_j=\mathrm{id~}, r_j=t_j=y_j=v_j=u_j=w_j=0 & \quad \textit{if $i$ is odd and  $L_i$ is \textit{free of type I}}.\\
    \end{array} \right.\]
\end{itemize}
Then  $F_j$ is isomorphic to $ \mathbb{A}^{1} \times \mathbb{Z}/2\mathbb{Z}$ as a $\kappa$-variety,
where $\mathbb{A}^{1}$ is an affine space of dimension $1$, and has exactly two connected components.
\end{Lem}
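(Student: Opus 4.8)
The plan is to compute the coordinate ring of $F_j$ explicitly by enumerating the equations that cut out $F_j$ as a closed subscheme of $\tilde{G}$, and then show this coordinate ring is isomorphic to $\kappa[t] \times \kappa[t]$ (or, equivalently, $\kappa[F_j] \cong \kappa[t] \otimes_\kappa \kappa[\mathbb{Z}/2\mathbb{Z}]$). First I would record, using the formal matrix description of an element of $\underline{M}(R)$ from Section \ref{m} and the defining equations above, that an element of $F_j(R)$ for a $\kappa$-algebra $R$ is entirely determined by the single scalar $z_j^\ast$ (when $j$ is even) or by $z_j$ (when $j$ is odd), since the remaining blocks are fixed to be $\mathrm{id}$ or $0$ by hypothesis, and the entries $x_j$ are killed as well in the even case up to the congruence relations. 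Concretely, in the even case $m_{j,j}$ reduces to $\mathrm{diag}(\mathrm{id}, \,\bigl(\begin{smallmatrix} 1 & 0 \\ 0 & 1+2z_j^\ast\end{smallmatrix}\bigr))$ or its $I^e$-analogue, and in the odd case to $\mathrm{diag}(\mathrm{id}, \bigl(\begin{smallmatrix}1 & 0 \\ \pi z_j & 1\end{smallmatrix}\bigr))$ up to the bookkeeping already displayed just before Equation (\ref{e42}).

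Next I would impose the condition that $m$ actually lies in $\tilde{G}$, i.e.\ that $m$ preserves the hermitian form $h$; this is the equation $\sigma({}^t m)\cdot h \cdot m = h$ interpreted formally as in Remark \ref{r35}. Writing $z_j^\ast = (z_j^\ast)_1 + \pi (z_j^\ast)_2$ (resp.\ $z_j = (z_j)_1 + \pi(z_j)_2$) with the $(\cdot)_k \in R$, the only nontrivial constraints that survive after letting $\pi^2 = 0$ are precisely those collected in Equation (\ref{e42}): in particular $(z_j^\ast)_1 + (z_j^\ast)_1^2 = 0$ (resp.\ $(z_j)_1 + (z_j)_1^2 = 0$), together with the linear relations $(x_j)_1 = 0$ and $(x_j)_2 + (z_j^\ast)_1 = 0$ (resp.\ $(x_j)_1 = 0$, $(z_j)_1 + (x_j)_2 = 0$) in the non-$I^o$ cases. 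The computation producing these equations is the one already asserted in the main text before Equation (\ref{e42}), so here I would only need to verify it in detail: expand the relevant $2\times 2$ or $3\times 3$ diagonal block of $\sigma({}^t m)\cdot h \cdot m$ using the Gram matrices of Remark \ref{r33}.(1), reduce mod $\pi^2$, and read off the coefficients.

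Given these equations, the conclusion is a short commutative-algebra computation. In the $I^o$ even case the coordinate ring is $\kappa[(z_j^\ast)_1, (z_j^\ast)_2]/\bigl((z_j^\ast)_1 + (z_j^\ast)_1^2\bigr) \cong \bigl(\kappa[(z_j^\ast)_1]/((z_j^\ast)_1^2 + (z_j^\ast)_1)\bigr)[(z_j^\ast)_2] \cong (\kappa \times \kappa)[(z_j^\ast)_2]$, using that $X^2 + X = X(X+1)$ splits over $\kappa$; this is exactly $\kappa[\mathbb{A}^1 \times \mathbb{Z}/2\mathbb{Z}]$. In the other cases, the additional variables $(x_j)_1, (x_j)_2$ (and the untouched $(z_j)_2$) are eliminated by their linear relations — using the elimination trick stated at the start of Appendix \ref{App:AppendixA} — so that after eliminating, one is left with $\kappa[(z_j^\ast)_1, (z_j)_2]/((z_j^\ast)_1^2 + (z_j^\ast)_1)$ (resp.\ $\kappa[(z_j)_1, (z_j)_2]/((z_j)_1^2 + (z_j)_1)$), again $\cong (\kappa\times\kappa)[\text{one free variable}]$. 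Hence $F_j \cong \mathbb{A}^1 \times \mathbb{Z}/2\mathbb{Z}$, which has exactly two connected components (one for each idempotent $(z_j^\ast)_1 \mapsto 0, 1$), by Exercise 2.19 of \cite{H}.

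The main obstacle I anticipate is purely bookkeeping: correctly tracking which entries of $m_{j,j}$ survive and which congruence conditions (the relations involving $z_j^\ast$, $m_{j,j}^\ast$, $m_{j,j}^{\ast\ast}$ from Section \ref{m}) become vacuous once all off-diagonal blocks and all neighboring diagonal blocks are set equal to the identity — in particular, verifying that in the even case the equation genuinely involves $z_j^\ast = z_j/\pi$ rather than $z_j$, which is why the quadratic relation is $(z_j^\ast)_1^2 + (z_j^\ast)_1 = 0$ and not merely $z_j \cdot(\text{something}) = 0$. Once the right two coordinates are identified and the hermitian-form equation is expanded in those coordinates, the isomorphism with $\mathbb{A}^1 \times \mathbb{Z}/2\mathbb{Z}$ is immediate.
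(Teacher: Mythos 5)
Your proposal is correct and follows essentially the same route as the paper's proof: write out the single nontrivial block $m_{j,j}$, expand the $(j,j)$-block of $\sigma({}^tm)\cdot h\cdot m=h$ modulo $\pi^2$ using the Gram matrices of Remark \ref{r33}, and read off the quadratic relation $(z_j^\ast)_1+(z_j^\ast)_1^2=0$ (resp.\ $(z_j)_1+(z_j)_1^2=0$) together with the linear relations eliminating $x_j$. The only cosmetic slip is writing $(z_j)_2$ for $(z_j^\ast)_2$ in the even case of your final coordinate-ring computation; the argument is otherwise the paper's.
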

\begin{proof}
A matrix form of an element $m$ of $F_j(R)$ for a $\kappa$-algebra $R$ is
\[\begin{pmatrix} id&0& & \ldots& & &0\\ 0&\ddots&& & & &\\ & &id& & & & \\  \vdots & & &m_{j,j} & & &\vdots
\\ & & & & id & & \\  & & & & &\ddots &0 \\ 0& & &\ldots & &0 &id \end{pmatrix}$$
such that $$m_{j,j}=\left\{
\begin{array}{l l}
\begin{pmatrix}id&0\\0&1+2 z_j^{\ast} \end{pmatrix} & \quad \textit{if $j$ is even and $L_j$ is of type $I^o$};\\
\begin{pmatrix}id&0&0\\0&1+\pi x_j&2 z_j^{\ast}\\0&0&1 \end{pmatrix} & \quad \textit{if $j$ is even and  $L_j$ is of type $I^e$};\\
\begin{pmatrix}id&0&0\\0&1+\pi x_j&0\\0&\pi z_j&1 \end{pmatrix} & \quad \textit{if $j$ is odd and  $L_j$ is free of type $I$}.
\end{array}\right.\]
We emphasize that we have $2z_j^{\ast}$, not $\pi z_j$, when $i$ is even.

To prove the lemma, we consider the matrix equation $\sigma({}^tm)\cdot h\cdot m=h$.
Recall that $h$, as an element of $\underline{H}(R)$, is as explained in Remark \ref{r33}.(2).
Based on Equations (\ref{ea1}) and (\ref{ea2}),
the diagonal $(i,i)$-blocks of $\sigma({}^tm)\cdot h\cdot m=h$ with $i\neq j$ are trivial
and the nondiagonal blocks of $\sigma({}^tm)\cdot h\cdot m=h$ are also trivial.
The $(j, j)$-block of $\sigma({}^tm)\cdot h\cdot m$ is
\[\left\{
\begin{array}{l l}
\pi^j\cdot\begin{pmatrix}a_j&0\\0&(1+\sigma(2 z_j^{\ast}))\cdot (1+2\bar{\gamma}_j)\cdot (1+2 z_j^{\ast}) \end{pmatrix}
& \quad \textit{$L_j$ : of type $I^o$ with j even};\\
\pi^j\cdot\begin{pmatrix}a_j&0&0\\0&(1+\sigma(\pi x_j))(1+\pi x_j)&(1+\sigma(\pi x_j))(1+2 z_j^{\ast})\\
0&(1+\sigma(2 z_j^{\ast}))(1+\pi x_j)&(1+2 z_j^{\ast})\sigma(2 z_j^{\ast})+2 z_j^{\ast}+2\bar{\gamma}_j \end{pmatrix} & \quad
\textit{$L_j$ : of type $I^e$ with j even};\\
\pi^j\cdot\begin{pmatrix}a_j&0&0\\0&\pi^3\bar{\gamma}_j+\pi^3\left((z_j)_1+(z_j)_1^2\right)&1+\sigma(\pi x_j)+\pi\cdot\sigma(\pi z_j)  \\
0&-1-\pi x_j+\pi^2 z_j&\pi \end{pmatrix} & \quad
\textit{$L_j$ : free of type $I$ with j odd}.
\end{array}\right.\]
We write $z_j^{\ast}=(z_j^{\ast})_1+\pi (z_j^{\ast})_2$, $x_j=(x_j)_1+\pi (x_j)_2$, and $z_j=(z_j)_1+\pi (z_j)_2$, where $(z_j^{\ast})_1, (z_j^{\ast})_2,$ $(x_j)_1,$ $(x_j)_2,$ $(z_j)_1,$ $(z_j)_2 \in R \subset R\otimes_AB$ and $\pi$ stands for $1\otimes \pi\in R\otimes_AB$.
When $L_j$ is \textit{of type $I^o$}, by considering the $(2, 2)$-block of the matrix above, we obtain the equation
\[(z_j^{\ast})_1+(z_j^{\ast})_1^2=0.\]
Therefore, in this case, $F_j$ is isomorphic to $ \mathbb{A}^{1} \times \mathbb{Z}/2\mathbb{Z}$ as a $\kappa$-variety.

When $L_j$ is \textit{of type $I^e$}, by considering the $(2, 2)$-block of the matrix above, we obtain the equation
\[ (x_j)_1^2=0.\]
We also  consider the $(2, 3)$-block of the matrix above, and we obtain two equations
\[(x_j)_1=0, ~~~ (x_j)_2+(z_j^{\ast})_1=0.   \]
By considering the $(3, 3)$-block of the matrix above, we obtain the equation
\[(z_j^{\ast})_1+(z_j^{\ast})_1^2=0.\]
By combining all these, we see that $F_j$ is isomorphic to $ \mathbb{A}^{1} \times \mathbb{Z}/2\mathbb{Z}$ as a $\kappa$-variety.

By using a similar argument used above, when  $L_j$ is \textit{free of type $I$} with $i$ odd, we obtain the equations
\[(z_j)_1+(z_j)_1^2=0, ~~~ (x_j)_1=0, ~~~(z_j)_1+(x_j)_2=0.  \]
By combining all these, we see that $F_j$ is isomorphic to $ \mathbb{A}^{1} \times \mathbb{Z}/2\mathbb{Z}$ as a $\kappa$-variety.
\end{proof}
\textit{  }

We finally prove Lemma \ref{l46}.
\begin{proof}
 We start with the following short exact sequence
\[1\rightarrow \tilde{G}^1 \rightarrow \mathrm{Ker~}\varphi\rightarrow\mathrm{Ker~}\varphi/\tilde{G}^1\rightarrow 1.\]
It is obvious that $\mathrm{Ker~}\varphi$ is smooth  by Theorems \ref{ta4} and  \ref{ta6}.
$\mathrm{Ker~}\varphi$ is also unipotent since it is a subgroup of a unipotent group $\tilde{M}^+$.
Since $\tilde{G}^1$ is connected by Theorem \ref{ta4}, the  component group of $\mathrm{Ker~}\varphi$ is the same as that of $\mathrm{Ker~}\varphi/\tilde{G}^1$ by  Lemma A.10 of \cite{C2}.
 Moveover, the dimension of $\mathrm{Ker~}\varphi$ is the sum of the dimension of $\tilde{G}^1$
and the dimension of $\mathrm{Ker~}\varphi/\tilde{G}^1$. This completes the proof.
\end{proof}

\section{Examples} \label{App:AppendixB}
In this appendix, we provide an example with a unimodular lattice $(L, h)$ of rank 1.
The structure of this appendix is parallel to that of Appendix B of \cite{C2} and thus
many sentences of loc. cit. are repeated without comment.
Let $L$ be $B\textit{e}$ of rank 1 hermitian lattice with hermitian form $h(le, l'e)=\sigma(l)l'$.
With this lattice, we construct the smooth integral model and its special fiber and compute the local density.

\subsection{Naive construction (without using our technique)}\label{nc}
We first construct the smooth integral model and its special fiber, without using any technique introduced in this paper.
If we write an element of $L$ as $x+\pi y$ where $x, y\in A$,
then it is easy to see that a naive integral model $\underline{G}'$ is $\mathrm{Spec~}A[x,y]/(x^2+(\pi +\sigma(\pi))xy+\pi\sigma(\pi)y^2-1)$.
As mentioned in Section \ref{Notations}, we may assume that $\pi +\sigma(\pi)=0$ and $\pi\sigma(\pi)=-2\delta$ for a unit $\delta\in A$
such that $\delta\equiv 1 \mathrm{~mod~}2$.
We remark that $\underline{G}'$ is smooth if $p\neq 2$, and in this case its special fiber is
$\mathrm{Spec~}\kappa[x,y]/(x^2-1)=\mathbb{A}^1\times \mu_2$ as a $\kappa$-variety.
However, if $p=2$, then its special fiber is no longer smooth since $\kappa[x,y]/(x^2-1)=\kappa[x,y]/(x-1)^2$ is nonreduced.
Some of the difficulty in the case $p=2$ arises from this.
The associated smooth integral model is
obtained by a finite sequence of \textit{dilatations} (at least once) of $\underline{G}'$ (cf. \cite{BLR}).

On the other hand, the difficulty can also be explained in terms of quadratic forms.
Namely, the smoothness of any scheme over $A$ should be closely related to the smoothness of its special fiber.
If we define a function $q : L\longrightarrow A, l\mapsto h(l,l)$,
then $q$  mod 2   is a quadratic form over $\kappa$.
Therefore, the associated smooth integral model should contain information about this quadratic form,
which is  more subtle than quadratic forms over a field of characteristic not equal 2.

To construct the smooth integral model, we observe the characterization of $\underline{G}$ such that $\underline{G}(R)=\underline{G}'(R)$
for an \'etale $A$-algebra $R$.
Thus any element of \underline{G}(R) is of the form $x+\pi y$ such that $x^2-2\delta y^2=1$.
Therefore, $(x-1)^2$ is contained in the ideal $(2)$ of $R$ so that we can rewrite $x=1+2x'$ since $R$ is \'etale over $A$.
With this, any element of \underline{G}(R) is of the form $1+2x'+\pi y$ such that
$2(x'+(x')^2)-\delta y^2=0$.
This equation also yields that $y^2$ is contained in the prime ideal $(2)$ of $R$
so that we can rewrite $y=2y'$ since $R$ is \'etale over $A$.
With this, any element of \underline{G}(R) is of the form $1+2x'+\pi\cdot 2y'$ such that
$x'+(x')^2-2\delta (y')^2=0$.

We consider the affine scheme $\mathrm{Spec~}A[x,y]/(x+x^2-2\delta y^2)$.
 Its special fiber  is then reduced and smooth.
 Thus, this affine scheme is the desired smooth integral model $\underline{G}$.
 Furthermore, its special fiber $\mathrm{Spec~}\kappa[x,y]/(x+x^2)$ is isomorphic to $\mathbb{A}^1\times \mathbb{Z}/2\mathbb{Z}$
 as a $\kappa$-variety so that the number of rational points is $2f$, where $f$ is the cardinality of $\kappa$.

\subsection{Construction following our technique}\label{cfot}
Let $q$ be the function defined over $L$ such that
$$q : L\longrightarrow A, l\mapsto h(l,l).$$
If we write $l=x+\pi y$ such that $x, y \in A$, then $q(l)=h(x+\pi y, x+\pi y)=x^2-2\delta y^2$.
Thus $q$  mod 2  is an additive polynomial over $\kappa$.
Let $B(L)$ be the sublattice of $L$ such that $B(L)/\pi L$ is the kernel of the additive polynomial  $q$  mod 2 on  $L/\pi L$.
In this case, $B(L)=\pi L$.

We define another  sublattice $Z(L)$ of $L$ such that
$Z(L)/\pi B(L)$ is the radical of the quadratic form $\frac{1}{2}q$ mod 2 on $B(L)/\pi B(L)$.
In this case, $Z(L)=\pi B(L)=2L$.

For an    \'etale $A$-algebra $R$ with  $g\in \mathrm{Aut}_{B\otimes_AR}(L\otimes_AR, h\otimes_AR)$,
 it is easy to see that $g$ induces the identity on $L/B(L)=L/\pi L$.
 An element $g$ also induces the identity on $L/Z(L)=L/2L$ since
 $$q(gw-w, gw-w)=2q(w)-\left(h(gw, w)+h(w, gw)\right)=$$
 $$2q(w)-\left(h(w+x, w)+h(w, w+x)\right) =-\left(h(x, w)+h(w, x)\right)$$
 where $w\in L$ and $x\in B(L)=\pi L$ such that $gw=w+x$, and thus $h(x, w)\in \pi B$ and $\left(h(x, w)+h(w, x)\right)\in 4 B$.

Based on this, we construct the following functor from the category of commutative flat $A$-algebras to the category of monoids as follows. For any commutative flat $A$-algebra $R$, set
    $$\underline{M}(R) = \{m \in \mathrm{End}_{B\otimes_AR}(L \otimes_A R)\} ~|~ \textit{$m$ induces the identity on $L\otimes_A R/ Z(L)\otimes_A R$}\}.$$
    This functor $\underline{M}$ is then representable by a polynomial ring and has the structure of a scheme of monoids.
    Let $\underline{M}^{\ast}(R)$ be the set of invertible elements in  $\underline{M}(R)$ for any commutative $A$-algebra $R$.
Then    $\underline{M}^{\ast}$ is representable by a group scheme which is an open subscheme of $\underline{M}$ (Section \ref{m}).
    Thus  $\underline{M}^{\ast}$ is smooth.
    Indeed, in our case, $\underline{M}^{\ast}=\underline{M}$.
    As a matrix, each element of $\underline{M}^{\ast}(R)$ for a flat $A$-algebra $R$ can be written as $\begin{pmatrix} 1+2 z \end{pmatrix}$.

    We define another functor from the category of commutative flat $A$-algebras to the category of sets as follows. For any commutative flat $A$-algebra $R$,
    let $\underline{H}(R)$ be the set of hermitian forms $f$ on $L\otimes_{A}R$ (with values in $B\otimes_AR$)
    such that
    $f(a,a)$ mod 4 = $h(a, a)$ mod 4, where $a \in L \otimes_{A}R$.
    As a matrix, each element of $\underline{H}(R)$ for a flat $A$-algebra $R$ is $\begin{pmatrix} 1+4 c \end{pmatrix}$.

    Then for any flat $A$-algebra $R$, the group $\underline{M}^{\ast}(R)$ acts on the right of $\underline{H}(R)$
    by $f\circ m = \sigma({}^tm)\cdot f\cdot m$ and
    this action is represented by an action morphism (Theorem \ref{t34})
     \[\underline{H} \times \underline{M}^{\ast} \longrightarrow \underline{H} .\]
        Let $\rho$ be the morphism $\underline{M}^{\ast} \rightarrow \underline{H}$ defined by $\rho(m)=h \circ m$,
  which is obtained from the above action morphism.
  As a matrix, for a flat $A$-algebra $R$,  $$\rho(m)=\rho(\begin{pmatrix} 1+2 z \end{pmatrix})
     =\begin{pmatrix} 1+2 z+\sigma(2 z)+4\cdot z\sigma(z) \end{pmatrix}.$$

  Then $\rho$ is smooth of relative dimension 1 (Theorem \ref{t36}).
   Let $\underline{G}$ be the stabilizer of $h$ in $\underline{M}^{\ast}$.
 The group scheme $\underline{G}$ is smooth, and $\underline{G}(R)=\mathrm{Aut}_{B\otimes_AR}(L\otimes_A R,h\otimes_A R)$ for any \'{e}tale $A$-algebra $R$ (Theorem \ref{t38}).\\

We now describe the structure of the special fiber $\tilde{G}$ of $\underline{G}$.
For a $\kappa$-algebra $R$,
each element of $\underline{M}(R)$ (resp. $\underline{H}(R)$)  can be written as a formal matrix $m=\begin{pmatrix} 1+2 z \end{pmatrix}$
(resp. $f=\begin{pmatrix} 1+4 c \end{pmatrix}$).
Firstly, it is obvious that  the morphism $\varphi$ in Section \ref{red} is trivial
since the dimension of $B(L)/Z(L)=\pi L/2L$ is $1$ as a $\kappa$-vector space
so that the associated reduced orthogonal group is trivial.

For the component groups,
as explained in Theorem \ref{t411}, there is a surjective morphism from $\tilde{G}$ to  $\mathbb{Z}/2\mathbb{Z}$.
 Let us  describe this morphism explicitly below.
It is easy to see that  $L^0=M_0=L$ and $C(L^0)=M_0'=L$.
Here, we follow notation of Section \ref{cg}.
Since $M_0=L$ is \textit{of type $I^o$}, there exists a morphism from the special fiber $\tilde{G}$ $(=G_0)$ to the special fiber of the smooth integral model associated to
$M_0'\oplus C(L^0)= L\oplus L$ \textit{of type $I^e$} as explained in the argument (2) just before Remark \ref{r410}.
Remark \ref{r410} tells us how to describe this morphism as formal matrices.
Let $(e_1, e_2)$ be a basis for $L\oplus L$ so that the associated Gram matrix of   the hermitian lattice $L\oplus L$
with respect to this basis is $\begin{pmatrix} 1& 0 \\ 0& 1  \end{pmatrix}$.
Then we consider the basis $(e_1, e_1+e_2)$, with respect to which the morphism described in Remark \ref{r410} is given as
\[\begin{pmatrix} 1+2 z \end{pmatrix} \longrightarrow \begin{pmatrix} 1& -2 z \\ 0& 1+2 z \end{pmatrix}.\]
We now construct a morphism from the special fiber of the smooth integral model associated to
$M_0'\oplus C(L^0)= L\oplus L$ to $\mathbb{Z}/2\mathbb{Z}$
and describe the image of $\begin{pmatrix} 1& -2 z \\ 0& 1+2 z \end{pmatrix}$ in $\mathbb{Z}/2\mathbb{Z}$.

Let $R$ be a $\kappa$-algebra.
The Gram matrix for the hermitian lattice $L\oplus L$ with respect to the basis $(e_1, e_1+e_2)$  is $\begin{pmatrix} 1& 1 \\ 1& 2 \end{pmatrix}$.
Since $L\oplus L$  is \textit{unimodular of type $I^e$},
an $R$-point of the special fiber associated to $L\oplus L$ with respect to this basis is expressed as the  formal matrix
$\begin{pmatrix} 1+\pi x'& 2 z' \\ u'& 1+\pi w' \end{pmatrix}$,  as explained in Section \ref{m}.
Based on the argument (1) following Definition \ref{d49},
 the morphism mapping to $\mathbb{Z}/2\mathbb{Z}$ factors through the special fiber associated to $Y(C(L\oplus L))$,
 composed with the Dickson invariant associated to the corresponding orthogonal group.
Note that $C(L\oplus L)$ is  generated by $(\pi e_1, e_1+e_2)$  and the corresponding Gram matrix is
$\begin{pmatrix} -2\delta& \pi \\ -\pi& 2 \end{pmatrix}$.
Then $Y(C(L\oplus L))$ (in this case it is the same as $B(C(L\oplus L))$)  is generated by $(2 e_1, \pi e_1+e_1+e_2)$ and the corresponding Gram matrix is
$\begin{pmatrix} 4\delta^2& 2\delta(\pi-1) \\ 2\delta(-\pi-1)& 2(1-\delta) \end{pmatrix}$.
Note that a method to choose the above basis is also explained in the argument (1) following Definition \ref{d49}.
Since $\delta\equiv 1$ mod 2, the lattice $Y(C(L\oplus L))$ is $\pi^2$-modular \textit{of type II}.
Thus there is no congruence condition on an element of the smooth integral model associated to $Y(C(L\oplus L))$
as explained in Section  \ref{m}.

We write $x'=x'_1+\pi x'_2$, $y'=y'_1+\pi y'_2$, and $z'=z'_1+\pi z'_2$.
The image of $\begin{pmatrix} 1+\pi x'& 2 z' \\ u'& 1+\pi w' \end{pmatrix}$ in the special fiber associated to $Y(C(L\oplus L))$, with respect to the above basis $(2 e_1, \pi e_1+e_1+e_2)$,
is  $$\begin{pmatrix} 1+\pi (x'_1+u'_1)&  x'+u'+z'+w' \\ 0 & 1+\pi (x'_1+u'_1) \end{pmatrix}.$$
Since $Y(C(L\oplus L))$ is $\pi^2$-modular \textit{of type II} with rank 2, there is a morphism from the special fiber
associated to $Y(C(L\oplus L))$ to the orthogonal group associated to $Y(C(L\oplus L))/\pi Y(C(L\oplus L))$,
as described in Theorem \ref{t43} or Remark \ref{r47}.
Then the image of $\begin{pmatrix} 1+\pi (x'_1+u'_1)&  x'+u'+z'+w' \\ 0 & 1+\pi (x'_1+u'_1) \end{pmatrix}$ in this orthogonal group is
$$\begin{pmatrix} 1&  x'_1+u'_1+z'_1+w'_1 \\ 0& 1 \end{pmatrix}.$$
The Dickson invariant of the above matrix is $x'_1+u'_1+z'_1+w'_1$
 as mentioned in Step (1) of the proof of Theorem \ref{t411}.

In conclusion, the image of $\begin{pmatrix} 1+2 z \end{pmatrix}$, which is an element of $\tilde{G}(R)$ for a $\kappa$-algebra $R$,
 in $\mathbb{Z}/2\mathbb{Z}$ is $z_1$, where we write $z=z_1+\pi z_2$.
On the other hand, the equation defining $\tilde{G}$ is  $z_1+z_1^2=0$.
Thus, the morphism from $\tilde{G}$ to $\mathbb{Z}/2\mathbb{Z}$ is surjective.
Therefore the maximal reductive quotient of $\tilde{G}$  is  $\mathbb{Z}/2\mathbb{Z}$  and using Remark 5.3 of \cite{C2},
$$\#(\tilde{G}(\kappa))=\#(\mathbb{Z}/2\mathbb{Z})\cdot \#(\mathbb{A}^1)=2f,$$
where $f$ is the cardinality of $\kappa$.
Based on Theorem \ref{t52}, the local density is
$$\beta_L=f^{-1}\cdot 2f=2.$$

\begin{Rmk}[Correction]\label{correction}
In the last line of Appendix B of \cite{C2}, 
$$\beta_L=f^{0}\cdot 2f=2f$$
should be fixed by  
$$\beta_L=f^{-1}\cdot 2f=2.$$
\end{Rmk}

\end{document}